%
%
%

%
%
%

\documentclass[11pt]{article}

\usepackage{exscale,enumerate}
\usepackage{amsfonts,amssymb,latexsym,a4wide}
\usepackage{amsmath}
\usepackage{amsthm}
\usepackage{hyperref}
\usepackage[svgnames]{xcolor} 
\usepackage{verbatim}

\usepackage{graphicx}
\usepackage[all]{xy}
\SelectTips{cm}{}


\newcommand{\seq}{{\rm Seq}}

\newcommand{\Pol}{\cal{P}o\ell}
\newcommand{\ii}{ \textbf{\textit{i}}}
\newcommand{\jj}{ \textbf{\textit{j}}}
\newcommand{\kk}{ \textbf{\textit{k}}}

\newcommand{\Rn}[1]{R(#1)}
\newcommand{\RnL}[1]{R^{\Lambda}(#1)}
\newcommand{\Rnu}{R(\nu)}
\newcommand{\RnuL}{R^{\Lambda}(\nu)}

\DeclareMathOperator{\prD}{pr}
\DeclareMathOperator{\infD}{infl}

\newcommand{\et}[1]{\tilde{e_{#1}}}
\newcommand{\ft}[1]{\tilde{f_{#1}}}

\newcommand{\eL}[1]{e_{#1}^{\Lambda}}

\newcommand{\ets}[1]{\tilde{e_{#1}}^{\vee}}
\newcommand{\es}[1]{e_{#1}^{\vee}}
\newcommand{\fts}[1]{\tilde{f_{#1}}^{\vee}}
\newcommand{\eph}{\epsilon_i^{\vee}}
\newcommand{\ephj}{\epsilon_j^{\vee}}
\newcommand{\ephl}{\epsilon_{\ell}^{\vee}}
\newcommand{\ep}[1]{\epsilon_i(#1)}
\newcommand{\phiL}{\phi_i^{\Lambda}}
\newcommand{\phiO}{\phi_i^{\Omega}}
\newcommand{\jump}{{\rm jump}}
\newcommand{\Lc}[1]{\cal{L}(#1)}
\newcommand{\Lj}[1]{\cal{L}(#1)}
\newcommand{\prL}{\prD_{\Lambda}}
\newcommand{\prO}{\prD_{\Omega}}
\newcommand{\infL}{\infD_{\Lambda}}
\newcommand{\barM}{\overline{M}}
\newcommand{\unu}{\underline{\nu}}

\newcommand{\refequal}[1]{\xy {\ar@{=}^{#1}
(-1,0)*{};(1,0)*{}};
\endxy}

\newcommand{\xsum}[2]{
  \xy
  (0,.4)*{\sum};
  (0,3.7)*{\scs #2};
  (0,-2.9)*{\scs #1};
  \endxy
}


\usepackage{fancyheadings}
\pagestyle{fancyplain}

\lhead[\fancyplain{}{\bfseries\thepage}]{\fancyplain{}{\sl\rightmark}}
\rhead[\fancyplain{}{\sl\leftmark}]{\fancyplain{}{\bfseries\thepage}}

\hfuzz=6pc

\newcommand{\To}{\Rightarrow}

\newcommand{\Hom}{{\rm Hom}}
\newcommand{\HOM}{{\rm HOM}}
\renewcommand{\to}{\rightarrow}
\newcommand{\maps}{\colon}

\newcommand{\iso}{\cong}
\newcommand{\id}{{\rm id}}

\newcommand{\wt}{{\rm wt}}
\newcommand{\soc}{{\rm soc\ }}
\newcommand{\cosoc}{{\rm cosoc\ }}
\newcommand{\chr}{{\rm ch}}

\newcommand{\scs}{\scriptstyle}

\theoremstyle{definition}
\newtheorem{thm}{Theorem}[section]
\newtheorem{cor}[thm]{Corollary}

\newtheorem{lem}[thm]{Lemma}
\newtheorem{rem}[thm]{Remark}
\newtheorem{prop}[thm]{Proposition}
\newtheorem{defn}[thm]{Definition}
\newtheorem{example}[thm]{Example}


\numberwithin{equation}{section}


%
%

%
%
\newcommand{\0}{{\mathbf 0}}


\let\hat=\widehat
\let\tilde=\widetilde


\let\phi=\varphi
\let\epsilon=\varepsilon


\usepackage{bbm}
\def\C{{\mathbbm C}}
\def\N{{\mathbbm N}}

\def\Z{{\mathbbm Z}}
\def\Q{{\mathbbm Q}}


\def\cal#1{\mathcal{#1}}%
\def\1{\mathbbm{1}}%
\def\nn{\notag}

\DeclareMathOperator{\Ind}{Ind}
\DeclareMathOperator{\Indc}{coInd}
\DeclareMathOperator{\Res}{Res}
\def\lra{{\longrightarrow}}
\def\dmod{{\mathrm{-mod}}}   
\def\fmod{{\mathrm{-fmod}}}   
\def\pmod{{\mathrm{-pmod}}}  
\def\gdim{{\mathrm{gdim}}}
\def\Ext{{\mathrm{Ext}}}

\def\mc{\mathcal}
\def\mf{\mathfrak}
\def\Af{{_{\mc{A}}\mathbf{f}}}    
\def\shuffle{\,\raise 1pt\hbox{$\scriptscriptstyle\cup{\mskip
               -4mu}\cup$}\,}
\newcommand{\define}{\stackrel{\mbox{\scriptsize{def}}}{=}}
\def\UmA{{_{\mc{A}}\mathbf{U}^-_q}}
\def\U{\mathbf{U}_q(\mf{g})}
\def\Um{\mathbf{U}_q^-}

\newcommand{\Gnu}{{G_0(\Rnu)^\ast}}
\newcommand{\Gdual}{G_0^\ast(R)}
\newcommand{\GnuL}{{G_0(\RnuL)^\ast}}
\newcommand{\GL}{G_0^\ast(R^\Lambda)}
\def\UpA{{_{\mc{A}}\mathbf{U}^+_q}}
\def\Up{{\mathbf{U}^+_q}}
\newcommand{\delM}[1]{\delta_{{#1}}}
\newcommand{\deLM}[1]{\mathfrak{d}_{{#1}}}
\newcommand{\deltriv}{\delM{\mathbbm 1}}
\newcommand{\deLtriv}{\deLM{\mathbbm 1}}
\newcommand{\epany}[2]{\epsilon_{#1}(#2)}
\newcommand{\rank}{\mathrm{rank}\/}
\newcommand{\VL}{V(\Lambda)}
\newcommand{\AV}{{}_{\mc{A}} V(\Lambda)}
\newcommand{\Vdual}{V(\Lambda)^\ast}
\newcommand{\AVdual}{{}_{\mc{A}} V^\ast(\Lambda)}
\newcommand{\mydot}{\cdot}



\newcommand{\up}[1]{\xybox{
   (-3,-13)*{};
  (3,8)*{};
 (0,0)*{\includegraphics[scale=0.5]{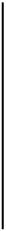}};
 (-.1,-12)*{\scs #1};
 }}

 \newcommand{\updot}[1]{\xybox{
   (-3,-13)*{};
  (3,8)*{};
 (0,0)*{\includegraphics[scale=0.5]{up.eps}};
 (-.1,-12)*{\scs #1};(0,0)*{\bullet};
 }}

\renewcommand{\sup}[1]{\xybox{
   (-3,-7)*{};
  (3,6)*{};
 (0,0)*{\includegraphics[scale=0.5]{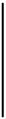}};
 (-.1,-7)*{\scs #1};
 }}

 \newcommand{\supdot}[1]{\xybox{
   (-3,-7)*{};
  (3,6)*{};
 (0,0)*{\includegraphics[scale=0.5]{short_up.eps}};
 (-.1,-7)*{\scs #1}; (0,0)*{\bullet};
 }}

\newcommand{\dcross}[2]{\xybox{
 (-6,-7)*{};
 (6,6)*{};
 (0,0)*{\includegraphics[scale=0.5]{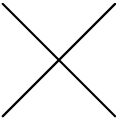}};
 (-5.1,-7)*{\scs #1};
 (4.7,-7)*{\scs #2};
}}


\newcommand{\dcrossul}[2]{\xybox{
(-2.5,2.5)*{\bullet};
 (-6,-7)*{};
 (6,6)*{};
 (0,0)*{\includegraphics[scale=0.5]{dcross.eps}};
 (-5.1,-7)*{\scs #1};
 (4.7,-7)*{\scs #2};
}}

\newcommand{\dcrossur}[2]{\xybox{
(2.5,2.5)*{\bullet};
 (-6,-7)*{};
 (6,6)*{};
 (0,0)*{\includegraphics[scale=0.5]{dcross.eps}};
 (-5.1,-7)*{\scs #1};
 (4.7,-7)*{\scs #2};
}}

\newcommand{\dcrossdl}[2]{\xybox{
(-2.5,-2.5)*{\bullet};
 (-6,-7)*{};
 (6,6)*{};
 (0,0)*{\includegraphics[scale=0.5]{dcross.eps}};
 (-5.1,-7)*{\scs #1};
 (4.7,-7)*{\scs #2};
}}

\newcommand{\dcrossdr}[2]{\xybox{
(2.5,-2.5)*{\bullet};
 (-6,-7)*{};
 (6,6)*{};
 (0,0)*{\includegraphics[scale=0.5]{dcross.eps}};
 (-5.1,-7)*{\scs #1};
 (4.7,-7)*{\scs #2};
}}

\newcommand{\twocross}[2]{\xybox{
 (-6,-13)*{};
 (6,8)*{};
 (0,0)*{\includegraphics[scale=0.5]{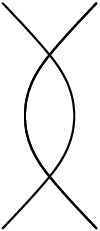}};
 (-4.1,-12)*{\scs #1};
 (3.7,-12)*{\scs #2};
}}

\newcommand{\linecrossL}[3]{\xybox{
 (-6,-13)*{};
 (6,8)*{};
 (0,0)*{\includegraphics[scale=0.5]{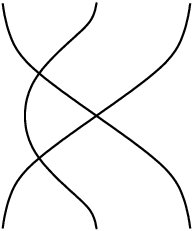}};
 (-8,-12)*{\scs #1};
 (0,-12)*{\scs #2};
 (8,-12)*{\scs #3};
}}

\newcommand{\linecrossR}[3]{\xybox{
 (-6,-13)*{};
 (6,8)*{};
 (0,0)*{\includegraphics[angle=180, scale=0.5]{line_crossL.eps}};
 (-8,-12)*{\scs #1};
 (0,-12)*{\scs #2};
 (8,-12)*{\scs #3};
}}

%
%


\title{Crystals from categorified quantum groups}
      \author{Aaron D. Lauda and Monica Vazirani}
      \date{September 10, 2009}

%
\begin{document}
%

\maketitle

\begin{abstract}

We study the crystal structure on categories of graded modules over
algebras which categorify the negative half of the quantum Kac-Moody
algebra associated to a symmetrizable Cartan data.   We identify
this crystal with Kashiwara's crystal for the corresponding negative
half of the quantum Kac-Moody algebra. As a consequence, we show the
simple graded modules for certain cyclotomic quotients carry the
structure of highest weight crystals, and hence compute the rank of
the corresponding Grothendieck group.
\end{abstract}

\tableofcontents

%
\section{Introduction}
\label{sec_intro}
%

In \cite{KL, KL2,Ro}
a family $R$ of graded algebras was introduced
that categorifies the integral form $\UmA := \UmA(\mf{g})$ of the
negative half of the quantum enveloping algebra $\U$ associated to a
symmetrizable Kac-Moody algebra $\mf{g}$.    The grading on these
algebras equips the Grothendieck group $K_0(R\pmod)$ of the category
of finitely-generated graded projective $R$-modules with the
structure of a $\Z[q,q^{-1}]$-module, where $q^r[M] := [M\{r\}]$,
and $M\{r\}$ denotes a graded module $M$ with its grading shifted up
by $r$. Natural parabolic induction and restriction functors give
$K_0(R\pmod)$ the structure of a (twisted) $\Z[q,q^{-1}]$-bialgebra.
In \cite{KL,KL2} an explicit isomorphism of twisted bialgebras was
given between $\UmA$ and $K_0(R\pmod)$.  The crystal-theoretic methods in this paper provide a new proof of this result.

Several conjectures were also made in \cite{KL, KL2}.  One conjecture that was unproven at the time this article first appeared is the so called cyclotomic
quotient conjecture which suggests a close connection between
certain finite dimensional quotients of the algebras $R$ and the
integrable representation theory of quantum Kac-Moody algebras.
At that time, the conjecture had been proven in finite and affine type $A$
by Brundan and Kleshchev~\cite{BK2}, but very little was known in the
case of an arbitrary symmetrizable Cartan datum.  By obtaining
new results on the fine structure of simple $R$-modules, here we show
that simple graded modules for these cyclotomic quotients carry the
structure of highest weight crystals. Hence we identify the rank of
the corresponding Grothendieck group with the rank of the integral
highest weight representation, proving a major component of the cyclotomic quotient conjecture.  Before this article went to press, proofs of the full conjecture appeared independently in work of Webster~\cite{Web} and Kang and Kashiwara~\cite{KK}.

To explain these results more precisely, suppose we are given a symmetrizable Cartan datum where $I$ is the index set of simple roots. The algebras $R$ have a diagrammatic description and are determined by the symmetrizable Cartan datum of $\mathfrak{g}$ together with some extra parameters. In the literature these
algebras are sometimes called Khovanov-Lauda-Rouquier algebras and quiver Hecke algebras.

For each $\nu \in \N[I]$ the block $\Rn{\nu}$ of the algebra $R$
admits a finite dimensional quotient $\RnuL$ associated to  a
highest weight $\Lambda$, called a cyclotomic quotient. These
quotients were conjectured in \cite{KL,KL2} to categorify the
$\nu$-weight space of the integral version of the irreducible
representation $V(\Lambda)$ of highest weight $\Lambda$ for
$\mathbf{U}_q(\mf{g})$, in the sense that there should be an
isomorphism
\[
 \xy {\ar^-{\iso}(-22,0)*+{V(\Lambda)_{\C}};
 (20,0)*+{\bigoplus_{\nu \in \N[I]} K_0(\RnuL\pmod)_{\C},}}; \endxy
\]
where $K_0(\RnuL\pmod)_{\C}$ denotes the complexified Grothendieck
group of the category of graded finitely generated projective
$\RnuL$-modules. A special case of this conjecture was proven in
type $A$ by Brundan and Stroppel~\cite{BS}. The more general
conjecture was proven in finite and affine type $A$ by Brundan and
Kleshchev~\cite{BK1,BK2}. They constructed an isomorphism
\[
 \xy {\ar^{\iso}(-8,0)*+{ \RnuL}; (8,0)*+{H_{\nu}^{\Lambda}}}; \endxy,
\]
where $H_{\nu}^{\Lambda}$ is a block of the cyclotomic affine Hecke
algebra $H_{m}^{\Lambda}$ as defined in \cite{AK,BM,Ch}.  This
isomorphism induces a new grading on blocks of the cyclotomic affine
Hecke algebra. This has led to the definition of graded Specht
modules for cyclotomic Hecke algebras~\cite{BKW}, the construction
of a homogeneous cellular basis for the cyclotomic quotients $\RnuL$
in type $A$~\cite{HM}, the introduction of gradings in the study of
$q$-Schur algebras~\cite{Ari4}, and an extension of the generalized
LLT conjecture to the graded setting~\cite{BK2}.

Ariki's categorification theorem gave a geometric proof that the sum
of complexified Grothendieck groups of cyclotomic Hecke algebras
$H^{\Lambda}_m$ at an $N$th root of unity over $\C$, taken over all
$m\geq 0$, was isomorphic to the highest weight representation
$V(\Lambda)$ of $U(\hat{\mathfrak{sl}}_N)$ ~\cite{Ari1}, see
\cite{Ari2,Ari3,AM,Mathas} and also \cite{Groj2,LLT}.  Grojnowski
gave a purely algebraic proof of this result, parameterizing the
simple $H_m^{\Lambda}$-modules in terms of crystal data of highest
weight crystals~\cite{Groj}.

Brundan and Kleshchev's proof of the cyclotomic quotient conjecture
in type $A$ utilized the isomorphism between the graded algebras
$\RnuL$ and blocks of the cyclotomic affine Hecke algebra, allowing
them to extend Grojnowski's crystal theoretic classification of
simples of the ungraded affine Hecke algebra to the graded setting.
By keeping careful track of the gradings, they were
able to extend Ariki's theorem to the graded setting, thereby
proving the cyclotomic quotient conjecture in type $A$, as well as
identifying the indecomposable projective modules for
$R^{\Lambda}_{\nu}$ with the  
canonical basis for
$V(\Lambda)$. Indeed, the algebras $\RnuL$ were originally called
cyclotomic quotients in \cite{KL} because they were expected to categorify irreducible highest weight representations of quantum Kac-Moody algebras analogous to the way that cyclotomic Hecke algebras categorify irreducible highest weight representations for type $A$ in the non-quantum setting.  In this way, these diagrammatically defined cyclotomic quotients can be viewed as graded extensions of the cyclotomic Hecke algebras to all types.

While there are natural extensions of cyclotomic Hecke algebras of type $A$, namely quotients of affine Hecke algebras of crystallographic type, they do not provide analogous categorification results. However, categorification results of a different flavour do exist in types $B$ and $D$, see \cite{VV2,SVV,EK,KM}.

In type $A$ homogeneous cellular bases were constructed~\cite{KR2,BS,HM}. However, the study of cyclotomic quotients outside of type $A$ has been
hindered by the lack of explicit bases for the algebras $\RnuL$.  Some explicit calculations of cyclotomic quotients $\RnuL$ in other type were made
for level one and two representations~\cite{RTG}, but it is not
clear how to extend these results to all representations. The
algebras $\Rnu$ have a PBW basis that aid in computations. No such
basis is known for the algebras $\RnuL$.
\medskip

In the symmetric case the algebras $R$ are related to Lusztig's
geometric categorification using perverse sheaves. Following
Ringel~\cite{Ringel}, Lusztig gave a geometric interpretation of
$\Um=\Um(\mf{g})$~\cite{Lus1,Lus2,Lus3}, see also \cite{Lus4,Lus6}.
This gave rise to Lusztig's canonical basis for $\Um$. Kashiwara defined a
crystal basis of $\Um$ for certain simple Lie algebras~\cite{Kas1}
and later proved its existence for all symmetrizable Kac-Moody
algebras~\cite{Kas2,Kas5}; the affine type $A$ case was proven by
Misra and Miwa~\cite{MT}. Kashiwara also constructed the so-called
global crystal basis of $\Um$~\cite{Kas2,Kas3,Kas5}. Grojnowski and
Lusztig~\cite{GL2} proved that the global crystal basis and the
canonical basis are the same. The canonical basis
of $\Um$ is a basis with remarkable positivity and integrality
properties, and gives rise to bases in all irreducible integrable
$\mathbf{U}_q(\mf{g})$-representations.

Varagnolo and Vasserot constructed an isomorphism between
$\Ext$-algebras of certain simple perverse sheaves on Lusztig quiver
varieties~\cite{VV} and the algebras $R(\nu)$ in the symmetric case,
proving a conjecture from \cite{KL}. Consequently, one can identify
indecomposable projectives for the algebras $R$ with simple perverse
sheaves on Lusztig quiver varieties and the
canonical basis for $\UmA$. Rouquier has also announced a similar
result.

One should be able to deduce a classification of graded simple
modules for the algebras $\RnuL$ in the symmetric case using results
of \cite{KL} and \cite{VV} together with Kashiwara and Saito's
geometric construction of crystals~\cite{KS}, but the details of
this argument have not appeared.  We expect that cyclotomic
quotients $\RnuL$ should also have a geometric interpretation in
terms of Nakajima quiver varieties~\cite{Na}. \medskip

In this paper we determine the size of the Grothendieck group for
arbitrary cyclotomic quotients $\RnuL$ associated to a symmetrizable
Cartan datum.  Rather than working geometrically, our methods are
based strongly on the algebraic treatment of the affine Hecke
algebra and its cyclotomic quotients introduced by
Grojnowski~\cite{Groj}. This approach extended Kleshchev's results
for the symmetric groups~\cite{Kle1,Kle2,Kle3}, and utilizes earlier
results of Vazirani~\cite{Vaz1,Vaz2} and
Grojnowski-Vazirani~\cite{GV}. Kleshchev's book contains an
excellent exposition of Grojnowski's approach in the context of
degenerate affine Hecke algebras~\cite{KleBook}. The idea is to
introduce a crystal structure on categories of modules, interpreting
Kashiwara operators module theoretically. To apply this approach to the study of algebras $R(\nu)$, rather than working with projective modules, one must work with the category of finite dimensional graded $R(\nu)$-modules.  This could be done by working over an algebraically closed field $\Bbbk$ and utilizing the $\Z[q,q^{-1}]$-bilinear pairing
\begin{equation}
 (,) \maps K_0(R(\nu)\pmod) \times G_0(R(\nu)\fmod) \to \Z[q,q^{-1}],
\end{equation}
where $G_0(R(\nu)\fmod)$ denotes the Grothendieck group of the
category of finite dimensional graded $R(\nu)$-modules. Since the pairing is a perfect pairing (see \cite{KL}), it allows one to deduce that Serre relations hold on $G_0(R)$ from the corresponding result for $K_0(R)$. Here, however, we take a more direct approach giving a direct proof of Serre relations on $G_0(R)$ and a more direct identification of $G_0(R)$ with $\UmA$. This is a byproduct of our careful analysis, which additionally yields new results on the structure of simple modules.

We study the crystal graph whose nodes are the graded
simple $\Rn{\nu}$-modules (up to grading shift) taken over all $\nu
\in \N[I]$. By identifying this crystal graph with the Kashiwara
crystal $B(\infty)$ associated to $\Um$ we are able to define a
crystal structure on the set of graded simple modules for the
cyclotomic quotients $\RnuL$ and show that it is the crystal graph
$B(\Lambda)$. This allows us to view cyclotomic quotients of the
algebras $\Rn{\nu}$ as a categorification of the integrable highest
weight representation $V(\Lambda)$ of $\Up$, proving {\em part} of
the cyclotomic quotient conjecture from \cite{KL} in the general
setting.  This does not prove the entire cyclotomic quotient
conjecture as our isomorphism is only an isomorphism of
$\Up$-modules, not of $\mathbf{U}_q(\mf{g})$-modules.
%

The study of KLR algebras and their cyclotomic quotients is rapidly developing.  On the same day that this posted to the ArXiV, an article by Kleshchev and Ram \cite{KR} also appeared where they construct all irreducible representations of algebras $R(\nu)$ in finite type from Lyndon words.  Their work generalizes the fundamental work of \cite{BZ,Zel} who parameterized and constructed the simple modules for the affine Hecke algebra in type $A$ with generic parameter in terms of $\mathbf{U}^-(\mf{gl}_\infty)$.  Furthermore, some time after this article appeared alternative proofs of the full cyclotomic quotient conjecture were given by Webster~\cite{Web} and by Kang-Kashiwara~\cite{KK}.  Kang and Kashiwara show that functors lifting the action of $E_i$ and $F_i$ in $\U$ are biadjoint, showing that cyclotomic quotients categorify $V(\Lambda)$ as $\U$-modules and give a 2-representation in the sense Rouquier~\cite{Ro}.  Webster's work gives a different proof of biadjointness and also constructs an action of the 2-category $\dot{\cal{U}}$ from \cite{Lau1,KL} on categories of modules over cyclotomic quotients.

This article gives a proof of the crystal version of the cyclotomic quotient conjecture.  This work differs from the articles mentioned above in that it requires a detailed study of the fine structure simple modules for cyclotomic quotients.  We feel that this fine structure constitutes the main results obtained in this article.  These results are strong enough to give an alternative proof of the categorification theorem of \cite{KL,KL2} staying entirely in the category of finitely-generated modules, see Section~\ref{sec_genrel}.

All of the results in this paper should extend to Rouquier's version
of algebras $R(\nu)$ associated to Hermitian matrices, at least for
those Hermitian matrices leading to graded algebras.  We also
believe that these results will fit naturally within Khovanov and
Lauda's framework of categorified quantum groups \cite{Lau1,KL3}, as
well as Rouquier's 2-representations of 2-Kac-Moody
algebras~\cite{CR,Ro}.

\bigskip

We end the introduction with a brief outline of the article,
highlighting other results to be found herein.  In
Section~\ref{sec_Rnu} we review the definition and key properties of
the algebras $\Rn{\nu}$.  In Section~\ref{sec_functors} we study
various functors defined on the categories of graded modules over
the algebras $R(\nu)$.  In particular, Section~\ref{sec_coind}
introduces the co-induction functor and proves several key results.
In Section~\ref{sec_Groth} we look at the morphisms induced by these
functors on the Grothendieck rings.

Section~\ref{sec_crystals} contains a brief review of crystal
theory.  Of key importance is the result of Kashiwara and
Saito~\cite{KS}, recalled in Section~\ref{sec_Binf}, characterizing
the crystal $B(\infty)$.  In Section~\ref{sec_mod_crystals} we
introduce crystal structures on the category of modules over
algebras $R(\nu)$ and their cyclotomic quotients $\RnuL$.  After a
detailed study of this crystal data in Section~\ref{sec_modules},
these crystals are identified as the crystals $B(\infty)$ and
$B(\Lambda)$ in Section~\ref{sec_harvest}.

\bigskip

\noindent {\it Acknowledgments:}  We thank Ian Grojnowski for
suggesting this project and for many helpful discussions.  We also
thank Mikhail Khovanov for helpful discussions and comments. The
first author was partially supported by the NSF grant DMS-0739392
and DMS-0855713. The second author was partially supported by the
NSA grant \#H982300910076, and would like acknowledge Columbia's RTG
grant DMS-0739392 for supporting her visits to Columbia.

%
\subsection{The algebras $\Rn{\nu}$}
\label{sec_Rnu}
%

%
\subsubsection{Cartan datum} \label{sec_Cartan}
%

Assume we are given a Cartan data
\begin{align*}
  & \text{P - a free $\Z$-module (called the weight lattice)} \\
  & \text{I - an index set for simple roots}\\
  & \text{$\alpha_i \in P$ for $i \in I$ called simple roots}\\
& \text{$h_i\in P^\vee = \Hom_{\Z}(P,\Z)$ called simple coroots}
\\
  & \text{$(,) \maps P \times P \to \Z$ a bilinear form}
\end{align*}
where we write $\langle \cdot, \cdot \rangle \maps P^{\vee} \times P
\to \Z$ for the canonical pairing.  This data is required to satisfy
the following axioms
\begin{align}
  & (\alpha_i, \alpha_i) \in 2\Z_{>0} \;\; \text{for any $i\in I$} \\
  & \langle h_i, \lambda \rangle = 2 \frac{(\alpha_i,\lambda)}{(\alpha_i,\alpha_i)}
  \;\;\text{for $i \in I$ and $\lambda \in P$} \\
  & (\alpha_i,\alpha_j) \leq 0 \;\; \text{for $i,j\in I$ with $i \neq j$.}
\end{align}
Hence $\{\langle h_i, \alpha_j \rangle\}_{i,j\in I}$ is a symmetrizable generalized Cartan matrix. In what follows we write
\begin{equation}
 a_{ij} =-\langle i,j \rangle := -\langle h_i, \alpha_j \rangle
\end{equation}
for $i,j\in I$.

Let $\Lambda_i\in P^+$ be the fundamental weights defined by
$\langle h_j,\Lambda_i \rangle =\delta_{ij}$.

%
\subsubsection{The algebra $\Um$}
\label{sec_UZ}
%

Associated to a Cartan datum one can define an algebra $\Um$, the
quantum deformation of the universal enveloping algebra of the
``lower-triangular'' subalgebra of a symmetrizable Kac-Moody algebra
$\mf{g}$.  Our discussion here follows Lusztig~\cite{Lus4}.

Let $q_i=q^{\frac{(\alpha_i,\alpha_i)}{2}}$,
$[a]_i=q_i^{a-1}+q_i^{a-3}+\dots + q_i^{1-a}$,
$[a]_i!=[a]_i[a-1]_i\dots [1]_i$. Denote by
$'\mathbf{f}$\index{f@$'\mathbf{f}$} the free associative algebra
over $\Q(q)$ with generators $\theta_i$, $i\in I$, and introduce
q-divided powers $\theta_i^{(a)}= \theta_i^a/[a]_i!$. The algebra
$'\mathbf{f}$ is $\N[I]$-graded, with $\theta_i$ in degree $i$. The
tensor square $'\mathbf{f}\otimes {}'\mathbf{f}$ is an associative
algebra with twisted multiplication
$$ (x_1\otimes x_2) (x_1'\otimes x_2') =q^{- |x_2|\cdot |x_1'|} x_1 x_1' \otimes x_2 x_2'$$
for homogeneous $x_1, x_2, x_1', x_2'$. The assignment $r(\theta_i) = \theta_i
\otimes 1 + 1\otimes \theta_i$ extends to a unique algebra homomorphism $r:
{}'\mathbf{f}\lra {}'\mathbf{f}\otimes {}'\mathbf{f}$.

The algebra $'\mathbf{f}$ carries a $\Q(q)$-bilinear form determined by
the conditions
\begin{itemize}
\item $(1,1)=1$,
\item $(\theta_i, \theta_j) = \delta_{i,j} (1-q_i^2)^{-1} $ for $i,j\in I$,
\item $(x, y y') = (r(x), y \otimes y')$ for $x,y, y' \in {}'\mathbf{f}$,
\item $(x x', y) = (x \otimes x', r(y))$ for $x, x', y\in {}'\mathbf{f}$.
\end{itemize}
The bilinear form $(,)$ is symmetric. Its radical $\mf{I}$ is a two-sided ideal
of $'\mathbf{f}$. The form $(,)$ descends to a nondegenerate form on the
associative $\Q(q)$-algebra $\mathbf{f} = {}'\mathbf{f}/\mf{I}$.

\begin{thm} \label{thm_quantumGK} The ideal $\mf{I}$ is generated by the elements
$$ \sum_{r+s=a_{ij}+1} (-1)^r \theta_i^{(r)} \theta_j \theta_i^{(s)} $$
over all $i,j\in I, $ $i\not= j$.
\end{thm}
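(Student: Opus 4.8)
The plan is to prove the two inclusions $\mf{J}\subseteq\mf{I}$ and $\mf{I}\subseteq\mf{J}$ separately, where $\mf{J}$ denotes the two-sided ideal of $\primef$ generated by the quantum Serre elements $\Theta_{ij}:=\sum_{r+s=a_{ij}+1}(-1)^r\theta_i^{(r)}\theta_j\theta_i^{(s)}$, $i\neq j$, and $\mf{I}$ is the radical of the form $(,)$.

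For $\mf{J}\subseteq\mf{I}$, I would first set up the twisted derivations extracted from the comultiplication $r$: linear maps ${}_ir,\,r_i\colon\primef_\nu\to\primef_{\nu-i}$ determined by ${}_ir(1)=r_i(1)=0$, ${}_ir(\theta_j)=r_i(\theta_j)=\delta_{ij}$, and the $q$-twisted Leibniz rules forced by the multiplication rule on $\primef\otimes\primef$. The defining properties of $(,)$ then yield adjunctions of the shape $(\theta_i x,y)=(\theta_i,\theta_i)\,(x,{}_ir(y))$ and its right-handed mirror, with $(\theta_i,\theta_i)=(1-q_i^2)^{-1}\neq 0$. It follows that a homogeneous $y$ of nonzero weight lies in $\mf{I}$ as soon as ${}_ir(y)=0$ for every $i$, so it suffices to verify ${}_kr(\Theta_{ij})=0$ for all $k$. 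This is automatic for $k\notin\{i,j\}$; for $k\in\{i,j\}$, using that ${}_ir(\theta_i^{(a)})$ is proportional to $\theta_i^{(a-1)}$ together with the Leibniz rule reduces the claim to the vanishing of an alternating sum of Gaussian binomial coefficients with prescribed powers of $q_i$ --- an instance of the $q$-binomial theorem in which one factor $(1-q_i^{2m})$ vanishes. Hence $\Theta_{ij}\in\mf{I}$, and since $\mf{I}$ is a two-sided ideal, $\mf{J}\subseteq\mf{I}$; in particular the form descends to $A:=\primef/\mf{J}$.

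For $\mf{I}\subseteq\mf{J}$, the statement to prove is that the induced form is \emph{nondegenerate} on each weight space of $A$. Here I would import the representation theory of $\U$: $A$ is by construction a copy of $\Um$, it carries the canonical skew-Hopf pairing with $\Up$ coming from the quasi-$R$-matrix (equivalently, from the Drinfeld double), and one checks this pairing matches, under the normalization $(\theta_i,\theta_j)=\delta_{ij}(1-q_i^2)^{-1}$, the form inherited from $\primef$. Nondegeneracy then follows from the triangular decomposition $\U\cong\Um\otimes\mathbf{U}^0_q\otimes\Up$ together with the faithfulness of the $\Um$-action on a Verma module of generic highest weight, whose contravariant (Shapovalov) form is nondegenerate; specializing this family of forms and extracting the leading term recovers $(,)$ on $A$. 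In the symmetric case one could instead argue geometrically, realizing $A$ via Lusztig's perverse sheaves on the representation varieties of the quiver, where nondegeneracy of the induced Euler-type form is transparent. Either route yields $\mf{I}/\mf{J}=0$.

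The hard part is precisely the nondegeneracy input in the previous paragraph: it draws on the full construction of $\U$ and its integrable and Verma modules over an arbitrary symmetrizable Cartan datum --- or, alternatively, on Kashiwara's simultaneous ``grand loop'' induction, which establishes the quantum Serre relations, the PBW and crystal bases, and the nondegeneracy of $(,)$ all at once. By contrast the inclusion $\mf{J}\subseteq\mf{I}$, and in particular the twisted-derivation calculus and the accompanying $q$-binomial identities, is routine bookkeeping once the definitions are unwound.
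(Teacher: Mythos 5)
The paper does not actually prove this theorem. Immediately after the statement it remarks that ``for a general Cartan datum, the only known proof of this theorem requires Lusztig's geometric realization of $\mathbf{f}$ via perverse sheaves,'' citing Lusztig's Theorem 33.1.3, and then notes that less sophisticated proofs exist when the Cartan datum is finite. So the relevant comparison is between your sketch and a black-box citation.

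Your inclusion $\mf{J}\subseteq\mf{I}$ via the twisted derivations ${}_ir,\ r_i$ and the $q$-binomial cancellation is the standard and correct computation; no complaints there. The trouble is in the other inclusion. You begin by asserting ``$A$ is by construction a copy of $\Um$,'' but that identification is precisely the content of the theorem. A priori all you have is a surjection $\pi\colon A=\primef/\mf{J}\twoheadrightarrow\Um$ (the image of $\theta_i\mapsto f_i$), and $\mf{I}=\mf{J}$ is equivalent to the injectivity of $\pi$. The Verma/Shapovalov route then says: the contravariant form on a generic Verma for $\U$ is nondegenerate, it ``matches'' the form on $A$, hence the form on $A$ is nondegenerate. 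But matching the two forms only gives $(\pi(x),\pi(y))_{\Um}=(x,y)_A$, and nondegeneracy on $\Um$ then shows only that $\ker\pi\subseteq\operatorname{rad}(,)_A=\mf{I}/\mf{J}$, which is tautologically true and does not force $\mf{I}/\mf{J}=0$. To close the gap you would need in addition a dimension count $\dim A_\nu=\dim(\Um)_\nu$ or an independent proof that $\pi$ is injective, and supplying either of those is exactly as hard as the theorem itself (it is where the Gabber-Kac content lives in the classical case as well, where faithfulness on a single generic Verma is likewise insufficient and one needs the existence of enough integrable modules). So, as written, your primary route is circular.

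Your two fallback routes are the real ones: Lusztig's geometric proof (what the paper cites) and Kashiwara's grand-loop induction, which establishes the Serre relations, the nondegeneracy of $(,)$, and the crystal lattice simultaneously for arbitrary symmetrizable Cartan data --- and which, incidentally, shows that the paper's claim that Lusztig's geometric argument is the ``only known'' general proof is somewhat overstated. But invoking either amounts to citing a deep external result, which is the same stance the paper takes; the sketch you give in between does not constitute an independent proof.
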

For a general Cartan datum, the only known proof of this theorem
requires Lusztig's geometric realization of $\mathbf{f}$ via
perverse sheaves. This proof is given in his
book~\cite[Theorem~33.1.3]{Lus4}. Less sophisticated proofs exist
when the Cartan datum is finite.

\begin{rem} \label{rem_nosmaller}
Theorem~\ref{thm_quantumGK} implies that $\mathbf{f}$ is the
quotient of $'\mathbf{f}$ by the quantum Serre relations
\begin{equation}\label{rels-serre}
 \sum_{r+s=a_{ij}+1} (-1)^r \theta_i^{(r)} \theta_j \theta_i^{(s)} =0.
\end{equation}
Furthermore, since $\mathbf{f}$ is
an $\N[I]$-graded quotient of a free algebra, it
also implies that there are no smaller degree relations in $\mathbf{f}$. In
particular, \eqref{rels-serre} can never hold for $r+s=c+1$  with
$c<a_{ij}$.
\end{rem}

Let $\U$ denote the quantum enveloping algebra of a symmetrizable
Kac-Moody algebra $\mf{g}$.  There is a pair of injective algebra
homomorphisms $\mathbf{f} \to \U$, which sends $\theta_i \mapsto
e_i$, respectively $\theta_i \mapsto f_i$. We denote the images of
these homomorphisms as $\mathbf{U}_q^+(\mf{g})$ and
$\mathbf{U}_q^{-}(\mf{g})$. Let $\cal{A} = \Z[q,q^{-1}]$. The
integral form of the algebra $\mathbf{f}$, denoted $\Af$, is the
$\Z[q,q^{-1}]$-subalgebra of $\mathbf{f}$ generated by the divided
powers $\theta_i^{(a)}$, over all $i\in I$ and $a\in \N$. We write
$\UmA$ for the corresponding integral form of the negative half of
the quantum enveloping algebra $\U$.  The algebra $\Af$ admits a
decomposition into weight spaces $\Af = \bigoplus_{\nu \in \N[I]}
\Af(\nu)$.

In the next section we introduce graded algebras $R(\nu)$ whose
Grothendieck ring was shown by Khovanov and Lauda to be isomorphic
to $\Af$ as bialgebras, see Theorem~\ref{thm_KL}.

%
\subsubsection{The definition of the algebra $\Rn{\nu}$} \label{sec_defnRnu}
%

Recall the definition from~\cite{KL,KL2} of the algebra $R$
associated to a Cartan datum. Let $\Bbbk$ be an algebraically closed
field (of arbitrary characteristic). The algebra $R$ is defined by
finite $\Bbbk$-linear combinations of braid--like diagrams in the
plane, where each strand is coloured by a vertex $i \in I$.  Strands
can intersect and can carry dots; however, triple intersections are
not allowed.  Diagrams are considered up to planar isotopy that do
not change the combinatorial type of the diagram. We recall the
local relations
\begin{eqnarray} \label{new_eq_UUzero}
   \xy   (0,0)*{\twocross{i}{j}}; \endxy
 & = & \left\{
\begin{array}{ccl}
  0 & \qquad & \text{if $i=j$, } \\ \\
  \xy (0,0)*{\sup{i}};  (8,0)*{\sup{j}};  \endxy
  & &
 \text{if $(\alpha_i, \alpha_j)=0$, }
  \\    \\
  \vcenter{ \xy
   (0,0)*{\supdot{i}};
   (8,0)*{\sup{j}};
   (-3.5,2)*{\scs a_{ij}};
  \endxy}
    \;\; + \;\;
   \vcenter{\xy  (8,0)*{\supdot{j}};(0,0)*{\sup{i}};
   (11.5,2)*{\scs a _{ji}};
   \endxy}
 & &
 \text{if $(\alpha_i, \alpha_j)\not= 0$}.
\end{array}
\right.
\end{eqnarray}

\begin{eqnarray}\label{new_eq_ijslide}
  \xy  (0,0)*{\dcrossul{i}{j}};  \endxy
 \quad  =
   \xy  (0,0)*{\dcrossdr{i}{j}};   \endxy
& \quad &
   \xy  (0,0)*{\dcrossur{i}{j}};  \endxy
 \quad = \;\;
   \xy  (0,0)*{\dcrossdl{i}{j}};  \endxy \qquad \text{for $i \neq j$}
\end{eqnarray}

\begin{eqnarray}        \label{new_eq_iislide1}
 \xy  (0,0)*{\dcrossul{i}{i}}; \endxy
    \quad - \quad
 \xy (0,0)*{\dcrossdr{i}{i}}; \endxy
  & = &
 \xy (-3,0)*{\sup{i}}; (3,0)*{\sup{i}}; \endxy \\      \label{eq_iislide2}
  \xy (0,0)*{\dcrossdl{i}{i}}; \endxy
 \quad - \quad
 \xy (0,0)*{\dcrossur{i}{i}}; \endxy
  & = &
 \xy (-3,0)*{\sup{i}}; (3,0)*{\sup{i}}; \endxy
\end{eqnarray}

\begin{eqnarray}      \label{new_eq_r3_easy}
\xy  (0,0)*{\linecrossL{i}{j}{k}}; \endxy
  &=&
\xy (0,0)*{\linecrossR{i}{j}{k}}; \endxy
 \qquad \text{unless $i=k$ and $(\alpha_i, \alpha_j)\not= 0$   \hspace{1in} }
\\
\xy (0,0)*{\linecrossL{i}{j}{i}}; \endxy
  &-&
\xy (0,0)*{\linecrossR{i}{j}{i}}; \endxy
 \quad = \quad
 \sum_{a=0}^{a_{ij}-1}
 \xy
 (-9,0)*{\updot{i}};
 (-6.5,3)*{\scs a};
 (0,0)*{\up{j}};
 (9,0)*{\updot{i}};
 (17,3)*{\scs a_{ij}-1-a};\endxy
 \qquad \text{if $(\alpha_i, \alpha_j)\neq 0 $} \nn\\ \label{eq_r3_hard}
\end{eqnarray}

Left multiplication is given by concatenating a diagram on top of another diagrams when the corresponding endpoints have the same colours, and is defined to be zero
otherwise. The algebra is graded where generators are defined to
have degrees
\begin{equation}
  \deg\left( \xy  (-3,0)*{\supdot{i}}; \endxy \right) = (\alpha_i,\alpha_i) , \qquad \deg\left(  \xy
  (0,0)*{\dcross{i}{j}};  \endxy \right) = - (\alpha_i, \alpha_j).
\end{equation}

For $\nu = \sum_{i \in I} \nu_i \cdot i \in \N[I]$ let $\seq(\nu)$
be the set of all sequences of vertices $\ii = i_1 \ldots i_m$ where
$i_r \in I$ for each $r$ and vertex $i$ appears $\nu_i$ times in the
sequence.  The length $m$ of the sequence is equal to $|\nu|=\sum_{i
\in I} \nu_i$. It is sometimes convenient to identify $\nu = \sum_{i
\in I} \nu_i \cdot i \in \N[I]$ as  $\nu \in \sum_{i \in I} \nu_i
\alpha_i \in Q_+=\oplus_{i \in I} \Z_{\geq 0} \alpha_i$. We denote $Q_{-}=-Q_{+}=\oplus_{i \in I} \Z_{\leq 0} \alpha_i$. The algebra
$R$ has a decomposition
\begin{equation}
R = \bigoplus_{\nu \in \N[I]} \Rn{\nu}
\end{equation}
where $\Rn{\nu}$ is the subalgebra generated by diagrams that
contain $\nu_i$ strands coloured $i$.

To convert from graphical to algebraic notation write
\begin{eqnarray}
  1_{\ii} \quad := \quad
  \xy (0,0)*{\sup{}};  (0,-6)*{\scs i_1};  \endxy
    \dots
    \xy (0,0)*{\sup{}};  (0,-6)*{\scs i_k}; \endxy
   \dots
    \xy (0,0)*{\sup{}};  (1,-6)*{\scs i_m}; \endxy
\end{eqnarray}
for $\ii=i_1 i_2\dots i_m\in \seq(\nu)$.  The elements $1_{\ii}$ are idempotents in the ring $\Rn{\nu}$ and when $I$ is finite, $1_{\nu}\in \Rn{\nu}$ is given by $1_{\nu}=\sum_{i \in \seq(\nu)}1_{\ii}$. For $1\le r \le m$ we denote
\begin{eqnarray} \label{eq_dot_xki}
  x_{r,\ii} \quad := \quad
  \xy (0,0)*{\sup{}};  (0,-6)*{\scs i_1};  \endxy
    \dots
    \xy (0,0)*{\supdot{}};  (0,-6)*{\scs i_r}; \endxy
   \dots
    \xy (0,0)*{\sup{}};  (1,-6)*{\scs i_m}; \endxy
\end{eqnarray}
with the dot positioned on the $r$-th strand counting from the left, and
\begin{eqnarray} \label{eq_crossing_delta}
  \psi_{r,\ii} \quad := \quad
  \xy  (0,0)*{\sup{}};  (0,-6)*{\scs i_1};   \endxy
    \dots
  \xy (0,0)*{\dcross{i_r}{\; \; i_{r+1}}}; \endxy
   \dots
  \xy (0,0)*{\sup{}}; (1,-6)*{\scs i_m}; \endxy
\end{eqnarray}
The algebra $\Rn{\nu}$ decomposes as a vector space
\begin{equation}
\Rn{\nu} = \bigoplus_{\ii,\jj \in
\seq(\nu)} 1_{\jj} \Rn{\nu} 1_{\ii}
\end{equation}
where $1_{\jj}\Rn{\nu}1_{\ii}$ is the $\Bbbk$-vector space of all
linear combinations of diagrams with sequence $\ii$ at the bottom and sequence
$\jj$ at the top modulo the above relations.

The symmetric group $S_m$ acts on $\seq(\nu)$, $m=|\nu|$ by
permutations. Transposition $s_r=(r,r+1)$ switches entries $i_r,
i_{r+1}$ of $\ii$.  Thus, $\psi_{r,\ii} \in
{1_{s_r(\ii)}\Rn{\nu}1_{\ii}}$. For each $w\in S_m$ fix once and for
all a reduced expression $\hat{w} = s_{w_1}s_{w_2}\dots s_{w_t}$.
Given $w \in S_n$ we convert its reduced expression $\hat{w}$ into
an element of $1_{w(\ii)}\Rn{\nu}1_{\ii}$ denoted
$\psi_{\hat{w},\ii}=\psi_{w_1,s_{w_2}\cdots s_{w_t}(\ii)}\cdots
\psi_{w_{t-1},s_{w_t}(\ii)} \psi_{w_t,\ii}$.  To simplify notation
we introduce elements
\begin{equation}
  x_r := \sum_{\ii \in \seq(\nu)}x_{r,\ii}, \qquad \quad \psi_{\hat{w}} = \sum_{\ii \in \seq(\nu)} \psi_{\hat{w},\ii}
\end{equation}
so that $x_r1_{\ii} = 1_{\ii}x_r= x_{r,\ii}$ and
$\psi_{\hat{w}}1_{\ii} = 1_{w(\ii)}\psi_{\hat{w}}=
\psi_{\hat{w},\ii}$. This allows us to write the definition of the
algebra $\Rn{\nu}$ as follows:

For $\nu \in \N[I]$ with $|\nu|=m$, let $\Rn{\nu}$ denote the associative, $\Bbbk$-algebra on generators
\begin{align}
 & 1_{\ii}      & \text{for $\ii \in \seq(\nu)$} \\
 & x_{r}   & \text{for $1 \leq r \leq m$} \\
 & \psi_{r}     & \text{for $1 \leq r \leq m-1$}
\end{align}
subject to the following relations for $\ii$, $\jj \in \seq(\nu)$:
\begin{align}
  & 1_{\ii}1_{\jj} = \delta_{\ii,\jj}1_{\ii}, \\
  & x_{r}1_{\ii} = 1_{\ii} x_{r}, \\
  &\psi_{r}1_{\ii}  = 1_{s_r(\ii)} \psi_{r}, \\
  & x_{r} x_{t} = x_{t} x_{r}, \\
  & \psi_{r}\psi_{t} = \psi_{t}\psi_{r} \qquad \text{if $|r-t|>1$}, \\
  &\psi_{r}\psi_{r}1_{\ii} =
  \left\{\begin{array}{lll} \label{relation_quadratic}
    0 & \quad &\text{if $i_r = i_{r+1}$} \\
   1_{\ii} & & \text{if $(\alpha_{i_r},\alpha_{i_{r+1}}) = 0$} \\
   \left(x_{r}^{-\langle i_r,i_{r+1}\rangle} + x_{r+1}^{-\langle i_{r+1}, i_{r}\rangle} \right)1_{\ii}  & & \text{if $(\alpha_{i_r},\alpha_{i_{r+1}}) \neq 0$ and $i_r \neq i_{r+1}$,}
  \end{array} \right.
   \\
   \label{relation_cubic}
& \left( \psi_{r}\psi_{r+1}\psi_{r}
 -
 \psi_{r+1} \psi_{r} \psi_{r+1}\right)1_{\ii}= \nn \\
 & \qquad = \left\{
 \begin{array}{cll}
   \xsum{t=0}{ -\langle i_r,i_{r+1}\rangle -1}
   x_{r}^{t} x_{r+2}^{-\langle i_r,i_{r+1}\rangle-1-t} 1_{\ii}
 & \quad & \text{if $i_r= i_{r+2}$ and $(\alpha_{i_r},\alpha_{i_{r+1}})\neq 0$}\\
   0 & & \text{otherwise,}
 \end{array}
 \right. \\
   \label{relation_xpsi}
 & \left(\psi_{r} x_{t} - x_{s_r(t)}\psi_{r}\right)1_{\ii} =
 \left\{
 \begin{array}{cll}
   1_{\ii} & & \text{if $t=r$ and $i_r=i_{r+1}$}\\
   -1_{\ii} & &  \text{if $t=r+1$ and $i_r=i_{r+1}$} \\
   0 & & \text{otherwise.}
 \end{array}
 \right.
\end{align}

\begin{rem} \label{rem_basis}
For $\ii, \jj\in \rm{Seq}(\nu)$ let ${_{\jj}S_{\ii}}$ be the subset of $S_m$
consisting of permutations $w$ that take $\ii$ to $\jj$ via the standard action
of permutations on sequences, defined above.
Denote the subset $\{\hat{w}\}_{w\in {_{\jj}S_{\ii}}}$
of $1_{\jj}R1_{\ii}$ by ${_{\jj}\widehat{S}_{\ii}}$.
It was shown in \cite{KL,KL2} that the vector space $1_{\jj}\Rn{\nu}1_{\ii}$ has a basis consisting of elements of the form
\begin{equation}
  \{ \psi_{\hat{w}} \cdot x_{1}^{a_1} \cdots  x_{m}^{a_m}1_{\ii}
\mid \hat{w} \in{_{\jj}\widehat{S}_{\ii}}, \;\; a_{r} \in \Z_{\geq 0}\}.
\end{equation}
\end{rem}

Rouquier has defined a generalization of the algebras $R$, where the
relations depend on Hermitian matrices~\cite{Ro}. The results of this paper will extend to these algebras whenever the Hermitian matrices give rise to graded algebras $R$.

%
\subsubsection{The involution $\sigma$} \label{sec_involution}
%

Flipping a diagram about a vertical axis and simultaneously taking
$$
 \xy    (0,0)*{\dcross{i}{i}}; \endxy \quad {\rm to} \quad
  -\xy (0,0)*{\dcross{i}{i}}; \endxy
$$
(in other words, multiplying the diagram by $(-1)^s$ where $s$ is the number of
times equally labelled strands intersect) is an involution $\sigma = \sigma_\nu$ of $\Rn{\nu}$. Let $w_0$ denote the longest element of $S_{|\nu|}$. We can specify $\sigma$ algebraically as follows:
\begin{eqnarray}
  \sigma \maps \Rn{\nu} & \to & \Rn{\nu}
   \\ 1_{\ii} &\mapsto& 1_{w_0(\ii)} \nn \\
      x_{r} & \mapsto& x_{|\nu|+1-r} \nn \\
 \psi_{r} 1_{\ii} & \mapsto& (-1)^{ \delta_{i_r i_{r+1}}}  \psi_{|\nu|-r} 1_{w_0(\ii)}. \nn
\end{eqnarray}
Given an $\Rnu$-module  $M$, we let
$\sigma^{\ast}M$ denote the $\Rnu$-module whose underlying set is $M$ but with
twisted action $r \cdot u = \sigma(r) u$.

%
\subsubsection{Graded characters} \label{sec_char}
%

Define the graded character $\chr(M)$ of a graded finitely-generated $\Rn{\nu}$-module $M$
as
$$ \chr(M) = \sum_{\ii\in \seq(\nu)} \gdim (1_{\ii} M) \cdot \ii.$$
The character is an element of the free $\Z((q))$-module with the
basis $\seq(\nu)$; when $M$ is finite dimensional, $\chr(M)$ is an
element of the free $\Z[q,q^{-1}]$-module with basis $\seq(\nu)$.

%
\section{Functors on the module category} \label{sec_functors}
%

%
\subsection{Categories of graded modules}
%

We form the direct sum
$$ R = \bigoplus_{\nu\in \N[I]} \Rn{\nu}.$$
This is a non-unital ring.  However, $R$ is an idempotented ring with the elements $1_\nu \in \Rnu$ giving a system of mutually orthogonal idempotents. Observe that the appropriate notion of unital module $M$ for idempotented rings is the requirement that $M = \bigoplus_{\nu \in \N[I]} 1_{\nu}M$.

Let $\Rn{\nu}\dmod$ be the category of
finitely-generated graded left $\Rn{\nu}$-modules, $\Rn{\nu}\fmod$
be the category of finite dimensional graded $\Rn{\nu}$-modules, and
$\Rn{\nu}\pmod$ be the category of projective objects in
$\Rn{\nu}\dmod$. The morphisms in each of these three categories are
grading-preserving module homomorphisms.

By various categories of $R$-modules we will
mean direct sums of corresponding categories of $\Rn{\nu}$-modules:
\begin{eqnarray*}
R\dmod & \define &  \bigoplus_{\nu\in \N[I]} \Rn{\nu}\dmod , \\
R\fmod & \define & \bigoplus_{\nu\in \N[I]} \Rn{\nu}\fmod , \\
R\pmod & \define & \bigoplus_{\nu\in \N[I]} \Rn{\nu}\pmod.
\end{eqnarray*}
By a simple $R(\nu)$-module we mean a simple object in the category
$R(\nu)\dmod$. In this paper we will be primarily concerned with the
category of finite dimensional $R(\nu)$-modules. Note that this
category contains all of the simples.  Henceforth, by an
$R(\nu)$-module we will mean a finite dimensional graded
$R(\nu)$-module, unless we say otherwise. We will denote the zero
module by $\0$.

For any two $R(\nu)$-modules $M$, $N$ denote by $\Hom(M,N)$ or
$\Hom_{R(\nu)}(M,N)$ the $\Bbbk$-vector space of degree preserving
homomorphisms, and by $\Hom(M\{r\},N)= \Hom(M,N\{-r\})$ the space of
homogeneous homomorphisms of degree $r$. Here $N\{r\}$ denotes $N$
with the grading shifted up by $r$, so that $\chr(N\{r\}) = q^r
\chr(N)$.  Then we write
\begin{equation}
  \HOM(M,N) := \bigoplus_{r \in \Z} \Hom(M,N\{r\}),
\end{equation}
for the $\Z$-graded $\Bbbk$-vector space of all $R(\nu)$-module morphisms.

Though it is essential to work with the degree preserving morphisms
to get the $\Z[q,q^{-1}]$-module structure for the categorification
theorems in \cite{KL,KL2}, for our purposes it will often be
convenient to work with degree homogenous morphisms, but not
necessarily degree preserving, in the various categories of graded
modules introduced above.  Since any homogenous morphism can be
interpreted as a degree preserving morphism by shifting the grading
on the source or target, all results stated using homogeneous
morphisms can be recast as degree zero morphisms for an appropriate
shift on the source or target. For this reason, throughout the paper we define $M \cong N$ to mean there exists $r \in \Z$ such that $M$ is isomorphic to
$N\{r\}$ as graded modules, and all isomorphisms will implicitly mean isomorphic up to such a grading shift unless otherwise specified.

%
\subsection{Induction and Restriction functors}
\label{sec_indres}
%

There is an inclusion of graded algebras
$$\iota_{\nu,\nu'}\ : \  \Rn{\nu}\otimes \Rn{\nu'} \hookrightarrow \Rn{\nu+\nu'}$$
given graphically by putting the diagrams next to each other. It takes the
idempotent $1_{\ii}\otimes 1_{\jj}$ to $1_{\ii\jj}$ and the unit
element $1_{\nu}\otimes 1_{\nu'}$ to an idempotent of $R(\nu+\nu')$ denoted $1_{\nu,\nu'}$. This inclusion gives rise to restriction and induction functors denoted by $\Res_{\nu,\nu'}$ and $\Ind_{\nu,\nu'}$, respectively.  When it is clear from the context, or when no confusion is likely to arise, we often simplify notation and write $\Res$ and $\Ind$.

We can also consider these notions for any tuple $\unu =
(\nu^{(1)},\nu^{(2)},\dots, \nu^{(k)})$ and sometimes refer to the
image $\Rn{\unu} \define {\mathrm {Im}}\, \iota_{\unu}  \subseteq
\Rn{\nu^{(1)}+\cdots+ \nu^{(k)}}$ as a parabolic subalgebra. This
subalgebra has identity $1_{\unu}$. Let $\mu = \nu^{(1)}+\cdots+
\nu^{(k)}$, $m = \sum_r | \nu^{(r)}|$, and $P= P_{\unu}$ be the
composition $(|\nu^{(1)}|, \ldots, |\nu^{(k)}|)$ of $m$ so that
$S_P$ is the corresponding parabolic subgroup of $S_m$. It follows
from Remark~\ref{rem_basis} that $\Rn{\mu} 1_{\unu}$ is a free right
$\Rn{\unu}$-module with basis $\{ \psi_{\hat{w}} 1_{\unu} \mid w \in
S_m / S_P \}$ and $1_{\unu} \Rn{\mu}$ is a free left
$\Rn{\unu}$-module with basis
 $\{ 1_{\unu} \psi_{\hat{w}}  \mid w \in  S_P \backslash  S_m  \}$.
By abuse of notation we will write $S_m / S_P$ to denote the
minimal length left coset representatives, i.e$.$ $\{ w \in S_m \mid
\ell(wv) = \ell(w) + \ell(v) \, \forall \, v \in S_P \}$,
and $S_P \backslash   S_m$ for the minimal length right coset representatives.

\begin{rem}\label{rem_indbasis}
It is easy to see that if $M$ is an $\Rn{\unu}$-module with basis
${\mathcal U}$ consisting of weight vectors, then $\{ \psi_{\hat{w}}
\otimes u  \mid u \in {\mathcal U},  w \in S_m / S_P \}$ is a weight
basis of $\Ind_{\unu} M \define \Rn{\mu} \otimes_{\Rn{\unu}} M $
(where for each $w$ we fix just one reduced expression $\hat{w}$).
Note $\Rn{\mu} \otimes_{\Rn{\unu}} M = \Rn{\mu} 1_{\unu}
\otimes_{\Rn{\unu}} M$ since $\psi_{\hat{w}} 1_{\unu} \otimes u =
\psi_{\hat{w}} \otimes 1_{\unu} u = \psi_{\hat{w}} \otimes u$.

Likewise, $\Indc M \define \HOM_{\Rn{\unu}}(\Rn{\mu}, M)$, which is
discussed in detail in Section~\ref{sec_coind} below,  and has basis $\{
f_{{{w}}, u}  \mid u \in {\mathcal U},  w \in S_P \backslash   S_m
\}$ where $f_{{{w}}, u} ( h \psi_{\hat{v}}) = h u \, \delta_{w,v}$
for $h \in \Rn{\unu}$ and $v \in S_P \backslash   S_m$.
Note $\Hom_{\Rn{\unu}}(\Rn{\mu}, M) = \Hom_{\Rn{\unu}}(1_{\unu}
\Rn{\mu}, M)$ since for $f \in \Hom_{\Rn{\unu}}(1_{\unu} \Rn{\mu},
M)$, $t \in \Rn{\mu}$, if $1_{\ii} \not\in \Rn{\unu}$, i.e.
$1_{\unu} 1_{\ii} = 0$, then
$$f(1_{\ii} t) = 1_{\unu} f(1_{\ii} t) = f(1_{\unu} 1_{\ii} t) = f(0) = 0.$$
In other words, we can extend the domain of $f$ to $\Rn{\mu}$ by
setting $f$ to be $0$ on $1_{\ii} \Rn{\mu}$ when $1_{\ii} \not\in
\Rn{\unu}$. Likewise any $f \in \Hom_{\Rn{\unu}}(\Rn{\mu}, M)$ must
be 0 on the above set.
\end{rem}

One extremely important property of the functor $\Ind_{\unu} - \,
\define \Rn{\mu} \otimes_{\Rn{\unu}} -$ is that it is left adjoint
to restriction. In other words, there is a functorial isomorphism
\begin{equation} \label{frob}
\HOM_{\Rn{\mu}} (\Ind_{\unu}A, B) \cong \HOM_{\Rn{\unu}}(A,
\Res_{\unu} B)
\end{equation}
where $A$, $B$ are finite dimensional $\Rn{\unu}$- and
$\Rn{\mu}$-modules, respectively. This property is called
Frobenius reciprocity and we use it repeatedly, often for deducing information about characters.

A shuffle $\kk$ of a pair of sequences $\ii\in \seq(\nu)$, $\jj\in
\seq(\nu')$ is a sequence together with a choice of subsequence
isomorphic to $\ii$ such that $\jj$ is the complementary
subsequence. Shuffles of $\ii,\jj$ are in a bijection with the
minimal length left coset representatives of $S_{|\nu|}\times
S_{|\nu'|}$ in $S_{|\nu|+|\nu'|}$. We denote by $\deg(\ii,\jj,\kk)$
the degree of the diagram in $R(\nu+\nu')$ naturally associated to
the shuffle, see an example below.
\[
 \xy
 (0,0)*{\includegraphics[scale=0.5]{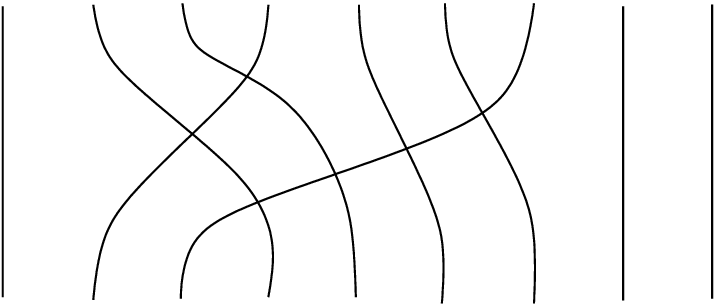}};
 (0,17)*{\overbrace{\hspace{2.5in}}};
 (-23,-17)*{\underbrace{\hspace{.75in}}};
 (11,-17)*{\underbrace{\hspace{1.6in}}};
 (0,21)*{\kk};(-23,-21)*{\ii};(11,-21)*{\jj};
 \endxy
\]
When the meaning is clear, we will also denote by $\kk$ the underlying sequence
of the shuffle $\kk$.

Given two functions $f$ and $g$ on sets $\seq(\nu)$ and $\seq(\nu')$, respectively,
with values in some commutative ring which contains $\Z[q,q^{-1}]$, we
define their (quantum) shuffle
product $f\shuffle g$ (see~\cite{Lec} and references therein)
as the function on $\seq(\nu+\nu')$ given by
$$ (f\shuffle g)(\kk) = \sum_{\ii,\jj} q^{\deg(\ii,\jj,\kk)} f(\ii) g(\jj),$$
the sum is over all ways to represent $\kk$ as a shuffle of $\ii$
and $\jj$. Given $M\in \Rn{\nu}\dmod$ and $N\in \Rn{\nu'}\dmod$ we
construct the $\Rnu \otimes \Rn{\nu'}$-module denoted by $M
\boxtimes N$  in the obvious way. It was shown in \cite{KL} that
$$\chr(\Ind_{\nu,\nu'}(M\boxtimes N) ) = \chr(M) \shuffle \chr(N).$$

A similar statement holds for characters of induced
$\Rn{\unu}$-modules by the transitivity of induction. This statement
can be seen as a special case of the Mackey formula which describes
a filtration on the restriction of an induced module (from one
parabolic to another).

More precisely, in the case of maximal parabolics, the Mackey
formula says the  graded $(\Rn{\nu}\otimes R(\nu'), R(\nu'')\otimes
R(\nu'''))$-bimodule ${1}_{\nu,\nu'}R 1_{\nu'',\nu'''}$ has a
filtration over all $\lambda\in \N[I]$ with subquotients
isomorphic to
the graded bimodules
$$  \big( {1}_{\nu}R 1_{\nu-\lambda,\lambda} \otimes
  {1}_{\nu'}R 1_{\nu'+\lambda-\nu''',\nu'''-\lambda} \big)
 \otimes_{R'}
 \big( {1}_{\nu-\lambda,\nu''+\lambda-\nu}R 1_{\nu''} \otimes
  {1}_{\lambda, \nu'''-\lambda}R 1_{\nu'''}\big) \{(-\lambda, \nu'+\lambda-\nu''')\},
$$
where $R'=R(\nu-\lambda)\otimes R(\lambda)\otimes
R(\nu'+\lambda-\nu''')\otimes R(\nu'''-\lambda)$, the bilinear form
$(\, , \, )$  is defined in Section~\ref{sec_Cartan}, and such that
every term above is in $\N[I]$. There is a natural generalization of
this statement to arbitrary parabolic subalgebras.

%
\subsection{Co-induction} \label{sec_coind}
%

In this section, we examine the right adjoint to restriction,
the co-induction functor denoted $\Indc$, and discuss the relationship between $\Ind$ and $\Indc$,
following the work of \cite{Vaz1}. Using the notation of the previous section, set $\Indc_{\Rn{\unu}} - := \HOM_{\Rn{\unu}}(\Rn{\mu},-)$ endowed with the
module structure $(r \odot f)(t)=f(tr)$ for $r,t\in \Rn{\mu}$, $f \in \Indc_{\Rn{\unu}}-$.
Now there is a functorial isomorphism
\begin{equation} \label{right_adjoint}
\HOM_{\Rn{\mu}} (B, \Indc_{\unu}A) \cong
\HOM_{\Rn{\unu}}(\Res_{\unu} B, A)
\end{equation}
where $A$, $B$ are finite dimensional modules.

Just as $w_0$ denotes the longest element of $S_m$, let $w_P \in S_P$ denote  the
longest element of the parabolic subgroup, with notation as above.
Let $y=w_Pw_0$ in the discussion below.  Note that $y$ is a minimal length right
coset representative for $S_P \backslash   S_m$ and corresponds to
 the ``longest shuffle".

Observe that for any $r$ such that $s_r \in S_P$,
$\ell(w_Ps_rw_P)=1=\ell(w_0s_rw_0)$  and further $$\ell(s_r y) = 1 +
\ell(y) = \ell(w_Ps_rw_Py)=\ell(yw_0s_rw_0)$$ as in fact
$$(w_Ps_rw_P)y= w_P s_r w_P w_P w_0=w_Pw_0w_0s_rw_0=y(w_0s_rw_0).$$

 Set
\begin{eqnarray}
 \sigma_{\unu} &:=& \sigma_{\nu^{(1)}} \otimes \sigma_{\nu^{(2)}} \otimes \dots \otimes \sigma_{\nu^{(k)}}
\end{eqnarray}
where $\sigma_{\nu} \maps R(\nu) \to R(\nu)$ is the involution defined in Section~\ref{sec_involution}.

When clear from context, let us just call $\sigma=\sigma_{\mu}$.  Then note,
$\sigma(1_{\jj})=1_{w_0(\jj)}$,
 $\sigma(x_{r})=x_{w_0(r)}$,
$\sigma(\psi_{r} 1_{\jj})=(-1)^{\delta_{j_r j_{r+1}}} \psi_{w_0s_rw_0 } 1_{w_0(\jj)}$ with similar equations
for $\sigma_{\unu}$, where $S_m$ acts on $\seq(\mu)$ in the usual fashion $w(i_1,\dots, i_m) = (i_{w^{-1}(1)}, \dots, i_{w^{-1}(m)})$.
%
In what follows, for bookkeeping purposes, we will write $u \in M$, but $\bar{u} \in
\sigma^{\ast}M$ so that the $\sigma$-twisted action can be described as $r \bar{u} = \overline{\sigma(r) u}$.

\begin{thm} \label{thm_coind}  \hfill
\begin{enumerate}
\item  \label{thm_coindp1} Let $M$ be a finite dimensional $\Rn{\unu}$-module. Then
$$\Ind_{\unu}^{\mu} M  \cong
\sigma^*_{\mu}(\Indc_{\unu}^{\mu}(\sigma^*_{\unu}M))\{\deg(y)\}$$
as graded modules.
\item \label{thm_coindp2}
Let $A$ be a finite dimensional $\Rn{\nu}$-module and $B$ a finite
dimensional $\Rn{\eta}$-module. Then there is an isomorphism $$\Ind_{\nu,\eta}^{\nu+\eta} A
\boxtimes B \cong \Indc_{\eta,\nu}^{\eta+\nu} B \boxtimes A.$$
\end{enumerate}
\end{thm}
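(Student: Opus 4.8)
The plan is to establish part~\eqref{thm_coindp1} first, since part~\eqref{thm_coindp2} will follow from it together with the known fact (from \cite{KL}) that $\Ind$ is commutative up to a predictable grading shift. For part~\eqref{thm_coindp1}, the strategy is to exhibit an explicit homogeneous isomorphism
$$
\Ind_{\unu}^{\mu} M \;=\; \Rn{\mu} \otimes_{\Rn{\unu}} M \;\;\lra\;\; \sigma^*_{\mu}\bigl(\Indc_{\unu}^{\mu}(\sigma_{\unu}M)\bigr)\{\deg(y)\}
$$
where $\Indc_{\unu}^{\mu}(\sigma_{\unu}M) = \HOM_{\Rn{\unu}}(\Rn{\mu}, \sigma_{\unu}M)$. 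First I would use the weight bases recorded in Remark~\ref{rem_indbasis}: the left-hand side has basis $\{\psi_{\hat{w}} \otimes u \mid w \in S_m/S_P,\ u \in \mathcal{U}\}$, while $\Indc_{\unu}^{\mu}(\sigma_{\unu}M)$ has basis $\{f_{w,\bar u} \mid w \in S_P\backslash S_m,\ u \in \mathcal{U}\}$ with $f_{w,\bar u}(h\psi_{\hat v}) = h\bar u\,\delta_{w,v}$. The natural candidate map sends $\psi_{\hat w}\otimes u$ to (a scalar multiple of) $f_{w',\overline{u'}}$ where $w' = w_P w w_0$ reindexes minimal left coset reps of $S_P$ into minimal right coset reps; the element $y = w_P w_0$ is precisely the ``longest shuffle'' that makes this reindexing length-additive, which is why the grading shift is $\deg(y)$. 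I would check this is a well-defined $\Rn{\mu}$-module map — here one must unwind how left multiplication by $\Rn{\mu}$ on $\psi_{\hat w}\otimes u$ corresponds, after applying $\sigma_\mu$, to the $\odot$-action $(r\odot f)(t) = f(tr)$ on $\HOM_{\Rn{\unu}}(\Rn{\mu},-)$; the identities $\sigma(\psi_r) = \psi_{w_0 s_r w_0}$, $\sigma(1_{\jj}) = 1_{w_0(\jj)}$ and the coset computation $(w_Ps_rw_P)y = y(w_0s_rw_0)$ displayed just before the theorem are exactly the combinatorial inputs needed to make the two actions match up. Finally I would observe the map sends a basis to a basis (bijectively on the index set $S_m/S_P \leftrightarrow S_P\backslash S_m$ via $w \mapsto w_P w w_0$), hence is an isomorphism of vector spaces, and being an $\Rn{\mu}$-module map it is an isomorphism of modules; tracking degrees of $\psi_{\hat w}$ versus $\psi_{\hat{w'}}$ and using $\deg(\psi_{\hat w}) + \deg(y) = \deg(\psi_{\hat{w'}})$ pins down the shift.

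For part~\eqref{thm_coindp2}, I would specialize part~\eqref{thm_coindp1} to the two-term tuple $\unu = (\eta,\nu)$ applied to the module $B\boxtimes A$, giving
$$
\Ind_{\eta,\nu}^{\eta+\nu} B\boxtimes A \;\cong\; \sigma^*_{\eta+\nu}\bigl(\Indc_{\eta,\nu}^{\eta+\nu}(\sigma_\eta B \boxtimes \sigma_\nu A)\bigr)\{\deg(y)\},
$$
and separately invoke the result of \cite{KL} that $\sigma^*$ intertwines induction with the opposite-order induction, i.e.\ $\sigma^*_{\eta+\nu}\Ind_{\eta,\nu}(X\boxtimes Y) \cong \Ind_{\nu,\eta}(\sigma^*_\nu Y \boxtimes \sigma^*_\eta X)$ up to a grading shift governed by the bilinear form (this is where $\deg(y) = (\nu,\eta)$ or the analogous quantity enters), together with $\sigma_\nu^*\sigma_\nu M \cong M$. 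Composing these and simplifying the shifts — which should cancel to leave only a homogeneous, not degree-preserving, isomorphism as asserted — yields $\Ind_{\nu,\eta}^{\nu+\eta} A\boxtimes B \cong \Indc_{\eta,\nu}^{\eta+\nu} B\boxtimes A$. Alternatively, and perhaps more cleanly, part~\eqref{thm_coindp2} is just the $k=1$, single-module-factor reading of part~\eqref{thm_coindp1} after rewriting $\Ind A\boxtimes B$ via commutativity of $\boxtimes$-induction; I would present whichever bookkeeping is shorter.

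The main obstacle I anticipate is the well-definedness and module-map verification in part~\eqref{thm_coindp1}: one must carefully reconcile three separate conventions — the fixed choice of reduced expressions $\hat w$ (changing a reduced word can alter $\psi_{\hat w}$ by lower terms, so one needs the coset reps and their reduced words chosen compatibly so that $\psi_{\hat w} 1_{\unu}$ genuinely maps to $f$ evaluated against $\psi_{\hat{v}}$ without correction terms), the $\sigma$-twist on both the parabolic and the big algebra, and the $\odot$-action versus ordinary left multiplication. Getting the scalar and the degree shift exactly right (rather than off by a sign or by a term coming from the relation \eqref{eq_r3_hard}) is the delicate part; the cleanest route is probably to first prove everything at the level of the distinguished ``longest shuffle'' element $y$, check the generators $x_r, \psi_r$ of $\Rn{\mu}$ act correctly on the single basis vector $\psi_{\hat y}\otimes u$, and then propagate to all of $S_m/S_P$ by the length-additivity $\ell(s_r y) = 1 + \ell(y)$ noted in the text. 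Everything else — finite-dimensionality hypotheses, freeness of $\Rn{\mu}$ over $\Rn{\unu}$ on both sides from Remark~\ref{rem_basis}, and the final counting argument — is routine once the map is correctly pinned down.
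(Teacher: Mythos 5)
Your primary route for part~\eqref{thm_coindp1} — define a map on the whole weight basis $\{\psi_{\hat w}\otimes u\}$ by sending each basis vector to a scalar multiple of a single bump function $f_{w',\overline{u'}}$ and then check it is a module map — will hit exactly the wall you flag: the $\odot$-action of a generator $\psi_r$ on a bump function $f_{w,\bar u}$ produces lower-order terms (a triangular combination of other $f_{v,\bar u'}$'s, not a pure bump function), so a map prescribed to be ``basis vector $\mapsto$ basis vector'' does not intertwine the module actions and is not well-defined independently of the choice of reduced expressions. In other words, the conclusion ``hence is an isomorphism of vector spaces, and being an $\Rn{\mu}$-module map \dots'' begs the question: the map you wrote down is a linear bijection but there is no reason it respects the module structure, and in fact it does not. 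The ``cleanest route'' you then propose — pin down the image of the single generating vector $\psi_{\hat y}\otimes u$ (equivalently, of $1\otimes u$), verify the generators $1_{\jj}$, $x_r$, $\psi_r$ with $s_r\in S_P$ act compatibly on it, and propagate — is precisely what the paper does, in the form of Frobenius reciprocity: it constructs an $\Rn{\unu}$-map $F\colon M\to \Res_{\unu}^{\mu}\bigl(\sigma_\mu^*\Indc_{\unu}^\mu(\sigma_{\unu}^*M)\bigr)$ sending $u\mapsto\overline{f_u}$ (the bump function at $y$), checks the three cases for the parabolic generators exactly as you anticipate using $\sigma(\psi_r)=\psi_{w_0 s_r w_0}$, $(w_Ps_rw_P)y=y(w_0s_rw_0)$, and the length-additivity of $y$, then lets adjunction produce $\cal F\colon\Ind_{\unu}^\mu M\to\sigma_\mu^*\Indc_{\unu}^\mu(\sigma_{\unu}^*M)$ automatically. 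Surjectivity of $\cal F$ is then shown by the triangularity you mention (the $\psi_{\hat v}\odot\overline{f_{y,u}}$ are upper-triangular against $\{\overline{f_{w,u'}}\}$, hence span), and injectivity follows from equality of dimensions. This detour through adjunction is what lets the triangular correction terms be harmless.

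For part~\eqref{thm_coindp2}, both you and the paper deduce it from part~\eqref{thm_coindp1} combined with an order-reversal property of $\sigma$, so the strategies agree. The paper's implicit route is slightly more direct: apply~\eqref{thm_coindp1} with $\unu=(\nu,\eta)$, $M=A\boxtimes B$, and observe that $\sigma_\mu^*\Indc_{\nu,\eta}(\sigma_\nu^*A\boxtimes\sigma_\eta^*B)\cong\Indc_{\eta,\nu}(B\boxtimes A)$ because $\sigma_\mu$ carries the parabolic $\Rn{\nu}\otimes\Rn{\eta}$ to $\Rn{\eta}\otimes\Rn{\nu}$ (composed with the factor-wise $\sigma$'s), and $\sigma^*\sigma^*$ is the identity. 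Your version instead invokes the intertwining of $\sigma^*$ with ordinary induction; either works, yours just takes one extra hop.
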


\begin{proof}
We first note that \eqref{thm_coindp2} follows from a special case
of \eqref{thm_coindp1}. The appropriate degree shift to make it an isomorphism of graded modules
is thus $-(\eta, \nu)$.  To prove \eqref{thm_coindp1}, we first
construct a $\Rn{\unu}$-module map
\begin{equation}
  \xymatrix{M \ar[r]^-F &   \Res_{\unu}^{\mu}(\sigma_{\mu}^* \Indc_{\unu}^{\mu}(\sigma_{\unu}^*M))}
\end{equation}
with $\deg(F)=-\deg(y)$ and then the induced map
\begin{equation}
  \xymatrix{ \Ind_{\unu}^{\mu}M \ar[r]^-{\cal{F}} &   \sigma_{\mu}^{\ast}\Indc_{\unu}^{\mu}
(\sigma_{\unu}^*M))}
\end{equation}
also has $\deg(\cal{F})=-\deg(y)$ and surjective as the image of $F$
generates the target over $\Rn{\mu}$.  Since the two modules in
question have the same dimension, they are isomorphic.

Given $u \in M$
define $f_u \in \HOM_{\Rn{\unu}}(\Rn{\mu}, \sigma_{\unu}^{\ast}M)$ by
\begin{equation}
  f_u( \psi_{\hat{w}}) =
   \bar{u} \delta_{w,y}
\end{equation}
where $w \in {S_P}\backslash S_m$ ranges over the minimal length
right coset representatives, $\hat{w}$ is a fixed reduced
expression, and $y  = w_P w_0$.
Observe that $\deg(f_u)=\deg(u)-\deg(y)$. We extend $f_u$ to an
$\Rn{\unu}$-map by declaring $f_u(h\psi_{\hat{w}})=h
f_u(\psi_{\hat{w}})$ for $h \in \Rn{\unu}$ which is viable by
Remark~\ref{rem_indbasis}. Now we define
\begin{eqnarray}
  F \maps M &\to& \sigma_{\mu}^{\ast} \Indc_{\unu}^{\mu}(\sigma_{\unu}^*M) \nn \\
 u &\mapsto& \overline{f_u}
\end{eqnarray}
and check it is an $\Rn{\unu}$-map.  This map is homogeneous with
$\deg(F)=-\deg(y)$. Note that $f_{u + u'} = f_u + f_{u'}$ so it
suffices to consider only degree homogeneous weight vectors $u \in
M$, i.e.\ there exists $\ii$ such that $1_{{\ii}} \bar{u} = \bar{u}$
(and so $1_{w_P(\ii)} u=u$). In this case $f_u( 1_{\jj}
\psi_{\hat{w}}) = \bar{u} \delta_{w,y} \delta_{\ii,\jj}$, and this
holds regardless of whether $1_{\jj} \in \Rn{\unu}$ by
Remark~\ref{rem_indbasis}. In fact, by abuse of notation, we may
write $1_{\jj} \bar{u} = \bar{u} \delta_{\ii,\jj}$ even when
$1_{\jj} \not\in \Rn{\unu}$.

The following three computations show that $F(hu)= h \odot F(u)$ for $h=1_{\jj}$, $h=x_{r}$ for
all $r$, and $h=\psi_{r}1_{\jj}$ for $r$ such that $s_r \in S_P$  and $\jj$ such that $1_{\jj} \in R(\unu)$. These computations show that that $F$ is an $\Rn{\unu}$-map. In these computations note that with respect to $\psi_{\hat{w}}$, by lower terms we mean
elements of $ \{ h \psi_{\hat{v}} \mid h \in \Rn{\unu}, \; \ell(v) <
\ell(w) \}$. From now on, assume $u$ is a weight vector as above.

Case 1) We evaluate
\begin{eqnarray}
  (1_{\jj}F(u))(\psi_{\hat{w}}) &=& 1_{\jj} \odot \overline{f_u}(\psi_{\hat{w}}) = \overline{\sigma_{\mu}(1_{\jj}) \odot f_u}(\psi_{\hat{w}}) \nn \\
 &=& \overline{f_u}(\psi_{\hat{w}}1_{w_0(\jj)}) = \overline{f_u}(1_{ww_0(\jj)}\psi_{\hat{w}}) \nn \\
 &=&  \bar{u} \delta_{w,y} \delta_{\ii, ww_0(\jj)}
= \bar{u} \delta_{w,y} \delta_{\ii, yw_0(\jj)}
\nn \\
 &=&  \bar{u} \delta_{w,y} \delta_{\ii, w_P(\jj)}
 =   1_{w_P(\jj)} \bar{u} \delta_{w,y}
\nn \\
&=& \sigma_{\unu}(1_{\jj}) \bar{u} \delta_{w,y}
  = \overline{1_{\jj}u}  \delta_{w,y}
\nn \\
&=&
\overline{f_{{1_{\jj}u}}}(\psi_{\hat{w}}) = F(1_{\jj}u)(\psi_{\hat{w}})
\end{eqnarray}
so that $1_{\jj}F(u) = F(1_{\jj}u)$.

Case 2)  We compute
\begin{eqnarray}
  (x_{r} F(u))(\psi_{\hat{w}}) &=& (x_{r} \odot \overline{f_u})(\psi_{\hat{w}}) = \overline{\sigma_{\mu}(x_{r})\odot f_u}(\psi_{\hat{w}}) \nn \\
&=& \overline{f_u}(\psi_{\hat{w}} x_{w_0(r)}) \nn \\
&=& \overline{f_u}(x_{ww_0(r)}\psi_{\hat{w}} + \text{lower terms} )\nn \\
&=&
\left\{
\begin{array}{ccl}
  \overline{f_u}(x_{w_P(r)}\psi_{\hat{y}}) & \quad & \text{if $w=y$}\\
  0 & \quad & \text{else}
\end{array} \right. \nn \\
&=&
\left\{
\begin{array}{ccl}
  x_{w_P(r)}\bar{u} & \quad & \text{if $w=y$ } \\
  0 & \quad & \text{else}
\end{array} \right. \nn \\
&=&\left\{
\begin{array}{ccl}
\overline{x_{r}u} & \quad & \text{if $w=y$} \nn \\
  0 & \quad & \text{else}
\end{array} \right. \nn \\
&=& \overline{f_{x_{r}u}}(\psi_{\hat{w}}) = F(x_{r}u)(\psi_{\hat{w}})
\end{eqnarray}
so that $F(x_{r}u) = x_{r}F(u)$ for any $r$.

Case 3)
Let $r$ be such that $s_r \in S_P$, and $\jj$ be such that $\psi_r 1_{\jj} \in R(\unu)$.  Recall that then $w_Ps_rw_P \in S_P$ as well, and furthermore $\sigma_{\unu}(\psi_r 1_{\jj}) = \psi_{w_Ps_rw_P} 1_{w_P(\jj)} \in R(\unu)$.  We compute
\begin{eqnarray}
  \psi_{r} 1_{\jj} F(u)(\psi_{\hat{w}})
        &=& (\psi_{r} 1_{\jj} \odot \overline{f_u}) (\psi_{\hat{w}}) \nn \\
 &=& \overline{f_u} (\psi_{\hat{w}} \sigma_{\mu} (\psi_{r} 1_{\jj}))
        = \overline{f_u}(\psi_{\hat{w}} (-1)^{\delta_{{\jj}_r, {\jj}_{r+1}}}
        \psi_{w_0s_rw_0} 1_{w_0({\jj})} )
\nn \\
 &=&
\left\{ \begin{array}{ccl}
  (-1)^{\delta_{{\jj}_r, {\jj}_{r+1}}} \overline{f_u}(
\bigl( \psi_{w_Ps_rw_P}\psi_{\hat{y}} + \text{lower terms} \bigr) 1_{w_0({\jj})} ) &
    \quad & \text{if $w=y$} \\
(-1)^{\delta_{{\jj}_r, {\jj}_{r+1}}}
  \overline{f_u}((\text{lower terms}) 1_{w_0({\jj})}) & \quad & \text{if $w\neq y$}
\end{array} \right. \nn \\
&=&
\left\{\begin{array}{ccl}
(-1)^{\delta_{{\jj}_r, {\jj}_{r+1}}}
\overline{f_u}(\psi_{w_Ps_rw_P} 1_{y w_0({\jj})}\psi_{\hat{y}}) &\quad & \text{if $w=y$} \\
  0 & \quad & \text{else}
\end{array} \right. \nn \\
&=&
\left\{\begin{array}{ccl}
(-1)^{\delta_{{\jj}_r, {\jj}_{r+1}}}
  \psi_{w_Ps_rw_P} 1_{w_P({\jj})}\overline{f_u}(\psi_{\hat{y}}) & \quad & \text{if $w=y$} \\
  0 & \quad & \text{else}
\end{array} \right. \nn \\
&=&
\left\{\begin{array}{ccl}
 \sigma_{\unu} (\psi_{r} 1_{\jj})\bar{u}& \quad & \text{if $w=y$}\\
  0 & \quad & \text{else}
\end{array} \right. \nn \\
&=&
\left\{\begin{array}{ccl}
 \overline{\psi_{r} 1_{\jj} u}& \quad & \text{if $w=y$} \\
  0 & \quad & \text{else}
\end{array} \right. \nn \\
&=& \overline{f_{\psi_{r} 1_{\jj} u}}(\psi_{\hat{w}}) \nn \\
&=& F(\psi_{r} 1_{\jj} u)(\psi_{\hat{w}}),
\end{eqnarray}
so that $\psi_{r} 1_{\jj} F(u)=F(\psi_{r} 1_{\jj} u)$.

Note the image of $F$ contains all of
the  $\overline{f_u}$ as $u$ ranges over a weight basis of $  M$.  Hence the image of $\cal{F}\maps \Ind_{\unu}^{\mu}M \to \sigma_{\mu}^* \Indc_{\unu}^{\mu}(\sigma_{\unu}^*M)$ contains all of the $h \odot \overline{f_u}$ for $h \in \Rn{\mu}$. We shall argue this contains a basis of $\sigma_{\mu}^*
\Indc_{\unu}^{\mu}\sigma_{\unu}^*M$ which will show that $\cal{F}$
is surjective. Recall from Remark~\ref{rem_indbasis} that
$\sigma_{\mu}^* \Indc_{\unu}^{\mu}(\sigma_{\unu}^*M)$ has a basis of
``bump functions" of the form  $\overline{f_{{w},u}} $ and in this
notation $\overline{f_{u}} =\overline{f_{{y},u}}$. As in
\cite{Vaz1}, we can show the $\psi_{\hat{v}}\odot
\overline{f_{{y},u}}$ for appropriate $v$ are triangular with
respect to the $\{\overline{f_{{w},u'}} \}$ so contain a basis.
Since the dimensions of the induced and coinduced modules are the
same,
 $\cal{F}$ is in fact an isomorphism.
\end{proof}

%
\subsection[Certain simple modules]{Simple $R(mi)$-modules} \label{sec_Lim}
%

Simple modules for the algebra $R(mi)$
play a key role in this paper.  There are several constructions of these modules.

Throughout this section let $\ii=i^m$.  Consider the graded algebra
$\Bbbk[x_{1,\ii},\dots, x_{m,\ii}]$  with
$\deg(x_{t,\ii})=(\alpha_i,\alpha_i)$. Up to isomorphism
and grading shift, there is a unique graded irreducible
module $L(i^m)$ for the ring $\Rn{mi}$ given as the quotient of
$\Bbbk[x_{1,\ii},\dots, x_{m,\ii}]$ by the ideal generated by homogeneous symmetric polynomials with positive degree, see \cite[Section 2.2]{KL}. This module can alternatively be described as the induced module from the trivial $R'$-module, where
$R'$ is the subalgebra of $R(mi)$ generated by $\psi_{1,\ii}, \dots,
\psi_{m-1,\ii}$ and symmetric polynomials in $\Bbbk[x_{1,\ii},\dots,
x_{m,\ii}]$.
Note the trivial $R'$-module is its unique 1-dimensional module, on which all
$\psi_{r, \ii}$ and $\sum_{r=1}^m x_{r,\ii}^k$  act as $0$, where  $1 \le r \le m$ and $k \geq 1$.

Furthermore, this irreducible module $L(i^m)$
 is isomorphic
to the module
induced from the one-dimensional graded module $L$ $= L(i) \boxtimes
\cdots \boxtimes L(i)$ over $\Bbbk[x_{1,\ii},\dots, x_{m,\ii}]$ on
which $x_{1,\ii}, \dots, x_{m,\ii}$ all act trivially. In this paper
we fix the grading shift on this unique simple module $L(i^m)\{r\}$
so that
\begin{equation}
  \chr(L(i^m)) = [m]_i^! i^m.
\end{equation}

In \cite[Proposition 2.8]{Lau3}, it is not only shown  that
 for any $u \in L(i^m)$, $1 \leq r \leq m$, and $k \geq m$
that $ x_{r}^k u =0$, but
also that there exists $\tilde{u} \in L(i^m)$ such that $x_r^{m-1} \tilde{u}
\neq 0$ for all $r$.

See the third statement in Section~\ref{sec_properties} for some of
the important properties of $L(i^m)$, such as its behaviour under
the induction and restriction functors.

%
\subsection{Refining the restriction functor}
\label{sec_define_ei}
%

For $M$ in $\Rn{\nu}\dmod$ and $i\in I$ let
$$\Delta_{i}M= (1_{\nu-i}\otimes 1_i)M = \Res_{\nu-i,i} M,$$
and, more generally,
$$ \Delta_{i^n}M =  (1_{\nu-ni}\otimes 1_{ni})M = \Res_{\nu-ni,ni} M.$$
We view $\Delta_{i^n}$ as a functor into the category $R(\nu-ni)\otimes R(ni)\dmod$.
By Frobenius reciprocity, there are functorial isomorphisms
\begin{equation} \label{eq_funs}
\HOM_{\Rn{\nu}} (\Ind_{\nu-ni,ni}N\boxtimes L(i^n), M) \cong
\HOM_{R(\nu-ni)\otimes R(ni)}(N\boxtimes L(i^n), \Delta_{i^n}M),
\end{equation}
for $M$ as above and $N\in R(\nu-ni)\dmod$.

Define
\begin{equation}\label{def_ei}
 e_i \; := \Res_{\nu-i}^{\nu-i,i} \circ \Delta_i \maps \Rn{\nu}\fmod \to \Rn{\nu-i}\fmod
\end{equation}
and for $M \in Rnu\fmod$, set
\begin{align}
  & \et{i}M := \soc e_i M, \\
  & \ft{i}M := \cosoc \Ind_{\nu,i}^{\nu+i} M \boxtimes L(i), \\
  & \epsilon_i(M) := \max\{n \geq 0 \mid \et{i}^n M \neq \0 \}.
\end{align}
We also define their so-called $\sigma$-symmetric versions, which are indicated
with a $\vee$.
Note that $ \sigma^{\ast}(\Delta_{i}(\sigma^{\ast}M)) = \Res_{i,\nu-i} M$.
Set
\begin{align}
 &e_i^\vee
 := \Res_{\nu-i}^{i,\nu-i} \circ \Res_{i,\nu-i} \maps \Rn{\nu}\fmod \to \Rn{\nu-i}\fmod, \\
  & \ets{i}M := \sigma^{\ast}(\et{i}(\sigma^{\ast}M)) = \soc e_i^\vee M, \\
  & \fts{i}M := \sigma^{\ast}(\ft{i}(\sigma^{\ast}M))
= \cosoc \Ind_{i, \nu}^{\nu+i} L(i) \boxtimes M, \\
  & \eph(M) := \epsilon_i(\sigma^{\ast}M) 
= \max\{m \geq 0 \mid (\ets{i})^mM \neq \0 \}.
\end{align}
Observe that the functors $e_i$ and $e_i^\vee$ are exact.
Although the functors $\et{i}$ and $\ft{i}$ can be defined on any 
module,
in this paper we will only apply them to simple modules. It is a
theorem of \cite{KL} that if $M$ is irreducible, so are $\ft{i}M$
and $\et{i}M$ (as long as the latter is nonzero), and likewise for
$\fts{i}M$ and $\ets{i}M$. This is stated below along with other key
properties.

%
\subsubsection[Key properties]{Properties of the functors
$\et{i}$ and $\ft{i}$ on simple modules}
\label{sec_properties}
%

In this section we give a long list of results that were proved in
\cite{KL} about simple $\Rnu$-modules  and their behaviour under
induction and restriction. They extend to the symmetrizable case by the results in \cite{KL2}. We will use them freely throughout the
paper.

\begin{enumerate}
  \item $$\chr(\Delta_{i^n}M)= \sum_{\jj\in \seq(\nu-ni)}  \gdim(1_{{\jj} i^n}M) \cdot \jj, $$
where we view $\Delta_{i^n}M$ as a module over the subalgebra $R(\nu-ni)$
of $R(\nu-ni)\otimes R(ni)$.
 \item
Let $N\in \Rn{\nu}\dmod$ be irreducible and $M=\Ind_{\nu,ni}N\boxtimes L(i^n)$.
Let $\epsilon = \ep{N}$.
\begin{enumerate}
\item  $\Delta_{i^{\epsilon+n}}M\cong \et{i}^{\epsilon} N\boxtimes
L(i^{\epsilon + n})$.
\item
$\cosoc M$ is irreducible, and $\cosoc M \iso
\ft{i}^n N$, $\Delta_{i^{\epsilon+n}} \ft{i}^n N \cong
\et{i}^{\epsilon} N\boxtimes L(i^{\epsilon + n})$, and
$\varepsilon_i(\ft{i}^n N)= \varepsilon+n$.
\item
All other composition factors $L$ of $M$ have
$\varepsilon_i(L)<\varepsilon+n$.
\item  $\ft{i}^n N$ occurs with multiplicity one as a composition factor of
$M$.
\end{enumerate}
 \item
Let $\underline{\mu} = (i^{\mu_1}, \cdots, i^{\mu_r})$ with $\sum_{k=1}^r \mu_k = n$.
\begin{enumerate}
\item The module $L(i^n)$ over the algebra $\Rn{ni}$ is the only graded
irreducible module, up to isomorphism.
\item All composition factors of $\Res_{\underline{\mu}}L(i^n)$ are isomorphic
to $L(i^{\mu_1})\boxtimes \dots \boxtimes L(i^{\mu_r})$,
and
$\soc(\Res_{\underline{\mu}} L(i^n))$ is irreducible.
\item $\et{i} L(i^n) \cong L(i^{n-1})$.
\end{enumerate}
 \item Let $M\in \Rn{\nu}\dmod$ be irreducible with $\varepsilon_i(M)>0$.
Then $\et{i}M = {\soc}(e_i M)$ is irreducible and
$\varepsilon_i( \et{i} M)=\varepsilon_i(M)-1$.
Socles of $e_iM$ are pairwise nonisomorphic for different $i\in I$.
%
 \item  For irreducible $M\in \Rn{\nu}\dmod$ let $m=\epsilon_i(M)$. Then the socle of $e_i^m M$ is
isomorphic to $\widetilde{e}^m_iM^{\oplus [m]_i^!}$.
 \item For irreducible modules $M\in \Rn{\nu}\dmod$ and $N\in \Rn{\nu+i}\dmod$
we have $\widetilde{f}_i M \cong N$ if and only if $\widetilde{e}_i N\cong M$.
 \item Let $M,N\in \Rn{\nu}\dmod$ be irreducible. Then
$\widetilde{f}_iM \cong \widetilde{f}_iN$ if and only if $M\cong N$.
Assuming $\varepsilon_i(M), \varepsilon_i(N)>0$, $\widetilde{e}_i M \cong \widetilde{e}_i N$
if and only if $M\cong N$.
\end{enumerate}

%
\subsection{The algebras $\RnuL$}
\label{sec_RnuL}
%

For $\Lambda = \sum_{i \in I} \lambda_i \Lambda_i \in P^+$ consider the two-sided ideal $\cal{J}^{\Lambda}_{\nu}$ of $\Rn{\nu}$ generated by elements
$x_{1,\ii}^{\lambda_{i_1}}$ over all sequences $\ii \in \seq(\nu)$.
We sometimes write $\cal{J}^{\Lambda}_{\nu}=\cal{J}^{\Lambda}$ when no confusion is likely to arise. Define
\begin{equation} \label{eq_def_RLambda}
\RnuL := \Rn{\nu} /\cal{J}^{\Lambda}_{\nu}
\end{equation}
By analogy with the Ariki-Koike cyclotomic quotient of the affine
Hecke algebra~\cite{AK} (see also \cite{Ari3}) this algebra is
called the cyclotomic quotient at weight $\Lambda$ of $\Rn{\nu}$. As
above we form the non-unital ring
\begin{equation}
 R^{\Lambda} = \bigoplus_{\nu \in \N[I]} \RnuL.
\end{equation}

In type $A$ the following Proposition is essentially contained in \cite[Section 2.2]{BK1}.  Here we give the natural extension to arbitrary type.

\begin{prop} \hfill
\begin{enumerate}
  \item
For all $\ii \in \seq(\nu)$ and any $\Lambda \in P^+$ the elements $x_{r,\ii}$
are nilpotent for all $1 \leq r \leq |\nu|$.
  \item The algebra $\RnuL$ is finite dimensional.
\end{enumerate}
\end{prop}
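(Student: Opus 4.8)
The plan is to prove both statements together, since part (2) is an easy consequence of part (1) plus the PBW-type basis from Remark~\ref{rem_basis}. Note that the two claims are essentially equivalent: if every $x_{r,\ii}$ is nilpotent in $\RnuL$, then by the basis $\{\psi_{\hat w}\cdot x_1^{a_1}\cdots x_m^{a_m}1_{\ii}\}$ of $\Rn{\nu}$ and the fact that $\RnuL$ is a quotient, the images of these elements with all $a_r$ bounded by the nilpotency degrees span $\RnuL$; since the index set $\seq(\nu)$ and the symmetric group $S_m$ are finite, this is a finite spanning set, so $\RnuL$ is finite dimensional. Conversely finite-dimensionality forces every element, in particular each $x_{r,\ii}$, to be nilpotent (a finite-dimensional algebra cannot contain a polynomial subalgebra, or simply: $1, x_{r,\ii}, x_{r,\ii}^2,\dots$ must be linearly dependent, and since $x_{r,\ii}$ is homogeneous of positive degree the dependence forces nilpotency). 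So the crux is part (1).

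For part (1), first reduce to $r=1$: by the defining relation \eqref{relation_xpsi}, conjugating $x_{r,\ii}$ by the appropriate $\psi$'s moves a dot from strand $r$ to strand $1$ up to lower-order correction terms involving dots on strands to the left; more precisely one argues by induction on $r$, using that $\psi_{r-1} x_r - x_{r-1}\psi_{r-1}$ lies in the span of $1_{\ii}$-type terms (or is zero), so that nilpotency of $x_{1,\jj}$ for all $\jj$ propagates to nilpotency of $x_{r,\ii}$ for all $\ii$. Actually the cleanest route is: show $x_{1,\ii}^{N}=0$ in $\RnuL$ for a suitable $N$ depending only on $\Lambda$, then bootstrap. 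By definition $x_{1,\ii}^{\lambda_{i_1}}\in\cal{J}^{\Lambda}_{\nu}$, so $x_{1,\ii}^{\lambda_{i_1}}=0$ in $\RnuL$ already; the content is moving past the first strand.

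The key step, and the main obstacle, is the inductive claim that if $x_{r,\jj}$ is nilpotent in $\RnuL$ for every $\jj\in\seq(\nu)$ and every $r\le k$, then $x_{k+1,\ii}$ is nilpotent. The mechanism is to use the relation $\psi_k x_{k+1}1_{\ii} = (x_k\psi_k \pm 1_{\ii})1_{\ii}$ (the sign/term depending on whether $i_k=i_{k+1}$) together with $\psi_k\psi_k 1_{\ii}$ given by \eqref{relation_quadratic}. The standard argument (as in Brundan--Kleshchev, or Kleshchev--Ram, or the cyclotomic nilHecke computation) runs: within $1_{\ii}\RnuL 1_{\ii}$, one shows that a high power of $x_{k+1}$ can be rewritten, using the quadratic relation to produce a factor of $\psi_k$, as a sum of terms each of which either contains a high power of some $x_j$ with $j\le k$ (hence zero by induction on a doubly-indexed quantity) or has strictly smaller "complexity" in an appropriate sense. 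This is genuinely fiddly because the quadratic relation introduces $x_k^{-\langle i_k,i_{k+1}\rangle}+x_{k+1}^{-\langle i_{k+1},i_k\rangle}$ and the cubic relation \eqref{relation_cubic} introduces further $x$-terms, so one must set up the induction carefully — typically a lexicographic induction on $(k+1, \text{power})$ or an induction that simultaneously tracks the dots on all strands $\le k+1$.

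Alternatively — and this is probably the slicker packaging — one can invoke the result quoted in Section~\ref{sec_Lim} from \cite[Proposition 2.8]{Lau3}: on $L(i^n)$ the dots $x_r$ act with $x_r^n=0$. Combined with statement (1) of Section~\ref{sec_properties} ($\chr(\Delta_{i^n}M)$) and Frobenius reciprocity, one shows that any finite-dimensional quotient-compatible module has bounded dot action; but since we want the statement about the algebra $\RnuL$ itself rather than its modules, the self-contained path is to run the diagrammatic/relational induction sketched above and then harvest finite-dimensionality from Remark~\ref{rem_basis}. I would present the relational induction as the main body of the proof, flag the doubly-indexed induction as the technical heart, and close with the one-line deduction of (2) from (1).
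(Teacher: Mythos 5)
Your reduction of part (2) to part (1) via Remark~\ref{rem_basis} is correct and is exactly what the paper does, and the observation that finite-dimensionality conversely forces nilpotency of the positively-graded $x_{r,\ii}$ is also fine. But for part (1), which carries all the content, you have sketched the right \emph{shape} of an argument (propagate nilpotency rightward across strands using the quadratic relation) and then explicitly declined to carry out the key step, writing that you would ``flag the doubly-indexed induction as the technical heart.'' That flag marks a genuine gap: what you call ``terms of strictly smaller complexity in an appropriate sense'' is precisely what needs to be defined and shown to terminate, and your sketch does not do it.

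Comparing with the paper's proof clarifies what is missing. The paper inducts on $|\nu|$, appending one letter $j$ to a sequence $\ii$ of length $m-1$; the inclusion $R(\nu')\hookrightarrow R(\nu'+j)$ immediately gives nilpotency of $x_{r,\ii j}$ for $r<m$, so the whole content is $x_{m,\ii j}$. The argument then splits into three cases according to $(\alpha_{i_{m-1}},\alpha_j)$: for $(\alpha_i,\alpha_j)<0$ one writes $x_m^{a_{ji}}1_{\ii j}=\psi_{m-1}^2 1_{\ii j}-x_{m-1}^{a_{ij}}1_{\ii j}$ and iterates this identity $N$ times; the $\psi$-sandwiched piece is killed by sliding dots through the crossings via \eqref{new_eq_ijslide} to land $x_{m-1}^{N'}$ on the idempotent $1_{s_{m-1}(\ii j)}$, where it vanishes by the inductive hypothesis applied to the \emph{other} length-$(m-1)$ sequence $\ii'=i_1\cdots i_{m-2}j$, while the remaining piece carries $x_{m-1}^{Na_{ij}}1_{\ii j}$ and dies by the hypothesis for $\ii$; this yields the explicit bound $x_{m,\ii j}^{Na_{ji}+N'}=0$. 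For $(\alpha_i,\alpha_j)=0$, one has $x_{m,\ii j}^N=\psi_{m-1}x_{m-1}^N\psi_{m-1}1_{\ii j}=0$ directly. For $i_{m-1}=j$ the paper delegates to the cyclotomic nilHecke argument of Brundan--Kleshchev and Lauda. Your sketch does not set up the two-pronged hypothesis (for both $\ii$ and $\ii'$), does not identify the dot-sliding/re-idempotenting mechanism that actually makes the $\psi_{m-1}$-terms vanish, and does not separate the $i_{m-1}=i_m$ case, which requires a different argument. Your concern that the cubic relation \eqref{relation_cubic} contributes extra $x$-terms that must be tracked is also misplaced: the paper's proof never invokes the cubic relation, only \eqref{new_eq_UUzero}/\eqref{relation_quadratic} and \eqref{new_eq_ijslide}.
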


\begin{proof}
The proof of the first claim is by induction on the length of the
sequence $\ii$.  The base case is immediate from the definition
\eqref{eq_def_RLambda} of $\RnuL$.  For the induction step we assume
the claim holds for all sequences $\ii$ of length of $m-1$ and prove
the result for sequences of length $m$. For $j \in I$ write $\ii j$
for the concatenated sequence of length $m$ obtained by adding $j$
to the end of $\ii$. Restricting  $R_{\ii j} \to R1_{\ii}\otimes
R1_{j}$ implies $x_{r,\ii j}$ is nilpotent for all $1 \leq r < m$.
Thus it suffices to prove that $x_{m,\ii j}$ is nilpotent.

Let $\ii = i_1 \dots i_{m-1}$ and assume for simplicity of notation
that $i_{m-1}=i$ for some $i \in I$. If $(\alpha_{i},\alpha_j) <0$,
then by the inductive hypothesis for some $N,N'>0$ we have
$x_{m-1,\ii}^{N}=0$ and $x_{m-1,\ii'}^{N'}=0$, where $\ii'=i_1\dots
i_{m-3}i_{m-2}j$, so that adding $j$ to the end of the sequence
$\ii$ we have $x_{m-1,\ii j}^{N}=0$, and adding an $i$ to the end of
the sequence $\ii'$ we have $x_{m-1,s_{m-1}(\ii j)}^{N'}=0$. Since
$x_{m-1,\ii}^{N}=0$ we certainly have $x_{m-1,\ii}^{N a_{ij}}=0$.
Then $x_{m,\ii j}^{N a_{ji}+N'}=0$, since by \eqref{new_eq_UUzero}
(working locally around the last two strands) we have
\begin{equation}
   \vcenter{\xy  (8,0)*{\updot{j}};(0,0)*{\up{i}};
   (16,2)*{\scs N a _{ji}+N'};
   \endxy} =
     \xy   (0,0)*{\twocross{i}{j}};
     (2,9)*{\bullet}+(10,-1)*{\scs (N-1) a _{ji}+N'};; \endxy
 \;\; - \;\;
  \vcenter{ \xy
   (0,0)*{\updot{i}};
   (8,0)*{\updot{j}};
   (-3.5,2)*{\scs a_{ij}};
   (18,2)*{\scs (N-1) a _{ji}+N'};
  \endxy}
\end{equation}
The first diagram on the right-hand-side is zero since we can slide
the dots using \eqref{new_eq_ijslide} and then use our assumption
$x_{m-1,s_{m-1}(\ii j)}^{N'}=0$.
Then either $N=1$ and
the second diagram is also zero by assumption, or $N>1$ and we can
iterate $N$-times the application of \eqref{new_eq_UUzero}  to show
\begin{equation}
   \vcenter{\xy  (8,0)*{\updot{j}};(0,0)*{\up{i}};
   (16,2)*{\scs N a _{ji}+N'};
   \endxy} =
     \xy   (0,0)*{\twocross{i}{j}};
     (2,9)*{\bullet}+(4,-1)*{\scs N'};
     (-2,9)*{\bullet}+(-7,-1)*{\scs (N-1) a _{ij}}; \endxy
 \;\; - \;\; (-1)^{N} \;\;
  \vcenter{ \xy
   (0,0)*{\updot{i}};
   (8,0)*{\updot{j}};
   (-5,2)*{\scs  Na_{ij}};
   (14,2)*{\scs N'};
  \endxy}
\end{equation}
Sliding the $N'$ dots on the first diagram on the right-hand-side
using \eqref{new_eq_ijslide}, the entire right side is zero by our
assumptions on $N$, $N'$.

If $(\alpha_i,\alpha_j)=0$ then by the inductive hypothesis there
exist an $N$ with $x_{m,s_{m-1}(\ii')}^N=0$ for $\ii'=i_1 \dots
i_{m-3}i_{m-2}j$, so that \eqref{new_eq_UUzero} and
\eqref{new_eq_ijslide} imply $x_{m,\ii j} = \psi_{m-1} x_{m-1}^N
\psi_{m-1} 1_{\ii j} =0$. If $i=j$ then an identical proof as in
\cite[Lemma 2.1]{BK1} or \cite[Proposition 2.9 (i)]{Lau3} shows that
$x_{m,\ii j}$ is nilpotent. Therefore, $x_{r,\ii j}$ is nilpotent
for all  $1 \leq r\leq m$ and we have proven the induction step.

The second claim in the Proposition follows from the first since
$\RnuL$ is spanned by $\{\psi_{\hat{w} ,\ii} x_{1,\ii}^{n_1} \cdots
x_{m,\ii}^{n_m}  \mid w \in S_m, n_1, \dots , n_m \geq 0  \}$ and by
the first claim only finitely many of the $x_{r,\ii}^{n_r}$ are
nonzero for each $1 \leq r\leq m$.
\end{proof}

In terms of the graphical calculus the cyclotomic quotient $\RnuL$ is the quotient of $\Rn{\nu}$ by the ideal generated by
\begin{eqnarray}
  \xy (0,0)*{\supdot{i_1}}; (-4,2)*{\lambda_{i_1}};
  (6,0)*{\sup{\;\; i_2}};
  (12,0)*{\cdots};
  (18,0)*{\sup{\;\;\; i_m}}
  \endxy &=& 0 \label{eq_quotient1}
\end{eqnarray}
over all sequences $\ii$ in $\seq(\nu)$.


For bookkeeping purposes we will denote $\RnuL$-modules in calligraphic font $\cal{M}$ but $\Rnu$-modules by $M$.

We introduce functors
\begin{equation}
\infL \maps \RnuL\dmod \to \Rn{\nu}\fmod \qquad \prL \maps \Rn{\nu}\fmod \to \RnuL\dmod
\end{equation}
where $\infL$ is the inflation along the epimorphism $\Rn{\nu} \to
\RnuL$, so that $\cal{M}=\infL \cal{M}$ on the level of sets.  If
$\cal{M}, \cal{N}$ are $\RnuL$-modules, then
$$\Hom_{\RnuL}(\cal{M}, \cal{N}) \iso \Hom_{\Rnu}(\infL \cal{M}, \infL \cal{N}).$$
Note $\cal{M}$ is irreducible if and only if $\infL\cal{M}$ is.
 We define $\prL {M}=M/\cal{J}^{\Lambda}M$.
If $M$ is irreducible then $\prL M$ is either irreducible or zero.
 Observe $\infL$ is an exact
functor and its left adjoint is $\prL$ which is only right exact.

\begin{prop}
\label{prop_pr}
Let  $\Lambda = \sum_{i \in I} \lambda_i \Lambda_i \in P^+$
and let $M$ be a simple $\Rnu$-module.
Then
\begin{enumerate}
\item
$\cal{J}^\Lambda M = \0$
iff
 $\prL M \neq \0$
iff
$\eph(M) \le \lambda_i$ for all $i \in I$.
When these conditions hold,
we may identify $M$ with the $\RnuL$-module $\prL M$.
\item
$\cal{J}^\Lambda M = M$ if and only if there exists some $i \in I$
such that $\eph(M) > \lambda_i$.
\end{enumerate}
\end{prop}

We omit the proof of the above proposition. It follows from a
 careful study of the simple module $L(i^m)$, as in \cite[Lemma 2.1]{KL}
combined with the properties listed in (2) of Section~\ref{sec_properties}.
The second statement follows from the first as $M$ is simple.
It also follows that  when $\Lambda$ is large enough
$\cal{J}^\Lambda M = \0$, and such $\Lambda$ always exist.
Since any simple $M$ is finite-dimensional, it suffices
to take $\lambda_i > \dim_{\Bbbk} M$ to ensure $\Lambda$ is large enough.

Let $\cal{M}$ be an irreducible $\RnuL$-module.  As in
Section~\ref{sec_define_ei} define
\begin{eqnarray}
  e_{i}^{\Lambda} \cal{M} &=& \prL \circ e_{i} \circ \infL\cal{M}
  \maps \RnuL\dmod \to R^{\Lambda}(\nu-i)\dmod
  \nn \\
  \et{i}^{\Lambda} \cal{M} &=& \prL \circ \et{i} \circ \infL\cal{M}
  \nn \\
\ft{i}^{\Lambda} \cal{M} &=& \prL \circ \ft{i} \circ \infL \cal{M}
 \nn \\
\epsilon_i^{\Lambda}( \cal{M}) &=&  \ep{\infL\cal{M}}
\nn
\end{eqnarray}

Let $\cal{M} \in \RnuL \dmod$ and $M = \infL \cal{M}$.
Then $\prL M = \cal{M}$. Since $\cal{J}^\Lambda M = \0$ then
$\cal{J}^\Lambda e_{i} M = \0$ too, so that
$ e_i^\Lambda \cal{M}$ is an $\Rn{\nu-i}^\Lambda$-module
with $\infL(e_{i}^\Lambda \cal{M})  = e_{i}M$.
In particular, $\dim_{\Bbbk} e_i^\Lambda \cal{M} = \dim_{\Bbbk} e_i M$.
If furthermore  $\cal{M}$ is irreducible, then
$\et{i}^\Lambda \cal{M} =\soc e_i^\Lambda \cal{M}$.

%
\subsection{Ungraded modules}
\label{sec_ungraded}
%

Write $\underline{R\dmod}$, $\underline{R\fmod}$, and $\underline{R\pmod}$ for the corresponding categories of ungraded modules.  There are forgetful functors
\begin{equation}
  R\dmod \to \underline{R\dmod}, \qquad  R\fmod \to \underline{R\fmod}, \qquad R\pmod \to \underline{R\pmod}
\end{equation}
given by sending a module $M$ to the module $\underline{M}$ obtained
by forgetting the gradings, and mapping $\HOM(M,N)$ to
$\underline{\Hom}(\underline{M},\underline{N})$.  Essentially not
much is lost working with the ungraded modules since given an
irreducible module $M \in R\fmod$, then $\underline{M}$ is
irreducible in $\underline{R\fmod}$ \cite[Theorem 4.4.4(v)]{NV}.
Likewise, since $R^{\Lambda}(\nu)$ is a finite dimensional
$\Bbbk$-algebra, if $K \in \underline{R^{\Lambda}(\nu)\fmod}$ is
irreducible, then there exists an irreducible $L \in
R^{\Lambda}(\nu)\fmod$ such that $\underline{L} \cong K$.
Furthermore, $L$ is unique up to isomorphism and grading shift, see
\cite[Theorem 9.6.8]{NV}. Since any finite-dimensional
$R(\nu)$-module $M$ can be identified with the $\RnuL$-module $\prL M$ for some $\Lambda$, we also have that for any irreducible $K \in \underline{R(\nu)\fmod}$ there exists a unique, up to grading shift and isomorphism, irreducible $L \in R(\nu)\fmod$
such that $\underline{L}=K$.

%
\section{Operators on the Grothendieck group} \label{sec_Groth}
%

The Grothendieck groups
\begin{eqnarray}
 K_0(R) = \bigoplus_{\nu\in \N[I]} K_0(\Rn{\nu}\pmod), &\qquad&
     G_0(R) = \bigoplus_{\nu\in \N[I]} G_0(\Rn{\nu}\fmod) \nn \\
 K_0(R^\Lambda) = \bigoplus_{\nu\in \N[I]} K_0(\RnuL\pmod), &\qquad&
    G_0(R^\Lambda) = \bigoplus_{\nu\in \N[I]} G_0(\RnuL\fmod) \nn
\end{eqnarray}
are the direct sums of Grothendieck groups $\Rn{\nu}\pmod$,
$\Rn{\nu}\fmod$, $\RnuL\pmod$, $\RnuL\fmod$ respectively.  The
Grothendieck groups have the structure of a $\Z[q,q^{-1}]$-module
given by shifting the grading, $q[M] = [M\{1\}]$.

The functor $e_i$ defined in \eqref{def_ei} is clearly exact so
descends to an operator on the Grothendieck group
\begin{equation}
  G_0(\Rn{\nu}\fmod) \longrightarrow G_0(\Rn{\nu-i}\fmod)
\end{equation}
and hence
\begin{equation}
  e_i \maps G_0(R) \longrightarrow G_0(R).
\end{equation}
By abuse of notation, we will also call this operator $e_i$.
Likewise $ e_i^\Lambda \maps G_0(R^\Lambda) \longrightarrow G_0(R^\Lambda)$.
We also define divided powers
\begin{equation}
e_i^{(r)} \maps G_0(R) \longrightarrow G_0(R)
\end{equation}
given by $e_i^{(r)}[M]=\frac{1}{[r]_i^!}[e_i^rM]$, which are
well-defined by Section~\ref{sec_Lim}.

For irreducible $M$, we define
$\et{i}[M] = [\et{i} M]$,
$\ft{i}[M] = [\ft{i} M]$,
and extend the action linearly.

The exact functors of induction and restriction induce a multiplication and comultiplication on $G_0(R)$ giving $G_0(R)$ the structure of a (twisted) bialgebra.  More precisely, for $M \in R(\nu)\fmod$ and $N \in R(\mu)\fmod$, the multiplication is given by $[M][N] = [\Ind_{\nu,\mu} M \boxtimes N]$ and the comultiplication by $\Delta[M] = \sum_{\mu_1+\mu_2=\nu} [\Res_{\mu_1,\mu_2}M]$. In that latter we used the fact that simple $R(\mu_1) \otimes R(\mu_2)$-modules have the form $N_1 \boxtimes N_2$ and identified $[N_1 \boxtimes N_2]$ with $[N_1] \otimes [N_2]$.  There is a similar bialgebra structure on $K_0(R)$.

The main categorification results from \cite{KL,KL2} include the following theorem
restated here for completeness.  Although we do not use the results here explicitly, they are mentioned throughout the paper.   The theorem below condenses those of Theorem 3.17, Proposition 3.4, Proposition 3.18 of \cite{KL} and Theorem 8 of \cite{KL2}.

\begin{thm}[Khovanov-Lauda] \label{thm_KL} \hfill
\begin{enumerate}[(1)]
  \item \label{KL-injectchar}The character map
$$ \chr: G_0(\Rn{\nu}\fmod) \lra \Z[q,q^{-1}]\seq(\nu) $$
is injective.
\item
  There is a isomorphism of twisted $\Z[q,q^{-1}]$-bialgebras
\begin{equation}
  \gamma \maps \Af \longrightarrow K_0(R)
\end{equation}
such that multiplication corresponds to the exact functor $\Ind$ and comultiplication is induced by the exact functor $\Res$.

\end{enumerate}
\end{thm}

Note that as a consequence of part \eqref{KL-injectchar} we can deduce that
for any $R(\nu)$-module $M$ its graded character $\chr(M)$ completely determines  $[M] \in G_0(R)$.

Let us consider the maximal commutative subalgebra
$$\bigoplus_{\ii \in \seq(\nu)} \Bbbk[x_{1, \ii}, \ldots, x_{m, \ii}]
\subseteq \Rnu.$$ This ring was called $\Pol_\nu$ in \cite{KL}. In
the notation of this paper, we could also denote it
$\Bbbk[x_1,\dots, x_m] 1_{\nu}$. Its irreducible submodules are one
dimensional, and
are isomorphic to $L(i_1)
\boxtimes L(i_2) \boxtimes \cdots \boxtimes L(i_m)$ and in this way
correspond to $\ii = (i_1, \ldots, i_m) \in \seq(\nu)$.
 In this way, we may identify
$G_0( \Bbbk[x_1,\dots, x_m] 1_{\nu} \fmod)$ with $\Z[q,q^{-1}] \seq(\nu)$.
Hence one may rephrase the injectivity of the character map
as saying that a module is determined
by its restriction to that maximal commutative subalgebra,
in their respective  Grothendieck groups.

Note that the isomorphism classes of simple modules, up to grading
shift, form a basis of $G_0(R)$ as a free $\Z[q,q^{-1}]$-module.
One of the main results of this
paper is that we compute the rank of $G_0(\RnuL\fmod)$ by realizing
a crystal structure on $G_0(R^\Lambda)$ and identifying it as the
highest weight crystal $B(\Lambda)$. In this language, we see the
operators $\et{i}$ and $\ft{i}$ above become crystal operators.

%
\section{Reminders on crystals} \label{sec_crystals}
%

A main result of this paper is the realization of a crystal graph
structure on $G_0(R)$ which we identify as the crystal $B(\infty)$.
Hence, we need to remind the reader of the language and notation of
crystals. For a good introduction to crystal graphs see \cite{Kas4} or \cite{HK}.

%
\subsection{Monoidal category of crystals}
%

We recall the tensor category of crystals following
Kashiwara~\cite{Kas4}, see also~\cite{Kas1,Kas2,KS}.

A {\em crystal} is a set $B$ together with maps
\begin{itemize}
  \item $\wt \maps B \longrightarrow P$,
  \item $\epsilon_i, \phi_i \maps B \longrightarrow \Z \sqcup \{ \infty\}$ \quad for $i \in I$,
  \item $\et{i}, \ft{i} \maps B \longrightarrow B \sqcup \{0\}$ \quad for $i \in I$,
\end{itemize}
such that
\begin{enumerate}[(C1)]
 \item $\phi_i(b) =\epsilon_i(b)+ \langle h_i, \wt(b) \rangle$ \quad for any $i$.
 \item If $b \in B$ satisfies $\et{i} b \neq 0$, then
  \begin{align}
 &  \epsilon_i(\et{i}b) = \epsilon_i(b)-1, & \phi_i(\et{i}b) = \phi_i(b) +1, & & \wt(\et{i}b) = \wt(b)+\alpha_i.
  \end{align}
 \item If $b \in B$ satisfies $\ft{i} b \neq 0$, then
  \begin{align}
 &  \epsilon_i(\ft{i}b) = \epsilon_i(b)+1,
 & \phi_i(\ft{i}b) = \phi_i(b)-1,
 & &\wt(\ft{i}b) = \wt(b)-\alpha_i.
  \end{align}
 \item For $b_1$, $b_2 \in B$, $b_2=\tilde{f}_ib_1$ if and only if $b_1 = \et{i}b_2$.
 \item If $\phi_i(b) = -\infty$, then $\et{i}b=\ft{i}b=0$.
\end{enumerate} \medskip

If $B_1$ and $B_2$ are two crystals, then a {\em morphism} $\psi \maps B_1 \to B_2$ of crystals is a map $$\psi \maps B_1 \sqcup \{0\} \to B_2 \sqcup \{0\}$$ satisfying the following properties:
\begin{enumerate}[(M1)]
  \item $\psi(0) = 0$.
  \item If $\psi(b) \neq 0$ for $b \in B_1$, then
\begin{align}
  & \wt(\psi(b)) = \wt(b),
  & \epsilon_i(\psi(b)) = \epsilon_i(b),
  & &\phi_i(\psi(b)) = \phi_i(b).
\end{align}
 \item For $b \in B_1$ such that $\psi(b) \neq 0$ and $\psi(\et{i}b) \neq 0$, we have $\psi(\et{i}b) = \et{i}(\psi(b))$.
 \item For $b \in B_1$ such that $\psi(b) \neq 0$ and $\psi(\ft{i}b) \neq 0$, we have $\psi(\ft{i}b) = \ft{i}(\psi(b))$.
\end{enumerate}
A morphism $\psi$ of crystals is called {\em strict} if
\begin{equation}
  \psi \et{i} = \et{i}\psi, \qquad \quad \psi \ft{i} = \ft{i}\psi,
\end{equation}
and an {\em embedding} if $\psi$ is injective. \medskip

Given two crystals $B_1$ and $B_2$ their tensor product $B_1 \otimes
B_2$ has underlying set $\{b_1 \otimes b_2; b_1 \in B_1, \;
\text{and} \; b_2 \in B_2 \}$ where we identify $b_1 \otimes 0 = 0
\otimes b_2 = 0$. The crystal structure is given as follows:
\begin{align}
  & \wt(b_1 \otimes b_2) = \wt(b_1) + \wt (b_2), \\
  & \epsilon_i(b_1 \otimes b_2) = \max\max\{ \epsilon_i(b_1), \epsilon_i(b_2)-\langle h_i, \wt(b_1) \rangle\},\\
  & \phi_i(b_1 \otimes b_2) = \max\{ \phi_i(b_1) + \langle h_i,\wt(b_2) \rangle, \phi_i(b_2)\}, \\
  &\et{i}(b_1 \otimes b_2 ) = \left\{
  \begin{array}{lll}
    \et{i}(b_1) \otimes b_2 & \quad & \text{if $\phi_i(b_1) \geq \epsilon_i(b_2)$}\\
    b_1 \otimes \et{i}b_2 & &\text{if $\phi_i(b_1) < \epsilon_i(b_2)$},
  \end{array}
  \right. \label{eq_ei_tensor}\\
  &\ft{i}(b_1 \otimes b_2 ) = \left\{
  \begin{array}{lll}
    \ft{i}(b_1) \otimes b_2 & \quad & \text{if $\phi_i(b_1) > \epsilon_i(b_2)$}\\
    b_1 \otimes \ft{i}b_2 & &\text{if $\phi_i(b_1) \leq \epsilon_i(b_2)$}.
  \end{array}
  \right.
\end{align}

\begin{example}$T_{\Lambda}$ $(\Lambda \in P)$ \hfill

\noindent Let $T_{\Lambda} = \{ t_{\Lambda} \}$ with
$\wt(t_{\Lambda})=\Lambda$, $\epsilon_i(t_{\Lambda}) =
\phi_i(t_{\Lambda})=-\infty$,
$\et{i}t_{\Lambda}=\ft{i}t_{\Lambda}=0$. Note that the underlying set of the crystal $T_{\Lambda}$ consists of a single node.  Tensoring a crystal $B$
with the crystal $T_{\Lambda}$ has the effect of shifting the weight
$\wt$ by $\Lambda$ and leaving the other data fixed.
\end{example}

\begin{example}$B_i$ $(i \in I)$ \newline
$B_i = \{b_i(n) \mid n \in \Z \}$ with $\wt(b_i(n))=n\alpha_i$,
\begin{align}
& \epsilon_j(b_i(n)) =
 \left\{ \begin{array}{lll}
   -n & \quad & \text{if $i=j$}\\
   -\infty & & \text{if $j \neq i$},
 \end{array} \right. &
 \phi_j(b_i(n)) =
 \left\{ \begin{array}{lll}
   n & \quad & \text{if $i=j$}\\
   -\infty & & \text{if $j \neq i$},
 \end{array} \right. \label{eq_Bi_eps}
\end{align}
\begin{align}
  & \et{j}b_i(n) =
  \left\{ \begin{array}{lll}
   b_i(n+1) & \quad & \text{if $i=j$}\\
   0 & & \text{if $j \neq i$},
 \end{array} \right.
 & \ft{j}b_i(n) =
  \left\{ \begin{array}{lll}
   b_i(n-1) & \quad & \text{if $i=j$}\\
   0 & & \text{if $j \neq i$}.
 \end{array} \right.
  &
\end{align}
We write $b_i$ for $b_i(0)$.
\end{example}

%
\subsection{Description of $B(\infty)$}
\label{sec_Binf}
%

$B(\infty)$ is the crystal associated with the crystal graph of
$\Um(\mf{g})$ where $\mf{g}$ is the Kac-Moody algebra defined from
the Cartan data of Section~\ref{sec_Cartan}. One can also define
$B(\infty)$ as an abstract crystal.  As such, it can be
characterized by Kashiwara-Saito's Proposition~\ref{prop_descB}
below.

\begin{prop}[\cite{KS} Proposition 3.2.3] \label{prop_descB}
Let $B$ be a crystal and $b_0$ an element of $B$ with weight zero.  Assume the following conditions.
\begin{enumerate}[(B1)]
  \item $\wt(B) \subset Q_-$.
  \item $b_0$ is the unique element of $B$ with weight zero.
  \item $\epsilon_i(b_0) = 0$ for every $i\in I$.
  \item $\epsilon_i(b) \in \Z$ for any $b\in B$ and $i\in I$.
  \item For every $i\in I$, there exists a strict embedding $\Psi_i \maps B \to B \otimes B_i$.
  \item $\Psi_i(B) \subset B \times \{ \ft{i}^nb_i; n \geq 0 \}$.
  \item For any $b \in B$ such that $b \neq b_0$, there exists $i$ such that $\Psi_i(b)= b'\otimes \ft{i}^nb_i$ with $n>0$.
\end{enumerate}
Then $B$ is isomorphic to $B(\infty)$.
\end{prop}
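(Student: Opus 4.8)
The proposition asserts that the axioms (B1)--(B7) characterize $B(\infty)$, so the plan splits into two parts: (a) check that $B(\infty)$ itself satisfies (B1)--(B7), and (b) show that any two crystals satisfying (B1)--(B7), together with their distinguished weight-zero vectors, are related by a unique isomorphism of crystals. Granting both, taking one of the crystals to be $B(\infty)$ gives the result.

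For part (a): conditions (B1)--(B4) are standard properties of the crystal of $\Um(\mf{g})$ with $b_0 = b_\infty$ the class of $1$ --- its weights lie in $Q_-$, $b_\infty$ is the unique weight-zero vector, and every $\varepsilon_i$ is finite with $\varepsilon_i(b_\infty)=0$. The substantive input is (B5)--(B6): for each $i$ there is a strict embedding $\Psi_i \maps B(\infty) \to B(\infty) \otimes B_i$ with $\Psi_i(B(\infty)) \subset B(\infty) \times \{\ft{i}^{n}b_i : n \ge 0\}$. This is Kashiwara's embedding attached to the second ($*$-)crystal structure on $B(\infty)$, with $\Psi_i(b) = (\et{i}^{*})^{\max}b \otimes \ft{i}^{\varepsilon_i^{*}(b)}b_i$; strictness and the description of the image are part of Kashiwara's analysis of $B(\infty)$ (\cite{Kas5}, recalled in \cite{KS}). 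Finally (B7) holds because $\Psi_i(b) = b' \otimes b_i$ for all $i$ forces $\varepsilon_i^{*}(b)=0$ for all $i$, hence $b = b_\infty = b_0$.

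For part (b): let $B$ and $B'$ satisfy (B1)--(B7), with distinguished vectors $b_0$, $b_0'$. I would build $\Phi \maps B \to B'$ by recursion on ${\rm ht}(-\wt(b)) \in \Z_{\ge 0}$, which is well-founded since $\wt(B) \subset Q_-$ by (B1). Set $\Phi(b_0) = b_0'$. For $b \ne b_0$, use (B7) to choose $i$ with $\Psi_i^{B}(b) = c \otimes \ft{i}^{n}b_i$, $n > 0$. As crystal morphisms preserve weight and $\wt(\ft{i}^{n}b_i) = -n\alpha_i$, we get $\wt(c) = \wt(b) + n\alpha_i$, so ${\rm ht}(-\wt(c)) < {\rm ht}(-\wt(b))$ and $\Phi(c)$ is already defined; put $\Phi(b) := (\Psi_i^{B'})^{-1}\big(\Phi(c)\otimes \ft{i}^{n}b_i\big)$. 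One checks the argument lies in ${\rm im}\,\Psi_i^{B'}$ (this uses that $c$ is ``$*$-reduced at $i$'', i.e.\ $\varepsilon_i^{*}(c)=0$, a property inherited through $\Phi$). The remaining verifications are routine inductions on weight: $\Phi$ preserves $\wt$ and each $\varepsilon_i$ (hence each $\phi_i$, by (C1)), it is bijective (the same recipe run on $B'$ furnishes a two-sided inverse), and it intertwines all $\et{i}$, $\ft{i}$ --- the intertwining following from strictness of the $\Psi_i$ on both sides together with the tensor-product formula \eqref{eq_ei_tensor} for $B \otimes B_i$ and $B' \otimes B_i$. Taking $B' = B(\infty)$ then yields $B \cong B(\infty)$.

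The main obstacle is the \emph{well-definedness} of $\Phi$ in part (b): the element $(\Psi_i^{B'})^{-1}(\Phi(c)\otimes \ft{i}^{n}b_i)$ must not depend on which index $i$ (among those furnished by (B7)) is used. This requires a compatibility lemma for the pair $\Psi_i, \Psi_j$ with $i\ne j$ --- essentially that the associated ``string retractions'' $c \mapsto (\et{i}^{*})^{\max}c$ and $c \mapsto (\et{j}^{*})^{\max}c$ interact coherently, so that two different unfoldings of $b$ reach the same vector. I would prove this first inside $B(\infty)$, where it can be verified concretely from the explicit $*$-operators and Kashiwara's rank/bilinear-form estimates, and then transfer it to an arbitrary $B$ using the recursion hypothesis. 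This is exactly the content of the technical lemmas preceding Proposition~3.2.3 in \cite{KS}; everything else is bookkeeping by induction on weight.
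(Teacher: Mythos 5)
You are reconstructing a proof that the paper itself never gives: Proposition~\ref{prop_descB} is stated with the attribution ``[\cite{KS} Proposition 3.2.3]'' and used as a black box; there is no in-paper argument to compare against. So the only thing to assess is whether your sketch would stand on its own.

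Your part~(a) is sound --- the Kashiwara embedding $\Psi_i(b) = (\et{i}^{*})^{\max}b \otimes \ft{i}^{\varepsilon_i^*(b)}b_i$ does give a strict embedding with the stated image, and (B1)--(B4),(B7) are then immediate. The trouble is in part~(b). You flag the well-definedness of the recursion (independence of the index $i$ chosen in (B7)) as ``the main obstacle'' and then resolve it by appealing to ``the technical lemmas preceding Proposition~3.2.3 in \cite{KS}.'' That is precisely the content of the proposition you are trying to prove, so as an independent argument it is circular. There is also a subtler unaddressed point: you set $\Phi(b) := (\Psi_i^{B'})^{-1}(\Phi(c)\otimes\ft{i}^n b_i)$, but (B6) only gives a containment $\Psi_i(B) \subset B\times\{\ft{i}^n b_i\}$, not a description of the image, so membership of $\Phi(c)\otimes\ft{i}^n b_i$ in $\operatorname{im}\Psi_i^{B'}$ needs its own inductive argument (your parenthetical about $c$ being ``$*$-reduced at $i$'' gestures at this but does not supply it). And even after fixing a choice rule for $i$, showing $\Phi$ commutes with $\et{j},\ft{j}$ for every $j$ (not only the chosen one) is where the real work lives; your ``routine inductions on weight'' are hiding exactly the compatibilities that Kashiwara--Saito establish via a careful interplay of the two crystal structures on $B(\infty)$. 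In short: the strategy is the right shape, but the sketch does not discharge the hard step, and since the paper itself defers to \cite{KS} here, you should too --- cite the result rather than reprove it, or else fully prove the coherence lemma rather than pointing at it.
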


\section{Module theoretic realizations of certain crystals }
\label{sec_mod_crystals}
%

%
\subsection{The crystal $\cal{B}$}
%

Let $\cal{B}$ denote the set of isomorphism classes of irreducible $R$-modules.
Let $\0$ denote the zero module.

Let $M$ be an irreducible $\Rn{\nu}$-module, so that $[M]\in\cal{B}$.  By abuse of notation, we identify $M$ with $[M]$ in the following definitions. Hence, we are defining operators and functions on $\cal{B} \sqcup \{0\}$ below.

Recall from Section~\ref{sec_define_ei}  the definitions
\begin{align}
  & \et{i}M := \soc e_i M \\
  & \ft{i}M := \cosoc \Ind_{\nu,i}^{\nu+i} M \boxtimes L(i) \\
  & \epsilon_i(M) := \max\{n \geq 0 \mid \et{i}^n M \neq \0 \}
\end{align}
and similarly the $\vee$-versions
\begin{align}
  & \ets{i}M := \sigma^{\ast}(\et{i}(\sigma^{\ast}M)) \\ 
  & \fts{i}M := \sigma^{\ast}(\ft{i}(\sigma^{\ast}M)) 
= \cosoc \Ind_{i, \nu}^{\nu+i} L(i) \boxtimes M, \\
  & \eph(M) := \epsilon_i(\sigma^{\ast}M) 
= \max\{m \geq 0 \mid (\ets{i})^mM \neq \0 \}.
\end{align}

For $\nu=\sum_{i\in I}\nu_i \alpha_i$, $i \in I$ and $M \in \Rn{\nu}\fmod$ set
\begin{equation}
  \wt(M) = -\nu, \qquad \wt_i(M)= \langle h_i, \wt(M)\rangle.
\end{equation}
Set \begin{equation}
  \phi_i(M) = \epsilon_i(M) + \langle h_i, \wt(M) \rangle.
\end{equation}

\begin{prop}
The tuple $(\cal{B},\epsilon_i, \phi_i, \et{i},\ft{i}, \wt )$ defines a crystal.
\end{prop}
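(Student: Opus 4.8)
The plan is to verify the crystal axioms (C1)--(C5) directly from the definitions of $\wt$, $\epsilon_i$, $\phi_i$, $\et{i}$, $\ft{i}$ on $\cal{B} \sqcup \{0\}$, drawing on the properties of the functors $e_i$ and $\ft{i}$ collected in Section~\ref{sec_properties}. Axiom (C1), namely $\phi_i(M) = \epsilon_i(M) + \langle h_i, \wt(M)\rangle$, holds by the very definition of $\phi_i(M)$ set just above the proposition, so there is nothing to prove there. Axiom (C5) is vacuous in our setting: since $\epsilon_i(M) \geq 0$ is always a genuine integer (it is $\max\{n \geq 0 \mid \et{i}^n M \neq \0\}$, and this set is nonempty and bounded because $M$ is finite dimensional and $e_i$ strictly decreases dimension), we have $\phi_i(M) \in \Z$, never $-\infty$, so the hypothesis of (C5) is never met.

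The substance is in (C2), (C3), (C4). For (C3): suppose $\ft{i}M \neq 0$. Since $M$ is irreducible, $\ft{i}M = \cosoc \Ind_{\nu,i}^{\nu+i} M \boxtimes L(i)$ is irreducible by the theorem of \cite{KL} recalled at the end of Section~\ref{sec_define_ei}, and by property (2)(b) of Section~\ref{sec_properties} (applied with $n=1$, $N=M$) we have $\epsilon_i(\ft{i}M) = \epsilon_i(M) + 1$. The weight statement $\wt(\ft{i}M) = \wt(M) - \alpha_i$ is immediate since $\ft{i}M$ lives over $R(\nu+i)$ and $\wt$ is defined as minus the grading vector; then $\phi_i(\ft{i}M) = \epsilon_i(\ft{i}M) + \langle h_i, \wt(\ft{i}M)\rangle = (\epsilon_i(M)+1) + \langle h_i, \wt(M)\rangle - \langle h_i,\alpha_i\rangle = \phi_i(M) + 1 - 2 = \phi_i(M) - 1$ using $\langle h_i,\alpha_i\rangle = 2$. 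For (C2): if $\et{i}M \neq 0$, then $\epsilon_i(M) > 0$, so by property (4) of Section~\ref{sec_properties}, $\et{i}M = \soc(e_iM)$ is irreducible with $\epsilon_i(\et{i}M) = \epsilon_i(M) - 1$; the weight shifts by $+\alpha_i$ since $e_i$ lands over $R(\nu-i)$, and the $\phi_i$ statement follows by the same arithmetic as above. Axiom (C4), that $N = \ft{i}M \iff M = \et{i}N$ for irreducibles, is exactly property (6) of Section~\ref{sec_properties}.

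The main point requiring care — and the step I expect to be the real obstacle — is ensuring that $\et{i}$ and $\ft{i}$, as defined on isomorphism classes of irreducibles, genuinely land in $\cal{B} \sqcup \{0\}$ rather than in some larger set: one must know that whenever these functors produce a nonzero module, that module is irreducible, so that the operations are well-defined on the crystal $\cal{B}$. For $\ft{i}$ this is the irreducibility of $\cosoc$ of the induced module (property (2)(b)); for $\et{i}$ it is the irreducibility of $\soc(e_iM)$ when $\epsilon_i(M) > 0$ (property (4)). Both are cited from \cite{KL, KL2}, so modulo invoking those results the verification is a bookkeeping exercise. I would also remark that all identities are understood up to grading shift, consistent with the convention stated in Section~\ref{sec_properties}, since $\cal{B}$ is the set of isomorphism classes of irreducibles without tracking the shift. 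Assembling these observations completes the proof.
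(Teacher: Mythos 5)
Your proof is correct and takes essentially the same approach as the paper: (C1) is the definition of $\phi_i$, (C2)--(C4) follow from the properties of $\et{i}$, $\ft{i}$ on simple modules collected in Section~\ref{sec_properties} (citing \cite{KL,KL2}), and (C5) is vacuous because $\phi_i$ is always finite. The paper's proof is just terser, deferring to those references without spelling out the $\phi_i$ arithmetic or the well-definedness remark as you do.
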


\begin{proof}
(C1) is the definition of $\phi_i$.  (C2)--(C4) was shown in
\cite{KL}, see Section~\ref{sec_properties}.  Property (C5) is
vacuous as $\phi_i(b)$ is always finite for $b \in \cal{B}$.
\end{proof}

We write $\1\in \cal{B}$ for the class of the trivial $\Rn{\nu}$-module where
 $\nu=\emptyset$ and  $|\nu|=0$.

One of the main theorems of this paper is Theorem~\ref{thm_Binf} that
identifies the crystal $\cal{B}$ as $B(\infty)$.
However we need the many auxiliary results that follow before we can prove this.

%
\subsection{The crystal $\cal{B} \otimes T_{\Lambda}$}
%

Let $M$ be an irreducible $\Rn{\nu}$-module, so $M \otimes
t_{\Lambda}\in \cal{B} \otimes T_\Lambda$. Then
\begin{eqnarray}
  \ep{M \otimes t_{\Lambda}} &=& \ep{M} \nn \\
  \phi_i(M \otimes t_{\Lambda}) &=& \phi_i(M)+\lambda_i \nn \\
 \et{i}(M \otimes t_{\Lambda}) &=& \et{i}M \otimes t_{\Lambda} \nn \\
 \ft{i}(M \otimes t_{\Lambda}) &=& \ft{i}M \otimes t_{\Lambda} \nn \\
 \wt(M \otimes t_{\Lambda}) &=& -\nu+\Lambda. \nn
\end{eqnarray}

%
\subsection{The crystal $\cal{B}^{\Lambda}$} \label{sec-BLambda}
%

Let $\cal{B}^{\Lambda}$ denote the set of isomorphism classes of irreducible $R^{\Lambda}$-modules.  As in the previous section, by abuse of notation we write $\cal{M}$ for $[\cal{M}]$ below.  Define
\begin{eqnarray}
  \et{i}^{\Lambda} \maps \cal{B}^{\Lambda} &\to& \cal{B}^{\Lambda}\sqcup\{\0\} \nn \\
  \cal{M} & \mapsto& \prL \circ \et{i} \circ \infL\cal{M}
  \nn \\
\ft{i}^{\Lambda} \maps \cal{B}^{\Lambda} &\to& \cal{B}^{\Lambda}\sqcup\{\0\} \nn \\
  \cal{M} & \mapsto& \prL \circ \ft{i} \circ \infL \cal{M}
 \nn \\
\epsilon_i^{\Lambda} \maps \cal{B}^{\Lambda} &\to& \Z\sqcup\{-\infty\} \nn \\
  \cal{M} & \mapsto&  \ep{\infL\cal{M}}
\nn \\
  \phi_i^{\Lambda} \maps \cal{B}^{\Lambda} &\to& \Z\sqcup\{-\infty\} \nn \\
  \cal{M} & \mapsto&  \max\{k \in \Z \mid \prL \circ \ft{i}^k \circ \infL \cal{M} \neq \0 \}
\nn \\
  \wt^{\Lambda} \maps \cal{B}^\Lambda &\to& P \nn \\
  \cal{M} &\mapsto& -\nu + \Lambda.
\end{eqnarray}
Note $\epsilon_i^{\Lambda}(\cal{M}) =
\max\{k \in \Z \mid  (\et{i}^{\Lambda})^k\cal{M} \neq \0\}$,
and $0 \leq \phiL(\cal{M})< \infty$.

It is true, but not at all obvious, that with this definition
$\phiL(\cal{M})=\epsilon_i^{\Lambda}(\cal{M})+\langle
h_i,\wt^{\Lambda}\cal{M} \rangle$; see Corollary~\ref{cor_jump}. The
proof that the data
$(\cal{B}^{\Lambda},\epsilon_i^{\Lambda},\phiL,\et{i}^{\Lambda},\ft{i}^{\Lambda},\wt^{\Lambda})$
defines a crystal is delayed until Section~\ref{sec_harvest}.

On the level of sets define a function
\begin{eqnarray}
  \Upsilon \maps \cal{B}^{\Lambda} &\to& \cal{B} \otimes T_{\Lambda} \nn \\
  \cal{M} & \mapsto& \infL \cal{M} \otimes t_{\Lambda}.
\end{eqnarray}
The function $\Upsilon$ is clearly injective and satisfies
\begin{eqnarray} \label{ups1}
  \epsilon_i^{\Lambda}(\cal{M})& =& \epsilon_i(\Upsilon\cal{M}), \\
\Upsilon \et{i}^{\Lambda} \cal{M} &=& \et{i} \Upsilon \cal{M}, \label{ups2}\\
  \Upsilon \ft{i}^{\Lambda} \cal{M} &=&\left\{
\begin{array}{ccc}
  \ft{i}\Upsilon \cal{M} & \quad & \ft{i}^{\Lambda}\cal{M} \neq \0 \\
 \0 & \quad & \ft{i}^{\Lambda}\cal{M}= \0
\end{array}
\right.  \label{ups3} \\
\wt^{\Lambda}(\cal{M})
&=&
\wt(\Upsilon \cal{M})
\label{ups4}.
\end{eqnarray}
Later we will see the relationship between $\phiL(\cal{M})$
and $\phi_i(\infL \cal{M})$.
 Once this relationship is in place (see Corollary~\ref{cor_jump}) it will imply $\Upsilon$ is an embedding of crystals and in particular
that $\cal{B}^{\Lambda}$ is a crystal.  In Section~\ref{sec_harvest} we show that $\cal{B} \cong B(\infty)$ which then identifies $\cal{B}^{\Lambda}$ as the highest weight crystal $B(\Lambda)$.

%
\section{Understanding $\Rnu$-modules and the crystal data of $\cal{B}$}
\label{sec_modules}
%
This section contains an in depth study of simple $\Rnu$-modules and the
functor $\ft{i}$.
In particular, we describe how the quantities $\ephj$, $\epsilon_i$, $\phiL$
change with the application of $\ft{j}$.

Throughout this section we assume $j \neq i$ and set $a=a_{ij}=-\langle h_i,\alpha_j \rangle$.

%
\subsection{Jump} \label{sec_jump}
%

Given an irreducible module $M$, $\prL \ft{i}M$ is either irreducible or zero.
In the following subsection, we determine exactly when the latter occurs.
More specifically, we compare $\eph(M)$ to $\eph(\ft{i}M)$
and compute when the latter quantity ``jumps" by $+1$.
In this case, we show  $\ft{i}M \cong \fts{i}M$.
Understanding exactly when this jump occurs will be  a key ingredient in
constructing the strict embedding of crystals in Section~\ref{sec_embedding}.

One very useful byproduct  of understanding co-induction is that for
irreducible $M$ if we know $\ft{i}M \cong \fts{i}M$ then we can
easily conclude $\ft{i}^mM \cong \Ind M \boxtimes L(i^m) \cong \Ind
L(i^m) \boxtimes M$, not just for $m=1$, but for all $m \geq 1$, and
in particular that the latter module is irreducible.  We will prove
this in Lemma~\ref{lem_jump_tfae} below. While for the main
results of this paper, it suffices to understand exactly when
$\ft{i}M \cong \fts{i}M$, we found it worthwhile to include
Section~\ref{sec_coind} precisely for the sake of a deeper
understanding of $\Ind M \boxtimes L(i)$.

The following proposition is a consequence of Theorem~\ref{thm_coind}.
and the properties listed in Section~\ref{sec_properties}.

\begin{prop} \label{prop_coind}
 Let $M$ be an irreducible $\Rn{\nu}$-module.  Let $n \geq 1$.
Then
\begin{enumerate}
  \item \label{cor_coindp1} $\ft{i}^nM \cong \soc \Indc M \boxtimes L(i^n) \cong \soc \Ind L(i^n) \boxtimes M$.
 \item \label{cor_coindp2} $(\fts{i})^nM \cong \soc \Indc L(i^n) \boxtimes M \cong \soc\Ind M \boxtimes L(i^n)$.
\end{enumerate}
\end{prop}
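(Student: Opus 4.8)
The plan is to deduce everything from Theorem~\ref{thm_coind} together with the properties in Section~\ref{sec_properties}, especially property (2) describing $\Ind_{\nu,ni} N \boxtimes L(i^n)$ and its socle/cosocle structure. First I would observe that by Theorem~\ref{thm_coind}\eqref{thm_coindp2} there are homogeneous isomorphisms $\Ind_{\nu,ni}^{\nu+ni} M \boxtimes L(i^n) \cong \Indc_{ni,\nu}^{\nu+ni} L(i^n) \boxtimes M$ and $\Ind_{ni,\nu}^{\nu+ni} L(i^n) \boxtimes M \cong \Indc_{\nu,ni}^{\nu+ni} M \boxtimes L(i^n)$; this immediately pairs up the two induced and the two coinduced modules appearing in each claim, so it suffices to understand the socle of $\Ind L(i^n)\boxtimes M$ (equivalently $\Indc M \boxtimes L(i^n)$) and to identify it with $\ft{i}^n M$, and dually for the $\vee$-versions.

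The key computation for \eqref{cor_coindp1} is to apply $\sigma^\ast$ to the cosocle statement in property (2)(b) of Section~\ref{sec_properties}. That property says $\cosoc \Ind_{\nu,ni} M \boxtimes L(i^n) \cong \ft{i}^n M$ up to grading shift. Applying $\sigma^\ast$ and using that $\sigma^\ast$ is a contravariant-on-structure but exact duality which swaps socle and cosocle, and that $\sigma^\ast(\Ind_{\nu,ni} M \boxtimes L(i^n)) \cong \Ind_{ni,\nu}(\sigma^\ast L(i^n)) \boxtimes (\sigma^\ast M) \cong \Ind_{ni,\nu} L(i^n)\boxtimes (\sigma^\ast M)$ (using $\sigma^\ast L(i^n)\cong L(i^n)$ since $L(i^n)$ is the unique simple), I get $\soc \Ind_{ni,\nu} L(i^n)\boxtimes (\sigma^\ast M) \cong \sigma^\ast \ft{i}^n M$. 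Replacing $M$ by $\sigma^\ast M$ throughout and recalling $\fts{i}^n(\sigma^\ast M) = \sigma^\ast \ft{i}^n M$... wait — more directly: run the same argument but keep $M$ fixed, so $\soc \Ind_{ni,\nu} L(i^n)\boxtimes M \cong \sigma^\ast(\ft{i}^n(\sigma^\ast M)) = \fts{i}^n M$, which looks like claim \eqref{cor_coindp2} not \eqref{cor_coindp1}. So the cleaner route is: for \eqref{cor_coindp2}, directly apply $\sigma^\ast$ to (2)(b) as above to get $\soc \Ind L(i^n)\boxtimes M \cong \fts{i}^n M$ and $\soc \Indc L(i^n)\boxtimes M \cong \fts{i}^n M$ via Theorem~\ref{thm_coind}\eqref{thm_coindp2}; and for \eqref{cor_coindp1}, apply the $\vee$-symmetric form of (2)(b) — i.e. the statement $\cosoc \Ind_{ni,\nu} L(i^n)\boxtimes M \cong \fts{i}^n M$, obtained from (2)(b) by $\sigma$-twisting — then apply $\sigma^\ast$ to that to land on $\soc \Ind_{\nu,ni} M\boxtimes L(i^n) \cong \ft{i}^n M$, and again feed in Theorem~\ref{thm_coind}\eqref{thm_coindp2} to replace one $\Ind$ by $\Indc$. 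I would organize this as: (i) record the $\sigma^\ast$-compatibility of $\Ind$/$\Indc$ and of $L(i^n)$; (ii) apply $\sigma^\ast$ to the cosocle statement to get the socle statement for one side; (iii) invoke Theorem~\ref{thm_coind}\eqref{thm_coindp2} to get the other side.

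The main obstacle I anticipate is bookkeeping of grading shifts and of the direction of the $\sigma^\ast$-duality — in particular making sure that "up to grading shift" is genuinely all that is lost and that $\sigma^\ast$ does interchange socle and cosocle on finite-dimensional modules (which follows since $\sigma^\ast$ is an exact anti-self-equivalence fixing isomorphism classes of simples up to shift, by $\sigma$ being an involution). A secondary subtlety is justifying $\sigma^\ast L(i^m) \cong L(i^m)$ and $\sigma^\ast(\Ind_{\nu,\eta} A\boxtimes B) \cong \Ind_{\eta,\nu}(\sigma^\ast B)\boxtimes(\sigma^\ast A)$; the latter is essentially the definition of $\sigma_{\unu}$ together with the fact that $\sigma$ conjugates $1_{\nu,\eta}$ to $1_{\eta,\nu}$, so it should be quick. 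Everything else is a formal consequence of material already in the excerpt.
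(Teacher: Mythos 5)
Your approach breaks on a false premise: $\sigma^\ast$ does not interchange socle and cosocle. Since $\sigma$ is an algebra automorphism (an involution) of $R(\nu)$, the functor $\sigma^\ast$ is a \emph{covariant} exact autoequivalence of $R(\nu)\fmod$: a module map $f\colon M\to N$ is literally the same underlying linear map $\sigma^\ast M\to \sigma^\ast N$ (because $f(\sigma(r)u)=\sigma(r)f(u)$), so $\sigma^\ast$ preserves both socle and cosocle. Running your computation correctly, applying $\sigma^\ast$ to property (2)(b) of Section~\ref{sec_properties} and replacing $M$ by $\sigma^\ast M$ gives $\cosoc \Ind_{ni,\nu} L(i^n)\boxtimes M \cong (\fts{i})^n M$, which is just the definition of $(\fts{i})^n M$ as a cosocle and needs no coinduction at all; feeding in Theorem~\ref{thm_coind}\eqref{thm_coindp2} then identifies the \emph{cosocle} of $\Indc M\boxtimes L(i^n)$ with $(\fts{i})^n M$, not the \emph{socle} with $\ft{i}^n M$ as the proposition claims. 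These are genuinely different facts: by the Jump Lemma~\ref{lem_jump_tfae}, $\ft{i}^n M\cong(\fts{i})^n M$ holds only when $\jump_i(M)=0$, and for general $M$ the socle and cosocle of $\Ind L(i^n)\boxtimes M$ are nonisomorphic. No amount of $\sigma$-twisting or invoking Theorem~\ref{thm_coind} will convert the cosocle statement into the socle one.

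The paper obtains the socle statement from the correct source, namely the right adjunction \eqref{right_adjoint}. Setting $m=\ep{M}$ and $N=\et{i}^m M$ one has $\Delta_{i^m}M\cong N\boxtimes L(i^m)$, and the adjunction then produces a nonzero, hence injective, map $M\hookrightarrow \Indc N\boxtimes L(i^m)$; a bound on $\epsilon_i$ of composition factors (using Theorem~\ref{thm_coind} to compare with $\Ind L(i^m)\boxtimes N$) shows this image is the whole socle, and likewise $\ft{i}^n M\cong\soc\Indc N\boxtimes L(i^{m+n})$, from which \eqref{cor_coindp1} follows. Theorem~\ref{thm_coind}\eqref{thm_coindp2} and $\sigma$-symmetry are used, as in your outline, to pass between $\Ind$ and $\Indc$ and between $\ft{i}$ and $\fts{i}$, but the key step that lands you \emph{inside} a coinduced module (rather than as a quotient of an induced one) is the adjunction \eqref{right_adjoint}, and your proposal is missing it.
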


\begin{proof}
 Let $m = \ep{M}$ and $N = \et{i}^m M$.  Recall from Section~\ref{sec_properties}
\begin{equation}
  \Res_{\nu-mi,mi}M \cong N \boxtimes L(i^m).
\end{equation}
We thus have a nonzero map  $\Res_{\nu-mi,mi}M \to N \boxtimes
L(i^m)$, hence a nonzero and thus injective map
\begin{equation}
  M \to \Indc N \boxtimes L(i^m).
\end{equation}

Repeating the standard arguments from \cite{GV,KL} we see $M \cong
\soc \Indc N \boxtimes L(i^m)$ and that all other composition
factors have $\epsilon_i$ strictly smaller that $m$. Likewise we
have $\ft{i}^n M \cong \soc \Indc N \boxtimes L(i^{m+n})$ and deduce
statement \eqref{cor_coindp1}, using Theorem~\ref{thm_coind}.  The
proof of \eqref{cor_coindp2} is similar.
\end{proof}

It is necessary to understand how $\eph$ changes with application of $\ft{j}$.
\begin{prop} \label{prop_K10.1.3}
Let $M$ be an irreducible $\Rn{\nu}$-module.
\begin{enumerate}[i)]
  \item For any $i \in I$, either $\eph(\ft{i}M)=\eph(M)$ or $\eph(M)+1$.
  \item For any $i,j \in I$ with $i \neq j$, we have $\eph(\tilde{f}_jM)=\eph(M)$ and $\ep{\fts{j}M}=\ep{M}$.
\end{enumerate}
\end{prop}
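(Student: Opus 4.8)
The plan is to use the adjunction and character machinery set up in Sections~\ref{sec_indres}--\ref{sec_define_ei} together with Proposition~\ref{prop_coind} and the key properties of Section~\ref{sec_properties}. First I would reduce everything to statements about $\fts{i}$ and $\eph$ by applying the involution $\sigma^{\ast}$: since $\fts{i}M = \sigma^{\ast}(\ft{i}(\sigma^{\ast}M))$ and $\eph(M) = \epsilon_i(\sigma^{\ast}M)$, part~i) is equivalent to showing that for irreducible $M$, $\ep{\ft{i}M}$ equals either $\ep{M}$ or $\ep{M}+1$; call this $(\star)$. For $(\star)$, recall from Section~\ref{sec_properties}(2) that with $N = \et{i}M$ one has $\ft{i}M \cong \cosoc\Ind N'\boxtimes L(i)$ for an appropriate $N'$, or more usefully that $\ft{i}M$ is a composition factor of $\Ind M\boxtimes L(i)$ with $\ep{\ft{i}M} = \ep{M}+1$ when we induce up by $L(i^n)$ with $n\ge 1$ applied to $M$ itself rather than to $\et{i}^\epsilon M$. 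I would instead argue directly: $\Delta_i(\ft{i}M)$ surjects onto $M\boxtimes L(i)$ (from the cosocle description and Frobenius reciprocity), which gives $\ep{\ft{i}M}\ge \ep{M}+1$ is false in general, so one needs the finer bound. The cleanest route is: by Section~\ref{sec_properties}(4), $\et{i}(\ft{i}M)$ is irreducible with $\ep{\et{i}\ft{i}M} = \ep{\ft{i}M}-1$; and by property~(6), $\et{i}\ft{i}M \cong M$, so $\ep{\ft{i}M} = \ep{M}+1$. Wait — that would make i) trivial with no alternative, so the subtlety must be that the relevant quantity is $\eph$, not $\ep$, and $\eph \ne \ep$ in general. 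So $(\star)$ genuinely must be proved via characters: I would compute $\chr(\Delta_{i^{n}}\ft{i}M)$ for $n = \eph(M)$ and $n = \eph(M)+1$ using property~(1) and the shuffle formula, and compare with $\chr(\Ind M\boxtimes L(i))$, showing the top $i$-string on the $\sigma$-twisted side can grow by at most one.

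For part~ii) with $i\neq j$, the claim $\eph(\ft{j}M) = \eph(M)$ should follow from a Mackey-type / commuting-functors argument: the functors $e_i^\vee$ (or $\Res_{i,\nu-i}$) and $\ft{j}$ essentially commute when $i\neq j$ because inducing an $L(j)$ on one side does not interfere with stripping $i$'s from the other side. Concretely, I would show $\Res_{i,\nu+j-i}(\ft{j}M)$ has the same "top $i$-part" as $\Res_{i,\nu-i}M$ by using that $\ft{j}M = \cosoc\Ind_{\nu,j} M\boxtimes L(j)$, applying $\Res_{i,-}$, and invoking the Mackey filtration (stated in Section~\ref{sec_indres}): the only surviving subquotient relevant to the socle computation is the one where the extracted $i$ comes from $M$, since $i\ne j$. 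This yields $\eph(\ft{j}M)=\eph(M)$, and the symmetric statement $\ep{\fts{j}M}=\ep{M}$ is obtained by applying $\sigma^{\ast}$ throughout (using $\fts{j}M = \sigma^{\ast}\ft{j}\sigma^{\ast}M$ and $\ep{-} = \eph{\sigma^{\ast}(-)}$... being careful that $\sigma^{\ast}$ swaps $\ep{}$ and $\eph{}$).

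I would expect part~i), the "jump by at most one" for the $\sigma$-twisted function $\eph$, to be the main obstacle, because it is exactly the place where the interaction between the two ends of the strand diagrams (left $i$-strings vs.\ right $i$-strings) becomes delicate, and a naive adjunction argument only bounds things on one side. The resolution should come from combining Proposition~\ref{prop_coind} — which tells us $(\fts{i})^nM \cong \soc\Ind M\boxtimes L(i^n)$ and simultaneously $\ft{i}^nM \cong \soc\Ind L(i^n)\boxtimes M$ — with a character computation: applying $\Delta_{i^n}$ to $\ft{j}M$ and tracking, via the shuffle product and property~(1), precisely how many $i$'s can be peeled off the far end. The finiteness $\eph(M)\in\Z$ (from the last property in Section~\ref{sec_properties}, or from nilpotency of the $x_r$) guarantees all these quantities are well-defined integers, so the argument reduces to the combinatorics of shuffles of $i$-strings through a single $j$, which is controlled by the cubic relation~\eqref{relation_cubic} and is an honest but finite check.
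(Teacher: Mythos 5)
Your opening $\sigma^{\ast}$-reduction of part~i) is incorrect, and you half-notice this yourself: applying $\sigma^{\ast}$ turns the claim about $\eph$ of $\ft{i}M$ into a claim about $\epsilon_i$ of $\fts{i}(\sigma^{\ast}M)$ — that is, about $\epsilon_i(\fts{i}N)$, not $\epsilon_i(\ft{i}N)$ — so part~i) is emphatically \emph{not} equivalent to $\ep{\ft{i}M}\in\{\ep{M},\ep{M}+1\}$. (The latter is always an equality $\ep{\ft{i}M}=\ep{M}+1$, by property~(2)(b) of Section~\ref{sec_properties}, which is exactly what made you suspect something was off.) After flagging the problem you retreat to ``compute characters and do a finite check,'' but you never pin down the argument; that is a real gap.

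The paper's proof is a short two-sided bound that your proposal circles around without landing on. For any $j$ (including $j=i$) one has the surjection $\Ind M\boxtimes L(j)\twoheadrightarrow\ft{j}M$; Frobenius reciprocity gives an embedding $M\boxtimes L(j)\hookrightarrow\Res_{\nu,j}\ft{j}M$, and restricting further to the first $\eph(M)$ strands shows $\eph(\ft{j}M)\ge\eph(M)$. On the other side, the Shuffle Lemma — the fact that $\chr(\Ind M\boxtimes L(j))=\chr(M)\shuffle\chr(L(j))$ — bounds the length of the initial $i$-string of any constituent by $\eph(M)+\eph(L(j))=\eph(M)+\delta_{ij}$, so $\eph(\ft{j}M)\le\eph(M)+\delta_{ij}$. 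When $j=i$ this sandwich gives part~i); when $j\ne i$ it collapses to an equality, giving the first half of part~ii), and applying $\sigma$ (correctly, as an automorphism of $R(\nu)$ exchanging $\epsilon_i\leftrightarrow\eph$ and $\ft{j}\leftrightarrow\fts{j}$) gives $\ep{\fts{j}M}=\ep{M}$. Your proposed Mackey-filtration argument for part~ii) is not wrong in spirit, but it is heavier machinery than is needed, and you don't carry it through; the missing idea throughout is simply the combination of the Frobenius lower bound with the Shuffle Lemma upper bound.
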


\begin{proof}
Consider $\Ind M \boxtimes L(j)\twoheadrightarrow \ft{j}M$, so by
Frobenius reciprocity $\eph(\ft{j}M)\geq \eph(M)$.  On the other
hand, by the Shuffle Lemma
\begin{equation}
  \eph(\ft{j}M) \leq \eph(M)+\eph(L(j)) = \eph(M)+\delta_{ij}.
\end{equation}
In the case $i=j$ we then get $\eph(M) \leq \eph(\ft{j}M)\leq
\eph(M)+1$ and in the case $i \neq j$ $\eph(M) \leq
\eph(\ft{j}M)\leq \eph(M)$.  Applying the automorphism $\sigma$ in
the case $i \neq j$ also yields the symmetric statement
$\ep{\fts{j}M}=\ep{M}$.
\end{proof}

\begin{defn}
Let $M$ be an irreducible $\Rn{\nu}$-module and let $\Lambda \in P^+$.  Define
\begin{equation}
  \phiL(M) = \max\{k \in \Z \mid \prL \ft{i}^k M \neq \0 \}.
\end{equation}
where we take the convention that $\ft{i}^k=\et{i}^{-k}$ when $k<0$,
and that $\max \emptyset = -\infty$.
\end{defn}

Note that $\prL M \neq \0$ if and only if $\phiL(M) \geq 0$ for all
$i \in I$ by Proposition~\ref{prop_pr},
or even for a single $i \in I$ by
Proposition~\ref{prop_K10.1.3}.  Hence, by allowing $\phiL$ to take
negative values, we can use $\phiL$ to detect which irreducible
$\Rn{\nu}$-modules are in fact $\RnuL$-modules.  Thus when $\phiL(M)
\geq 0$ it agrees with $\phiL(\prL M)$ as defined in
Section~\ref{sec-BLambda} which is manifestly nonnegative.  By abuse
of notation we call both functions $\phiL$.

Observe that 
\begin{equation}
  \phiL(\ft{i}M) = \phiL(M)-1.
\end{equation}
We warn the reader that with this extended definition of $\phiL$ on $G_0(R)$, it
not only takes negative values but can be equal to $-\infty$.
For example, take $\Lambda = \Lambda_i$, and let $j \neq i$.
Then $\et{i} L(j) = \0$ and we see $\prL \ft{i}^k L(j)  = \0$ for
all $k \in \Z$ by Proposition~\ref{prop_K10.1.3}.
Hence $\phiL(L(j)) = - \infty$.
However, this is no call for alarm, as by
Proposition~\ref{prop_pr},
we can always find a larger $\Lambda$ so that $\prL M \neq \0$ for any given $M$.

\begin{defn}
Let $M$ be a simple $\Rn{\nu}$-module and let $i\in I$.  Then
\begin{equation}
  \jump_i(M) := \max\{ J \geq 0 \mid \eph(M)=\eph(\ft{i}^J M)\}.
\end{equation}
\end{defn}
While it is clear $\jump_i (M) \geq 0$, it is less clear why $\jump_i(M) < \infty$.
We show this in Proposition~\ref{prop_jump_tfae2} \eqref{jumpwt}.

In the following Lemma we collect a long list of useful characterizations of when $\jump_i(M) = 0$.  We find it convenient to be overly thorough below and furthermore  to give  this lemma the name ``Jump Lemma" because we use it repeatedly throughout the paper.

We remind the reader that the isomorphisms below are homogeneous but
not necessarily degree preserving.

\begin{lem}[Jump Lemma]\label{lem_jump_tfae}
  Let $M$ be irreducible.  The following are equivalent:
\begin{enumerate}[1)]
  \item \label{tfaep1} $\jump_i(M)=0$
  \item \label{tfaep2}$\ft{i}M\cong \fts{i}M$
  \item \label{tfaep3} $\ft{i}^mM\cong (\fts{i})^mM$ for all $m \geq 1$
  \item \label{tfaep4} $\Ind M \boxtimes L(i) \cong \Ind L(i) \boxtimes M$
\item \label{tfaep5} $\Ind M \boxtimes L(i^m) \cong \Ind L(i^m)\boxtimes M$ for all $m\geq
 1$
  \item \label{tfaep6}$\ft{i}M \cong \Ind M \boxtimes L(i)$
\makebox[3.3in][r]{
$6'$) \,
$\fts{i}M\cong \Ind L(i) \boxtimes M$
}
  \item \label{tfaep7} $\Ind M \boxtimes L(i)$ is irreducible
\makebox[3.2in][r]{
$7'$) \,
$\Ind L(i) \boxtimes M$ is irreducible
}
  \item \label{tfaep8}
\makebox[2.8in][l]{
$\Ind M \boxtimes L(i^m)$ is irreducible
}
\makebox[2.8in][l]{ $8'$) \, $\Ind L(i^m) \boxtimes M$ is irreducible }
\\
\makebox[3.2in][l]{ for all $m \geq1$  } \makebox[3in][l]{for all $m
\geq 1$ }
  \item \label{tfaep9} $\eph(\ft{i}M)=\eph(M)+1$
\makebox[3.2in][r]{
$9'$) \,
$\ep{\fts{i}M} = \ep{M}+1$
}
  \item \label{tfaep10} $\jump_i(\ft{i}^m M)=0$ for all $m \geq 0$
  \item \label{tfaep11} $\eph(\ft{i}^mM) = \eph(M)+m$ for all $m \geq 1$
\end{enumerate}
\end{lem}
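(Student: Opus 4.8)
The plan is to prove the equivalence by assembling a cycle of implications among the listed conditions (and their primed companions), drawing only on three earlier inputs: Proposition~\ref{prop_K10.1.3}(i), which forces $\eph(\ft{i}M)$ to equal $\eph(M)$ or $\eph(M)+1$; Proposition~\ref{prop_coind}, which identifies $\ft{i}^{n}M\cong\soc\bigl(\Ind L(i^{n})\boxtimes M\bigr)$ and $(\fts{i})^{n}M\cong\soc\bigl(\Ind M\boxtimes L(i^{n})\bigr)$ up to grading shift; and the list of Section~\ref{sec_properties}, chiefly item~2, which says $\cosoc\bigl(\Ind M\boxtimes L(i^{n})\bigr)\cong\ft{i}^{n}M$ occurs with composition multiplicity one while every other factor $L$ has $\epsilon_i(L)<\epsilon_i(M)+n$, together with its $\sigma$-twisted version: $\cosoc\bigl(\Ind L(i^{n})\boxtimes M\bigr)\cong(\fts{i})^{n}M$ occurs with multiplicity one, $\eph\bigl((\fts{i})^{n}M\bigr)=\eph(M)+n$, and every other factor $L$ has $\eph(L)<\eph(M)+n$. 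Throughout I will use the elementary remark that a finite-dimensional module whose socle and head are isomorphic simple modules, that simple appearing with composition multiplicity one, must be irreducible.

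First I would dispose of the routine links. From Proposition~\ref{prop_K10.1.3}(i) and the definition of $\jump_i$ one gets $\ref{tfaep1}\Leftrightarrow\ref{tfaep9}$; since $\Ind M\boxtimes L(i)$ has irreducible cosocle $\ft{i}M$, $\ref{tfaep6}\Leftrightarrow\ref{tfaep7}$; the case $m=1$ yields $\ref{tfaep5}\Rightarrow\ref{tfaep4}$, $\ref{tfaep3}\Rightarrow\ref{tfaep2}$, $\ref{tfaep8}\Rightarrow\ref{tfaep6}$ and $\ref{tfaep11}\Rightarrow\ref{tfaep9}$; and telescoping Proposition~\ref{prop_K10.1.3}(i) along $M,\ft{i}M,\ft{i}^{2}M,\dots$ gives $\ref{tfaep10}\Leftrightarrow\ref{tfaep11}$. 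Next I would show $\ref{tfaep2}\Leftrightarrow\ref{tfaep9}$: if $\ft{i}M\cong\fts{i}M$ then $\eph(\ft{i}M)=\eph(\fts{i}M)=\eph(M)+1$ by the $\sigma$-twisted version of Section~\ref{sec_properties}(2); conversely, if $\eph(\ft{i}M)=\eph(M)+1$, then in $\Ind L(i)\boxtimes M$ the socle is $\ft{i}M$ (Proposition~\ref{prop_coind}) while every composition factor other than the cosocle $\fts{i}M$ has $\eph<\eph(M)+1$, so the socle is forced to be $\fts{i}M$, i.e.\ $\ft{i}M\cong\fts{i}M$. And $\ref{tfaep2}\Leftrightarrow\ref{tfaep7}$: if $\ft{i}M\cong\fts{i}M$, then $\Ind M\boxtimes L(i)$ has socle $\fts{i}M$ (Proposition~\ref{prop_coind}) isomorphic to its cosocle $\ft{i}M$, the latter of multiplicity one, so by the remark above it is irreducible; if $\Ind M\boxtimes L(i)$ is irreducible it equals both $\cosoc=\ft{i}M$ and $\soc=\fts{i}M$.

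It then remains to bootstrap to general $m$. For $\ref{tfaep2}\Rightarrow\ref{tfaep4}$ I would apply $\ref{tfaep2}\Leftrightarrow\ref{tfaep7}$ to $M$ and to $\sigma^{\ast}M$ to conclude that $\Ind M\boxtimes L(i)$ and $\Ind L(i)\boxtimes M$ are irreducible, hence equal to $\ft{i}M$ and $\fts{i}M$, which are isomorphic. For $\ref{tfaep4}\Rightarrow\ref{tfaep5}$ I would induct on $m$, using transitivity of induction, the isomorphism $L(i^{m})\cong\Ind L(i^{m-1})\boxtimes L(i)$ up to grading shift (Section~\ref{sec_Lim}), and $\ref{tfaep4}$ to commute a single $L(i)$-strand past $M$. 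For $\ref{tfaep5}\Rightarrow\ref{tfaep3}$ and $\ref{tfaep8}$: a homogeneous isomorphism $\Ind M\boxtimes L(i^{m})\cong\Ind L(i^{m})\boxtimes M$ matches socles, which by Proposition~\ref{prop_coind} are $(\fts{i})^{m}M$ and $\ft{i}^{m}M$, whence $\ft{i}^{m}M\cong(\fts{i})^{m}M$; then $\Ind M\boxtimes L(i^{m})$ has socle $(\fts{i})^{m}M\cong\ft{i}^{m}M=\cosoc$, of multiplicity one, hence is irreducible. For $\ref{tfaep3}\Rightarrow\ref{tfaep11}$: $\eph(\ft{i}^{m}M)=\eph\bigl((\fts{i})^{m}M\bigr)=\eph(M)+m$ by the $\sigma$-twisted version of Section~\ref{sec_properties}(2). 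These implications close every cycle. The primed statements $6',7',8',9'$ follow by applying $\sigma^{\ast}$: since $\ft{i}\sigma^{\ast}=\sigma^{\ast}\fts{i}$, $\fts{i}\sigma^{\ast}=\sigma^{\ast}\ft{i}$ and $\sigma^{\ast}$ interchanges $\eph$ with $\epsilon_i$, the condition $\jump_i=0$ holds for $M$ exactly when it holds for $\sigma^{\ast}M$, and each primed item is the unprimed one read for $\sigma^{\ast}M$ (equivalently, one repeats the socle arguments with $\Ind L(i^{m})\boxtimes M$ in place of $\Ind M\boxtimes L(i^{m})$).

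The hardest step will be the passage from socle information to cosocle information, i.e.\ $\ref{tfaep9}\Rightarrow\ref{tfaep2}$ and its iterate $\ref{tfaep5}\Rightarrow\ref{tfaep8}$: a priori $\eph$ controls only socles, whereas the multiplicity-one and $\epsilon_i$-bound assertions of Section~\ref{sec_properties}(2) live on the cosocle side, so one must play the two presentations $\Ind M\boxtimes L(i^{m})\cong\Indc L(i^{m})\boxtimes M$ of Theorem~\ref{thm_coind} off against the socle computations of Proposition~\ref{prop_coind}, and it is essential that $\ft{i}^{m}M$ (equivalently $(\fts{i})^{m}M$) occur with composition multiplicity one. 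A secondary nuisance is keeping the involutions $\sigma^{\ast}$ and the grading shifts straight; as the statement warns, all isomorphisms here are homogeneous but not necessarily degree-preserving, so I would suppress the shifts throughout.
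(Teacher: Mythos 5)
Your proposal is correct and follows essentially the same cycle of implications the paper uses, relying on the same three inputs (Proposition~\ref{prop_K10.1.3}, Proposition~\ref{prop_coind}, and the multiplicity-one/$\epsilon_i$-bound facts of Section~\ref{sec_properties}). The one substantive variation is your treatment of $\eqref{tfaep9}\Rightarrow\eqref{tfaep2}$ (and of $\eqref{tfaep2}\Rightarrow\eqref{tfaep4}$): the paper compares $\chr(\Ind M\boxtimes L(i))$ with $\chr(\Ind L(i)\boxtimes M)$ at $q=1$ and invokes injectivity of the ungraded character map to match composition factors, whereas you avoid characters entirely by reading off $\soc(\Ind L(i)\boxtimes M)\cong\ft{i}M$ directly from Proposition~\ref{prop_coind} and letting the $\eph$-bound on the non-cosocle factors do the work (and, for $\eqref{tfaep2}\Rightarrow\eqref{tfaep4}$, applying $\eqref{tfaep2}\Leftrightarrow\eqref{tfaep7}$ to both $M$ and $\sigma^{\ast}M$). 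Both are valid and about equally short; your version is a bit cleaner in that it stays inside the socle/cosocle picture without switching to the Grothendieck-group comparison.
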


\begin{proof}
Pairs of ``symmetric" conditions labelled by $X)$ and $X')$ are clearly equivalent to each
other by applying the automorphism $\sigma$, except for $ \eqref{tfaep9} \Leftrightarrow
\eqref{tfaep9}'$ which is slightly less obvious.
We will show $\eqref{tfaep2}\Leftrightarrow \eqref{tfaep9}$ which then gives
$\eqref{tfaep2} \Leftrightarrow \eqref{tfaep9}'$ by $\sigma$-symmetry.

By Proposition~\ref{prop_K10.1.3}, we have $\eph(M) \leq
\eph(\ft{i}M) \leq \eph(M)+1$. This yields $\eqref{tfaep1}
\Leftrightarrow \eqref{tfaep9}$. Suppose \eqref{tfaep9} holds, i.e.
$\eph(\ft{i}M)= \eph(M)+1=\eph(\fts{i}M)$.  By the Shuffle Lemma,
\begin{equation}
  \chr(\Ind M \boxtimes L(i))\mid_{q=1} = \chr(\Ind L(i) \boxtimes
  M)\mid_{q=1},
\end{equation}
so by the injectivity of the character map and the discussion of
Section~\ref{sec_ungraded}, they have same composition factors. But
$\fts{i}M$ is the unique composition factor of $\Ind L(i) \boxtimes
M$ with largest $\eph$, forcing $\ft{i}M \cong \fts{i}M$
which yields \eqref{tfaep2}.  The converse of
$\eqref{tfaep2} \To \eqref{tfaep9}$ is obvious.  So we have
$\eqref{tfaep2} \Leftrightarrow \eqref{tfaep9}$ and by
$\sigma$-symmetry also $\eqref{tfaep2} \Leftrightarrow
\eqref{tfaep9}'$.

Next suppose $\eqref{tfaep2}$, i.e. $\ft{i}M \cong \fts{i}M$.  This implies
\begin{equation}
 \cosoc \Ind M \boxtimes L(i) \cong \soc \Indc L(i) \boxtimes M \cong \soc \Ind M \boxtimes
L(i)
\end{equation}
by Proposition~\ref{prop_coind}.  Furthermore from
Section~\ref{sec_properties}, $\ft{i}M$ is not only the cosocle, but
occurs with multiplicity one in $\Ind M \boxtimes L(i)$. For it to
also be
 the socle forces $\Ind M \boxtimes L(i)$ to be irreducible, yielding \eqref{tfaep7}.
Clearly $\eqref{tfaep7} \Leftrightarrow  \eqref{tfaep6}$.  Further
$\eqref{tfaep7} \To \eqref{tfaep4}$ as $\chr(\Ind M \boxtimes L(i))
= \chr(\Ind L(i) \boxtimes M)$ at $q=1$.

Given \eqref{tfaep4} an inductive argument and transitivity of
induction gives \eqref{tfaep5}, that $\Ind M \boxtimes L(i^m) \cong
\Ind L(i^m) \boxtimes M$ for all $m \geq 1$.  Thus, $ \ft{i}^mM
\cong \cosoc \Ind M \boxtimes L(i^m) \cong \cosoc \Ind L(i^m)
\boxtimes M \cong (\fts{i})^mM$, yielding \eqref{tfaep3} and thus
\eqref{tfaep11} by then evaluating $\eph$.  That $\eqref{tfaep11}
\To \eqref{tfaep3}$ is an identical argument to $\eqref{tfaep9} \To
\eqref{tfaep2}$.

Now suppose \eqref{tfaep3} holds.  Again by Proposition~\ref{prop_coind}
\begin{equation}
  \cosoc \Ind M \boxtimes L(i^m) \cong \soc \Indc L(i^m) \boxtimes M \cong \soc \Ind M
\boxtimes L(i^m)
\end{equation}
so as above $\Ind M \boxtimes L(i^m)$ is irreducible, yielding \eqref{tfaep8},
and hence it is isomorphic to $\ft{i}^mM$.

It is trivial to check $\eqref{tfaep8} \To \eqref{tfaep7}
\To \eqref{tfaep4} \To \eqref{tfaep2}$ and $\eqref{tfaep6} \Leftrightarrow
\eqref{tfaep6}'$, $\eqref{tfaep7} \Leftrightarrow \eqref{tfaep7}'$, $\eqref{tfaep8}
\Leftrightarrow \eqref{tfaep8}' $.
Finally, since $\eqref{tfaep1}\Leftrightarrow \eqref{tfaep11}$ we certainly have
$\eqref{tfaep1}\Leftrightarrow \eqref{tfaep10}$.  This completes the proof.
\end{proof}

The following proposition gives alternate characterizations of $\jump_i(M)$.
Although we do not prove that all five hold at this time, it
is worth stating them all together now.

\begin{prop} \label{prop_jump_tfae}
Let $M$ be a simple $\Rn{\nu}$-module and let $i\in I$. Then the following hold.
\begin{enumerate}[i)]
  \item \label{XX2}  $\jump_i(M) = \min \{ J \geq 0 \mid \ft{i}(\ft{i}^J M) \cong \fts{i}(\ft{i}^J M)\}$
  \item \label{jumpphi}
  If $\phiL(M) > - \infty$, then $\jump_i(M)=\phiL(M) +
\eph(M)-\lambda_i$, where $\Lambda = \sum_{i} \lambda_i \Lambda_i \in P^+$.
\end{enumerate}
\end{prop}

\begin{proof}
We first prove \eqref{XX2}.  Let $J = \jump_i(M)$ and $N = \ft{i}^J
M$. Then by the maximality of $J$, $\eph(\ft{i}N) =
\eph(N)+1=\eph(M)+1$.  By the Jump Lemma~\ref{lem_jump_tfae}, $\ft{i}N \cong
\fts{i}N$, i.e. $\ft{i}(\ft{i}^JM) \cong \fts{i}(\ft{i}^JM)$.
Further, if $0 \leq m < J$ then
\begin{equation}
  \eph(\fts{i}\ft{i}^m M) = 1 + \eph(\ft{i}^m M) = 1+\eph(M) > \eph(M) = \eph(\ft{i}^{m+1}M)
\end{equation}
so $\fts{i}\ft{i}^mM \ncong \ft{i}\ft{i}^m M$.  This yields \eqref{XX2}.

Now we prove \eqref{jumpphi}.  Again let $J=\jump_i(M)$.  First,
suppose $\phiL(M) \geq 0$.  Then, as $\prL \ft{i}^{\phiL(M)}M \neq
\0$, it follows from Proposition~\ref{prop_K10.1.3} and
Proposition~\ref{prop_pr} that $\prL M \neq \0$.  Hence $\lambda_i \geq
\eph(M)=\eph(\ft{i}^JM)$.  But by
\eqref{tfaep11} of the Jump Lemma, 
$\eph(\ft{i}^{J+m}M)=\eph(M)+m$ for all $m \geq 0$.

Set $m=\lambda_i-\eph(M)$. Then by the maximality of $J$,
$\eph(\ft{i}^{J+m}M)=\lambda_i$ but
$\eph(\ft{i}^{J+m+1}M)=\lambda_i+1$.  And by
Proposition~\ref{prop_K10.1.3} $\ephj(\ft{i}^{J+m}M) = \ephj(M) \le
\lambda_j$.  In other words $\prL \ft{i}^{J+m}M \neq \0$ but $\prL
\ft{i}^{J+m+1}M =\0$, so by definition
$\phiL(M)=J+m=\jump_i(M)+\lambda_i-\eph(M)$.  Equivalently
$\jump_i(M)-\phiL(M)+\eph(M)-\lambda_i$.

Second, if  $- \infty < \phiL(M)<0$, let $k=-\phiL(M)$. Note
$\eph(\et{i}^kM)=\lambda_i$ but $\eph(\et{i}^{k-1}M)=\lambda_i+1$ so
that $\jump_i(\et{i}^kM)=0$ and hence $\jump_i(M)=0$ too, by
characterization \eqref{tfaep10} of the Jump Lemma. 
As before, $\eph(M)=\eph(\ft{i}^k\et{i}^kM)=\eph(\et{i}^kM)+k=\lambda_i-\phiL(M)$.
So again $\jump_i(M)=0=\phiL(M)+\eph(M)-\lambda_i$.
\end{proof}

It is clear from the Proposition~\ref{prop_jump_tfae} that
\begin{equation} \label{jumprecursion}
  \jump_i(\ft{i}M) = \max\{0, \jump_i(M)-1\}.
\end{equation}
We continue our list of characterizations of $\jump_i$ in a separate proposition below, whose proof is postponed to the end of this Section~\ref{sec_phi-lambda}.

\begin{prop} \label{prop_jump_tfae2}
Let $M$ be a simple $\Rn{\nu}$-module and let $i\in I$. Then the following hold.
\begin{enumerate}[i)]\addtocounter{enumi}{2}
\item
\label{XX1} $\jump_i(M) = \max\{ J \geq 0 \mid
\epsilon_i(M)=\epsilon_i((\fts{i})^J M)\}$
\item
\label{XX3}  $\jump_i(M) = \min \{ J \geq 0 \mid \ft{i}((\fts{i})^J M) \cong \fts{i}((\fts{i})^J M)\}$
\item
\label{jumpwt}  $\jump_i(M)=\epsilon_i(M) + \eph(M)+\wt_i(M)$.
\end{enumerate}
\end{prop}

We must delay the proof of \eqref{jumpwt} until we have proved
Theorem~\ref{thm_epsilonifj} and consequently
Corollary~\ref{cor_jump}.

The equivalence of Proposition~\ref{prop_jump_tfae} \eqref{XX2} to the definition of $\jump_i$ is $\sigma$-symmetric to the equivalence of $\eqref{XX1} \Leftrightarrow \eqref{XX3}$, and \eqref{XX2} is $\sigma$-symmetric to \eqref{XX3}. So once we have \eqref{jumpwt} whose right-hand side is a $\sigma$-symmetric expression, we will have all \eqref{XX1}--\eqref{jumpwt} of Proposition~\ref{prop_jump_tfae2}.

\begin{rem}\label{rem_Omega}
Given $\Lambda, \Omega \in P^+$ and irreducible modules $A$ and $B$ with $\prL A \neq \0$, $\prO A \neq \0$, $\prL B \neq \0$, $\prO B \neq \0$, then
$\phiL(A)-\phiL(B) = \phiO(A)-\phiO(B)$ since by Proposition~\ref{prop_jump_tfae}~\eqref{jumpphi} we compute
\begin{align}
  \phiL(A)-\phiL(B) &= (\jump_i(A)-\eph(A)+\lambda_i)-
  (\jump_i(B)-\eph(B)+\lambda_i) \\
  &= \jump_i(A)-\jump_i(B)+\eph(B)-\eph(A) \\
  &= \phiO(A)-\phiO(B).
\end{align}
\end{rem}

%
\subsection{Serre relations} \label{sec_Serre}
%

In this section we discuss the quantum Serre relations~\eqref{Serrea} which are certain  (minimal) relations that hold
among the operators $e_i$ on $G_0(R)$.  We refer the reader to \cite{KL2}, where
they prove similar relations (the vanishing of alternating sums in
$K_0(R)$) hold on a certain family of projective modules in their
Corollary 7. Then by the  obvious generalization to the
symmetrizable case of Corollary 2.15 of \cite{KL}
we have
\begin{equation}
  \sum_{r=0}^{a+1}(-1)^r e_i^{(a+1-r)}e_je_i^{(r)}[M]=0
\end{equation}
for all $M \in R(\nu)\dmod$ with $|\nu|=a+1$,
where $a = - \langle h_i, \alpha_j \rangle$, and hence for all
$[M]\in G_0(R)$, showing the operator
\begin{equation} \label{Serrea}
  \sum_{r=0}^{a+1}(-1)^r e_i^{(a+1-r)}e_je_i^{(r)}=0.
\end{equation}
Recall the divided powers $e_i^{(r)}$ are given by
$e_i^{(r)}[M]=\frac{1}{[r]_i^!}[e_i^rM]$.

Furthermore, when $c\leq a$ the operator
\begin{equation} \label{Serrec}
  \sum_{r=0}^{c}(-1)^r e_i^{(c-r)}e_je_i^{(r)}
\end{equation}
is never the zero operator on $G_0(R)$ by the quantum Gabber-Kac
Theorem~\cite[Theorem 33.1.3]{Lus4} and the work of \cite{KL,KL2}, which essentially computes the kernel of the map from the free algebra on the
generators $e_i$ to $G_0(R)$, see Remark~\ref{rem_nosmaller}.

In Section~\ref{sec_genrel} below, we give an alternate proof that the
quantum Serre relation~\eqref{Serrea} holds
by examining the structure of all simple $\Rn{(a+1)i + j}$-modules.
We further construct simple $\Rn{ci + j}$-modules that are witnesses to the
nonvanishing of~\eqref{Serrec} when $c \le a$.
In the following remark, we give a sample argument of how understanding
the simple $\Rnu$-modules for a fixed $\nu$ gives a relation among the operators
$e_i$ on $G_0(R)$.
Although we only give it in detail for a degree 2 relation among the $e_i$, it can
be easily extended to higher degree relations.

\begin{rem}\label{rem_zero}
Suppose we have explicitly constructed all simple $\Rn{i+j}$-modules
$M$, and have verified $(e_ie_j-e_je_i)[M]=0$ for all such $M$. (We
know this is the case whenever $\langle i,j \rangle = 0$.) We will
call this a degree 2 relation in the $e_i$'s for obvious reasons. We
easily see the operator $e_ie_j-e_je_i$ is zero on
$G_0(\Rn{\mu}\fmod)$ not just for $\mu = i+j$ but for any $\nu$ with
$|\mu|= 0, 1, 2$.
%
Now consider arbitrary $\nu$ with $|\nu| > 2$. Let $M$ be any finite
dimensional $\Rnu$-module. We can write $[\Res_{\nu - \mu, \mu} M] =
\sum_h [A_h \boxtimes B_h]$ for some simple $\Rn{\mu}$-modules $B_h$
with $|\mu|=2$, or the restriction is zero.  Then
\begin{align}
(e_ie_j-e_je_i)[M] &= \sum_{ \mu : |\mu|=2 } \sum_h [A_h \boxtimes (e_ie_j-e_je_i)B_h ]
\\ &= \sum \sum [A_h \boxtimes \0] = 0.
\end{align}
Hence  $e_ie_j-e_je_i$ is zero as an operator on $G_0(R)$. However,
this is a relation of the form~\eqref{Serrec} with $c=0$. By the
discussion above on the minimality of the quantum Serre relation,
this forces $a_{ij}=0$. Similarly, if one shows the expression
\eqref{Serrea} in the quantum Serre relation vanishes on all
irreducible $\Rn{(a+1)i +j}$-modules, the same argument shows the
relation holds on all $G_0(R)$ and that $a_{ij} \leq a$.
\end{rem}

%
\subsection[The structure theorems]{The Structure Theorems for simple
$\Rn{ci+j}$-modules} \label{sec_structure}
%

In this section we describe the structure of all simple $\Rn{ci+j}$-modules.
We will henceforth refer to Theorems~\ref{thm_simplec}, ~\ref{thm_circleL} as
the Structure  Theorems for simple  $\Rn{ci+j}$-modules.
Throughout this section we assume $j \neq i$ and set
$a=a_{ij}=-\langle h_i,\alpha_j \rangle$.

In the theorems below we introduce the notation
$$\Lj{i^{c-n}ji^n} \quad \text{and} \quad \Lc{n} \define  \Lj{i^{a-n}ji^n}$$
for the irreducible $\Rn{ci+j}$-modules
when $c \le a$.
They are characterized by $\epsilon_i\left(\Lj{i^{c-n}ji^n}\right)=n$.

\begin{thm} \label{thm_simplec}
Let $c\leq a$ and let $\nu= ci+j$. Up to isomorphism,
there exists a unique irreducible $\Rn{\nu}$-module denoted $\Lj{i^{c-n}ji^n}$ with
\begin{equation} \label{Ln}
\epsilon_i\left(\Lj{i^{c-n}ji^n}\right)=n
\end{equation}
for each $n$ with $0 \leq n \leq c$.  Furthermore,
\begin{equation}\label{Lcn}
  \eph(\Lj{i^{c-n}ji^n})=c-n
\end{equation}
and
\begin{equation} \label{charL}
 \chr(\Lj{i^{c-n}ji^n}) = [c-n]_i![n]_i!i^{c-n}ji^n .
\end{equation}
In particular, in the Grothendieck group $e_{i}^{(c-s)}e_je_i^{(s)} [ \Lj{i^{c-n}ji^n} ] =0$ unless $s=n$.
\end{thm}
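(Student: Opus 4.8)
\emph{The plan} is to classify the simple $\Rn{\nu}$-modules for $\nu=ci+j$ by the value of $\epsilon_i$ (reducing to the case $\epsilon_i=0$), to pin down those using a dimension count together with an elementary corner-algebra computation, and then to extract \eqref{Ln}, \eqref{Lcn}, \eqref{charL} and the concluding vanishing from these together with the functor $\Delta_{i^n}$ and the involution $\sigma$. Note $|\nu|=c+1$ and $\seq(\nu)=\{i^{c-k}ji^k\mid 0\le k\le c\}$. First I would record that for any simple $M\in\Rn{mi+j}\dmod$ one has $0\le\ep{M}\le m$, and that writing $n=\ep{M}$, the properties listed in Section~\ref{sec_properties} give that $\et{i}^nM$ is simple over $\Rn{(m-n)i+j}$ with $\epsilon_i=0$ and that $M\iso\ft{i}^n\et{i}^nM$ up to shift; so everything hinges on the simple $\Rn{mi+j}$-modules $N$ with $\epsilon_i=0$. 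Such an $N$ has $e_iN=\0$, which means no $\ii$ with $1_{\ii}N\ne\0$ ends in $i$; but $i^mj$ is the only sequence in $\seq(mi+j)$ not ending in $i$, so $N=1_{i^mj}N$ is a module over the corner $1_{i^mj}\Rn{mi+j}1_{i^mj}$. By Remark~\ref{rem_basis} the stabiliser of $i^mj$ in $S_{m+1}$ is $S_m=\langle s_1,\dots,s_{m-1}\rangle$, and checking the defining relations on generators identifies this corner with $\Rn{mi}\otimes\Bbbk[x_{m+1}]$, the extra variable $x_{m+1}$ being central of positive degree; since $x_{m+1}$ then acts nilpotently on finite-dimensional graded modules, the corner has a unique finite-dimensional graded simple up to shift, of graded dimension $[m]_i^!$. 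Hence there is at most one simple $\Rn{mi+j}$-module with $\epsilon_i=0$, and when it exists its character is $[m]_i^!\,i^mj$ for a suitable choice of shift.

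\emph{The role of $c\le a$.} Let $s_m$, resp.\ $t_m$, denote the number of isomorphism classes up to shift of simple $\Rn{mi+j}$-modules, resp.\ of those with $\epsilon_i=0$; then $t_m\le1$ by the previous step, while the reduction gives $s_m=\sum_{l=0}^m t_l$. The categorification theorem of \cite{KL,KL2} gives $s_m=\dim_{\Q(q)}\mathbf{f}_{mi+j}$, and by Remark~\ref{rem_nosmaller} (the quantum Gabber--Kac theorem, whose smallest relation already involves $a+1$ copies of $\theta_i$) this dimension is $m+1$ whenever $m\le a$. Telescoping forces $t_l=1$ for every $l\le a$. So for $c\le a$ and each $0\le n\le c$ there is a unique simple $\Rn{ci+j}$-module with $\epsilon_i=n$, which I would name $\Lj{i^{c-n}ji^n}\define\ft{i}^n\,\Lj{i^{c-n}j}$; these are pairwise non-isomorphic and exhaust the simples. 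This establishes existence, uniqueness and \eqref{Ln}. (For $c>a$ the count drops and the simple lifting the corner simple at $i^{c}j$ acquires larger support, which is the situation treated by Theorem~\ref{thm_circleL}.)

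\emph{Character and $\eph$.} Applying property~2(b) of Section~\ref{sec_properties} to $N=\Lj{i^{c-n}j}$ (which has $\epsilon_i=0$) gives $\Delta_{i^n}\Lj{i^{c-n}ji^n}\iso\Lj{i^{c-n}j}\boxtimes L(i^n)$, and with property~1 this shows $\chr(\Lj{i^{c-n}ji^n},\,i^{c-n}ji^n)=[c-n]_i^![n]_i^!$ while the coefficient of $i^{c-k}ji^k$ vanishes for $k>n$; since $\ep{\Lj{i^{c-n}ji^n}}=n$, the character is supported on $\{i^{c-k}ji^k\mid 0\le k\le n\}$ with top coefficient (at $k=n$) equal to $[c-n]_i^![n]_i^!$. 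Now $\sigma$ is a graded algebra automorphism reversing sequences, so $\sigma^{\ast}$ permutes the simples, say $\sigma^{\ast}\Lj{i^{c-n}ji^n}\iso\Lj{i^{c-n'}ji^{n'}}$; the top index of the left-hand side is $c-k_{\min}(n)$, where $k_{\min}(n)$ is the smallest index occurring in $\chr(\Lj{i^{c-n}ji^n})$ (obtained by reversing), while that of the right-hand side is $n'$, so $n'=c-k_{\min}(n)$. Since $n\mapsto n'$ is a bijection of $\{0,\dots,c\}$ and $n'(n)=c-k_{\min}(n)\ge c-n$ (because $k_{\min}(n)\le n$), summing over $n$ forces $n'(n)=c-n$, hence $k_{\min}(n)=n$, for all $n$. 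Therefore $\chr(\Lj{i^{c-n}ji^n})$ is concentrated on $i^{c-n}ji^n$ with coefficient $[c-n]_i^![n]_i^!$, which is \eqref{charL}, and $\sigma^{\ast}\Lj{i^{c-n}ji^n}\iso\Lj{i^{n}ji^{c-n}}$, whose $\epsilon_i$ is the length $c-n$ of the final $i$-run of $i^{n}ji^{c-n}$; thus $\eph(\Lj{i^{c-n}ji^n})=\ep{\sigma^{\ast}\Lj{i^{c-n}ji^n}}=c-n$, which is \eqref{Lcn}.

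\emph{The final statement, and the main obstacle.} For $e_i^{(c-s)}e_je_i^{(s)}[\Lj{i^{c-n}ji^n}]$, recall that $e_i$ and $e_j$ are exact and act on $G_0(R)$ by, on characters, stripping a terminal $i$, resp.\ $j$, and that $e_i^{(s)}[X]=\tfrac{1}{[s]_i^!}[e_i^sX]$. Feeding in $\chr(\Lj{i^{c-n}ji^n})=[c-n]_i^![n]_i^!\,i^{c-n}ji^n$: if $s>n$ then $e_i^s\Lj{i^{c-n}ji^n}=\0$; if $s<n$ then $e_i^{(s)}[\Lj{i^{c-n}ji^n}]$ is supported on $i^{c-n}ji^{n-s}$, which does not end in $j$, so $e_je_i^{(s)}[\Lj{i^{c-n}ji^n}]=0$; and if $s=n$ one computes $e_i^{(c-n)}e_je_i^{(n)}[\Lj{i^{c-n}ji^n}]=[\1]$. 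Because the character map is injective on $G_0(R)$, this proves the vanishing for $s\ne n$. The hard part is the concentration of the character established in the third step (equivalently, that $\eph=c-n$): the one-sided bounds coming from $\ft{i}^n$ and from $\Delta_{i^n}$ are routine, but locating the bottom of the character is where the hypothesis $c\le a$ is genuinely used, entering through the count $\dim_{\Q(q)}\mathbf{f}_{ci+j}=c+1$ of the second step.
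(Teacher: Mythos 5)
Your proof is correct, but it takes a genuinely different route from the paper's. The paper argues by induction on $c$: it analyzes the character of $\Ind \Lj{i^{c+1-n}ji^{n-1}}\boxtimes L(i)$, and in the ``bad'' case propagates isomorphisms $\Ind L(i)\boxtimes\Lj{\cdot}\cong\Ind\Lj{\cdot}\boxtimes L(i)$ through all $n$ (and through $\sigma$), so that by Remark~\ref{rem_zero} an alternating sum of degree $c+1\le a$ in the operators $e_i^{(r)},e_j$ would vanish on all of $G_0(R)$, contradicting the minimality of the quantum Serre relations (Remark~\ref{rem_nosmaller}). You instead reduce to simples with $\epsilon_i=0$ via the $\et{i}/\ft{i}$ bijection, identify such a simple as a module over the corner $1_{i^mj}\Rn{mi+j}1_{i^mj}\cong \Rn{mi}\otimes\Bbbk[x_{m+1}]$ (which forces its character to be $[m]_i^!\,i^mj$ and gives uniqueness), obtain existence for $m\le a$ from the count $\rank_{\mc{A}}K_0(\Rn{mi+j}\pmod)=\dim_{\Q(q)}\mathbf{f}_{mi+j}=m+1$, and then pin down the support of the character, hence $\eph$, by the $\sigma$-averaging trick $\sum_n n'(n)=\sum_n(c-n)$. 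Both proofs rest on the same deep input -- the categorification theorem of \cite{KL,KL2} together with Theorem~\ref{thm_quantumGK} -- but you isolate it in a single dimension count, whereas the paper routes it through the (non)vanishing of \eqref{Serrec}/\eqref{Serrea} on $G_0(R)$; your corner computation also yields the $\epsilon_i=0$ characters for every $m$, not just $m\le a$, and is close in spirit to the paper's second, generators-and-relations proof in Section~\ref{sec_genrel}, while the paper's induction keeps all steps inside the module-theoretic toolkit (Frobenius reciprocity, Shuffle Lemma, multiplicity-one of the cosocle) that is reused in Theorem~\ref{thm_circleL} and afterwards. Two steps worth making explicit in your write-up: that the number of graded simples up to shift equals $\rank_{\mc{A}}K_0$ (via the perfect pairing of $K_0$ with $G_0$, i.e.\ simples form a basis of $G_0$), and that a simple $N$ with $e_iN=\0$ restricts to a \emph{simple} module over the idempotent truncation $1_{i^mj}\Rn{mi+j}1_{i^mj}$, with $N\mapsto 1_{i^mj}N$ injective on such simples -- both standard, but they carry the weight of your uniqueness step.
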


\begin{proof}
The proof is by induction on $c$.  The case $c=0$ is obvious;
there exists a unique irreducible $R(j)$-module $L(j)$ and it
obviously satisfies \eqref{Ln}--\eqref{charL}.

The case $c=1$ is also straightforward.  Since $c \leq a$, and so $a\neq 0$, we compute $\Ind L(i)\boxtimes L(j)$ is reducible, but has irreducible cosocle.  Let
\begin{align}
  \Lj{ij} &= \cosoc \Ind L(i) \boxtimes L(j) \\
  \Lj{ji} &= \cosoc \Ind L(j) \boxtimes L(i).
\end{align}
Note each of the above modules is one-dimensional and satisfies \eqref{Ln}--\eqref{charL}.  Observe if \eqref{Ln} did not hold for either module, then
by the Jump Lemma~\ref{lem_jump_tfae}
\begin{equation}
  \Ind L(i) \boxtimes L(j) \cong \Ind L(j) \boxtimes L(i)
\end{equation}
and this module would be irreducible.  Hence for all $\Rn{i+j}$-modules $M$ we would have
\begin{equation}
  (e_ie_j-e_je_i)[M]=0
\end{equation}
and in fact this relation would then hold for any $\nu$ and any irreducible $\Rn{\nu}$-module $M$ via Remark~\ref{rem_zero}.
But by \eqref{Serrec} this would imply $a=0$, a contradiction.

Now assume the theorem holds for some fixed $c \leq a$ and we will show it also holds for $c+1$ so long as $c+1 \leq a$.  Let $N$ be an irreducible $\Rn{(c+1)i+j}$-module with $\ep{N}=n$.

Suppose $n>0$.  If in fact $n=0$ consider instead ${n}^\vee=\eph{N}$
which cannot also be 0 and perform the following argument applying
the automorphism $\sigma$ everywhere.
Observe any other
module $N'$ such that $\ep{N'} = n$ has $\et{i}N' \iso \et{i} N$,
forcing $N' \iso N$, which gives us the uniqueness.
Note $\et{i}N$ is an
$\Rn{ci+j}$-module with $\ep{\et{i}N}=n-1$ so by the inductive
hypothesis $\et{i}N= \Lj{i^{c+1-n}ji^{n-1}}$.
We have a surjection (up to grading shift)
\begin{equation}
  \Ind \Lj{i^{c+1-n}ji^{n-1}} \boxtimes L(i) \twoheadrightarrow N.
\end{equation}
Since $N=\cosoc \Ind \Lj{i^{c+1-n}ji^{n-1}} \boxtimes L(i)$, by
Frobenius reciprocity, the Shuffle Lemma, and the fact that $L(i^m)$
is irreducible with character $[m]_i^!i^m$, either we have
\begin{equation}
  \chr(N) = [c+1-n]_i^![n]_i^!i^{c+1-n}ji^{n}
\end{equation}
or
\begin{eqnarray}
  \chr(N) &=& [c+1-n]_i^![n]_i^!i^{c+1-n}ji^{n}+q^{-(\alpha_i,\alpha_j)}[c+2-n]_i^![n-1]_i^!i^{c+2-n}ji^{n-1} \\
&=&\chr(\Ind \Lj{i^{c+1-n}ji^{n-1}} \boxtimes L(i)).
\end{eqnarray}
In the former case, $N$ satisfies \eqref{charL} and of course also \eqref{Lcn}.  In the latter case, by the injectivity of the character map,
 we must have isomorphisms $N \cong \Ind \Lj{i^{c+1-n}ji^{n-1}} \boxtimes L(i)$ and in fact
\begin{equation} \label{badL}
  \Ind \Lj{i^{c+1-n}ji^{n-1}} \boxtimes L(i)
\cong \Ind L(i) \boxtimes \Lj{i^{c+1-n}ji^{n-1}}.
\end{equation}
Next we will show that if \eqref{badL} holds for this $n$, then it holds for all $1 \leq n \leq c$.

Let $M = \cosoc \Ind L(i) \boxtimes \Lj{i^{c-n}ji^{n}}$ which is irreducible.  By the Shuffle Lemma, either $\ep{M}=n$ or $\ep{M}=n+1$.  If $\ep{M}=n$, then by uniqueness
part of the inductive hypothesis  $\et{i}M\cong \et{i}N$ and so $M \cong N$.  But this is impossible as $i^{c+2-n}ji^{n-1}$ can never be a constituent of $\chr(M)$.  So we must have $\ep{M}=n+1$.  Repeating the same analysis of characters as above we must have
\begin{equation}
  M \cong \Ind L(i) \boxtimes \Lj{i^{c-n}ji^{n}} \cong \Ind \Lj{i^{c-n}ji^{n}} \boxtimes L(i).
\end{equation}
Continuing in this manner, we deduce
\begin{equation}\label{worseL}
  \Ind L(i) \boxtimes \Lj{i^{c-g}ji^{g}} \cong \Ind \Lj{i^{c-g}ji^{g}}\boxtimes L(i)
\end{equation}
for all $n-1\leq g \leq c$.

We may repeat the same argument applying the automorphism $\sigma$ everywhere.  In other words consider $\eph(N)=c+2-n$ and start with
\begin{equation}
  M'= \cosoc \Ind \Lj{i^{c+2-n}ji^{n-2}} \boxtimes L(i)
\end{equation}
which will force $\eph(M')= c+3-n$ and
\begin{equation}
\Ind
  \Lj{i^{c+2-n}ji^{n-2}} \boxtimes L(i) \cong \Ind L(i) \boxtimes \Lj{i^{c+2-n}ji^{n-2}}.
\end{equation}
Continuing  as before yields isomorphisms \eqref{worseL} for $n-1 >g \geq0$,
in other words for all $g$.

Under the original assumption that the $\Rn{(c+1)i+j}$-module $N$
does not satisfy \eqref{charL}, we have shown that every irreducible
$\Rn{(c+1)i+j}$-module $A$ satisfies
\begin{equation}
  A \cong \Ind L(i) \boxtimes B \cong \Ind B \boxtimes L(i)
\end{equation}
for some irreducible $\Rn{ci+j}$-module $B$, and furthermore we have computed $\chr(A)$.

On closer examination of these characters, we see
\begin{equation}
  \sum_{s=0}^{c+1}(-1)^s e_i^{(c+1-s)}e_je_i^{(s)}[A] =0
\end{equation}
for all such $A$.
But then an argument similar to that in Remark~\ref{rem_zero} shows
\begin{equation}
  \sum_{s=0}^{c+1}(-1)^s e_i^{(c+1-s)}e_je_i^{(s)}[C] =0
\end{equation}
for all irreducible $\Rn{\nu}$-modules $C$ for any $\nu \in \N[I]$.
So by \eqref{Serrec}, \eqref{Serrea} we would get $a\leq c$,
contradicting $c+1\leq a$.

So it must be that all irreducible $\Rn{(c+1)i+j}$-modules satisfy
\eqref{Ln}, \eqref{Lcn}, and \eqref{charL}.
\end{proof}

In the previous theorem we introduced the notation $\Lj{i^{c-n}ji^{n}}$ for the unique
 simple $\Rn{ci+j}$-module with $\epsilon_i=n$ when $c\leq a$.
 Theorem~\ref{thm_circleL} below extends this uniqueness to $c \ge a$.
Recall that in the special case that $c=a$, we denote
$$\Lc{n} =\Lj{i^{a-n}ji^{n}}.$$
The following theorem motivates why we distinguish the special case $c=a$.

\begin{thm}\label{thm_circleL}
 Let $0 \leq n \leq a$.
\begin{enumerate}[i)]
\item \label{circleL1}The module
\begin{equation} \label{icommutesL}
  \Ind L(i^m) \boxtimes \Lc{n} \cong \Ind \Lc{n} \boxtimes L(i^m)
\end{equation}
is irreducible for all $m \geq 0$.

\item \label{circleL2}Let $c \geq a$.  Let $N$ be an irreducible $\Rn{ci+j}$-module with $\ep{N}=n$. Then $c-a \leq n \leq c$ and
\begin{equation}
  N \cong \Ind \Lc{n-(c-a)}\boxtimes L(i^{c-a}).
\end{equation}
\end{enumerate}
\end{thm}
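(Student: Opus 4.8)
The plan is to prove the first assertion and then to deduce the second by induction on $c$, using the first as an input. Throughout recall that $j\neq i$ is fixed, $a=a_{ij}$, and that isomorphisms of simples are homogeneous but need not preserve degree. For $m=0$ the first assertion is trivial, and for $m\geq1$ the Jump Lemma (Lemma~\ref{lem_jump_tfae}) shows that ``$\Ind\Lc{n}\boxtimes L(i^m)$ is irreducible for all $m\geq1$'' and ``$\Ind\Lc{n}\boxtimes L(i^m)\cong\Ind L(i^m)\boxtimes\Lc{n}$ for all $m\geq1$'' are both equivalent to the single condition $\jump_i(\Lc{n})=0$; so the task is to prove $\jump_i(\Lc{n})=0$. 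First I would identify $\Lc{n}$ with $\ft{i}^n\fts{i}^{a-n}L(j)$: the module $\fts{i}^{a-n}L(j)$ is an irreducible $\Rn{(a-n)i+j}$-module with $\eph(\fts{i}^{a-n}L(j))=\eph(L(j))+(a-n)=a-n$ (Section~\ref{sec_properties}), hence equal to $\Lj{i^{a-n}j}$ by Theorem~\ref{thm_simplec} and so has $\epsilon_i=0$; applying $\ft{i}^n$ yields an irreducible $\Rn{ai+j}$-module with $\epsilon_i=n$, which is $\Lc{n}$ by Theorem~\ref{thm_simplec}. Iterating the recursion \eqref{jumprecursion} then gives $\jump_i(\Lc{n})=\max\{0,\jump_i(L(j))-a\}$, so the first assertion reduces to the single inequality $\jump_i(L(j))\leq a$.

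To prove $\jump_i(L(j))\leq a$, suppose for contradiction that $\jump_i(L(j))\geq a+1$, i.e.\ $\eph(\ft{i}^{a+1}L(j))=\eph(L(j))=0$. Then (applying $\sigma$) no sequence of $\ft{i}^{a+1}L(j)$ begins with $i$, so $\chr(\ft{i}^{a+1}L(j))$ is supported on the single sequence $ji^{a+1}$; comparing with $\Delta_{i^{a+1}}\ft{i}^{a+1}L(j)\cong L(j)\boxtimes L(i^{a+1})$ (Section~\ref{sec_properties}) pins the coefficient to $[a+1]_i^!$, so up to a grading shift $\chr(\fts{i}^{a+1}L(j))=[a+1]_i^!\,i^{a+1}j$ and $\epsilon_i(\fts{i}^{a+1}L(j))=0$. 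Now evaluate the quantum Serre operator \eqref{Serrea} on $[\fts{i}^{a+1}L(j)]\in G_0(R)$: since $e_i$ annihilates this class only the $r=0$ term survives, and a short character computation gives $e_j[\fts{i}^{a+1}L(j)]=[L(i^{a+1})]$ and then $e_i^{(a+1)}[L(i^{a+1})]=[\1]$. So the identically zero operator \eqref{Serrea} sends $[\fts{i}^{a+1}L(j)]$ to a nonzero multiple of $[\1]$ --- a contradiction. Hence $\jump_i(L(j))\leq a$, which proves the first assertion.

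For the second assertion I would induct on $c\geq a$. The base case $c=a$ is Theorem~\ref{thm_simplec}: every irreducible $\Rn{ai+j}$-module has $\epsilon_i=n$ for a unique $n\in\{0,\dots,a\}$ and equals $\Lj{i^{a-n}ji^n}=\Lc{n}=\Ind\Lc{n}\boxtimes L(i^{0})$. For $c>a$, let $N$ be irreducible with $\epsilon_i(N)=n$; automatically $n\leq c$. If $n=0$, then $\chr(N)$ is supported on the sequence $i^cj$, which by a standard restriction/Frobenius argument forces $N$ to be the cosocle of $\Ind_{ci,j}L(i^c)\boxtimes L(j)$, i.e.\ $N\cong\fts{i}^cL(j)$; but $\epsilon_i(\fts{i}^cL(j))=\eph(\ft{i}^cL(j))\geq1$ since $\jump_i(L(j))\leq a<c$ by the first assertion, contradicting $\epsilon_i(N)=0$. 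Hence $n\geq1$, and $\et{i}N$ is an irreducible $\Rn{(c-1)i+j}$-module with $\epsilon_i=n-1$. By the inductive hypothesis $(c-1)-a\leq n-1\leq c-1$, giving $c-a\leq n\leq c$, and $\et{i}N\cong\Ind\Lc{n'}\boxtimes L(i^{c-1-a})$ with $n'=n-(c-a)\in\{0,\dots,a\}$; by the first assertion this module is irreducible, hence equals its cosocle $\ft{i}^{c-1-a}\Lc{n'}$. Since $\epsilon_i(N)=n>0$ and $\epsilon_i(\ft{i}\et{i}N)=n>0$, the injectivity of $\et{i}$ on simples with positive $\epsilon_i$ (Section~\ref{sec_properties}) gives $N\cong\ft{i}\et{i}N\cong\ft{i}^{c-a}\Lc{n'}$, which by the first assertion once more is $\Ind\Lc{n'}\boxtimes L(i^{c-a})=\Ind\Lc{n-(c-a)}\boxtimes L(i^{c-a})$, as required.

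The hard part will be the inequality $\jump_i(L(j))\leq a$. This is a non-vanishing statement, and I do not expect any soft argument with the Kashiwara operators to reach it; it must be forced by the minimality of the quantum Serre relations, i.e.\ by the quantum Gabber--Kac theorem, used here through the identical vanishing of the operator \eqref{Serrea} together with Remark~\ref{rem_nosmaller}. Everything else is bookkeeping: translating the first assertion into a statement about $\jump_i$ through the Jump Lemma, the identification $\Lc{n}\cong\ft{i}^n\fts{i}^{a-n}L(j)$ via Theorem~\ref{thm_simplec}, the character computations, and the inductive reconstruction of the simple $\Rn{ci+j}$-modules from their $\et{i}$-socles using the properties collected in Section~\ref{sec_properties}.
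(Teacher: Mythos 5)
Your proof is correct, and it takes a genuinely different route through the same essential tools. Both your argument and the paper's hinge on the quantum Serre relation \eqref{Serrea} (hence on the quantum Gabber--Kac theorem, Remark~\ref{rem_nosmaller}), but you organize the work differently. The paper proves the $m=1$ case of \eqref{icommutesL} directly: it applies \eqref{Serrea} to $[\ft{i}\Lc{n}]$, uses the Shuffle Lemma to see that only the $s=n$ and $s=n+1$ terms can be nonzero, and deduces $e_i^{(a+1-n)}e_je_i^{(n)}[\ft{i}\Lc{n}]\neq 0$, hence $\eph(\ft{i}\Lc{n})=\eph(\Lc{n})+1$; the Jump Lemma then gives all $m\geq1$. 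You instead use the Jump Lemma to reduce all of part (i) to the single inequality $\jump_i(L(j))\leq a$, via the identification $\Lc{n}\cong\ft{i}^n\fts{i}^{a-n}L(j)$ and the recursion \eqref{jumprecursion}, and prove that inequality by applying \eqref{Serrea} once to $[\fts{i}^{a+1}L(j)]$ under a contradiction hypothesis (the character pins everything down and the Serre sum collapses to $[\1]\neq 0$). This buys a cleaner statement of what the Serre relation is actually doing: it bounds the jump of $L(j)$. For part (ii), your handling of the $n=0$ case also differs: the paper applies \eqref{Serrea} directly to $[N]$, whereas you identify $N$ with $\fts{i}^c L(j)$ by a Frobenius/character argument and invoke the already-established bound $\jump_i(L(j))\leq a< c$. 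The inductive step is otherwise the same. Note that your argument, like the recursion \eqref{jumprecursion} you iterate, depends on the paper's assertion that \eqref{jumprecursion} follows from parts (ii) and (iv) of Proposition~\ref{prop_jump_tfae} (the only parts proved before the Structure Theorems); as long as one accepts that assertion, your proof is complete.
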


\begin{proof}
We first prove \eqref{icommutesL} for $m=1$, from which it will
follow for all $m$ by the Jump Lemma~\ref{lem_jump_tfae}.  Let
$M=\ft{i}\Lc{n} = \cosoc \Ind \Lc{n} \boxtimes L(i)$, which is
irreducible. Note $\ep{M}=n+1$ and by the Shuffle Lemma
\begin{equation}  \label{Serre1}
e_i^{(a-n)}e_je_i^{(n+1)}[M] \neq 0
\end{equation}
but
\begin{equation}  \label{Serre2}
e_i^{(a+1-s)}e_je_i^{(s)}[M] = 0
\end{equation}
unless $s=n+1$ or $s=n$.  But the Serre relations \eqref{Serrea} imply the following operator is identically zero:
\begin{equation}
\sum_{s=0}^{a+1}(-1)^s e_i^{(a+1-s)}e_je_i^{(s)} = 0.
\end{equation}
In particular,
\begin{eqnarray}
0&=&\sum_{s=0}^{a+1}(-1)^s e_i^{(a+1-s)}e_je_i^{(s)}[M] \nn \\
&\refequal{\eqref{Serre2}}& (-1)^n e_i^{(a+1-n)}e_je_i^{(n)}[M]
+ (-1)^{n+1}e_i^{(a-n)}e_je_i^{(n+1)}[M],
\end{eqnarray}
from which we conclude, by \eqref{Serre1}, that
\begin{equation}
  e_i^{(a+1-n)}e_je_i^{(n)}[M] \neq 0.
\end{equation}

This implies
\begin{equation}
a-n+1= \eph{M}=\eph(\ft{i}\Lc{n})=\eph(\Lc{n})+1
\end{equation}
so that by the Jump Lemma
$\ft{i}\Lc{n} \cong \fts{i}\Lc{n}$,  and
consequently part \eqref{circleL1} of the theorem also holds for all $m\geq 1$.
(The case $m=0$ is vacuously true.)


For part \eqref{circleL2}, we induct on $c\geq a$, the case $c=a$ following directly from Theorem~\ref{thm_simplec}.  Now assume the statement for general $c>a$ and consider an irreducible $\Rn{(c+1)i+j}$-module $N$ such that $\ep{N}=n$. If $n=0$, then clearly $e_i^{(c+1)}e_j[N] \neq 0$ so also $e_{i}^{(a+1)}e_j[N] \neq 0$, which by the Serre relations \eqref{Serrea} implies there exists an $n'\neq 0$ with $e_{i}^{(a+1-n')}e_je_{i}^{(n')}[N]\neq0$.  But then $\ep{N} \geq n' >0$, which is a contradiction.

Let $M \cong \et{i}N \neq \0$, so that $\ep{M}=n-1$ and by the inductive hypothesis $$M \cong \Ind \Lc{n-1-(c-a)}\boxtimes L(i^{c-a}).$$  Hence,
by part \eqref{circleL1} and the Jump Lemma 
\begin{equation}
  N \cong \ft{i}M \cong \Ind \Lc{n-((c+1)-a)}\boxtimes L(i^{c+1-a}).
\end{equation}
Consequently $n\geq c+1-a$.  As $N$ is an irreducible $\Rn{(c+1)i+j}$-module, clearly $c+1 \geq n$.
\end{proof}

Observe that from Theorems~\ref{thm_simplec}, \ref{thm_circleL} and the Shuffle Lemma, we have computed the character (up to grading shift) of all irreducible $\Rn{ci+j}$-modules.

%
\subsubsection{A generators and relations proof} \label{sec_genrel}
%
In this section, we give alternative proofs of the Structure
Theorems~\ref{thm_simplec} and \ref{thm_circleL} using the
description of $\Rn{\nu}$ via generators and relations.  In
particular, we do not use the Serre relations \eqref{Serrea} and in
fact one could instead deduce that the Serre relations hold from
these theorems.

We first set up some useful notation.  For this section let
\[
\xy (0,0)*{\ii(b,c)= \underbrace{i \dots i}j\underbrace{i \dots i}};
 (2.5,-5)*{b}; (13.5,-5)*{c};
\endxy
\]
Let $\{ u_r \mid 1 \leq r \leq m!\}$ be a (weight) basis of $L(i^m)$, $\{y_s \mid 1 \leq s
\le n!  \}$ be a basis of $L(i^n)$, and $\{v\}$ be a basis of $L(j)$.
Recall the following fact about the irreducible module $L(i^m)$. For any $u \in L(i^m)$
\begin{equation} \label{eq_nilpotent}
  x_{r}^k u =0,
\end{equation}
for all $k \geq m$, and  $1 \leq r \leq m$.  Further if $u \neq 0$
then $L(i^m)=\Rn{mi}u$, and $1_{\jj} u =0$ if $\jj \neq i^m$. Also
there exists $\tilde{u} \in L(i^m)$ such that $x_r^{m-1} \tilde{u}
\neq 0$ for all $r$.  (We note that it is from these properties we
may deduce Proposition~\ref{prop_pr}.)

The induced module $\Ind L(i^m) \boxtimes L(j) \boxtimes L(i^n)$ has a weight basis
\begin{equation}
  B = \{\psi_{\hat{w}} \otimes (u_r \otimes v \otimes y_s) \mid
 1 \leq r \leq m!, \; 1 \leq s \leq n!, \; w \in S_{m+1+n}/ {S_m\times S_1 \times S_n} \}
\end{equation}
as in Remark~\ref{rem_indbasis}.

\begin{prop}
 Let $K = {\rm span} \{ \psi_{\hat{w}} \otimes (u_r \otimes v \otimes y_s) \in  B \mid \ell(w) \neq 0\}$.  Suppose $c=m+n \leq a$.  Then
\begin{enumerate}
  \item $K$ is a proper submodule of $\Ind L(i^m) \boxtimes L(j) \boxtimes L(i^n)$.
  \item \label{prop_newsimplep2}
The quotient module $\Ind L(i^m) \boxtimes L(j) \boxtimes L(i^n)/K$
is irreducible with character $[m]_i^![n]_i^! i^mji^n.$
\end{enumerate}
\end{prop}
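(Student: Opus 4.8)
The plan is to avoid straightening words in the generators inside $M:=\Ind L(i^m)\boxtimes L(j)\boxtimes L(i^n)$ and instead build the quotient module $M/K$ by hand, then exhibit it as the image of a map out of $M$ whose kernel is visibly $K$. Write $\unu=(mi,j,ni)$ and $\nu=ci+j$, so $M=\Rn{\nu}1_\unu\otimes_{\Rn{\unu}}T$ with $T:=L(i^m)\boxtimes L(j)\boxtimes L(i^n)$, and $M$ carries the weight basis $\{\psi_{\hat w}\otimes\xi\}$ indexed by $w\in S_{m+1+n}/(S_m\times S_1\times S_n)$ of Remark~\ref{rem_indbasis}, with $K$ the span of those elements for which $\ell(w)\ge 1$. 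I put an $\Rn{\nu}$-action on the vector space $T$ as follows: $1_\ii$ acts as the identity if $\ii=i^mji^n$ and as $0$ otherwise; $x_r$ acts through the given $\Rn{\unu}$-structure of $T$ (so $x_r$ hits the $L(i^m)$-factor when $r\le m$, the $L(j)$-factor when $r=m+1$, and the $L(i^n)$-factor when $r\ge m+2$); $\psi_r$ acts through the $\Rn{\unu}$-structure when $r\le m-1$ or $r\ge m+2$; and $\psi_m$, $\psi_{m+1}$ act as $0$.

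First I would verify that these operators satisfy the defining relations \eqref{relation_quadratic}--\eqref{relation_xpsi} of $\Rn{\nu}$, producing an $\Rn{\nu}$-module $\bar T$. Most relations are immediate: indices internal to the first or third block reduce to the nilHecke relations already satisfied by $L(i^m)$ and $L(i^n)$, and any relation involving $\psi_m$ or $\psi_{m+1}$ is trivial since those operators vanish and the corresponding right-hand sides are supported on sequences $\ne i^mji^n$ or vanish. The two substantive checks are: the quadratic relation at $r=m$, which requires $x_m^{a_{ij}}+x_{m+1}^{a_{ji}}$ to act as $0$ on $T$ — true because $a_{ij}=a\ge c\ge m$ forces $x_m^{a_{ij}}$ to kill $L(i^m)$ by the nilpotency recalled in Section~\ref{sec_Lim}, while $a_{ji}\ge 1$ (as $(\alpha_i,\alpha_j)\ne 0$) forces $x_{m+1}^{a_{ji}}$ to kill $L(j)$; and the cubic relation at $r=m$, which requires $\sum_{t=0}^{a-1}x_m^{t}x_{m+2}^{a-1-t}$ to act as $0$ on $T$ — true because for each $t$ we have $t+(a-1-t)=a-1\ge m+n-1$, hence $t\ge m$ or $a-1-t\ge n$, so each summand kills $L(i^m)$ or $L(i^n)$. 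This cubic identity is exactly where the hypothesis $c\le a$ is used, and it is sharp; the symmetric checks at $r=m+1$ are identical, and the cubic relation at $r=m+1$ has vanishing right-hand side since $i_{m+1}=j\ne i=i_{m+3}$. I expect this finite but somewhat fiddly relation check to be the heart of the argument.

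Because the image of $\iota\colon\Rn{\unu}\hookrightarrow\Rn{\nu}$ involves none of $\psi_m$, $\psi_{m+1}$, the construction gives $\Res_\unu\bar T=T$ as $\Rn{\unu}$-modules. Frobenius reciprocity \eqref{frob} applied to $\id_T\colon T\to\Res_\unu\bar T$ then yields an $\Rn{\nu}$-map $\mathcal F\colon M\to\bar T$ with $\mathcal F(\psi_{\hat w}\otimes\xi)=\psi_{\hat w}\cdot\xi$ computed in $\bar T$; in particular $\mathcal F(1_\unu\otimes\xi)=\xi$, so $\mathcal F$ is surjective. If $w\ne e$ is a minimal coset representative, then every reduced word for $w$ — in particular the fixed $\hat w$ — must contain $s_m$ or $s_{m+1}$ (otherwise $w\in S_m\times S_1\times S_n$, forcing $w=e$), so the operator $\psi_{\hat w}$ on $\bar T$ is a composite one of whose factors is $\psi_m=0$ or $\psi_{m+1}=0$, whence $\mathcal F(\psi_{\hat w}\otimes\xi)=0$. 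Therefore $\ker\mathcal F=K$, so $K$ is a submodule of $M$; it is proper because $\bar T=M/K$ is nonzero. This is the first claim.

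Finally $\mathcal F$ induces $M/K\cong\bar T$. As an $\Rn{\unu}$-module, $\bar T=T=L(i^m)\boxtimes L(j)\boxtimes L(i^n)$ is irreducible, being an outer tensor product of irreducibles over $\Bbbk$-algebras with $\Bbbk$ algebraically closed; hence $M/K$ is irreducible as an $\Rn{\unu}$-module, and a fortiori as an $\Rn{\nu}$-module. Since $\bar T=1_{i^mji^n}\bar T$, its character is supported on the single sequence $i^mji^n$, with graded dimension $\gdim L(i^m)\cdot\gdim L(j)\cdot\gdim L(i^n)=[m]_i^!\cdot 1\cdot[n]_i^!$ by the normalizations $\chr L(i^m)=[m]_i^!\,i^m$ and $\chr L(j)=j$; therefore $\chr(M/K)=[m]_i^![n]_i^!\,i^mji^n$, which is the second claim.
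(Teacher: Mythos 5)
Your proof is correct, but it takes a genuinely different route from the paper's. The paper shows directly that $K$ is closed under left multiplication by the generators $1_{\jj}$, $x_t$, $\psi_b$: it first argues that any such product reduces (via the defining relations) to a combination of basis elements $\psi_{\hat{w}'}\otimes\xi'$ with $\ell(w')\geq\ell(w)-2$, then checks the handful of possibly length-decreasing cases explicitly, the decisive computations being the quadratic relations at $r=m,m+1$ and the braid relation at $r=m$, both of which vanish on $T$ because $m+n\leq a$ forces the requisite powers of the $x$'s to kill $L(i^m)$ or $L(i^n)$. You instead bypass the length bookkeeping entirely: you put an $\Rn{\nu}$-module structure $\bar{T}$ on the vector space $T$ by decreeing $\psi_m=\psi_{m+1}=0$ and letting everything else act through the $\Rn{\unu}$-structure, then verify the defining relations of $\Rn{\nu}$ once and for all (the only nontrivial checks are the same quadratic and cubic identities the paper meets, for the same nilpotency/$c\leq a$ reasons), and finally recover $K$ as the kernel of the Frobenius-reciprocity map $\mathcal{F}\colon M\to\bar T$. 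Your argument buys a cleaner and more conceptual structure --- no reduction from arbitrary $\ell(w)$ down to $\ell(w)\leq 2$, no case-splitting on reduced words, and the character and irreducibility of $M/K$ drop out immediately once $\bar{T}$ is identified --- at the mild cost of having to sweep through all of the defining relations (most of which are vacuous or reduce to the nilHecke relations internal to a block, as you note). The paper's approach, by contrast, is more hands-on and stays inside the standard "show the span is a submodule" framework, which may be easier to adapt to situations where one cannot guess the quotient module structure in advance. One small presentational point: you should make explicit that the final identification $\ker\mathcal{F}=K$ follows from a dimension count (you have shown $K\subseteq\ker\mathcal{F}$ and surjectivity of $\mathcal{F}$, and $\dim M-\dim K=\dim T=\dim\bar{T}$ gives equality); this is implicit in your write-up but worth spelling out.
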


\begin{proof}
It suffices to show
\begin{equation} \label{hpsiK}
h \psi_{\hat{w}} \otimes (u_r \otimes v \otimes y_s) \in K
\end{equation}
where $\ell(w)>0$ as $h$ ranges over the generators $1_{\jj}$, $x_{r}$, $\psi_{r}$ of $\Rn{\nu}$.

Considering the relations in Section~\ref{sec_defnRnu}, $h \psi_{\hat{w}} \otimes (u_r \otimes v \otimes y_s)$ is 0 or a sum of terms of the form $\psi_{\hat{w'}} \otimes (u' \otimes v \otimes y')$ with $\ell(w') \geq \ell(w)-2$, so in other words, we reduce to the case $\ell(w) = 1$ or $\ell(w)=2$ (or else the terms are obviously in $K$).  In fact, it is only in considering relation \eqref{relation_cubic} we examine $\ell(w)=2$, and otherwise we examine $\ell(w)=1$.

To make this reduction valid, we first examine the case $h = x_t$.
Let $\ii = \ii(m,n)$.
We first observe that for $w \in S_{m+1+n}/ S_m \times S_1 \times S_n$,
$w(m+1) = r+1$ if and only if $w(\ii) = \ii(r, c-r)$.
In this case, we can factor $w = \tau \gamma$ with $\ell(w) = \ell(\tau)+\ell(\gamma)$
where $\gamma$ is minimal such that $\gamma(\ii) = \ii(r,c-r)$.
In particular $\gamma = s_{r+1} \cdots s_{m-1} s_m$ or $\gamma = s_{r} \cdots s_{m+2}
s_{m+1}$, which has length $|m-r|$.
By relation \eqref{relation_xpsi}
$$x_t \psi_{\hat{w}} 1_{\ii}=  1_{{\ii}(r, c-r)} x_t \psi_{\hat{w}}
= 1_{{\ii}(r, c-r)} \psi_{\hat{w}} x_{w^{-1}(t)}
+  1_{{\ii}(r, c-r)} \sum \psi_{i_1} \cdots \psi_{i_k}
$$
where the sum is over some subset of (not necessarily reduced)
 subwords $s_{i_1} \cdots s_{i_k}$ of $\hat{w}$,
all satisfying that if $z = s_{i_1} \cdots s_{i_k}$  then $z(m+1) = r+1$.
In particular $\ell(z) \ge |m-r|$.
This shows \eqref{hpsiK} holds for $h = x_t$ when $\ell(w) > 0$.

For $h=1_{\jj}$,  either $h \psi_{\hat{w}} 1_{\ii(m,n)} = 0$ or $h
\psi_{\hat{w}} 1_{\ii(m,n)} =\psi_{\hat{w}} 1_{\ii(m,n)}$,
so clearly \eqref{hpsiK} holds. 

For $h = \psi_b$,
when employing relation \eqref{relation_cubic}, we see some terms in
$h \psi_{\hat{w}} 1_{{\ii}}$ may involve terms of the form $f(x_1, \ldots, x_{c+1})
\psi_{\hat{w'}}$ with $\ell(w') = \ell(w) - 2$.
However from the case completed above regarding relation
\eqref{relation_xpsi}, these terms still have length $>0$ so long as $\ell(w') > 0$.
In other words, we need only consider  the case $\ell(w)=2$,
for which either $w=s_{m \pm 1}s_m$ or $w=s_{m+1 \pm 1}s_{m+1}$.
 However,  the only cases that are
potentially ``length-decreasing" by 2 are for $w = s_{m+1} s_m$ and $h = \psi_m$,
or $w = s_m s_{m+1}$ and $h = \psi_{m+1}$, for which we compute
\begin{equation}
 (\psi_{m }\psi_{m+1 } \psi_{m } - \psi_{m+1 }\psi_{m } \psi_{m+1 }) 1_{\ii}
 =
\sum_{k=0}^{a+1} x_{m}^k x_{m+2}^{a+1-k} 1_{\ii}.
\end{equation}
By \eqref{eq_nilpotent}
\begin{eqnarray}
  x_{m}^k x_{m+2}^{a+1-k} \otimes
(u \otimes v \otimes y) = 1_{\ii} \otimes
(x_{m}^ku) \otimes v \otimes (x_{1}^{a+1-k}y) = 0
\end{eqnarray}
since either $k \geq m$ or $a+1-k > a+1-m \geq n$ as we assumed $m+n
\leq a$. This yields
$$\psi_{m }\psi_{m+1 } \psi_{m } \otimes (u \otimes v \otimes y) =
\psi_{m+1 }\psi_{m } \psi_{m+1 } \otimes (u \otimes v \otimes y).$$
In fact, we also have $\psi_{m}\psi_{m-1}\psi_{m} \otimes (u \otimes
v \otimes y) = \psi_{m-1}\psi_{m}\psi_{m-1}\otimes (u \otimes v
\otimes y)$, as for instance $\ii_{m-1} \neq \ii_{m+1}$,
and similarly $\psi_{m+1}\psi_{m+2}\psi_{m+1} \otimes (u \otimes v
\otimes y) = \psi_{m+2}\psi_{m+1}\psi_{m+2} \otimes (u \otimes v
\otimes y)$. Thus in all cases, this braid relation honestly holds.
This then reduces us to the case $\ell(w)=1$ as such relations
decrease length by at most 1. For example,
\begin{equation}
  \psi_{m}\psi_{m-1}\psi_{m} \otimes (u \otimes v \otimes y) =
  \psi_{m-1}\psi_{m}\psi_{m-1} \otimes (u \otimes v \otimes y)
  = \psi_{m-1}\psi_{m} \otimes (u' \otimes v \otimes y).
\end{equation}

When $\ell(w)=1$ either $w=s_m$ or $w=s_{m+1}$.
 For $h=\psi_{b}$ the only remaining relation that is length decreasing is
\eqref{relation_quadratic} (which decreases length by at most one,
when $b = m$ or $m+1$), for which  we compute
\begin{eqnarray}
  \psi_{m}\psi_{m} \otimes (u \otimes v \otimes y) &=& (x_{m}^a+x_{m+1}^{-\langle j,i\rangle})1_{\ii} \otimes (u \otimes v \otimes y) \nn \\ &=&
1_{\ii} \otimes (x_{m}^a u) \otimes v \otimes y + 1_{\ii} \otimes u \otimes (x_{1}^{-\langle j,i\rangle} v) \otimes y \nn  \\
&=& 0 \in K
\end{eqnarray}
by \eqref{eq_nilpotent} since $a \geq m$, and $-\langle j, i \rangle
\geq 1$. Similarly,
\begin{align}
  \psi_{m+1}\psi_{m+1} \otimes (u \otimes v \otimes y)  &= 1_{\ii} \otimes u \otimes (x_{1}^{- \langle j,i\rangle }v) \otimes y
+ 1_{\ii} \otimes u \otimes v \otimes (x_{1}^ay)\nn\\ &= 0 \in K
\end{align}
as $a \geq n$.

In conclusion, $K$ is indeed a submodule and in fact generated by
\begin{equation}
  \psi_{m+1} \otimes (u_r \otimes v \otimes y_s), \qquad {\rm and} \quad \psi_{m} \otimes (u_r \otimes v \otimes y_s).
\end{equation}

For part \eqref{prop_newsimplep2} note $w(\ii)=\ii(c-r,r)$ for
some $r$, but $r\neq n$ when $\ell(w)>0$ for minimal length $w \in
S_{m+1+n}/S_{m} \times S_1 \times S_{n}$. In other words,
$\psi_{\hat{w}} \otimes (u_r \otimes v \otimes y_s)$ is a weight
vector and $1_{\ii} \psi_{\hat{w}} \otimes (u_r \otimes v
\otimes y_s)=0$ when $\ell(w)>0$. That is, for all $z \in Q = \Ind
L(i^m) \boxtimes L(j) \boxtimes L(i^n)/K$,
 $1_{\ii}z=z$, but $1_{\ii(c-r,r)}z=0$ when $r \neq n$.  Hence all constituents of $\chr(Q)$ have the form $i^{m}ji^n$.

By Frobenius reciprocity, and the irreducibility of $L(i^m)$, we
have an injection
\begin{equation}
  L(i^m) \boxtimes L(j) \boxtimes L(i^n)\hookrightarrow \Res_{mi,j,ni}Q
\end{equation}
which is also a surjection by the above arguments.  Hence
\begin{equation}
\chr(Q)= [m]_i^! [n]_i^! i^m j i^n.
\end{equation}
Note that, up to grading shift, $Q$ is none other than
$\Lj{i^{m}ji^n}$ and we have shown this is the unique simple
quotient of $\Ind L(i^m) \boxtimes L(j) \boxtimes L(i^n)$. The
uniqueness statements of Theorem~\ref{thm_simplec} follow by
Frobenius reciprocity.
\end{proof}

Next we will give a generators and relations proof that
\begin{equation}
  \ft{i} \Lc{n} \cong \fts{i} \Lc{n} \cong \Ind \Lc{n} \boxtimes L(i).
\end{equation}
Just as in the proof of Theorem~\ref{thm_simplec},
\begin{equation}
  \chr(\Ind \Lc{n} \boxtimes L(i)) = [a-n]_i^![n+1]_i^! i^{a-n}ji^{n+1}
+ q^{-(\alpha_i,\alpha_j)}[a-n+1]_i^! [n]_i^! i^{a+n+1} j i^n,
\end{equation}
and since $L(i^m)$ is irreducible with dimension $m!$, either $\chr( \ft{i}\Lc{n})=[a-n]_i^![n+1]_i^! i^{a-n}ji^{n+1}$ or
$\chr(\ft{i} \Lc{n}) = \chr(\Ind \Lc{n} \boxtimes L(i))$.

In the latter case, $\Ind \Lc{n} \boxtimes L(i)$ is isomorphic to
$\ft{i}\Lc{n}$, so by the Jump Lemma~\ref{lem_jump_tfae} it is irreducible
and isomorphic to $\fts{i} \Lc{n}$.
In the former case, we clearly have
\begin{equation}
  0 \to K \to \Ind L(i^{a-n}) \boxtimes L(j) \boxtimes L(i^{n+1}) \to \ft{i} \Lc{n}
\end{equation}
by Frobenius reciprocity.

The $\Rn{(a+1)i + j}$-module $\Ind L(i^{a-n}) \boxtimes L(j)
\boxtimes L(i^{n+1})$ has a weight basis given by
\begin{equation}
  \{ \psi_{\hat{w}} \otimes (u_r \otimes v \otimes y_s) \mid w \in S_{a+2}/S_{a-n}\times S_1 \times S_{n+1}, \quad 1 \leq r \leq (a-n)!, \; 1 \leq s \leq (n+1)! \}.
\end{equation}
Let $\ii=\ii(a-n,n+1)$.
Note, for all minimal left coset representatives $w \in
S_{a+2}/S_{a-n}\times S_1 \times S_{n+1}$ that $w(\ii)\neq \ii$
unless $w = \id$, i.e. unless $\ell(w)=0$. (In fact
$w(\ii)=\ii(a-r+1,r)$ for some $r$.)  Since
$1_{\ii(a-r+1,r)}\ft{i}\Lc{n}=0$ if $r \neq n+1$ by assumption, we
must have
\begin{equation}\label{uvy1}
  K = {\rm span} \{ \psi_{\hat{w}}\otimes(u_r \otimes v \otimes y_s) \mid \ell(w) >0\}.
\end{equation}
We will show that $K$ is not a proper submodule.

Pick $u \in L(i^{a-n})$, $y \in L(i^{n+1})$ so that $x_{a-n}^{a-n-1}u = u' \neq 0$, $x_{1}^ny = y' \neq 0$ so that
\begin{equation} \label{uvy2}
  x_{a-n}^{a-n-1} \cdot x_{a-n+2}^{n}\left( 1_{\ii} \otimes (u \otimes v \otimes y) \right) = 1_{\ii} \otimes (u' \otimes v \otimes y') \neq 0,
\end{equation}
but
\begin{equation}
  x_{a-n}^{a-1-k} u = 0 \qquad \text{ if $k < n$}
\end{equation}
and
\begin{equation}\label{uvy3}
  x_{1}^k y =0 \qquad \text{if $k>n$}.
\end{equation}
Also recall $u'$ generates $L(i^{a-n})$ and $y'$ generates $L(i^{n+1})$ so $1_{\ii} \otimes (u' \otimes v \otimes y')$ generates the module  $\Ind L(i^{a-n})\boxtimes L(j) \boxtimes L(i^{n+1})$.  By assumption, $K \ni \psi_{a-n+1}\otimes (u \otimes v \otimes y)$ and $K \ni \psi_{a-n} \otimes (u \otimes v \otimes y)$.

If $K$ is a $\Rn{(a+1)i+j}$-submodule, $K$ also contains
\begin{eqnarray}
  \left( \psi_{a-n+1} \psi_{a-n}\psi_{a-n+1}
- \psi_{a-n}\psi_{a-n+1}\psi_{a-n} \right) \otimes (u \otimes v
\otimes y) \hspace{1.7in}\nn \\
  \refequal{\eqref{relation_cubic}}
 \left(
\sum_{k=0}^{a-1}x_{a-n}^{a-1-k} x_{a-n+2}^k
\right)\otimes (u \otimes v \otimes y)
\refequal{\eqref{uvy1},\eqref{uvy2},\eqref{uvy3}} 0 +
1_{\ii}\otimes(u' \otimes v \otimes y') \neq 0. \nn
\end{eqnarray}
Therefore $K \ni 1_{\ii} \otimes (u' \otimes v \otimes y')$, hence
$K$ contains all of $\Ind L(i^{a-n}) \boxtimes L(j) \boxtimes
L(i^{n+1})$ contradicting that $K$ is a proper submodule.  We
must have $\ft{i}\Lc{n} \cong \Ind \Lc{n} \boxtimes L(i)$.  Now
\eqref{icommutesL} in Theorem~\ref{thm_circleL} follows for general
$m$ from the $m=1$ case as before.

Note that the Structure Theorems do not depend on the characteristic of $\Bbbk$.
Just as the dimensions of  simple $\Rn{mi}$-modules are independent of ${\mathrm {char}} \Bbbk$, so are the dimensions of simple $\Rn{ci+j}$-modules. In fact, Kleshchev and Ram have conjectured~\cite{KR} that the dimensions of all simple $\Rnu$-modules are independent of ${\mathrm {char}} \Bbbk$ for finite Cartan datum.

%
\subsection[Understanding $\phi^{\Lambda}$]{Understanding $\phiL$} \label{sec_phi-lambda}
%

The main theorems in this section measure how the crystal data differs for $M$ and $\ft{j} M$. In particular, Theorem~\ref{thm_epsilonifj} below is equivalent to
\begin{equation}
  \phiL(\ft{j}M) - \ep{\ft{j}M} = a + (\phiL(M)-\ep{M})
\end{equation}
where $a= -\langle h_i, \alpha_j \rangle$.

First we introduce several lemmas that will be needed.

\begin{lem}\label{lem_epsilonpastcircle}
Suppose $c+d \leq a$.
\begin{enumerate}[i)]
\item \label{Li} $\Ind \Lj{i^cji^d} \boxtimes L(i^m)$ has irreducible cosocle equal to
\begin{equation}
\ft{i}^m \Lj{i^cji^d}=\ft{i}^{m+d}\cal{L}(i^cj) = \left\{
\begin{array}{ccc}
  \Ind \Lc{a-c}\boxtimes L(i^{m-a+c+d}) & \quad & m \geq a-(c+d) \\
  \Lj{i^cji^{d+m}} & \quad & m < a-(c+d).
\end{array}
\right.
\end{equation}

\item \label{computeepsiloncircle} Suppose there is a nonzero map
\begin{equation}
  \Ind \Lc{c_1} \boxtimes \Lc{c_2}  \boxtimes \dots \boxtimes \Lc{c_r}  \boxtimes L(i^m) \longrightarrow Q
\end{equation}
where $Q$ is irreducible.  Then $\ep{Q}=m+\sum_{t=1}^{r}c_t$ and $\eph(Q)=m+\sum_{t=1}^r(a-c_t)$.

\item \label{epcircle3} Let $B$ and $Q$ be irreducible and suppose there is a nonzero map $\Ind B \boxtimes \cal{L}(c) \to Q$.  Then $\ep{Q}=\ep{B}+c$.
\end{enumerate}
\end{lem}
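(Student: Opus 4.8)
The plan is to establish the three parts in the order (i), (iii), (ii): part (iii) will feed into (ii) by iteration, and part (i) supplies both the base cases and the explicit modules that appear.

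For (i), I would first observe that $\Lj{i^cji^d}\cong\ft{i}^d\cal{L}(i^cj)$. Indeed, by property~2 of Section~\ref{sec_properties} we have $\epsilon_i(\ft{i}^d\cal{L}(i^cj))=\epsilon_i(\cal{L}(i^cj))+d=d$, so since $c+d\le a$ the uniqueness assertion of Theorem~\ref{thm_simplec} forces the isomorphism; hence $\ft{i}^m\Lj{i^cji^d}=\ft{i}^{m+d}\cal{L}(i^cj)$, and this module is irreducible since it is the cosocle of $\Ind\Lj{i^cji^d}\boxtimes L(i^m)$ (Section~\ref{sec_properties} again). It remains to identify $\ft{i}^{m+d}\cal{L}(i^cj)$, which I would do by iterating Theorem~\ref{thm_simplec}: $\ft{i}^k\cal{L}(i^cj)=\Lj{i^cji^k}$ as long as $c+k\le a$, reaching $\Lc{a-c}=\Lj{i^cji^{a-c}}$ at $k=a-c$. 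Since $\jump_i(\Lc{a-c})=0$ by Theorem~\ref{thm_circleL}(i) and the Jump Lemma~\ref{lem_jump_tfae}, for $k>a-c$ one gets $\ft{i}^k\cal{L}(i^cj)\cong\Ind\Lc{a-c}\boxtimes L(i^{k-(a-c)})$ from \eqref{icommutesL}. Taking $k=m+d$ yields $\Lj{i^cji^{d+m}}$ when $m<a-(c+d)$ and $\Ind\Lc{a-c}\boxtimes L(i^{m-a+c+d})$ when $m\ge a-(c+d)$, the two descriptions agreeing on the overlap $m=a-(c+d)$.

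For (ii) and (iii) the upper bounds are immediate and the $\eph$-statements reduce to the $\epsilon_i$-statements. By the Shuffle Lemma the character of an induced module is a shuffle product with nonnegative coefficients, so $\epsilon_i$ is additive under $\Ind$; since $\Delta_{i^n}$ is exact and $Q$ is a quotient, $\epsilon_i(Q)\le m+\sum_t c_t$ in (ii) and $\epsilon_i(Q)\le\epsilon_i(B)+c$ in (iii), and likewise for $\eph$ using $\eph(\Lc{c_t})=a-c_t$ from \eqref{Lcn}. The $\eph$-half of (ii) reduces to the $\epsilon_i$-half by applying $\sigma$: up to grading shift one has $\sigma^*\Lc{n}\cong\Lc{a-n}$ and $\sigma^*L(i^m)\cong L(i^m)$ (compare characters), $\sigma^*(\Ind X_1\boxtimes\dots\boxtimes X_r)\cong\Ind\sigma^*X_r\boxtimes\dots\boxtimes\sigma^*X_1$, and each $\Lc{\bullet}$ commutes with $L(i^m)$ under induction by \eqref{icommutesL}; thus an $\eph$-statement for $(c_1,\dots,c_r,m;Q)$ becomes an $\epsilon_i$-statement for $(a-c_r,\dots,a-c_1,m;\sigma^*Q)$. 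What is left is the lower bound $\epsilon_i(Q)\ge\epsilon_i(B)+c$ in (iii) and $\epsilon_i(Q)\ge m+\sum_t c_t$ in (ii).

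For the lower bound in (iii), I would set $\epsilon=\epsilon_i(B)$, $N=\et{i}^\epsilon B$, so $\epsilon_i(N)=0$; since $\Delta_{i^\epsilon}B\cong N\boxtimes L(i^\epsilon)$ (the $n=0$ case of property~2 in Section~\ref{sec_properties}), Frobenius reciprocity \eqref{eq_funs} makes $B$ a quotient of $\Ind N\boxtimes L(i^\epsilon)$, hence $Q$ a quotient of $\Ind N\boxtimes L(i^\epsilon)\boxtimes\Lc{c}\cong\Ind(\Ind N\boxtimes\Lc{c})\boxtimes L(i^\epsilon)$ by \eqref{icommutesL}. Using that induction is exact and that a cosocle factor $S'$ of $\Ind N\boxtimes\Lc{c}$ induces to the cosocle factor $\ft{i}^\epsilon S'$ with $\epsilon_i(\ft{i}^\epsilon S')=\epsilon_i(S')+\epsilon$ (Section~\ref{sec_properties}, property~2), this reduces (iii) to the case $\epsilon=0$, i.e.\ to showing that every cosocle factor of $\Ind N\boxtimes\Lc{c}$ has $\epsilon_i=c$; this residual case I would handle by induction on $c$, since for $c=0$ the Shuffle Lemma already forces $\epsilon_i(\Ind N\boxtimes\Lc{0})=0$, and for $c\ge1$ one uses $\Lc{c}\cong\ft{i}\Lc{c-1}$ to realize $\Ind N\boxtimes\Lc{c}$ as a quotient of $\Ind(\Ind N\boxtimes\Lc{c-1})\boxtimes L(i)$ and applies the inductive hypothesis together with property~2. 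Granting (iii), part (ii) follows by induction on $r$: for $r\le1$ the module is irreducible by Theorem~\ref{thm_circleL}(i); for the step, regroup $\Ind\Lc{c_1}\boxtimes\dots\boxtimes\Lc{c_r}\boxtimes L(i^m)\cong\Ind P\boxtimes\Lc{c_r}$ with $P=\Ind L(i^m)\boxtimes\Lc{c_1}\boxtimes\dots\boxtimes\Lc{c_{r-1}}$, argue that every irreducible quotient $Q$ of $\Ind P\boxtimes\Lc{c_r}$ is a quotient of $\Ind S\boxtimes\Lc{c_r}$ for some cosocle factor $S$ of $P$ (exactness of $\Ind(-)\boxtimes\Lc{c_r}$ together with the inductive control on the $\epsilon_i$ of the composition factors of $\mathrm{rad}\,P$, which keeps $\Ind(\mathrm{rad}\,P)\boxtimes\Lc{c_r}$ inside the radical), and conclude $\epsilon_i(Q)=\epsilon_i(S)+c_r=m+\sum_t c_t$ from (iii) and the inductive hypothesis. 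The hard part throughout is precisely this cosocle bookkeeping in the lower bounds: one must guarantee at every stage that the composition factor carrying the \emph{maximal} $\epsilon_i$ survives into the cosocle rather than being buried in the radical — this is exactly what the multiplicity-one and uniqueness statements for the top $\epsilon_i$-factor of $\Ind N\boxtimes L(i^n)$ in Section~\ref{sec_properties} are designed to provide, and threading them through the two nested inductions while keeping radicals inside radicals is the delicate point.
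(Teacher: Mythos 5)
Your part (i), the Shuffle Lemma upper bounds, and the $\sigma$-twist reduction of the $\eph$-half of (ii) to the $\epsilon_i$-half are all correct and match the paper's approach. The lower bounds are where there is a genuine gap. First, a concrete error: $\Lc{c}\not\cong\ft{i}\Lc{c-1}$, since $\Lc{c}$ is an $\Rn{ai+j}$-module while $\ft{i}\Lc{c-1}$ lives over $\Rn{(a+1)i+j}$. The correct identity is $\Lc{c}\cong\ft{i}\,\Lj{i^{a-c}ji^{c-1}}$, but $\Lj{i^{a-c}ji^{c-1}}$ lives over $\Rn{(a-1)i+j}$ and is not of the form $\Lc{c'}$, so the induction on $c$ in your $\epsilon=0$ case of (iii) does not close on its own hypothesis. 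Second, and more seriously, both there and in your $r$-induction for (ii) you need that every irreducible quotient of $\Ind P\boxtimes\Lc{c_r}$ (resp.\ $\Ind P\boxtimes L(i^\epsilon)$) already factors through $\Ind S\boxtimes\Lc{c_r}$ for some cosocle factor $S$ of $P$, i.e.\ that $\Ind(\mathrm{rad}\,P)\boxtimes\Lc{c_r}\subseteq\mathrm{rad}(\Ind P\boxtimes\Lc{c_r})$. You rightly flag this as the delicate point, but the statements in Section~\ref{sec_properties} you invoke govern $\Ind N\boxtimes L(i^n)$ with $N$ \emph{irreducible}, not $\Ind N\boxtimes\Lc{c}$ with $N$ a general finite-length module, and ``inductive control on $\epsilon_i$ of $\mathrm{rad}\,P$'' does not by itself deliver the containment.

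The paper's proof avoids identifying cosocles entirely and obtains the lower bounds from a single application of Frobenius reciprocity to a well-chosen surjection. For (iii): put $b=\ep{B}$, $C=\et{i}^bB$; then exactness of induction and~\eqref{icommutesL} give $\Ind C\boxtimes L(i^b)\boxtimes\Lc{c}\cong\Ind C\boxtimes\Lc{c}\boxtimes L(i^b)\twoheadrightarrow Q$, and Frobenius reciprocity immediately gives $\ep{Q}\ge\ep{\Lc{c}}+\ep{L(i^b)}=c+b$, while the Shuffle Lemma supplies $\ep{Q}\le\ep{B}+\ep{\Lc{c}}=b+c$. For (ii): iterate your observation from part~(i) that $\Lc{c_t}$ is a quotient of $\Ind\Lj{i^{a-c_t}j}\boxtimes L(i^{c_t})$, and use~\eqref{icommutesL} to slide each $L(i^{c_t})$ past the remaining $\Lc{\bullet}$ factors, producing a surjection $\Ind\Lj{i^{a-c_1}j}\boxtimes\cdots\boxtimes\Lj{i^{a-c_r}j}\boxtimes L(i^h)\twoheadrightarrow Q$ with $h=m+\sum_t c_t$. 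Since each $\Lj{i^{a-c_t}j}$ has $\epsilon_i=0$, Frobenius reciprocity gives $\ep{Q}\ge h$ and the Shuffle Lemma gives $\ep{Q}\le h$, with no need to decide which composition factor survives to the cosocle at any intermediate stage.
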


\begin{proof}
Part \eqref{Li} follows from the Structure Theorems
\ref{thm_simplec}, \ref{thm_circleL}  for  irreducible
$\Rn{(c+d+m)i+j}$-modules.  For part \eqref{computeepsiloncircle}
recall $\Ind \Lc{c} \boxtimes L(i^m)$ is irreducible and is
isomorphic to $\Ind L(i^m) \boxtimes \Lc{c}$
by Part~\eqref{circleL1} of Theorem~\ref{thm_circleL}.
Consider the chain of
homogeneous surjections
\begin{equation}
\xy (0,12)*+{ \Ind \Lj{i^{a-c_1}j} \boxtimes \Lc{c_2} \boxtimes
\dots \boxtimes \Lc{c_r}\boxtimes L(i^{c_1+m})}="1"; (0,0)*+{\Ind
\cal{L}(i^{a-c_1}j) \boxtimes L(i^{c_1})\boxtimes\cal{L}(c_2)
\boxtimes \dots \boxtimes \cal{L}(c_r)\boxtimes L(i^{m})}="2";
(0,-12)*+{\Ind \Lc{c_1} \boxtimes\cal{L}(c_2) \boxtimes \dots
\boxtimes \cal{L}(c_r)\boxtimes L(i^{m})}="3"; (0,-24)*+{Q}="4";
 {\ar^{\iso} "1";"2"};
 {\ar@{->>} "2";"3"};
 {\ar@{->>} "3";"4"};
\endxy
\end{equation}
Iterating this process we get a surjection
\begin{equation}
  \Ind \cal{L}(i^{a-c_1}j)\boxtimes \cal{L}(i^{a-c_2}j)\boxtimes \dots \boxtimes
  \cal{L}(i^{a-c_r}j)\boxtimes L(i^{h}) \twoheadrightarrow Q
\end{equation}
where $h=m+\sum_{t=1}^rc_t$.  This shows that $\ep{Q}=m+\sum_{t=1}^rc_t$.  The computation of $\eph(Q)$ is similar.

For part \eqref{epcircle3} let $b=\ep{B}$.  By the Shuffle Lemma $\ep{Q}\leq b+ c$.  Further there exists an irreducible module $C$ such that $\ep{C}=0$ and $\Ind C \boxtimes L(i^b)\twoheadrightarrow B$.  By the exactness of induction, we have a surjection
\begin{equation}
  \xy
   (-20,0)*+{\Ind C \boxtimes \cal{L}(c)\boxtimes L(i^b) \cong \Ind C \boxtimes L(i^b) \boxtimes \cal{L}(c)}="1";
   (28,0)*+{Q}="3";{\ar@{->>} "1";"3"};
  \endxy
\end{equation}
and so by Frobenius reciprocity $\ep{Q}\geq \ep{\cal{L}(c)}+\ep{L(i^b)}=c+b$.
\end{proof}

\begin{lem}\label{lem_Nj}
Let $N$ be an irreducible $\Rn{ci+dj}$-module with $\ep{N}=0$.
Suppose $c+d > 0$.
\begin{enumerate}[i)]
  \item \label{Nbar} There exists irreducible $\overline{N}$ with $\ep{\overline{N}}=0$ and a surjection
  \begin{equation}
    \Ind \overline{N}\boxtimes \cal{L}(i^bj) \twoheadrightarrow N
  \end{equation}
with $b \leq a$.

 \item \label{btN}There exists an $r\in \N$ and $b_t \leq a$ for $1\leq t \leq r$ such that
 \begin{equation}
   \Ind \cal{L}(i^{b_1}j) \boxtimes \cal{L}(i^{b_2}j) \boxtimes \dots \boxtimes \cal{L}(i^{b_r}j) \twoheadrightarrow N.
 \end{equation}
\end{enumerate}
\end{lem}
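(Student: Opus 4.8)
The plan is to derive part (ii) from part (i) by induction on the number $d$ of $j$-strands, so the real content lies in part (i). To prove (i), I would first record that the hypotheses force $d\geq 1$ and, since $\ep{N}=0$ and $|\nu|\geq 1$, also $\epsilon_j(N)\geq 1$: only the colours $i,j$ occur, so if both $\Delta_iN$ and $\Delta_jN$ vanished then $1_{\ii}N=0$ for every $\ii\in\seq(\nu)$, contradicting $N\ne\0$. Then set $N_1:=\et{j}N$ (irreducible, as $\epsilon_j(N)>0$), let $b:=\ep{N_1}$, and put $\overline N:=\et{i}^{\,b}N_1$. By the properties recalled in Section~\ref{sec_properties}, $\overline N$ is irreducible with $\ep{\overline N}=0$, it is a module over $R((c-b)i+(d-1)j)$ (so it has one fewer $j$-strand and $b\leq c$), and $N\cong\ft{j}\ft{i}^{\,b}\overline N$.

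Next I would assemble the surjection. Using exactness and transitivity of $\Ind$ together with the fact (Section~\ref{sec_properties}) that for simple $M$ the module $\cosoc\Ind M\boxtimes L(i^n)$ is irreducible and equals $\ft{i}^{\,n}M$, one sees that $N=\ft{j}\ft{i}^{\,b}\overline N$ is a quotient of $\Ind\overline N\boxtimes L(i^b)\boxtimes L(j)$, and the latter surjects onto $\Ind\overline N\boxtimes T$, where $T:=\cosoc\Ind L(i^b)\boxtimes L(j)$; applying the involution $\sigma$, $T\cong\sigma^*(\ft{i}^{\,b}L(j))$ is irreducible. The usual bookkeeping with cosocles of induced modules (of the kind used throughout this section) then upgrades this to a surjection $\Ind\overline N\boxtimes T\twoheadrightarrow N$. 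Thus the full statement of (i) reduces to identifying $T$, i.e.\ to proving $b\leq a$.

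The heart of the matter is the bound $b\leq a$. Since $\Ind\overline N\boxtimes T\twoheadrightarrow N$, Frobenius reciprocity gives an embedding $\overline N\boxtimes T\hookrightarrow\Res N$; concatenating a weight vector of $\overline N$ (whose sequence ends in $j$, since $\ep{\overline N}=0$) with one of $T$ shows that any sequence in the support of $\chr(T)$ ending in $i^{p}$ produces a sequence of $N$ ending in $i^{p}$, hence $p\leq\ep{N}=0$; therefore $\ep{T}=0$. But $T$ is a simple $R(bi+j)$-module, and Theorem~\ref{thm_circleL}(ii) says every simple $R(bi+j)$-module has $\epsilon_i\geq b-a$; so $b\leq a$, and Theorem~\ref{thm_simplec} then identifies $T$ as the unique simple $R(bi+j)$-module with $\epsilon_i=0$, namely $\cal{L}(i^bj)$. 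This proves (i). Part (ii) follows by induction on $d$: in the base case $d=1$, $\overline N$ is a module over $R((c-b)i)$ with $\ep{\overline N}=0$, forcing $c=b$ and $\overline N=\1$, so $N\cong\cal{L}(i^cj)$; for $d\geq 2$ one applies the inductive hypothesis to $\overline N$ and then the (exact) functor $\Ind(-)\boxtimes\cal{L}(i^{b}j)$ together with transitivity of induction, ending with $r=d$. I expect the genuinely new input to be the bound $b\leq a$ via Theorem~\ref{thm_circleL}(ii), whereas the fussiest point will be justifying the precise form of the surjection $\Ind\overline N\boxtimes\cal{L}(i^bj)\twoheadrightarrow N$ from the cosocle manipulations.
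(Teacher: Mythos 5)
Your proof is correct and follows essentially the same approach as the paper's: you define $\overline{N}=\et{i}^{b}\et{j}N$, build the surjection $\Ind\overline{N}\boxtimes L(i^b)\boxtimes L(j)\twoheadrightarrow N$, and use $\ep{N}=0$ together with the Structure Theorems (\ref{thm_simplec}, \ref{thm_circleL}) to force $b\leq a$ and to identify the relevant composition factor of $\Ind L(i^b)\boxtimes L(j)$ as $\cal{L}(i^bj)$. The paper arrives at $b\leq a$ slightly more directly---it needs neither the cosocle identification of $T$ nor the $\sigma$-trick, since the Structure Theorems already say $\Ind L(i^b)\boxtimes L(j)$ has a composition factor with $\epsilon_i=0$ only when $b\leq a$---but the key step (using $\ep{N}=0$ to force the factoring through an $\epsilon_i=0$ composition factor) and the inductive treatment of part (ii) are the same as yours.
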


\begin{proof}
First, we may assume $\et{j}N \neq \0$ or else $N$ would be the trivial module
$\1$, i.e.\ $c=d=0$.
Let $b=\ep{\et{j}N}$ and let $\overline{N}=\et{i}^b\et{j}N$ so that $\ep{\overline{N}}=0$.  There exists a surjection
\begin{equation}
  \Ind \overline{N}\boxtimes L(i^b) \boxtimes L(j) \twoheadrightarrow N.
\end{equation}
Recall $\ep{N}=0$ and by the Structure Theorems, $\Ind
L(i^b)\boxtimes L(j)$ has at most one composition factor with
$\epsilon_i=0$, namely $\cal{L}(i^bj)$ in the case $b \leq a$.  In
the case $b>a$ it has no such composition factors, contradicting
$\ep{N}=0$. Hence $b \le a$ and the above map must factor through
\begin{equation}
  \Ind \overline{N} \boxtimes \cal{L}(i^bj) \twoheadrightarrow N.
\end{equation}

For part \eqref{btN} we merely repeat the argument from part
\eqref{Nbar} using the exactness of induction.
\end{proof}

\begin{lem} \label{lem_iterate}
Suppose $Q$ is irreducible and we have a surjection
\begin{equation}
  \Ind \cal{L}(i^{b_1}j)\boxtimes \cal{L}(i^{b_2}j) \boxtimes \dots \boxtimes \cal{L}(i^{b_r}j) \boxtimes L(i^h) \twoheadrightarrow Q.
\end{equation}
\begin{enumerate}[i)]
  \item \label{iterate1} Then for $h \gg 0$ we have a surjection
\begin{equation}
  \Ind \cal{L}(a-b_1) \boxtimes \cal{L}(a-b_2) \boxtimes \dots \boxtimes \cal{L}(a-b_r) \boxtimes L(i^g) \twoheadrightarrow Q
\end{equation}
where $g=h-\sum_{t=1}^r(a-b_t)$.

\item \label{iterate2} In the case $h < ar-\sum_{t=1}^rb_t$, we have
\begin{equation}
\Ind \cal{L}(i^{b_1}j)\boxtimes \dots \boxtimes \cal{L}(i^{b_{s-1}}j)\boxtimes \cal{L}(i^{b_s}ji^{g'})\boxtimes \cal{L}(a-b_{s+1})\boxtimes \dots \boxtimes \cal{L}(a-b_r)
\twoheadrightarrow Q
\end{equation}
\end{enumerate}
where $g'=h-\sum_{t=s+1}^r(a-b_t)$ and $s$ is such that
\begin{equation}
  \sum_{t=s+1}^r(a-b_t) \leq h < \sum_{t=s}^r(a-b_t).
\end{equation}
\end{lem}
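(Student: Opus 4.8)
The plan is to fold the polynomial tail $L(i^h)$ into the factors $\cal{L}(i^{b_1}j),\dots,\cal{L}(i^{b_r}j)$ one at a time, working from the right, converting each $\cal{L}(i^{b_t}j)$ into the ``circle'' module $\cal{L}(a-b_t)$ at the cost of $a-b_t$ of the available $i$-strands. This is the proof of Lemma~\ref{lem_epsilonpastcircle}\eqref{computeepsiloncircle} run in reverse. The two tools are: (a) the cosocle computation of Lemma~\ref{lem_epsilonpastcircle}\eqref{Li}, which for $m\ge a-b_t$ gives a surjection $\Ind\cal{L}(i^{b_t}j)\boxtimes L(i^m)\twoheadrightarrow\Ind\cal{L}(a-b_t)\boxtimes L(i^{m-a+b_t})$ onto the irreducible cosocle, and for $m<a-b_t$ gives $\Ind\cal{L}(i^{b_t}j)\boxtimes L(i^m)\twoheadrightarrow\cal{L}(i^{b_t}ji^m)$; and (b) the commutation isomorphism $\Ind\cal{L}(a-b_t)\boxtimes L(i^m)\cong\Ind L(i^m)\boxtimes\cal{L}(a-b_t)$ of Theorem~\ref{thm_circleL}\eqref{circleL1}, used to slide the surviving power of $i$ leftward past a freshly produced circle so that the next factor $\cal{L}(i^{b_{t-1}}j)$ becomes accessible. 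Inducing surjections up through untouched tensorands (exactness and transitivity of $\Ind$) lets these local moves act inside the full induced module.

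For \eqref{iterate1}, I would induct downward on the number $r-s'$ of factors already folded, maintaining a surjection
\[
\Ind\cal{L}(i^{b_1}j)\boxtimes\dots\boxtimes\cal{L}(i^{b_{s'}}j)\boxtimes L(i^{h_{s'}})\boxtimes\cal{L}(a-b_{s'+1})\boxtimes\dots\boxtimes\cal{L}(a-b_r)\twoheadrightarrow Q,
\qquad h_{s'}=h-\sum_{t=s'+1}^{r}(a-b_t),
\]
the case $s'=r$ being the hypothesis. If $h_{s'}\ge a-b_{s'}$, one fold plus one commutation advances $s'$ to $s'-1$; this step is available for every $s'=r,\dots,1$ exactly when $h\ge\sum_{t=1}^{r}(a-b_t)=ar-\sum_t b_t$, the meaning of ``$h\gg0$''. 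At $s'=0$ one is left with $\Ind L(i^{g})\boxtimes\cal{L}(a-b_1)\boxtimes\dots\boxtimes\cal{L}(a-b_r)\twoheadrightarrow Q$ where $g=h-\sum_t(a-b_t)$, and a final application of Theorem~\ref{thm_circleL}\eqref{circleL1} slides $L(i^g)$ back to the far right, giving \eqref{iterate1}.

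For \eqref{iterate2}, where $h<ar-\sum_t b_t=\sum_{t=1}^r(a-b_t)$, I run the same induction until the first $s'=s$ at which $h_s=h-\sum_{t=s+1}^{r}(a-b_t)$ fails to reach $a-b_s$. The stated inequalities $\sum_{t=s+1}^{r}(a-b_t)\le h<\sum_{t=s}^{r}(a-b_t)$ say precisely that $0\le h_s<a-b_s$ (and moreover $b_s+h_s<a$, so that $\cal{L}(i^{b_s}ji^{h_s})$ is a genuine simple by Theorem~\ref{thm_simplec}), so this $s$ exists and is the one named in the statement, with $g'=h_s$. At the stopping point I apply the second branch of Lemma~\ref{lem_epsilonpastcircle}\eqref{Li} to get $\Ind\cal{L}(i^{b_s}j)\boxtimes L(i^{g'})\twoheadrightarrow\cal{L}(i^{b_s}ji^{g'})$, induce it up, and land on the surjection displayed in \eqref{iterate2}.

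The one point requiring care is that a surjection onto $Q$ survives each fold: each move produces, alongside the inherited surjection $\Ind(\text{partially folded})\twoheadrightarrow Q$, a surjection of $\Ind(\text{partially folded})$ onto the next module $\Ind(\text{more folded})$ (obtained by replacing an innermost $\Ind\cal{L}(i^{b_t}j)\boxtimes L(i^\bullet)$ by its irreducible cosocle), and one must conclude that $Q$ is still a cosocle constituent of $\Ind(\text{more folded})$. This is the converse of the implication used in the proof of Lemma~\ref{lem_epsilonpastcircle}\eqref{computeepsiloncircle}, and I expect it to be the main obstacle; it is secured by the Structure Theorems, which for $h\gg0$ pin down all irreducible $\Rn{ci+j}$-modules occurring together with their values of $\epsilon_i$ and $\epsilon_i^{\vee}$, forcing the cosocles of $\Ind(\text{partially folded})$ and $\Ind(\text{more folded})$ to carry the same constituent $Q$ (this is where the hypothesis $h\gg0$, resp.\ the choice of $s$, is genuinely used). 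All isomorphisms and surjections above are homogeneous but not necessarily degree-preserving, as elsewhere in this section.
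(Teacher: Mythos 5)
Your proposal is correct and follows essentially the same strategy as the paper: iterate Lemma~\ref{lem_epsilonpastcircle}\eqref{Li} from right to left to fold the polynomial tail into the $\cal{L}(i^{b_t}j)$'s one at a time, with Theorem~\ref{thm_circleL}\eqref{circleL1} to shuttle the surviving $L(i^\bullet)$ past freshly produced circles, and stop at the index $s$ determined by when the tail first runs short.

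The one place you flag as ``the main obstacle'' --- that a surjection onto $Q$ survives each fold --- is indeed the crux, and the paper handles it slightly differently than you gesture at. Rather than matching cosocles of the partially-folded and more-folded modules, the paper tracks $\epsilon_i$: it observes $\ep{Q}=h$, notes (via Lemma~\ref{lem_epsilonpastcircle}\eqref{epcircle3}) that the circles already produced on the right contribute a fixed amount $\sum(a-b_t)$ to $\ep{Q}$, so the unique composition factor of the current innermost $\Ind\cal{L}(i^{b_t}j)\boxtimes L(i^{h_t})$ that can still reach $Q$ is the one with $\epsilon_i=h_t$, namely its cosocle; all others are killed by the Shuffle Lemma exactly as in the proof of Lemma~\ref{lem_arguement_star}. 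Your phrasing ``forcing the cosocles $\dots$ to carry the same constituent $Q$'' points to the right object (the irreducible cosocle of the innermost induced piece) but the clean justification is this $\epsilon_i$-bookkeeping, not a comparison of two cosocles; you may want to replace the vague appeal to ``the Structure Theorems pinning things down'' with the explicit computation that every other composition factor has $\epsilon_i$ strictly too small to contribute.
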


\begin{proof}
Observe that $\ep{Q}=h$.  Similar to
Lemma~\ref{lem_epsilonpastcircle}~\eqref{Li} when $d=0$, $\Ind
\Lj{i^{b_r}j} \boxtimes L(i^h)$ has a unique composition factor with
$\epsilon_i=h$, namely $\Ind L(i^{h-(a-b_r)})\boxtimes \Lc{a-b_r}$
in the case $h \geq a-b_r$ and $\Lj{i^{b_r}ji^h}$ otherwise.
In the latter case, we are done, and note we fall into case \eqref{iterate2} with $s=r$.  In the former case, we get a surjection
\begin{equation}
\Ind \Lj{i^{b_1}j} \boxtimes \dots \boxtimes \Lj{i^{b_{r-1}}j} \boxtimes L(i^{h-(a-b_r)})\boxtimes \Lc{a-b_r} \twoheadrightarrow Q.
\end{equation}
We apply the same reasoning to $\Ind \Lj{i^{b_{r-1}}j} \boxtimes
L(i^{h-(a-b_r)})$ noting that by
Lemma~\ref{lem_epsilonpastcircle}~\eqref{epcircle3}, since
$\ep{\Lc{a-b_r}}=a-b_r=\ep{Q}-(h-(a-b_r))$ we want to pick out the
unique composition factor with $\epsilon_i=h-(a-b_r)$.   As above,
this is $\Ind L(i^{h-\sum_{t=r-1}^rb_t}) \boxtimes \Lc{a-{b_{r-1}}}$
for $h$ large enough and $\Lj{i^{b_{r-1}}ji^{h-(a-b_r)}}$ otherwise.
Continuing in this vein the lemma follows.
\end{proof}

\begin{lem}\label{lem_arguement_star}
Let $M$ be an irreducible $\Rn{\nu}$-module and suppose we have a nonzero map
\begin{equation}
 \xy
 {\ar@{->>}^-f (-20,0)*+{\Ind A \boxtimes B \boxtimes L(i^h)}; (10,0)*+{M}}
 \endxy
\end{equation}
where $\ep{A}=0$ and $B$ is irreducible.  Then there exists a surjective map
\begin{equation}
 \xy
 {\ar@{->>} (-14,0)*+{\Ind A \boxtimes \ft{i}^hB}; (10,0)*+{M}}.
 \endxy
\end{equation}
\end{lem}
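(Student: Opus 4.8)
The plan is to reduce the statement to producing a \emph{nonzero} homomorphism $\Ind A\boxtimes\ft{i}^hB\to M$: since $M$ is irreducible the given map $f$ is surjective, and likewise any nonzero map into $M$ is automatically surjective. Write $\beta=\epsilon_i(B)$. The argument has three parts: (i) pin down $\epsilon_i(M)=\beta+h$; (ii) fit $\Ind A\boxtimes\ft{i}^hB$ into a short exact sequence whose other nontrivial term $\Ind A\boxtimes K$ has only composition factors of small $\epsilon_i$; (iii) conclude by a $\Hom$-vanishing argument.

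For (i), the bound $\epsilon_i(M)\le\beta+h$ is immediate from the Shuffle Lemma: $\chr(M)$ is a summand of $\chr(A)\shuffle\chr(B)\shuffle\chr(L(i^h))$, and since $\epsilon_i(A)=0$ and $\epsilon_i(L(i^h))=h$, no sequence occurring there ends in more than $\beta+h$ consecutive $i$'s. For the reverse inequality I would use the standard surjection $\Ind(\et{i}^\beta B)\boxtimes L(i^\beta)\twoheadrightarrow B$ (it has irreducible cosocle $\ft{i}^\beta\et{i}^\beta B\cong B$, by the properties collected in Section~\ref{sec_properties}). Applying the exact functor $\Ind A\boxtimes(-)\boxtimes L(i^h)$, composing with $f$, and using transitivity of induction together with $\Ind L(i^\beta)\boxtimes L(i^h)\cong L(i^{\beta+h})$, I get a surjection $\Ind A\boxtimes\et{i}^\beta B\boxtimes L(i^{\beta+h})\twoheadrightarrow M$. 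Regrouping the source as an induction from the final $(\beta+h)i$ strands and using Frobenius reciprocity~\eqref{eq_funs} forces $\Delta_{i^{\beta+h}}M\neq\0$, hence $\epsilon_i(M)\ge\beta+h$.

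For (ii), recall from Section~\ref{sec_properties} that $\Ind B\boxtimes L(i^h)$ has irreducible cosocle $\ft{i}^hB$, occurring with multiplicity one, and every \emph{other} composition factor $L'$ has $\epsilon_i(L')<\beta+h$; set $K=\mathrm{rad}(\Ind B\boxtimes L(i^h))$, so that all composition factors of $K$ satisfy $\epsilon_i<\beta+h$. Applying the exact functor $\Ind A\boxtimes(-)$ (induction is exact by the freeness statement in Remark~\ref{rem_indbasis}) to $0\to K\to\Ind B\boxtimes L(i^h)\to\ft{i}^hB\to 0$, and using transitivity, yields $0\to\Ind A\boxtimes K\to\Ind A\boxtimes B\boxtimes L(i^h)\to\Ind A\boxtimes\ft{i}^hB\to 0$. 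Applying $\Hom(-,M)$, the sequence $0\to\Hom(\Ind A\boxtimes\ft{i}^hB,M)\to\Hom(\Ind A\boxtimes B\boxtimes L(i^h),M)\to\Hom(\Ind A\boxtimes K,M)$ is exact with nonzero middle term (it contains $f$), so it suffices to prove $\Hom(\Ind A\boxtimes K,M)=0$.

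For (iii): if $\Hom(\Ind A\boxtimes K,M)\neq0$ then $M$ is a composition factor of $\Ind A\boxtimes K$. But for each composition factor $L'$ of $K$, every composition factor of $\Ind A\boxtimes L'$ has $\epsilon_i\le\epsilon_i(A)+\epsilon_i(L')<\beta+h$ by the Shuffle Lemma, again using $\epsilon_i(A)=0$; this contradicts $\epsilon_i(M)=\beta+h$ from part (i). Hence $\Hom(\Ind A\boxtimes K,M)=0$, so $\Hom(\Ind A\boxtimes\ft{i}^hB,M)\cong\Hom(\Ind A\boxtimes B\boxtimes L(i^h),M)\ni f\neq0$, and any such nonzero map is surjective. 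I expect part (i)—establishing the lower bound $\epsilon_i(M)\ge\beta+h$—to be the main obstacle; the exact-sequence bookkeeping in (ii)--(iii) is routine once $\epsilon_i(M)$ is known exactly, and it is precisely there, and in the Shuffle bound of (iii), that the hypothesis $\epsilon_i(A)=0$ is essential.
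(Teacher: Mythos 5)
Your proof is correct and takes essentially the same route as the paper's: establish $\epsilon_i(M)=\epsilon_i(B)+h$, then use the short exact sequence $0\to K\to\Ind B\boxtimes L(i^h)\to\ft{i}^hB\to 0$ together with the $\epsilon_i$-bound on composition factors of $\Ind A\boxtimes K$ (via the Shuffle Lemma and $\epsilon_i(A)=0$) to force $f$ to factor through $\Ind A\boxtimes\ft{i}^hB$. The only real difference is that you unpack the paper's one-line appeal to Frobenius reciprocity for the lower bound $\epsilon_i(M)\ge\epsilon_i(B)+h$ into the explicit regrouping argument, and phrase the factorization as a $\Hom$-vanishing statement rather than as the restriction of $f$ to $\Ind A\boxtimes K$ being zero; these are cosmetic.
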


\begin{proof}
First note $\ep{M}=\ep{B}+h$ since by Frobenius reciprocity $\ep{M}
\geq \ep{B}+h$, but by the Shuffle Lemma $\ep{M}\leq \ep{B}+h$ since
$\ep{A}=0$.  Consider $\Ind B \boxtimes L(i^h)$.  This has  unique
irreducible quotient $\ft{i}^hB$ with $\ep{\ft{i}^hB}=\ep{B}+h$ and
has all other composition factors $U$ with $\ep{U} <
\ep{B}+h=\ep{M}$, by Section~\ref{sec_properties}. Hence, for any such $U$
there does not exist a nonzero map $\Ind A \boxtimes U \to M$.  In
particular, letting $K$ be the maximal submodule such that
\begin{equation}
  \xymatrix{
  0 \ar[r] & K \ar[r] & \Ind B \boxtimes L(i^h) \ar[r] & \ft{i}^hB \ar[r] & 0
  }
\end{equation}
is exact, the above map $f$ must restrict to zero on the submodule $\Ind A \boxtimes K$ and hence $f$ factors through $\Ind A \boxtimes \ft{i}^hB  \twoheadrightarrow M$, which is
nonzero and thus surjective.
\end{proof}

\begin{lem} \label{lem_pr0}
Let $A$ be an irreducible $\Rn{\nu}$-module with $\prL A \neq \0$ and $k=\phiL(A)$.
\begin{enumerate}[i)]
\item \label{pr01}  Let $U$ be an irreducible $R(\mu)$-module
and let $t\geq 1$.  Then $\prL \Ind A \boxtimes L(i^{k+t})\boxtimes U
=\0$.

\item \label{pr02} Let $B$ be irreducible with $\eph(B) >k$.  Then $\prL \Ind A \boxtimes B=\0$.  In particular, if $Q$ is any irreducible quotient of $\Ind A \boxtimes B$, then $\prL Q=\0$.
\end{enumerate}
\end{lem}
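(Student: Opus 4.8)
The plan is to deduce both parts from one structural observation: $\prL$ depends only on the leftmost strand, and hence commutes with induction on the left. So I would first isolate the following elementary lemma. If $P$ is an $R(\nu')$-module with $\prL P=\0$ and $U$ is any $R(\mu)$-module, then $\prL(\Ind_{\nu',\mu} P\boxtimes U)=\0$. To see this, note that the parabolic inclusion $\iota_{\nu',\mu}$ carries each generator of $\cal{J}^{\Lambda}_{\nu'}$ (a power of the leftmost dot, with exponent fixed by the colour of the first strand) to a sum of such generators of $\cal{J}^{\Lambda}_{\nu'+\mu}$, so that $\iota_{\nu',\mu}(\cal{J}^{\Lambda}_{\nu'}\otimes R(\mu))\subseteq \cal{J}^{\Lambda}_{\nu'+\mu}$. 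Since $\prL P=\0$ means $\cal{J}^{\Lambda}_{\nu'}P=P$, every generating element $1_{\nu',\mu}\otimes(p\otimes u)$ of $\Ind P\boxtimes U$ can be written, using $p=\sum_h a_h p_h$ with $a_h\in\cal{J}^{\Lambda}_{\nu'}$, as $\sum_h \iota_{\nu',\mu}(a_h\otimes 1)\otimes(p_h\otimes u)\in\cal{J}^{\Lambda}_{\nu'+\mu}\cdot(\Ind P\boxtimes U)$; hence $\cal{J}^{\Lambda}_{\nu'+\mu}$ acts surjectively on $\Ind P\boxtimes U$ and $\prL(\Ind P\boxtimes U)=\0$.

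For part (i): by transitivity of induction $\Ind A\boxtimes L(i^{k+t})\boxtimes U=\Ind\bigl(\Ind A\boxtimes L(i^{k+t})\bigr)\boxtimes U$, so by the lemma it suffices to check $\prL\bigl(\Ind A\boxtimes L(i^{k+t})\bigr)=\0$. By the properties recalled in Section~\ref{sec_properties}, the cosocle of $\Ind A\boxtimes L(i^{k+t})$ is the irreducible module $\ft{i}^{k+t}A$, so this is its only irreducible quotient; and since $\prL A\neq\0$ we have $k=\phiL(A)\ge 0$, whence $\prL\ft{i}^{k+t}A=\0$ because $k+t>k=\max\{r:\prL\ft{i}^r A\neq\0\}$ (the definition of $\phiL$ in Section~\ref{sec_jump}). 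Finally $\prL M=\0$ precisely when $M$ has no irreducible quotient $Q$ with $\prL Q\neq\0$: if $\prL M\neq\0$ it has an irreducible quotient, whose inflation is an irreducible quotient of $M$ surviving $\prL$; conversely if $M\twoheadrightarrow Q$ with $Q$ irreducible and $\prL Q\neq\0$ then $\cal{J}^{\Lambda}Q=\0$ (as $\prL Q$ is irreducible or zero), so $\cal{J}^{\Lambda}M$ is a proper submodule. Thus $\prL\bigl(\Ind A\boxtimes L(i^{k+t})\bigr)=\0$, proving (i).

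For part (ii): put $m=\eph(B)$; since $k\ge 0$ and $m>k$ we have $m\ge 1$. Applying $\sigma$ to $\Res_{\nu-mi,mi}N\iso\et{i}^{m}N\boxtimes L(i^{m})$ (Section~\ref{sec_properties}, with $m=\epsilon_i(N)$) gives $\Res_{mi,\mu-mi}B\iso L(i^{m})\boxtimes B'$ with $B'=(\ets{i})^{m}B$, and Frobenius reciprocity yields a surjection $\Ind_{mi,\mu-mi}L(i^{m})\boxtimes B'\twoheadrightarrow B$. Applying $\Ind_{\nu,\mu}(A\boxtimes-)$ and using transitivity, $\Ind A\boxtimes B$ is a quotient of $\Ind A\boxtimes L(i^{m})\boxtimes B'$; writing $m=k+t$ with $t=m-k\ge 1$, part (i) with $U=B'$ gives $\prL(\Ind A\boxtimes L(i^{k+t})\boxtimes B')=\0$, and right-exactness of $\prL$ (Section~\ref{sec_RnuL}) forces $\prL(\Ind A\boxtimes B)=\0$. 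The final assertion is immediate, since any irreducible quotient $Q$ of $\Ind A\boxtimes B$ has $\prL Q$ a quotient of $\prL(\Ind A\boxtimes B)=\0$.

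I expect the only delicate step to be the bookkeeping in the commuting lemma — verifying that $\iota_{\nu',\mu}$ sends $\cal{J}^{\Lambda}_{\nu'}$ into $\cal{J}^{\Lambda}_{\nu'+\mu}$ and that ``$\prL M=\0$'' is genuinely equivalent to ``$M$ has no $\prL$-surviving irreducible quotient.'' Once that is in place, everything reduces formally to transitivity of induction, Frobenius reciprocity, right-exactness of $\prL$, and the structure theory of $\ft{i}$ and $\ets{i}$ on simple modules already established.
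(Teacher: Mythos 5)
Your proof is correct and follows essentially the same route as the paper: show that the unique simple quotient of $\Ind A\boxtimes L(i^{k+t})$ dies under $\prL$, then transport the consequent surjectivity of $\cal{J}^\Lambda$ through the parabolic inclusion $\iota_{\nu',\mu}$ (the paper does this by choosing a single $\eta\in\cal{J}^\Lambda$ acting as the identity on the cyclic generator, where you package the same observation into a cleaner standalone lemma that works for arbitrary $P$ with $\prL P=\0$). Part (ii) is handled identically in both proofs, by expressing $B$ as a quotient of $\Ind L(i^{m})\boxtimes(\ets{i})^{m}B$ and invoking (i) together with right exactness of $\prL$.
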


\begin{proof}
Recall for a module $B$, $\prL B = B/\cal{J}^{\Lambda}B$ and so
$\prL B=\0$ if and only if $B=\cal{J}^{\Lambda}B$.  Since $A$,
$L(i^{k+t})$, and $U$ are all irreducible, each is
generated by any single nonzero element.  Let us pick nonzero $w\in A$, $v
\in L(i^{k+t})$, $u \in U$.  Further $\Ind A \boxtimes L(i^{k+t})$
is cyclically generated as an $\Rn{\nu+(k+t)i}$-module by
$1_{\nu+(k+t)i}\otimes w \otimes v$ and likewise $\Ind A \boxtimes
L(i^{k+t})\boxtimes U$ is generated as an
$\Rn{\nu+(k+t)i+\mu}$-module by $1_{\nu+(k+t)i+\mu}\otimes w \otimes
v \otimes u$.

Recall that $\Ind A \boxtimes L(i^{k+t})$ has a unique simple quotient $\ft{i}^{k+t}A$ and that $\prL \ft{i}^{k+t}A =\0$ because $\phiL(A)=k$.  Since $\prL$ is right exact, $\prL \Ind A \boxtimes L(i^{k+t})=\0$.  Consequently $\cal{J}^{\Lambda}_{\nu+(k+t)i}\Ind A \boxtimes L(i^{k+t})=\Ind A \boxtimes L(i^{k+t})$.  In particular, there exists an $\eta \in \cal{J}^{\Lambda}_{\nu+(k+t)i}$ such that
\begin{equation}
  \eta 1_{\nu+(k+t)i}\otimes w \otimes v =1_{\nu+(k+t)i}\otimes w \otimes v.
\end{equation}
But then
\begin{equation}
 \eta 1_{\nu+(k+t)i+ \mu}\otimes w \otimes v \otimes u = 1_{\nu+(k+t)i+\mu}\otimes w \otimes v \otimes u.
\end{equation}
Note we can consider $\eta$ as an element of $\cal{J}^{\Lambda}_{\nu+(k+t)i+\mu}$ as well via the canonical inclusion $\Rn{\nu+(k+t)i}\hookrightarrow \Rn{\nu+(k+t)i+\mu}$.  Hence
\begin{equation}
  \cal{J}^{\Lambda}_{\nu+(k+t)i+\mu} \Ind A \boxtimes L(i^{k+t}) \boxtimes U = \Ind A \boxtimes L(i^{k+t}) \boxtimes U
\end{equation}
and so $\prL \Ind A \boxtimes L(i^{k+t}) \boxtimes U =\0$.

For part \eqref{pr02}, let $b= \eph(B)$ and $C=(\ets{i})^b B$ so we have $\Ind L(i^b)\boxtimes C \twoheadrightarrow B$.  Thus by the exactness of induction we also have a surjection $\Ind A \boxtimes L(i^b)\boxtimes C \twoheadrightarrow \Ind A \boxtimes B$.  By part \eqref{pr01} and the right exactness of $\prL$, $\prL \Ind A \boxtimes B =\0$.  Likewise $\prL Q=\0$ for any quotient of $\Ind A \boxtimes B$.
\end{proof}

\begin{lem} \label{lem_pr}
  Let $A$ be an irreducible $\Rn{\nu}$-module with $\prL A \neq \0$ and $k=\phiL(A)$.  Further suppose $\ep{A}=\epsilon_j(A)=0$ and that $B$ is an irreducible $\Rn{ci+dj}$-module with $\eph(B) \leq k$.  Let $Q$ be irreducible such that $\Ind A \boxtimes B \twoheadrightarrow Q$ is nonzero.  Then $\eph(Q) \leq \lambda_i$.  Further, if $\ephj(B)\leq \phi^{\Lambda}_j(A)$ (or if $\lambda_j \gg 0$) then $\prL Q \neq \0$.
\end{lem}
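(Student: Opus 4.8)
The plan is to reduce the whole lemma to its first assertion, the bound $\eph(Q)\le\lambda_i$, which I will call $(\star)$, and then to concentrate on $(\star)$.

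\emph{Reducing the lemma to $(\star)$.} Since $B$ is an $\Rn{ci+dj}$-module, $1_{\kk}B=0$ unless the sequence $\kk$ uses only the letters $i,j$, so $\ephl(B)=0$ for every $\ell\neq i,j$. As $Q$ is a quotient of $\Ind A\boxtimes B$, the Shuffle Lemma gives $\ephl(Q)\le\ephl(\Ind A\boxtimes B)\le\ephl(A)+\ephl(B)=\ephl(A)\le\lambda_\ell$ for $\ell\neq i,j$, the last inequality by Remark~\ref{rem_pr} applied to $A$; and $\eph(Q)\le\lambda_i$ by $(\star)$. For the bound at $j$ there are two cases. If $\lambda_j\gg0$, concretely $\lambda_j\ge\ephj(A)+\ephj(B)$ (a finite quantity), then the Shuffle Lemma already gives $\ephj(Q)\le\ephj(A)+\ephj(B)\le\lambda_j$. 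If instead $\ephj(B)\le\phi_j^{\Lambda}(A)$, note that the hypotheses of $(\star)$ are symmetric in $i$ and $j$: one has $\ep{A}=\epsilon_i(A)=0$ and $\epsilon_j(A)=0$, the module $B$ is equally an $\Rn{dj+ci}$-module, and $\ephj(B)\le\phi_j^{\Lambda}(A)$ is the $i\leftrightarrow j$ analogue of $\eph(B)\le\phiL(A)$; so $(\star)$ applied with the roles of $i$ and $j$ interchanged gives $\ephj(Q)\le\lambda_j$. In every case $\ephl(Q)\le\lambda_\ell$ for all $\ell\in I$, whence $\prL Q\neq\0$ by Remark~\ref{rem_pr}. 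The lemma is thereby reduced to $(\star)$.

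\emph{Proof of $(\star)$.} First, by Proposition~\ref{prop_jump_tfae}~\eqref{jumpphi}, $\phiL(A)=\jump_i(A)+\lambda_i-\eph(A)$ depends on $\Lambda$ only through $\lambda_i$; since $(\star)$ also does, we may freely enlarge the coordinates $\lambda_\ell$ ($\ell\neq i$) — all hypotheses persist and the conclusion is unchanged — so that $\phi_\ell^{\Lambda}(A)\gg0$ for $\ell\neq i$. I would then argue by induction on $|B|=c+d$. If $d=0$ then $B=L(i^c)$ with $c=\eph(B)\le k:=\phiL(A)$; the module $\Ind A\boxtimes L(i^c)$ has irreducible cosocle $\ft{i}^cA$, so $Q\cong\ft{i}^cA$, and $\prL\ft{i}^cA\neq\0$ because $c\le\phiL(A)$, whence $\eph(Q)\le\lambda_i$ by Remark~\ref{rem_pr}. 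For $d\ge1$ the idea is to peel a boundary constituent off $B$ — using the Structure Theorems~\ref{thm_simplec} and~\ref{thm_circleL}, together with Lemma~\ref{lem_Nj} when $d\ge2$ to present $\et{i}^{\epsilon_i(B)}B$ as a quotient of an induced product of modules $\cal{L}(i^{b_t}j)$ with $b_t\le a$ — so that $Q$ becomes a quotient of $\Ind A\boxtimes\cal{L}(i^{b_1}j)\boxtimes\dots\boxtimes\cal{L}(i^{b_d}j)\boxtimes L(i^n)$ with $n=\epsilon_i(B)$, and then to contract this chain into $A$ from one end, absorbing successive outermost factors by Lemmas~\ref{lem_arguement_star}, \ref{lem_epsilonpastcircle}~\eqref{Li} and~\ref{lem_iterate}, and using Lemma~\ref{lem_pr0} to discard the obstructing composition factors of the kernels. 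The key point is that absorbing all of $B$ into $A$ amounts to at most $\eph(B)\le k$ net applications of $\ft{i}$, so $\prL$ stays nonzero throughout; combined with the explicit $\epsilon_i$ and $\epsilon_i^{\vee}$ computations of Lemmas~\ref{lem_epsilonpastcircle} and~\ref{lem_iterate} this forces $\eph(Q)\le\lambda_i$. The hypotheses $\ep{A}=\epsilon_j(A)=0$ are what guarantee that the induced modules met along the way have irreducible cosocle, so each contraction is clean, and the relations of Proposition~\ref{prop_K10.1.3}~(ii) keep the $\eph$-hypothesis under control as $B$ shrinks.

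\emph{Main obstacle.} The genuinely delicate part is the inductive step for $(\star)$: after absorbing an outermost tensor factor into $A$, one must check that the simple module $Q$ still factors through the contracted induced module, and this amounts to comparing $\epsilon_i$ (or $\epsilon_j$) of $Q$ against the strictly smaller values carried by the composition factors of the kernel of the contraction. Such bookkeeping is transparent for the explicit $\Rn{ci+j}$-modules of the Structure Theorems but for $d\ge2$ must be routed through Lemmas~\ref{lem_Nj} and~\ref{lem_iterate}. One must also arrange the induction so that it closes — in particular so that the inequality relating $\eph$ of the reduced $B$ to $\phiL$ of the modified $A$ survives the reduction — without circular appeal to Theorem~\ref{thm_epsilonifj} or Proposition~\ref{prop_reduction}, whose proofs invoke the present lemma; the characterizations of $\jump_i$ in Proposition~\ref{prop_jump_tfae} and the Jump Lemma~\ref{lem_jump_tfae}, and the relations of Proposition~\ref{prop_K10.1.3}, are the tools available for this.
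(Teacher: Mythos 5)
Your reduction of the lemma to the single inequality $\eph(Q)\le\lambda_i$, including the treatment of $\ell\ne i$ via the Shuffle Lemma and the $i\leftrightarrow j$ symmetry for the $j$-bound, is correct, and so is the base case $d=0$ of your proposed induction. But the inductive step of $(\star)$ is a genuine gap, which you yourself flag as the ``main obstacle.'' The iterative peel-and-absorb scheme is only sketched, and the key claim it leans on --- that absorbing $B$ into $A$ costs ``at most $\eph(B)\le k$ net applications of $\ft{i}$,'' so $\prL$ stays nonzero throughout --- is precisely what would need to be proved; tracking $\eph$ of the intermediate modules through Lemmas~\ref{lem_arguement_star}, \ref{lem_epsilonpastcircle}~\eqref{Li} and~\ref{lem_iterate} is not automatic, and it is unclear how to close the induction without effectively re-proving the lemma.

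The paper avoids the induction entirely with a single clean reduction. Put $b=\eph(B)$ and $C=(\ets{i})^{b}B$, so $\eph(C)=0$ and the surjection $\Ind A\boxtimes L(i^{b})\boxtimes C\twoheadrightarrow\Ind A\boxtimes B\twoheadrightarrow Q$ together with Frobenius reciprocity give $(1_{\nu}\otimes 1_{bi}\otimes 1_{(c-b)i+dj})Q\ne\0$. The hypothesis $\ep{A}=\epsilon_j(A)=0$ enters at the crucial moment: for any composition factor $U\ne\ft{i}^{b}A$ of $\Ind A\boxtimes L(i^{b})$ one has $\ep{U}<b$, so every constituent of $\chr(U)$ carries a letter $\notin\{i,j\}$ in one of the last $b$ slots, and by the Shuffle Lemma the same persists in $\chr(\Ind U\boxtimes C)$; hence the idempotent above kills $\Ind U\boxtimes C$, and the surjection must factor through $\Ind\ft{i}^{b}A\boxtimes C\twoheadrightarrow Q$. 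The Shuffle Lemma then gives $\eph(Q)\le\eph(\ft{i}^{b}A)+\eph(C)=\eph(\ft{i}^{b}A)\le\lambda_i$, since $b\le k=\phiL(A)$ implies $\prL\ft{i}^{b}A\ne\0$. In short: instead of peeling $B$ apart one tensor factor at a time, split off all $\eph(B)$ of its $i$'s in one step and let a character/idempotent argument, powered by $\ep{A}=\epsilon_j(A)=0$, do the rest.
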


\begin{proof}
Let $b=\eph(B)$ and $C=(\ets{i})^bB$ so that $\eph(C)=0$.  We thus have surjections
\begin{equation}
  \Ind A \boxtimes L(i^b) \boxtimes C \twoheadrightarrow \Ind A \boxtimes B \twoheadrightarrow Q.
\end{equation}
Observe by Frobenius reciprocity
\begin{equation}
  (1_{\nu}\otimes 1_{bi} \otimes 1_{(c-b)i+dj}) Q \neq \0.
\end{equation}
Let $U$ be any composition factor of $\Ind A \boxtimes L(i^b)$ other
than $\ft{i}^bA$, so that $\ep{U}<b$.  By the Shuffle Lemma
$1_{\nu}\otimes 1_{bi} \otimes 1_{(c-b)i+dj}(\Ind U \boxtimes
C)=\0$, so there cannot be a nonzero homomorphism $\Ind U \boxtimes
C\twoheadrightarrow Q$. (More precisely, for every constituent
$\ii=i_1\dots i_{|\nu|+b}$ of $\chr(U)$ there exists a $y$, $|\nu|<y
\leq |\nu|+b$ with $i_y\neq i$ and $i_y\neq j$. Hence by the Shuffle
Lemma, for every constituent $\ii'=i'_1\dots i'_{|\nu|+c+d}$ of
$\chr(\Ind U \boxtimes C)$ there exists a $z$, $|\nu| <z \leq
|\nu|+c+d$ with $i'_z \neq i$ and $i'_z\neq j$.)

Thus we must have a nonzero map
\begin{equation}
  \Ind \ft{i}^bA \boxtimes C \twoheadrightarrow Q.
\end{equation}
By the Shuffle Lemma, $\eph(Q)\leq \eph(\ft{i}^bA)+\eph(C)\leq
\lambda_i$ since $b \leq k = \phiL(A)$ and $\eph(C)=0$.  Note $\ephl(Q) \leq
\ephl(A)+\ephl(B)$, so for $\ell\neq i$, $\ell \neq j$ clearly
$\ephl(Q)\leq \lambda_{\ell}$ and hence $\prL Q \neq \0$ so long as
$\ephj(B)\leq \phi^{\Lambda}_j(A)$, which will for instance be assured if
$\lambda_j \gg 0$.
\end{proof}

In the following theorem and its proof all modules have support $\nu=ci+dj$ for some $c,d\in \N$.

\begin{thm}\label{thm_ij}
Let $M$ be an irreducible $\Rn{ci+dj}$-module and let $\Lambda \in P^+$ be such that $\prL M \neq \0$ and $\prL \ft{j}M \neq \0$.  Let $m=\epsilon_i(M)$, $k=\phi_i^{\Lambda}(M)$ .   Then there exists an $n$ with $0 \leq n \leq a$ such that $\epsilon_i(\ft{j}M)=m-(a-n)$ and $\phi_i^{\Lambda}(\ft{j}M)=k+n$.
\end{thm}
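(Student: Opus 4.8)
The plan is to prove the statement by a direct structural computation in the rank–two case $\nu=ci+dj$, feeding the Structure Theorems~\ref{thm_simplec} and~\ref{thm_circleL} into the reorganization lemmas (Lemmas~\ref{lem_epsilonpastcircle}--\ref{lem_pr}), which were designed exactly for this purpose.

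First I would record the harmless reductions. The assertion ``$\epsilon_i(\ft{j}M)=m-(a-n)$ for some $0\le n\le a$'' is equivalent to $m-a\le\epsilon_i(\ft{j}M)\le m$; here the upper bound $\epsilon_i(\ft{j}M)\le\epsilon_i(M)=m$ is immediate from the Shuffle Lemma since $j\neq i$ and $\epsilon_i(L(j))=0$, so only the lower bound carries content. Moreover, once $\epsilon_i(\ft{j}M)$ is known, the claim $\phi_i^{\Lambda}(\ft{j}M)=k+n$ is precisely the rephrasing~\eqref{eq_rephrase}; using Proposition~\ref{prop_jump_tfae}~\eqref{jumpphi} (available since $\prL M\neq\0$ and $\prL\ft{j}M\neq\0$) it becomes the identity $\jump_i(\ft{j}M)-\eph(\ft{j}M)-\epsilon_i(\ft{j}M)=a+\jump_i(M)-\eph(M)-\epsilon_i(M)$, and by Remark~\ref{rem_Omega} it suffices to verify the $\phi_i^{\Lambda}$–part for a single convenient $\Lambda$ (e.g.\ $\lambda_i,\lambda_j\gg0$). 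Thus everything reduces to computing $\epsilon_i$, $\eph$ and $\jump_i$ on $M$ and on $\ft{j}M$.

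To carry out that computation I would present $M$ explicitly. Put $M_0=\et{i}^m M$, so $\epsilon_i(M_0)=0$ and $M\cong\ft{i}^m M_0$ by the results of Section~\ref{sec_properties}. Applying Lemma~\ref{lem_Nj}~\eqref{btN} to $M_0$ gives a surjection $\Ind\cal{L}(i^{b_1}j)\boxtimes\cdots\boxtimes\cal{L}(i^{b_d}j)\twoheadrightarrow M_0$ with all $b_t\le a$, hence surjections $\Ind\cal{L}(i^{b_1}j)\boxtimes\cdots\boxtimes\cal{L}(i^{b_d}j)\boxtimes L(i^m)\twoheadrightarrow M$ and, composing with $\Ind M\boxtimes L(j)\twoheadrightarrow\ft{j}M$, also $\Ind\cal{L}(i^{b_1}j)\boxtimes\cdots\boxtimes\cal{L}(i^{b_d}j)\boxtimes L(i^m)\boxtimes L(j)\twoheadrightarrow\ft{j}M$. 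Now I would push the trailing $i$–string (and then the new $j$) into standard position: Lemma~\ref{lem_iterate} converts $\Ind\cal{L}(i^{b_1}j)\boxtimes\cdots\boxtimes L(i^m)$, after a case split according as $m\ge\sum_t(a-b_t)$ or not, into a module of the form $\Ind\cal{L}(c_1)\boxtimes\cdots\boxtimes\cal{L}(c_s)\boxtimes L(i^g)$, whence Lemma~\ref{lem_epsilonpastcircle}~\eqref{computeepsiloncircle} reads off $\epsilon_i(M)$ and $\eph(M)$; since $\ft{i}^{J}M$ has the same presentation with $L(i^{g+J})$ in place of $L(i^g)$, the same lemma gives $\eph(\ft{i}^{J}M)$ and hence $\jump_i(M)$. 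The parallel analysis of $\ft{j}M$ — absorbing the last block $\cal{L}(i^{b_d}j)$, the string $L(i^m)$ and the new $L(j)$ one at a time via Lemma~\ref{lem_epsilonpastcircle}~\eqref{Li} and Lemma~\ref{lem_iterate}, and invoking a Lemma~\ref{lem_arguement_star}–type argument that only the constituent of maximal $\epsilon_i$ can survive (legitimate since each $\cal{L}(i^{b_t}j)$ has $\epsilon_i=0$) — yields $\epsilon_i(\ft{j}M)$, $\eph(\ft{j}M)$ and $\jump_i(\ft{j}M)$. Comparing these, the integer $n:=\epsilon_i(\ft{j}M)-m+a$ lies in $[0,a]$ and the $\jump_i/\eph/\epsilon_i$ identity above holds; Proposition~\ref{prop_jump_tfae}~\eqref{jumpphi} then converts back to $\phi_i^{\Lambda}(\ft{j}M)=k+n$, with Lemmas~\ref{lem_pr0} and~\ref{lem_pr} supplying the $\prL$–nonvanishing statements needed to be sure the bookkeeping really computes $\phi_i^{\Lambda}$ and $\jump_i$ rather than something cut off.

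The main obstacle is the interaction of the freshly added $j$ with the trailing string $L(i^m)$ and the last block $\cal{L}(i^{b_d}j)$: one cannot simply replace $L(i^m)\boxtimes L(j)$ by its cosocle $\ft{i}^m L(j)$, since that has $\epsilon_i=m$ whereas $\epsilon_i(\ft{j}M)$ is in general strictly smaller than $m$, so the ``only the top constituent survives'' step must be organized carefully. The cleanest way is probably an induction on $d$, peeling off the last block with Lemma~\ref{lem_Nj}~\eqref{Nbar}, so that at each stage the extra $j$ is fed through a single $\cal{L}(i^bj)$ block, where the Structure Theorems give an exact formula for $\ft{j}\cal{L}(i^bj)$ and for $\Ind\cal{L}(i^bj)\boxtimes L(i^h)$. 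Keeping the two regimes of Lemma~\ref{lem_iterate} consistent along this induction, and checking at every stage that $\prL$ of the relevant cosocle is nonzero, is the delicate bookkeeping that the lemmas preceding the theorem are meant to absorb.
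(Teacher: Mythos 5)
Your plan matches the paper's proof of Theorem~\ref{thm_ij} step for step: set $N=\et{i}^m M$ with $\ep{N}=0$, present $N$ via Lemma~\ref{lem_Nj}, push to $h\gg 0$ with Lemma~\ref{lem_iterate}, read off $\eph(\ft{i}^hM)$ and $\eph(\ft{i}^h\ft{j}M)$ from Lemma~\ref{lem_epsilonpastcircle}~\eqref{computeepsiloncircle}, and convert back through Prop.~\ref{prop_jump_tfae}~\eqref{jumpphi} and Remark~\ref{rem_Omega}. The only cosmetic differences are that the paper works directly with $\phiO$ for a tailored $\Omega$ rather than reformulating as a $\jump_i/\eph/\epsilon_i$ identity, and that the explicit fact~\eqref{gamma} that $\ft{i}^aU_\gamma\cong\Ind\Lc{n}\boxtimes L(i^m)$ (rather than an induction on $d$) is what cleanly absorbs the added $j$ into the trailing $i$-string — precisely the ``main obstacle'' you flag.
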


\begin{proof}
Let $N=\et{i}^m M$ so that $\ep{N}=0$ and we have a surjection
\begin{equation}
  \Ind N \boxtimes L(i^m)\twoheadrightarrow M.
\end{equation}
Thus, we also have
\begin{equation}
  \Ind N \boxtimes L(i^m)\boxtimes L(j) \twoheadrightarrow \ft{j}M.
\end{equation}
By the Structure Theorems~\ref{thm_simplec}, ~\ref{thm_circleL} for
simple  $\Rn{mi+j}$-modules, for each $m-a\leq\gamma\leq m$ there
exists a composition factor $U_{\gamma}$ of $\Ind L(i^m)\boxtimes
L(j)$ with $\ep{U_{\gamma}}=\gamma$.  In particular, there is a
unique $\gamma$ such that the above map induces
\begin{equation}
  \Ind N \boxtimes U_{\gamma} \twoheadrightarrow \ft{j}M
\end{equation}
as we must have $\ep{U_\gamma}=\ep{\ft{j}M}$, since $\ep{N}=0$.  Choose $n$ so that $\gamma=m-(a-n)=\ep{\ft{j}M}$.  Note that by the Structure Theorems
\begin{equation}
  U_{\gamma} \cong \left\{
  \begin{array}{ccl}
    \Ind \Lc{n}\boxtimes L(i^{m-a}) & \quad & m \geq a \\
    \Lj{i^{a-n}ji^{m-(a-n)}} &\quad & m < a ,
  \end{array}
  \right.
\end{equation}
and furthermore
\begin{equation} \label{gamma}
\ft{i}^aU_{\gamma}\cong \Ind \Lc{n}\boxtimes L(i^m)
\end{equation}
in both cases.

By Lemma~\ref{lem_Nj} there exist $0 \leq b_t \leq a$ such that
\begin{equation}
  \Ind \Lj{i^{b_1}j}\boxtimes \Lj{i^{b_2}j}\boxtimes \dots \boxtimes \Lj{i^{b_r}j}
  \twoheadrightarrow N
\end{equation}
and hence we obtain the following surjections
\begin{align}
& \Ind \Lj{i^{b_1}j}\boxtimes \Lj{i^{b_2}j}\boxtimes \dots \boxtimes \Lj{i^{b_r}j} \boxtimes L(i^m)
   \twoheadrightarrow M \\
& \Ind \Lj{i^{b_1}j}\boxtimes \Lj{i^{b_2}j}\boxtimes \dots \boxtimes \Lj{i^{b_r}j} \boxtimes L(i^{m+h})
   \twoheadrightarrow \ft{i}^hM \label{ijMh}\\
& \Ind \Lj{i^{b_1}j}\boxtimes \Lj{i^{b_2}j}\boxtimes \dots \boxtimes \Lj{i^{b_r}j} \boxtimes U_{m-a+n}
   \twoheadrightarrow \ft{j}M \\
 & \Ind \Lj{i^{b_1}j}\boxtimes \Lj{i^{b_2}j}\boxtimes \dots \boxtimes \Lj{i^{b_r}j} \boxtimes U_{m-a+n}\boxtimes L(i^h)
   \twoheadrightarrow \ft{i}^h\ft{j}M  \label{ijMhj}
   \end{align}
We first apply Lemma~\ref{lem_iterate} to \eqref{ijMh} to obtain,
for $h \gg 0$ (in fact $h \ge \sum_{t=1}^r(a-b_t)-m$)
\begin{equation}
   \Ind \Lc{a-b_1} \boxtimes \Lc{a-b_2} \boxtimes \dots \boxtimes \Lc{a-b_r}\boxtimes L(i^g)\twoheadrightarrow \ft{i}^hM
\end{equation}
where $g=m+h-\sum_{t=1}^r(a-b_t)$.  Hence, by Lemma~\ref{lem_epsilonpastcircle}~\eqref{computeepsiloncircle}
\begin{equation}
  \eph(\ft{i}^hM) = g+\sum_{t=1}^rb_t=h+m-ar+2\sum_{t=1}^rb_t.
\end{equation}
Further, it is clear that $\eph(\ft{i}^{h+1})=1+\eph(\ft{i}^h(M))$.

Applying Lemma~\ref{lem_iterate} to \eqref{ijMhj} we obtain for $h \gg 0$
\begin{equation}
  \Ind \Lc{a-b_1} \boxtimes \dots \boxtimes \Lc{a-b_r} \boxtimes \Lc{n} \boxtimes
  L(i^m) \boxtimes L(i^{g'})\twoheadrightarrow\ft{i}^h\ft{j}M
\end{equation}
where $g'=h-a-\sum_{t=1}^r(a-b_t)$.
Note we have used \eqref{gamma} above, and in the case $m<a$ we have also employed Lemma~\ref{lem_arguement_star}.  As above, by Lemma~\ref{lem_epsilonpastcircle}~\eqref{computeepsiloncircle}
\begin{align}
  \eph(\ft{i}^h \ft{j}M) &= g'+m+a-n+\sum_{t=1}^rb_t \\
   &= h+m-n-ar+2\sum_{t=1}^rb_t \\
   &=\eph(\ft{i}^hM)-n.
\end{align}
Further, it is clear that $\eph(\ft{i}^{h+1}\ft{j}M)=1+\eph(\ft{i}^h\ft{j}M)$.

For $h \gg 0$ we have shown that $\eph(\ft{i}^h\ft{j}M)=\eph(\ft{i}^hM)-n$.
Now fix such an $h$ and let $\omega_i=h+(m-ar+2\sum_{t=1}^rb_t)$, which we may assume is positive.  Let $\omega_{\ell}=\lambda_{\ell}$ for $\ell \neq i$ and set $\Omega=\sum_{i\in I}\omega_i \Lambda_i \in P^+$.  Given these choices, we have shown
$\eph(\ft{i}^hM) = \omega_i$, but $\eph(\ft{i}^{h+1}M)=\omega_i+1$.  Hence $\phiO(M)=h$.   Likewise $\eph(\ft{i}^h\ft{j}M)=\omega_i-n$, so that $\eph(\ft{i}^{h+n}\ft{j}M)=\omega_i$, but $\eph(\ft{i}^{h+n+1}\ft{j}M)=\omega_i+1$
yielding $\phiO(\ft{j}M)=h+n$.  Observe then that
\begin{equation}
  \phiO(\ft{j}M)-\phiO(M)=n.
\end{equation}

By our hypotheses and the choice of $\Omega$, we know $\prL$ and $\prO$
are nonzero for both modules.  Hence by Remark~\ref{rem_Omega},
$$\phiL(\ft{j}M)-\phiL(M) = \phiO(\ft{j}M)-\phiO(M) = n.
$$
\end{proof}

We have just shown in Theorem~\ref{thm_ij} that Theorem~\ref{thm_epsilonifj} holds for all $\Rn{ci+dj}$-modules.  Next we show that to deduce the theorem for $\Rn{\nu}$-modules for arbitrary $\nu$ it suffices to know the result for $\nu=ci+dj$.

\begin{prop}\label{prop_reduction}
Let $\Lambda \in P^+$ and let $M$ be an irreducible
$\Rn{\nu}$-module such that $\prL M\neq \0$ and $\prL \ft{j}M\neq \0$.
Suppose $\ep{M}=m$ and $\ep{\ft{j}M}=m-(a-n)$ for some $0 \leq n
\leq a$.  Then there exists $c$, $d$ and an irreducible
$\Rn{ci+dj}$-module $B$ such that $\ep{B}=m$, $\ep{\ft{j}B}=m-(a-n)$
and there exists $\Omega \in P^+$ with $\prO(B)\neq \0$,
$\prO(\ft{j}B)\neq \0$, $\prO(M)\neq \0$, $\prO(\ft{j}M) \neq \0$, and
furthermore
\begin{equation}
  \phiO(\ft{j}M)- \phiO(M)= \phiO(\ft{j}B)-\phiO(B).
\end{equation}
\end{prop}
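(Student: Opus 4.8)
The plan is to extract from $M$ its ``$i,j$-skeleton'', to build $B$ from that skeleton alone, and to argue that the remaining colours of $M$ act as an inert spectator for all of the crystal data attached to $i$.

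Write $m=\ep{M}$ and $N=\et{i}^mM$, so $\ep{N}=0$ and $M\cong\ft{i}^mN$. The first step is a version of Lemma~\ref{lem_Nj} valid for arbitrary support. Peel the $j$-coloured strands off $N$ one block at a time, exactly as in the proof of Lemma~\ref{lem_Nj}: when $\epsilon_j(N)>0$, set $b=\ep{\et{j}N}$, note $b\le a$ by the Structure Theorems~\ref{thm_simplec},~\ref{thm_circleL} (since $\Ind L(i^b)\boxtimes L(j)$ has a composition factor with $\epsilon_i=0$ only for $b\le a$), and factor the surjection $\Ind(\et{i}^b\et{j}N)\boxtimes L(i^b)\boxtimes L(j)\twoheadrightarrow N$ through $\Ind(\et{i}^b\et{j}N)\boxtimes\Lj{i^bj}\twoheadrightarrow N$, the left tensor factor again having $\epsilon_i=0$ so that one may iterate. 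Afterwards, strip the leftover $i$-coloured strands of what remains by the $\sigma$-symmetric procedure (alternately using $\eph$, $\ets{i}$ and $\ep{\cdot}$, $\et{i}$, which strictly decreases the number of $i$-strands since an irreducible module carrying an $i$-coloured strand has $\epsilon_i>0$ or $\eph>0$). This should produce an irreducible $A$ coloured only by indices $\ne i,j$, integers $0\le b_1,\dots,b_r\le a$, and a surjection
\[
\Ind A\boxtimes\Lj{i^{b_1}j}\boxtimes\cdots\boxtimes\Lj{i^{b_r}j}\twoheadrightarrow N .
\]

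Let $N_B$ be an irreducible quotient of $\Ind\Lj{i^{b_1}j}\boxtimes\cdots\boxtimes\Lj{i^{b_r}j}$ through which the above surjection factors after inducing up by $A$ (such a quotient exists by the standard $\epsilon_i$-maximality arguments, using $\ep{A}=0$ and the uniqueness in the Structure Theorems), so $\ep{N_B}=0$ and $\Ind A\boxtimes N_B\twoheadrightarrow N$. Put $B:=\ft{i}^mN_B$; then $B$ lives on $ci+dj$ with $c=m+\sum_tb_t$, $d=r$, $\ep{B}=m$, and $M$ is an irreducible quotient of $\Ind A\boxtimes B$ by the same reasoning. Running the first half of the proof of Theorem~\ref{thm_ij}, $\ft{j}M$ is a quotient of $\Ind N\boxtimes U_\gamma$ for the unique composition factor $U_\gamma$ of $\Ind L(i^m)\boxtimes L(j)$ (furnished by the Structure Theorems) with $\epsilon_i(U_\gamma)=\epsilon_i(\ft{j}M)=m-(a-n)$; since $A$ carries no $i$- or $j$-coloured strand, the Shuffle Lemma and Frobenius reciprocity force exactly the same $U_\gamma$ with $N_B$ in place of $N$ (this is the situation of Lemma~\ref{lem_arguement_star}), whence $\epsilon_i(\ft{j}B)=m-(a-n)$ and $\ft{j}M$ is an irreducible quotient of $\Ind A\boxtimes\ft{j}B$.

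The last ingredient is a spectator lemma: if $A$ is irreducible and coloured only by indices $\ne i,j$ and $Q$ is a nonzero quotient of $\Ind A\boxtimes Y$ with $Y$ irreducible, then $\epsilon_i(Q)=\epsilon_i(Y)$, $\eph(Q)=\eph(Y)$ and $\jump_i(Q)=\jump_i(Y)$. The first two hold because $\epsilon_i(\Ind A\boxtimes Y)=\epsilon_i(Y)$ and $\eph(\Ind A\boxtimes Y)=\eph(Y)$ (the Shuffle Lemma and Frobenius reciprocity, using $\epsilon_i(A)=\eph(A)=0$), and these equalities pass to any nonzero quotient. For the third, since $\epsilon_i(A)=0$ one checks as in the previous paragraph that $\ft{i}^JQ$ is a quotient of $\Ind A\boxtimes\ft{i}^JY$ for every $J\ge 0$ (the kernel of $\Ind Y\boxtimes L(i^J)\twoheadrightarrow\ft{i}^JY$ has all composition factors of strictly smaller $\epsilon_i$, by Section~\ref{sec_properties}), hence $\eph(\ft{i}^JQ)=\eph(\ft{i}^JY)$ for all $J$, and $\jump_i(Q)=\jump_i(Y)$ follows from the definition of $\jump_i$. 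Granting this, choose $\Omega=\sum_\ell\omega_\ell\Lambda_\ell\in P^+$ with all $\omega_\ell$ large (Remark~\ref{rem_pr}) so that $\prO$ is nonzero on $M,\ft{j}M,B,\ft{j}B$; by part~(\ref{jumpphi}) of Proposition~\ref{prop_jump_tfae} and the spectator lemma,
\[
\phiO(M)=\jump_i(M)-\eph(M)+\omega_i=\jump_i(B)-\eph(B)+\omega_i=\phiO(B),
\]
and similarly $\phiO(\ft{j}M)=\phiO(\ft{j}B)$, so $\phiO(\ft{j}M)-\phiO(M)=\phiO(\ft{j}B)-\phiO(B)$, as required.

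The main obstacle is the first step. Peeling the $j$'s off $N$ is a direct generalization of Lemma~\ref{lem_Nj}, but stripping the leftover $i$-coloured strands while keeping $\epsilon_i=0$ throughout (so that $A$ ends up coloured only by indices $\ne i,j$, and hence $\epsilon_i(A)=\eph(A)=0=\epsilon_j(A)$), checking that the bound ``$\le a$'' persists at each stage and that all the induced surjections genuinely factor as asserted, requires a careful $\sigma$-symmetric induction; one also needs the fact, used above, that an irreducible module carrying an $i$-coloured strand has $\epsilon_i>0$ or $\eph>0$. A secondary point is that $N_B$, $M$ and $\ft{j}M$ must be recognised as honest irreducible quotients of the relevant induced modules rather than merely as constituents, which rests throughout on $\ep{A}=0$ and the uniqueness assertions in the Structure Theorems.
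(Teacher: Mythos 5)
The key fact you flag as necessary---``an irreducible module carrying an $i$-coloured strand has $\epsilon_i>0$ or $\eph>0$''---is false, and this sinks the construction. For a counterexample take type $A_3$ with $i=2$ and $M=\cal{L}(123)$, the one-dimensional $R(\alpha_1+\alpha_2+\alpha_3)$-module on which $1_{123}$ is the identity and all other generators act by zero: then $\chr(M)=123$ has no constituent beginning or ending with $2$, so $\epsilon_2(M)=\epsilon_2^{\vee}(M)=0$ even though $\nu_2=1>0$. Consequently your two-phase stripping procedure can perfectly well stall at an $A$ that still carries $i$- and $j$-coloured strands (merely with $\epsilon_i(A)=\epsilon_j(A)=0$), and the hypothesis ``$A$ is coloured only by indices $\ne i,j$'' that underwrites the spectator lemma simply cannot be arranged in general. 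And the spectator lemma genuinely needs it: with only $\epsilon_i(A)=\epsilon_j(A)=0$, a quotient $Q$ of $\Ind A\boxtimes Y$ can have $\eph(Q)>\eph(Y)$ (coming from the leftover $i$-strands in $A$) and $\wt_i(Q)=\wt_i(A)+\wt_i(Y)$ with $\wt_i(A)\ne 0$, so $\jump_i(Q)=\epsilon_i(Q)+\eph(Q)+\wt_i(Q)$ need not equal $\jump_i(Y)$, and the clean identities $\phiO(M)=\phiO(B)$, $\phiO(\ft{j}M)=\phiO(\ft{j}B)$ fail.

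The paper's proof anticipates exactly this obstruction and takes a deliberately indirect route. It stops after your first phase (alternating $\et{j}$ and $\et{i}$ until both $\epsilon_i$ and $\epsilon_j$ vanish), accepting an $A$ with $\epsilon_i(A)=\epsilon_j(A)=0$ that may still carry spectator $i$- and $j$-strands. It then never asserts $\phiO(M)=\phiO(B)$. Instead it chooses $\Omega$ depending on $B$ (so $\omega_\ell\ge\epsilon^{\vee}_\ell(B)$), sets $h=\phiO(M)$, $h'=\phiO(\ft{j}M)$, builds surjections $\Ind A\boxtimes\ft{i}^hB\twoheadrightarrow\ft{i}^hM$ and $\Ind A\boxtimes\ft{i}^{h'}\ft{j}B\twoheadrightarrow\ft{i}^{h'}\ft{j}M$ via Lemma~\ref{lem_arguement_star}, and uses the contrapositives of Lemmas~\ref{lem_pr0} and~\ref{lem_pr} to force $\eph(\ft{i}^hB)=\phiO(A)=\eph(\ft{i}^{h'}\ft{j}B)$ and $\jump_i(\ft{i}^hB)=\jump_i(\ft{i}^{h'}\ft{j}B)=0$, from which $\phiO(\ft{j}B)-\phiO(B)=h'-h=\phiO(\ft{j}M)-\phiO(M)$. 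That extra indirection---comparing $\eph$ of certain $\ft{i}$-shifts of $B$ against $\phiO(A)$, rather than comparing $\phiO$ of $M$ and $B$ directly---is precisely what lets the argument tolerate an $A$ that still has $i$-strands. Until you either supply a different route to an honestly $\{i,j\}$-free $A$ (which the counterexample rules out in general) or replace the spectator lemma with an argument that survives a partially-coloured $A$, the proof has a gap.
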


Note that by Remark~\ref{rem_Omega} $\phiL(\ft{j}M)- \phiL(M)=\phiO(\ft{j}M)- \phiO(M)$, so once we prove this proposition, it together with Theorem~\ref{thm_ij} proves Theorem~\ref{thm_epsilonifj}.

\begin{proof}
Let $N=\et{i}^mM$, so that $\ep{N}=0$.  Then there exists
irreducible modules $A$ and $\overline{B}$ with a surjection $\Ind A
\boxtimes \overline{B}\twoheadrightarrow N$ such that
$\ep{A}=\epsilon_j(A)=0$ and $\overline{B}$ is an
$\Rn{\bar{c}i+dj}$-module for some $\bar{c}$, $d$.  (For instance,
one may construct $A$ by setting
\begin{equation}
  A_1 =N, \qquad A_{2r}=\et{j}^{\epsilon_j(A_{2r-1})} A_{2r-1},
\qquad A_{2r+1}=\et{i}^{\ep{A_{2r}}}A_{2r}
\end{equation}
which eventually stabilizes.  So we may set $A=A_r$ for $r\gg 0$.)

Observe, as $\ep{A}=\epsilon_j(A)=0$, we must have
$\ep{\overline{B}}=\ep{N}=0$ and
$\epsilon_j(\overline{B})=\epsilon_j(N)$.  Hence we also have a
surjection
\begin{equation}
  \Ind A \boxtimes \overline{B} \boxtimes L(i^m) \twoheadrightarrow M
\end{equation}
which by Lemma~\ref{lem_arguement_star} produces a map
\begin{equation}\label{ABM}
  \Ind A \boxtimes B \twoheadrightarrow M
\end{equation}
where $B=\ft{i}^m\overline{B}$.
Observe $\ep{B}=\ep{M}=m$.
We have a surjection
\begin{equation}
  \Ind A \boxtimes B \boxtimes L(j) \twoheadrightarrow \ft{j}M
\end{equation}
and since $\epsilon_j(B)=\epsilon_j(M)$,
Lemma~\ref{lem_arguement_star} again produces a map
\begin{equation}\label{AjBjM}
  \Ind A \boxtimes \ft{j}B \twoheadrightarrow \ft{j}M.
\end{equation}

Again observe $\ep{\ft{j}B}= \ep{\ft{j}M}=m-(a-n)$.  From
\eqref{ABM} and \eqref{AjBjM}
 we also have nonzero maps
\begin{equation}
  \Ind A \boxtimes B \boxtimes L(i^h) \twoheadrightarrow \ft{i}^hM, \qquad \quad
  \Ind A \boxtimes \ft{j}B \boxtimes L(i^{h'})\twoheadrightarrow \ft{i}^{h'}\ft{j}M
\end{equation}
so applying Lemma~\ref{lem_arguement_star}, there exist surjections
\begin{equation}
  \Ind A \boxtimes \ft{i}^h B\twoheadrightarrow \ft{i}^hM, \qquad \quad
  \Ind A \boxtimes \ft{i}^{h'}\ft{j}B \twoheadrightarrow \ft{i}^{h'}\ft{j}M. \label{AhBhM}
\end{equation}

Let $\Omega=\sum_{i\in I}\omega_i\Lambda_i \in P^+$ be such that $\omega_{\ell}=\max\{ \lambda_{\ell}, \epsilon^{\vee}_{\ell}B\}$ for all $\ell \in I$.  Recall $B$ is an $\Rn{ci+dj}$-module, where $c = \bar{c}+m$, so for $\ell \neq i,j$, $\epsilon^{\vee}_{\ell}B=0$.
Take $h=\phiO(M)$ and $h'=\phiO(\ft{j}M)$ so that $\prO(\ft{i}^hM)\neq \0$, $\prO(\ft{i}^{h'}\ft{j}M)\neq \0$, but $\prO(\ft{i}^{h+1}M)=\prO(\ft{i}^{h'+1}\ft{j}M)=\0$.

From the contrapositive to Lemma~\ref{lem_pr0}~\eqref{pr02} applied to \eqref{AhBhM} we deduce
\begin{equation}
  \eph(\ft{i}^hB)\leq \phiO(A), \qquad \quad \eph(\ft{i}^{h'}\ft{j}B)\leq \phiO(A).
\end{equation}
However, applying the contrapositive of Lemma~\ref{lem_pr}
\begin{equation}
  \eph(\ft{i}^{h+1}B)> \phiO(A), \qquad \quad \eph(\ft{i}^{h'+1}\ft{j}B)> \phiO(A).
\end{equation}
We thus conclude
\begin{equation}\label{hatOmega}
  \eph(\ft{i}^hB)=\phiO(A) = \eph(\ft{i}^{h'}\ft{j}B)
\end{equation}
and furthermore $\jump_i(\ft{i}^hB)=\jump_i(\ft{i}^{h'}\ft{j}B)=0$.

Recall that
$\phiO(C) = 1+\phiO(\ft{i}C)$ for any irreducible module $C$.  Hence, we compute
\begin{eqnarray}
  \phiO(\ft{j}B)- \phiO(B) &=&
   (h'+ \phiO(\ft{i}^{h'}\ft{j}B))- (h+ \phiO(\ft{i}^hB)) \nn \\
  &=& (h'-h)+ \phiO(\ft{i}^{h'}\ft{j}B)- \phiO(\ft{i}^hB) \nn \\
  &\refequal{{\rm Prop}~\ref{prop_jump_tfae}~\eqref{jumpphi}}&
  (h'-h)+(\jump_i(\ft{i}^{h'}\ft{j}B)-\eph(\ft{i}^{h'}\ft{j}B)+\omega_i) \nn \\
 & &\quad -(\jump_i(\ft{i}^{h}B)-\eph(\ft{i}^{h}B)+\omega_i) \nn \\
  &=& (h'-h)+(0-\phiO(A)+\omega_i)-(0-\phiO(A)+\omega_i) \nn \\
  &=& h'-h \nn \\
  &=& \phiO(\ft{j}M)- \phiO(M). \nn
\end{eqnarray}
\end{proof}

\begin{thm}\label{thm_epsilonifj}
Let $M$ be an irreducible $\Rn{\nu}$-module $\Lambda \in P^+$ such that $\prL M \neq \0$
and $\prL \ft{j}M \neq \0$.  Let $m=\epsilon_i(M)$, $k=\phi_i^{\Lambda}(M)$.   Then there exists an $n$ with $0 \leq n \leq a$ such that $\epsilon_i(\ft{j}M)=m-(a-n)$ and $\phi_i^{\Lambda}(\ft{j}M)=k+n$.
\end{thm}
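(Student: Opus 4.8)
The plan is to reduce the statement about $\phiL$ to one about $\jump_i$, and then to extract both assertions from the rank-two Structure Theorems~\ref{thm_simplec} and~\ref{thm_circleL}.

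First I would dispose of the $\phiL$-part formally. Since $\prL M\neq\0$ and $\prL\ft jM\neq\0$, Proposition~\ref{prop_jump_tfae}~\eqref{jumpphi} applies to both $M$ and $\ft jM$, and together with $\eph(\ft jM)=\eph(M)$ from Proposition~\ref{prop_K10.1.3} it gives
\[
\phiL(M)=\jump_i(M)+\lambda_i-\eph(M),\qquad
\phiL(\ft jM)=\jump_i(\ft jM)+\lambda_i-\eph(M),
\]
whence $\phiL(\ft jM)-k=\jump_i(\ft jM)-\jump_i(M)$. So, setting $n:=\epsilon_i(\ft jM)-m+a$, the theorem becomes the pair of claims: $(\mathrm a)$ $0\le n\le a$, i.e.\ $m-a\le\epsilon_i(\ft jM)\le m$; and $(\mathrm b)$ $\jump_i(\ft jM)=\jump_i(M)+n$.

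Next I would set up the rank-two reduction. Put $N:=\et i^mM$, so $\epsilon_i(N)=0$, $\Res_{\mu-mi,mi}M\cong N\boxtimes L(i^m)$, and $M\cong\ft i^mN$ up to a shift (Section~\ref{sec_properties}). Transitivity of induction exhibits $\ft jM=\cosoc\,\Ind_{\mu,j}M\boxtimes L(j)$ as a quotient of $\Ind_{\mu-mi,\,mi+j}(N\boxtimes Q)$, where $Q:=\Ind_{mi,j}(L(i^m)\boxtimes L(j))$ is an $R(mi+j)$-module whose complete internal structure is given by the Structure Theorems: $\chr(Q)=[m]_i^!\,(i^m\shuffle j)$; its composition factors are the unique simple $R(mi+j)$-modules $Q_s$ with $\epsilon_i(Q_s)=s$, one for each $s$ with $\max(0,m-a)\le s\le m$ (with known $\eph(Q_s)$ and $\jump_i(Q_s)$, e.g.\ $Q_s\cong L(i^{m-s}ji^s)$ if $m\le a$ and $Q_s\cong\Ind\Lc{s-(m-a)}\boxtimes L(i^{m-a})$ if $m\ge a$); and $\cosoc Q=Q_{\max(0,m-a)}=\ft jL(i^m)$ while $\soc Q=Q_m=\ft i^mL(j)$. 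Filtering $Q$ by the $Q_s$ and inducing gives a filtration of $\Ind_{\mu-mi,\,mi+j}(N\boxtimes Q)$ with subquotients $\Ind_{\mu-mi,\,mi+j}(N\boxtimes Q_s)$, and the simple $\ft jM$ occurs as a composition factor of exactly one of them; the corresponding index is $n$. Rewriting each piece via the description of $Q_s$ above (which has a trailing $L(i^\bullet)$ factor after the $j$), Frobenius reciprocity together with the Shuffle Lemma let one read off the possible values of $\epsilon_i$ and $\eph$ of the composition factors of $\Ind_{\mu-mi,\,mi+j}(N\boxtimes Q_s)$; comparing these across the filtration, and — exactly as in the proof of the Structure Theorems — invoking the Serre relations~\eqref{Serrea}--\eqref{Serrec} to exclude the degenerate alternatives, pins down which $s$ occurs and gives $(\mathrm a)$ together with $\epsilon_i(\ft jM)=m-a+n$. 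For $(\mathrm b)$ one combines $\ft jM\cong\ft i^{\,\epsilon_i(\ft jM)}\!\bigl(\et i^{\,\epsilon_i(\ft jM)}\ft jM\bigr)$ and \eqref{jumprecursion} (so that $\jump_i(\ft jM)$ is governed by the $\jump_i$ of the $i$-highest-weight form of $\ft jM$) with the value of $\jump_i(Q_s)$ supplied by the Structure Theorems. I would organize $(\mathrm a)$--$(\mathrm b)$ as an induction on $m=\epsilon_i(M)$; the base case $m=0$ forces $\epsilon_i(\ft jM)=0$ by the Shuffle-Lemma bound $\epsilon_i(\ft jM)\le\epsilon_i(M)$, so $n=a$ there, and reduces to $\jump_i(\ft jM)=\jump_i(M)+a$, which follows from the rank-two facts that $\ft i^JL(j)=L(ji^J)$ for $0\le J\le a$ has $\eph$ identically $0$ while $\ft i^{a+1}L(j)=\Ind\Lc a\boxtimes L(i)$ has strictly larger $\eph$, so $\jump_i(L(j))=a$; the inductive step propagates through the extra power of $\ft i$ relating $M$ to $\et iM$.

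The main obstacle is exactly this last bookkeeping: identifying which composition factor $Q_s$ of $Q$ carries the cosocle $\ft jM$, and then showing that $\epsilon_i$ and $\eph$ (hence $\jump_i$ and $\phiL$) of $\ft jM$ are inherited from $\Ind_{\mu-mi,\,mi+j}(N\boxtimes Q_s)$ in exactly the predicted way rather than dropping below it. All the inputs are available — exactness of $e_i$, injectivity of the character map, Frobenius reciprocity, the Jump Lemma~\ref{lem_jump_tfae}, Proposition~\ref{prop_coind}, and the Structure Theorems — but simultaneously tracking the two quantities $\epsilon_i$ and $\eph$ through an induction product is delicate, and the degenerate cases genuinely require the (non-)vanishing of the Serre-type expressions~\eqref{Serrea}--\eqref{Serrec}, just as in Section~\ref{sec_structure}.
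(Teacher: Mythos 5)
Your opening reduction is sound and in fact mirrors the paper's own rephrasing \eqref{eq_rephrase}: using Proposition~\ref{prop_jump_tfae}~\eqref{jumpphi} together with $\eph(\ft jM)=\eph(M)$ from Proposition~\ref{prop_K10.1.3}, the theorem is equivalent to the pair $0\le n\le a$ and $\jump_i(\ft jM)=\jump_i(M)+n$ for $n:=\epsilon_i(\ft jM)-m+a$. Where the proposal goes wrong is the organization of the rest as a single induction on $m=\epsilon_i(M)$, because both ends of that induction are missing their substance.

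The base case $m=0$ is not a rank-two fact. When $\epsilon_i(M)=0$, the module $M$ is an arbitrary simple $\Rn\nu$-module (with $\nu$ involving other colors $\ell\neq i,j$), not $\1$ and not $L(j)$; yet the justification you offer — the computation $\jump_i(L(j))=a$ from $\ft i^JL(j)=L(ji^J)$ for $J\le a$ — only treats the case $M=\1$, $\ft jM=L(j)$. You would actually have to show $\jump_i(\ft jM)=\jump_i(M)+a$ for this general $M$, and that requires knowing how the other colours in $\nu$ affect $\eph$ and $\jump_i$. This is precisely what the paper's Proposition~\ref{prop_reduction} is for: it constructs a ``spectator'' $A$ with $\epsilon_i(A)=\epsilon_j(A)=0$ and a rank-two $B$ with $\Ind A\boxtimes B\twoheadrightarrow M$, picks a weight $\Omega$ making $\prO$ nonzero everywhere needed, and uses Lemmas~\ref{lem_arguement_star}, \ref{lem_pr0}, \ref{lem_pr} to show $\phiO(\ft jM)-\phiO(M)=\phiO(\ft jB)-\phiO(B)$. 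None of that is captured in the proposal.

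The inductive step is also not substantiated. Passing from $M'=\et iM$ to $M=\ft iM'$, you would have to relate $\ft j\ft iM'$ to $\ft jM'$, but $\ft i$ and $\ft j$ do not commute, and tracking how $\epsilon_i$ and $\eph$ change under the interpolated $\ft i$ is exactly the hard content — not incidental bookkeeping, as the final paragraph implicitly concedes. The paper sidesteps this entirely: rather than inducting on $\epsilon_i(M)$, Theorem~\ref{thm_ij} first proves the statement when $\nu=ci+dj$, by factoring $N=\et i^mM$ through products of $\Lj{i^{b_t}j}$'s (Lemma~\ref{lem_Nj}), pushing forward by $\ft i^h$ for $h\gg 0$ (Lemmas~\ref{lem_epsilonpastcircle} and~\ref{lem_iterate}), and then reading off $\eph(\ft i^hM)-\eph(\ft i^h\ft jM)$ explicitly; Proposition~\ref{prop_reduction} then transports that to general $\nu$. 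Your filtration of $Q=\Ind L(i^m)\boxtimes L(j)$ by composition factors $Q_s$ and the identification of the unique $\gamma$ with $\Ind N\boxtimes U_\gamma\twoheadrightarrow\ft jM$ do match the opening of Theorem~\ref{thm_ij}, but the proposal stops short of the $h\gg 0$ analysis that actually yields the $\jump_i$ relation, and it drops the reduction proposition entirely. So as written the argument has genuine gaps at both the base case and the inductive step, and the missing steps are precisely the content of the paper's Theorem~\ref{thm_ij} and Proposition~\ref{prop_reduction}.
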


\begin{proof}
This follows from Theorem~\ref{thm_ij} which proves the theorem in
the case $\nu=ci+dj$ and from Proposition~\ref{prop_reduction} which
reduces it to this case.
\end{proof}

One important rephrasing of the Theorem is
\begin{equation} \label{eq_rephrase}
  \phiL(\ft{j}M) - \ep{\ft{j}M} = a + (\phiL(M)-\ep{M}) =
-\langle h_i, \alpha_j \rangle + (\phiL(M)-\ep{M}).
\end{equation}

\begin{cor} \label{cor_jump}
Let $\Lambda=\sum_{i \in I}\lambda_i \Lambda_i \in P^+$ and
let $M$ an irreducible $\Rnu$-module such that $\prL M \neq \0$.  Then
$$\phiL(M)=\lambda_i+\epsilon_i(M)+\wt_i(M).$$
\end{cor}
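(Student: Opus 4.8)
The plan is to prove the identity by induction on $m=|\nu|$, descending from a simple module to a strictly smaller one and feeding in Theorem~\ref{thm_epsilonifj}, with the elementary behaviour of $\epsilon_i$, $\eph$, $\wt_i$ and $\phiL$ under the Kashiwara operators $\ft{i},\ft{j}$ (from Section~\ref{sec_properties}, Proposition~\ref{prop_K10.1.3}, and the observation $\phiL(\ft{i}M)=\phiL(M)-1$) doing the bookkeeping. For the base case $\nu=0$ the only simple module is $\1$: here $\epsilon_i(\1)=0=\wt_i(\1)$, and since $\ft{i}^k\1\cong L(i^k)$ with $\eph(L(i^k))=k$, Remark~\ref{rem_pr} shows $\prL\ft{i}^k\1\neq\0$ precisely for $0\le k\le\lambda_i$, so $\phiL(\1)=\lambda_i$, matching $\lambda_i+0+0$.

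For the inductive step I would fix a simple $R(\nu)$-module $M$ with $|\nu|>0$ and $\prL M\neq\0$, and first observe that some $\epsilon_j(M)>0$: choosing $\ii=i_1\cdots i_m$ with $1_{\ii}M\neq\0$ and restricting the last strand forces $\Delta_{i_m}M\neq\0$, hence $\epsilon_{i_m}(M)>0$. If $\epsilon_i(M)>0$, put $M'=\et{i}M$, so $M\cong\ft{i}M'$ with $\epsilon_i(M)=\epsilon_i(M')+1$, $\wt_i(M)=\wt_i(M')-2$, and $\phiL(M)=\phiL(M')-1$; Proposition~\ref{prop_K10.1.3} gives $\epsilon_\ell^{\vee}(M')\le\epsilon_\ell^{\vee}(M)\le\lambda_\ell$ for every $\ell$, so $\prL M'\neq\0$ by Remark~\ref{rem_pr} and the inductive hypothesis applies to $M'$; substituting the three relations into $\phiL(M')=\lambda_i+\epsilon_i(M')+\wt_i(M')$ yields the claim for $M$. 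If instead $\epsilon_i(M)=0$, pick $j\neq i$ with $\epsilon_j(M)>0$, set $a=a_{ij}$ and $M'=\et{j}M$, so $M\cong\ft{j}M'$ and $\wt_i(M)=\wt_i(M')+a$; again Proposition~\ref{prop_K10.1.3} gives $\epsilon_\ell^{\vee}(M')\le\lambda_\ell$ for all $\ell$, hence $\prL M'\neq\0$, so Theorem~\ref{thm_epsilonifj} applies to $M'$ with $\ft{j}M'=M$, producing $0\le n\le a$ with $\epsilon_i(M)=\epsilon_i(M')-(a-n)$ and $\phiL(M)=\phiL(M')+n$. Feeding in the inductive formula for $\phiL(M')$, the $a$'s and $n$'s cancel and $\phiL(M)=\lambda_i+\epsilon_i(M)+\wt_i(M)$ drops out.

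The only substantial input is Theorem~\ref{thm_epsilonifj}; everything else is routine once that is available, and the one point that needs a little care is checking at each step that the ``smaller'' module $M'$ still survives in the cyclotomic quotient, i.e.\ that $\prL M'\neq\0$, for which Proposition~\ref{prop_K10.1.3} together with Remark~\ref{rem_pr} is exactly the tool. I would close by remarking that this Corollary, substituted back into Proposition~\ref{prop_jump_tfae}~\eqref{jumpphi}, immediately yields the outstanding identity $\jump_i(M)=\epsilon_i(M)+\eph(M)+\wt_i(M)$ of Proposition~\ref{prop_jump_tfae}~\eqref{jumpwt}, so no circularity is incurred: the present argument uses only \eqref{XX2} and \eqref{jumpphi}, which were already established independently.
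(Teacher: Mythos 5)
Your proposal is correct and follows essentially the same inductive argument as the paper: induct on $|\nu|$, pass to $\et{j}M$ for some $j$ with $\epsilon_j(M)>0$, and apply Theorem~\ref{thm_epsilonifj} (or the direct bookkeeping for $j=i$) together with the inductive hypothesis. Your explicit check that $\prL M'\neq\0$ via Proposition~\ref{prop_K10.1.3} and Remark~\ref{rem_pr} is a worthwhile detail the paper leaves implicit, but it does not change the route.
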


\begin{proof}
The proof is by induction on the length $|\nu|$.  For $|\nu|=0$ we
have $M=\1$ and $\wt(M)=0$.  For all $i \in I$ observe that
$\phiL(\1)=\lambda_i$,
$\epsilon_i(\1)=0$,
and $\wt_i(M)=0$, so that the claim clearly holds for $M=\1$.
 Fix $\nu$
with $|\nu| > 0$ and an irreducible $\Rn{\nu}$-module $M$. Let $j\in
I$ be such that $\epsilon_j(M)\neq0$, noting such $j$ exists since
$|\nu| > 0$.

Consider $N=\et{j}M$. By induction we may assume the claim holds for
$N$.  Note $M=\ft{j}N$.  By Theorem \ref{thm_epsilonifj} and its rephrasing \eqref{eq_rephrase}, for any $i \in I$
\begin{align}
\phiL(M) &=\phiL(\ft{j}N)=\phiL(N)+\epsilon_i(\ft{j}N)-\epsilon_i(N)+a_{ij} \nn\\
 &=(\lambda_i+\epsilon_i(N)+\wt_i(N))
+\epsilon_i(\ft{j}N)-\epsilon_i(N)+a_{ij} \nn \\
&= \lambda_i+\epsilon_i(\ft{j}N)+\wt_i(N)- \langle h_i, \alpha_j \rangle \nn \\
&= \lambda_i+\epsilon_i(M)+\wt_i(M). \nn
\end{align}
\end{proof}

Note  that we have finally proved
Proposition~\ref{prop_jump_tfae2}~\eqref{jumpwt}. By
Proposition~\ref{prop_pr}, given an irreducible  module $M$ we can always
take $\Lambda$ large enough so that $\prL M \neq \0$, and then
Proposition~\ref{prop_jump_tfae}~\eqref{jumpphi} combined with the
above corollary gives
\begin{eqnarray}
  \jump_i(M) &=& \phiL(M) + \eph(M) + \lambda_i \nn \\
&=&  (\lambda_i+\ep{M}+\wt_i(M))+\eph(M)-\lambda_i \nn \\
 &=& \ep{M}+\eph(M)+\wt_i(M).
\end{eqnarray}
As mentioned in the discussion below
Proposition~\ref{prop_jump_tfae2}, the $\sigma$-symmetry of
this characterization of $\jump_i(M) $ now implies the remaining
parts \eqref{XX1}, \eqref{XX3} of that proposition. In the next section, we will use
all characterizations of $\jump_i(M) $ from Propositions~\ref{prop_jump_tfae} and \ref{prop_jump_tfae2}.

%
\section{Identification of crystals -- ``Reaping the Harvest"} \label{sec_harvest}
%

Now that we have built up the machinery of Section~\ref{sec_modules},
we can prove the module theoretic crystal $\cal{B}$ is isomorphic
to $B(\infty)$.
Once we have completed this step, it is not much harder to show
$\cal{B}^\Lambda \iso B(\Lambda)$.

While the methods used in Section~\ref{sec_modules} differ from those of Grojnowski, the propositions and their proofs in Section~\ref{sec_harvest} follow \cite[Section 13]{Groj} extremely closely.

%
\subsection{Constructing the strict embedding $\Psi$} \label{sec_embedding}
%

Recall Proposition~\ref{prop_K10.1.3} that said $\eph(\tilde{f}_jM)=\eph(M)$ when
$i \neq j$ but when $i=j$ either $\eph(\ft{i}M)=\eph(M)$ or $\eph(M)+1$.

\begin{prop} \label{lem13.1}
Let $M$ be a simple $\Rn{\nu}$-module, and write $c=\eph(M)$.
\begin{enumerate}[i)]
  \item \label{13.1p1} Suppose $\eph(\ft{i} M) = \eph(M) +1$.  Then
  \begin{equation}
    \ets{i}\ft{i}M \cong M.
  \end{equation}
  \item \label{13.1p2} Suppose $\eph(\ft{j} M) = \eph(M)$ where $i$ and $j$ are not necessarily distinct.  Then
\end{enumerate}
\begin{equation}
  (\ets{i})^c (\ft{j}M) \cong \ft{j} (\ets{i}{}^cM).
\end{equation}
\end{prop}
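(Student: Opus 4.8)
The plan is to prove the two parts in turn. Part~\eqref{13.1p1} is immediate from the Jump Lemma: the hypothesis $\eph(\ft{i}M)=\eph(M)+1$ is literally condition~\eqref{tfaep9} of the Jump Lemma~\ref{lem_jump_tfae}, hence equivalent to condition~\eqref{tfaep2}, i.e.\ $\ft{i}M\cong\fts{i}M$ up to grading shift; applying the involution $\sigma$ to property~(6) of Section~\ref{sec_properties} gives $\ets{i}\fts{i}X\cong X$ up to grading shift for every irreducible $X$, so $\ets{i}\ft{i}M\cong\ets{i}\fts{i}M\cong M$.

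For part~\eqref{13.1p2} (which is trivial when $\eph(M)=0$, so assume $\eph(M)\ge1$), I would first record the bookkeeping on $\eph$. If $i\neq j$, then $\eph(\ft{j}(\ets{i})^kM)=\eph((\ets{i})^kM)$ for all $k\ge0$ automatically, by the second part of Proposition~\ref{prop_K10.1.3}. If $i=j$, the hypothesis $\eph(\ft{i}M)=\eph(M)$ says $\jump_i(M)\ge1$, and then $\fts{i}\ets{i}M\cong M$ together with \eqref{jumprecursion} gives $\jump_i((\ets{i})^kM)=\jump_i(M)+k>0$ for all $k$, so again $\eph(\ft{i}(\ets{i})^kM)=\eph((\ets{i})^kM)$. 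Writing $c=\eph(M)$ and $P=(\ets{i})^cM$ (so $\eph(P)=0$), iterating the $\sigma$-dual of property~(6) of Section~\ref{sec_properties} gives $M\cong(\fts{i})^cP$ and likewise $\ft{j}M\cong(\fts{i})^cQ$ with $Q:=(\ets{i})^c(\ft{j}M)$, $\eph(Q)=0$ — here one uses $\eph(\ft{j}M)=c$, which for $i=j$ is exactly the hypothesis. Since $(\fts{i})^c$ is injective on irreducibles up to grading shift (iterate property~(7) of Section~\ref{sec_properties} and its $\sigma$-dual), it then suffices to prove $\ft{j}M\cong(\fts{i})^c(\ft{j}P)$.

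Here the co-induction results of Section~\ref{sec_coind} enter. By Proposition~\ref{prop_coind}, $M\cong\soc(\Ind P\boxtimes L(i^c))$ and $\ft{j}M\cong\soc(\Ind L(j)\boxtimes M)$; exactness of $\Ind L(j)\boxtimes(-)$ and transitivity of induction then embed $\ft{j}M$ into $\Ind L(j)\boxtimes P\boxtimes L(i^c)$, so $\ft{j}M$ is a simple summand of $\soc(\Ind L(j)\boxtimes P\boxtimes L(i^c))$. Symmetrically, $\soc(\Ind L(j)\boxtimes P)\cong\ft{j}P$ and $\soc(\Ind(\ft{j}P)\boxtimes L(i^c))\cong(\fts{i})^c(\ft{j}P)$, again by Proposition~\ref{prop_coind}, so $(\fts{i})^c(\ft{j}P)$ is also a simple summand of the same socle. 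Hence the Proposition reduces to showing that $\Ind L(j)\boxtimes P\boxtimes L(i^c)$ has irreducible socle: granting this, the two summands coincide, giving $\ft{j}M\cong(\fts{i})^c(\ft{j}P)$, and cancelling $(\fts{i})^c$ gives $Q\cong\ft{j}P$.

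The irreducibility of $\soc(\Ind L(j)\boxtimes P\boxtimes L(i^c))$ is the main obstacle. I would attack it by the argument underlying Proposition~\ref{prop_coind}: writing the module as $\Ind Y\boxtimes L(i^c)$ with $Y=\Ind L(j)\boxtimes P$ (whose socle $\ft{j}P$ is irreducible with $\eph(\ft{j}P)=0$ — for $i=j$ this uses $\jump_i(P)=\jump_i(M)+c\ge1$), rewriting $\Ind Y\boxtimes L(i^c)\cong\Indc L(i^c)\boxtimes Y$ via part~\eqref{thm_coindp2} of Theorem~\ref{thm_coind}, and applying the adjunction \eqref{right_adjoint} so that for a simple submodule $S$ one has $\Hom(S,\Indc L(i^c)\boxtimes Y)\cong\Hom(\Res S,L(i^c)\boxtimes Y)$ over the appropriate parabolic. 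Feeding in the Shuffle Lemma bound on $\eph$ together with the fact that $L(i^c)$ is the unique graded simple $R(ci)$-module (so that any map into $L(i^c)\boxtimes Y$ factors through the $R(ci)$-cosocle, whose $\eph$-value is controlled) should force $(\ets{i})^cS$ to embed in $Y$, hence into $\soc(Y)=\ft{j}P$, so that $S\cong(\fts{i})^c(\ft{j}P)$ with multiplicity one. The delicate point, exactly as in the classical arguments \cite{Groj,GV,KleBook}, is that restrictions of simple modules are very far from semisimple, so this step has to be carried out at the level of $\Hom$-spaces and $\eph$-values rather than by naively factoring maps through cosocles; that, rather than anything in the reduction above, is where the real work lies.
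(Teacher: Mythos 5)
Part~(i) is the same as the paper's proof. For part~(ii) you take a genuinely different route. The paper works on the restriction/cosocle side: it applies the exact $(\es{i})^c$ to the surjection $\Ind L(i^c)\boxtimes\barM\boxtimes L(j)\twoheadrightarrow\ft{j}M$, observes that all composition factors of $(\es{i})^c\ft{j}M$ are $(\ets{i})^c\ft{j}M$ up to shift (since $\eph(\ft{j}M)=c$), and uses the Mackey filtration of $(\es{i})^c\bigl(\Ind L(i^c)\boxtimes\barM\boxtimes L(j)\bigr)$ to extract a surjection $\Ind\barM\boxtimes L(j)\twoheadrightarrow(\ets{i})^c\ft{j}M$, with a Frobenius reciprocity exclusion of the stray subquotient $\Ind L(i)\boxtimes\barM$ when $j=i$. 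You instead work with co-induction and socles, embedding both $\ft{j}M$ and $(\fts{i})^c\ft{j}P$ into $\soc\bigl(\Ind L(j)\boxtimes P\boxtimes L(i^c)\bigr)$ and reducing to simplicity of that socle; your reductions and $\eph$-bookkeeping (including $\jump_i(P)=\jump_i(M)+c$ when $j=i$) are correct.

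The gap, which you flag yourself, is that simplicity claim. The tool that closes it is not factoring through an $R(ci)$-cosocle but the $\sigma$-dual of property~2(b) in Section~\ref{sec_properties}: for a simple $S$ with $\eph(S)=c$ one has $\Res\,S\cong L(i^c)\boxtimes(\ets{i})^cS$ as an $R(ci)\otimes R(\nu+j-ci)$-module. For any simple submodule $S\hookrightarrow\Ind Y\boxtimes L(i^c)\cong\Indc L(i^c)\boxtimes Y$ with $Y=\Ind L(j)\boxtimes P$, the adjunction \eqref{right_adjoint} gives $\HOM(\Res\,S,\,L(i^c)\boxtimes Y)\neq 0$, so $\eph(S)\geq c$ and hence $\eph(S)=c$ by the Shuffle Lemma; then the restriction formula together with the factorization $\HOM_{R(ci)\otimes R}(L(i^c)\boxtimes A,\,L(i^c)\boxtimes B)\cong\HOM(L(i^c),L(i^c))\otimes\HOM(A,B)$ over the algebraically closed $\Bbbk$ yields a nonzero, hence injective, map $(\ets{i})^cS\to Y$, so $(\ets{i})^cS\cong\soc Y\cong\ft{j}P$ by Proposition~\ref{prop_coind}, i.e.\ $S\cong(\fts{i})^c\ft{j}P$; a parallel $\HOM$-count gives multiplicity one. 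With this your route closes. The paper's Mackey argument bypasses the socle question entirely; yours, once completed this way, additionally proves that this particular triple induction has simple socle.
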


\begin{proof}
For part \eqref{13.1p1}, the Jump Lemma~\ref{lem_jump_tfae} gives us $\ft{i} M
\iso \fts{i} M$. Therefore, $\ets{i}\ft{i}M \cong \ets{i}\fts{i}M
\cong M$.

For part \eqref{13.1p2} let $\overline{M}=(\ets{i})^cM$ so that $\eph(\barM)=0$ and we have a surjection $\Ind L(i^c)\boxtimes \barM \twoheadrightarrow M$ as well as
\begin{equation} \label{13.1.1}
  \Ind L(i^c)\boxtimes \barM \boxtimes L(j)\twoheadrightarrow \ft{j}M.
\end{equation}
Note that as $c=\eph(\ft{j}M)$, all composition factors of $(\es{i})^c\ft{j}M$ are
isomorphic to $(\ets{i})^c\ft{j}M$, so there exists a surjection $(\es{i})^c\ft{j}M\twoheadrightarrow (\ets{i})^c\ft{j}M$.  As $(\es{i})^c$ is exact, we may apply it to \eqref{13.1.1} and compose with the map above yielding
\begin{equation} \label{13.1.2}
  (\es{i})^c(\Ind L(i^c) \boxtimes \barM \boxtimes L(j)) \twoheadrightarrow (\ets{i})^c\ft{j}M.
\end{equation}
In the case $j\neq i$, by the Mackey Theorem~\cite[Proposition
2.8]{KL} $(\es{i})^c(\Ind L(i^c) \boxtimes \barM \boxtimes L(j))$
has a filtration whose subquotients are isomorphic
to $\Ind \barM \boxtimes L(j)$.  So \eqref{13.1.2} yields a
map
\begin{equation}
  \Ind \barM \boxtimes L(j)  \twoheadrightarrow (\ets{i})^c \ft{j}M,
\end{equation}
 which implies
\begin{equation}
  (\ets{i})^c \ft{j}M \cong \ft{j}\overline{M} \cong \ft{j}(\ets{i})^c M.
\end{equation}

In the case $j = i$, the subquotients are isomorphic to $\Ind \barM
\boxtimes L(i)$ or $\Ind L(i) \boxtimes \barM$.
But, by assumption
$\eph((\ets{i})^c\ft{i}M)=0$,
so by Frobenius reciprocity we cannot
have a nonzero map from $\Ind L(i) \boxtimes \barM$ to
$(\ets{i})^c\ft{i}M$.
As before, we must have
\begin{equation}
  \Ind \barM \boxtimes L(i) \twoheadrightarrow 
(\ets{i})^c \ft{i}M
\end{equation}
and so $(\ets{i})^c \ft{j}M = (\ets{i})^c \ft{i}M
 \cong
\ft{i}\barM =
\ft{i}(\ets{i})^cM=
\ft{j}(\ets{i})^cM$.
\end{proof}

\begin{prop} \label{lem13.2}
Let $M$ be an irreducible $\Rn{\nu}$-module, and write $c=\eph(M)$, $\barM = (\ets{i})^{c}(M)$.
\begin{enumerate}[i)]
  \item \label{13.2.1} $\epsilon_i(M) = \max\left\{ \epsilon_i(\barM), c -\wt_i(\barM) \right\}$.

  \item \label{13.2.2} Suppose $\epsilon_i(M)>0$.  Then
 \begin{equation}
   \eph(\et{i}M) =
 \left\{
 \begin{array}{lll}
    c & \quad & \text{if $\epsilon_i(\barM) \geq c -\wt_i(\barM)$,}\\
    c-1 & & \text{if $\epsilon_i(\barM) < c -\wt_i(\barM)$.}
 \end{array}
 \right.
 \end{equation}

 \item \label{13.2.3} Suppose $\epsilon_i(M)>0$. Then
 \begin{equation} (\ets{i})^{\eph(\et{i}M)}(\et{i}M) =
 \left\{
 \begin{array}{lll}
     \et{i}(\barM)& \quad & \text{if $\epsilon_i(\barM) \geq c -\wt_i(\barM)$,}\\
    \barM & & \text{if $\epsilon_i(\barM) < c -\wt_i(\barM)$.}
 \end{array}
 \right.
\end{equation}
\end{enumerate}
\end{prop}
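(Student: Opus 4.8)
The plan is to analyze $\Res_{i^c, \nu - ci} M \cong L(i^c) \boxtimes \barM$ (up to grading shift), which is the $\sigma$-symmetric version of the statement in Section~\ref{sec_properties}(2) applied to $\sigma^*M$, and to extract $\epsilon_i(M)$ from the surjection $\Ind L(i^c) \boxtimes \barM \twoheadrightarrow M$ by applying $e_i^{\epsilon_i(M)}$ and using Frobenius reciprocity together with the Shuffle Lemma. The core of the argument is the interplay between $\et{i}$ (acting on the left/bottom) and $\ets{i}$ (acting on the right/top): since we have already established $\chr$ is injective and have the Structure Theorems for $\Rn{ci+j}$-modules, the behaviour of restriction on $\Ind L(i^c) \boxtimes \barM$ is controlled by a Mackey filtration.

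For part \eqref{13.2.1}, I would compute $\epsilon_i(\Ind L(i^c) \boxtimes \barM)$ via the Shuffle Lemma: since $\epsilon_i(L(i^c)) = c$ and, crucially, $\eph(\barM) = 0$, the relevant shuffle term gives $\epsilon_i(\Ind L(i^c)\boxtimes \barM) = \max\{\epsilon_i(\barM) + c, \text{(something involving } \phi_i)\}$. More precisely, the tensor product rule \eqref{eq_ei_tensor} for the crystal $\mc{B}$ (which we have already shown is a crystal, even if not yet identified with $B(\infty)$) together with the fact that $M$ is the cosocle of $\Ind L(i^c)\boxtimes \barM$ forces $\epsilon_i(M) = \max\{\epsilon_i(\barM), \phi_i(L(i^c)) \text{ shifted appropriately}\}$. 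Unwinding $\phi_i$ via $\phi_i(b) = \epsilon_i(b) + \wt_i(b)$ and the weight bookkeeping $\wt_i(\barM) = \wt_i(M) + c\langle h_i, \alpha_i\rangle/\cdots$ — here I must be careful since $L(i^c)$ contributes $-c\alpha_i$ to the weight — yields $c - \wt_i(\barM)$ as the second term. I would verify this by the character argument of the Structure Theorems restricted to the relevant maximal parabolic.

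For parts \eqref{13.2.2} and \eqref{13.2.3}, the strategy is to combine part \eqref{13.2.1} with Proposition~\ref{lem13.1}. When $\epsilon_i(\barM) \geq c - \wt_i(\barM)$, the module $\et{i}M$ is obtained by applying $\et{i}$ to the ``$\barM$-slot'', so $\eph(\et{i}M) = c$ (the $L(i^c)$ part is untouched) and $(\ets{i})^c(\et{i}M) = \et{i}\barM$. When $\epsilon_i(\barM) < c - \wt_i(\barM)$, the operator $\et{i}$ acts on the $L(i^c)$ slot, reducing it to $L(i^{c-1})$, so $\eph(\et{i}M) = c-1$ and $(\ets{i})^{c-1}(\et{i}M) = \barM$. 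To make this rigorous I would run the same surjection/Frobenius-reciprocity machinery: write $\et{i}M = \soc(e_i M)$, use the Mackey filtration of $e_i(\Ind L(i^c)\boxtimes \barM)$ whose subquotients are $\Ind L(i^{c-1})\boxtimes \barM$ and $\Ind L(i^c)\boxtimes e_i\barM$ (up to shift), and determine which subquotient surjects onto $\et{i}M$ by comparing $\eph$ values using Proposition~\ref{lem13.1}\eqref{13.1p2}. The main obstacle I anticipate is the bookkeeping in the Mackey/Shuffle argument: correctly identifying which composition factor of the induced module has the maximal $\epsilon_i$ and showing the ``wrong'' factors cannot map onto $\et{i}M$ (or $(\ets{i})^{\eph(\et{i}M)}\et{i}M$), which requires carefully tracking both $\epsilon_i$ and $\eph$ simultaneously through the filtration — exactly the kind of two-sided argument that appears in Lemma~\ref{lem_pr} and Lemma~\ref{lem_arguement_star}, so I would lean heavily on those.
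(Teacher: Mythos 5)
Your proposal diverges fundamentally from the paper's route and has a genuine gap that makes it circular. The paper's proof of all three parts is a short unwinding of the jump formula
\[
\jump_i(M)=\epsilon_i(M)+\eph(M)+\wt_i(M),
\]
which is Proposition~\ref{prop_jump_tfae}~\eqref{jumpwt}, established via Corollary~\ref{cor_jump} (i.e.\ the hard Theorem~\ref{thm_epsilonifj}). For \eqref{13.2.1} one writes $\wt_i(M)=\wt_i(\barM)-2c$, notes $\epsilon_i(M)\geq\epsilon_i(\barM)$ by the $\sigma$-symmetric Proposition~\ref{prop_K10.1.3}, and then either $\jump_i(M)=0$ forces $\epsilon_i(M)=c-\wt_i(\barM)$, or $\jump_i(M)\geq 0$ forces $\epsilon_i(M)\geq c-\wt_i(\barM)$. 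For \eqref{13.2.2} one uses the recursion \eqref{jumprecursion} to compute $\jump_i(\ets{i}M)=\jump_i((\fts{i})^{c-1}\barM)$ and compares $\jump_i(\barM)$ with $c$, then invokes the Jump Lemma; \eqref{13.2.3} is then \eqref{13.2.2} plus Proposition~\ref{lem13.1}. You do use Proposition~\ref{lem13.1} for \eqref{13.2.3}, which agrees with the paper, but the numerical engine is absent from your plan.

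The concrete gap is in \eqref{13.2.1}: invoking ``the tensor product rule \eqref{eq_ei_tensor} for the crystal $\cal{B}$'' to read off $\epsilon_i(M)$ from $M\cong\cosoc\Ind L(i^c)\boxtimes\barM$ is circular. The tensor rule is a definition for abstract crystals; that $\et{i}$ and $\epsilon_i$ on modules are compatible with it under the map $M\mapsto\barM\otimes b_i(-c)$ is exactly the content of Proposition~\ref{lem13.2} (and the subsequent proof that $\Psi_i$ is a strict morphism). The Shuffle Lemma alone only gives an upper bound $\epsilon_i(\Ind L(i^c)\boxtimes\barM)\leq c+\epsilon_i(\barM)$ and says nothing about the $\epsilon_i$ of the specific composition factor $M$; pinning down $\epsilon_i(M)$ precisely is the hard part and is where the paper spends all of Section 6.4. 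Similarly, in \eqref{13.2.2}--\eqref{13.2.3} the plan to ``determine which subquotient \emph{surjects onto} $\et{i}M$'' by a Mackey filtration of $e_i(\Ind L(i^c)\boxtimes\barM)$ is not well posed: $\et{i}M=\soc e_iM$, and $e_iM$ is a \emph{quotient} of $e_i(\Ind L(i^c)\boxtimes\barM)$, so socles do not transfer along the surjection; subquotients of a Mackey filtration do not map to a socle of a quotient. If you want a Mackey-style argument, you would effectively be redoing Theorem~\ref{thm_ij} and Proposition~\ref{prop_reduction}; the clean path is to first prove Proposition~\ref{prop_jump_tfae}~\eqref{jumpwt} and then let arithmetic do the work, as the paper does.
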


\begin{proof}
Suppose $\epsilon_i(M)>\epsilon_i(\barM)$.  Then $\jump_i(M)=0$ and by Proposition~\ref{prop_jump_tfae2}~\eqref{jumpwt}
\begin{equation}
  0=\jump_i(M)=\epsilon_i(M)+\eph(M)+\wt_i(M)
  =\epsilon_i(M)+c+\wt_i(\barM)-2c
\end{equation}
so that $\epsilon_i(M)=c-\wt_i(\barM)$, and clearly
$\epsilon_i(M)=\max\left\{ \epsilon_i(\barM), c -\wt_i(\barM)
\right\}$. It is always the case that $\jump_i(M) \geq 0$. If
$\epsilon_i(M)=\ep{\barM}$, then as above
$\epsilon_i(M)=(c-\wt_i(\barM))+\jump_i(M) \geq c-\wt_i(\barM)$.  So
again $\epsilon_i(M)=\max\left\{ \epsilon_i(\barM), c -\wt_i(\barM)
\right\}$.

For part \eqref{13.2.2} consider two cases.

Case 1 ($\ep{\barM} < c -\wt_i(\barM)$):
Recall by Proposition~\ref{prop_jump_tfae2}~\eqref{jumpwt}, $\jump_i(\barM)=\eph(\barM)+\epsilon_i(\barM)+ \wt_i(\barM)=0 +\ep{\barM}+\wt_i(\barM)$ so $\jump_i{\barM}<c$ if and only if $\ep{\barM} < c -\wt_i(\barM)$.  Since $\jump_i{\barM}<c$ then $0=\jump_i((\fts{i})^{c-1}\barM)=\jump_i(\ets{i}M)$ by \eqref{jumprecursion}.
By the Jump Lemma~\ref{lem_jump_tfae}, $\ft{i}(\ets{i}M) \cong \fts{i}(\ets{i}M) \cong M$.  Hence $\ets{i}M=\et{i}M$ and so $\eph(\et{i}M)=\eph(\ets{i}M)=c-1$.

Case 2 ($\ep{\barM} \geq c -\wt_i(\barM)$):
As above this case is equivalent to
$\jump_i{\barM}\geq c$.
Note if $c=0$ then \eqref{13.2.2} obviously holds by Proposition~\ref{prop_K10.1.3}.
If $c>0$  by \eqref{jumprecursion}, we must have $0 < \jump_i((\fts{i})^{c-1}\barM)=\jump_i(\ets{i}M)$.  Suppose that $\jump_i(\et{i}M)=0$.  Then as above $\fts{i}\et{i}M \cong \ft{i}\et{i}M \cong M$ and so $\et{i}M\cong\ets{i}M$ yielding $\jump_i(\ets{i}M)=0$
which is a contradiction.   So we must have $\jump_i(\et{i}M)>0$.  Then by
the definition of $\jump_i$, $\eph(\et{i}M)=\eph(\ft{i}\et{i}M)=\eph(M)=c$.

For part \eqref{13.2.3}, first suppose $\ep{\barM} \geq c-\wt_i(\barM)$.  Then by part~\eqref{13.2.2} $\eph(\et{i}M)=c=\ep{M}$.  In other words $\eph(\et{i}M)=\eph(\ft{i}\et{i}M)$ so by Proposition~\ref{lem13.1} applied to $\et{i}M$,
\begin{equation}
  \ft{i}(\ets{i})^c\et{i}M \cong (\ets{i})^c \ft{i}\et{i}M \cong (\ets{i})^cM =\barM.
\end{equation}
Hence $(\ets{i})^c\et{i}M \cong \et{i}\barM$.

Next suppose $\ep{\barM}< c-\wt_i(\bar{M})$.  Then by part~\eqref{13.2.2}
\begin{equation}
  \eph(\et{i}M)=c-1=\eph(M)-1.
\end{equation}
In other words $\eph(\ft{i}\et{i}M) = \eph(\et{i}M)+1$, so by Proposition~\ref{lem13.1} applied to $\et{i}M$,
\begin{equation}
 \ets{i}M \cong \ets{i}\ft{i}\et{i}M \cong \et{i}M,
\end{equation}
hence $(\ets{i})^{c-1}\et{i}M \cong (\ets{i})^{c-1}\ets{i}M \cong (\ets{i})^cM \cong \barM$.
\end{proof}

\begin{prop}
For each $i\in I$ define a map
\begin{eqnarray}
  \Psi_i \maps \cal{B} &\to& \cal{B} \otimes B_{i} \nn\\
    M         &\mapsto& (\ets{i})^c(M) \otimes b_i(-c), \nn
\end{eqnarray}
where $c =\eph(M)$.  Then $\Psi_i$ is a strict embedding of crystals.
\end{prop}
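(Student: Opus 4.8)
The statement packages four assertions: $\Psi_i$ is well defined and respects the zero convention (M1), it preserves $\wt$, $\epsilon_j$ and $\phi_j$ (M2), it commutes with $\et{j}$ and $\ft{j}$ for every $j\in I$ (strictness), and it is injective (embedding). Throughout set $c=\eph(M)$ and $\barM=(\ets{i})^cM$, so that $\barM$ is irreducible with $\eph(\barM)=0$; the constant bookkeeping identity will be $\wt_i(\barM)=\wt_i(M)+2c$, which follows from $\wt(\ets{i}X)=\wt(X)+\alpha_i$ and $\langle h_i,\alpha_i\rangle=2$. Well-definedness and (M1) are immediate. For injectivity I would use the $\sigma$-twisted form of property~(6) of Section~\ref{sec_properties}: since $(\ets{i})^kM\neq\0$ for $0\le k\le c$, iterating $\fts{i}M_{k+1}\cong M_k$ gives $M\cong(\fts{i})^c\barM$ up to grading shift, so $M$ is recovered from the pair $(\barM,c)$, hence from $\Psi_i(M)=\barM\otimes b_i(-c)$.

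For (M2): $\wt(\Psi_iM)=\wt(\barM)-c\alpha_i=\wt(M)$. For $j\neq i$ the tensor rule gives $\epsilon_j(\barM\otimes b_i(-c))=\epsilon_j(\barM)$ because $\epsilon_j(b_i(-c))=-\infty$, and $\epsilon_j(\barM)=\epsilon_j(M)$ since applying $\ets{i}$ leaves $\epsilon_j$ unchanged --- this is the $\sigma$-transform of the equality $\epsilon_j(\ft{i}X)=\epsilon_j(X)$ from Proposition~\ref{prop_K10.1.3}, applied to $X=\et{i}(\sigma^*M)$ together with $\ft{i}\et{i}(\sigma^*M)\cong\sigma^*M$ (property~(6)). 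For $j=i$, the tensor rule gives $\epsilon_i(\barM\otimes b_i(-c))=\max\{\epsilon_i(\barM),\,c-\wt_i(\barM)\}$, which is exactly $\epsilon_i(M)$ by Proposition~\ref{lem13.2}~\eqref{13.2.1}. Since $\cal{B}$ and $\cal{B}\otimes B_i$ both satisfy (C1), equality of $\epsilon_j$ and $\wt$ forces equality of $\phi_j$. Strictness for $j\neq i$ is then routine: on the $B_i$-side $\et{j}$ and $\ft{j}$ act only on the first tensor factor, $\eph(\ft{j}M)=\eph(M)=c$ and (when $\et{j}M\neq\0$) $\eph(\et{j}M)=c$ by Proposition~\ref{prop_K10.1.3} applied to $\ft{j}\et{j}M\cong M$, and Proposition~\ref{lem13.1}~\eqref{13.1p2} (together with its consequence $(\ets{i})^c\et{j}M\cong\et{j}\barM$, obtained by applying property~(6) to $(\ets{i})^c(\ft{j}\et{j}M)\cong\ft{j}((\ets{i})^c\et{j}M)$) yields $\Psi_i(\ft{j}M)=\ft{j}\barM\otimes b_i(-c)=\ft{j}\Psi_i(M)$ and $\Psi_i(\et{j}M)=\et{j}\barM\otimes b_i(-c)=\et{j}\Psi_i(M)$, with the $\et{j}M=\0$ case matching because $\epsilon_j(\barM)=\epsilon_j(M)$.

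The real content is strictness for $j=i$. The organizing observation is that the tensor-product rules compare $\phi_i(\barM)$ against $\epsilon_i(b_i(-c))=c$, while by (C1) $\phi_i(\barM)=\epsilon_i(\barM)+\wt_i(\barM)$, so the dichotomy ``$\phi_i(\barM)\gtrless c$'' is \emph{literally} the dichotomy ``$\epsilon_i(\barM)\gtrless c-\wt_i(\barM)$'' that governs Proposition~\ref{lem13.2}~\eqref{13.2.2}--\eqref{13.2.3}. For $\ft{i}$: by Proposition~\ref{prop_K10.1.3}~(i), $\eph(\ft{i}M)$ is $c+1$ or $c$; in the first case the Jump Lemma~\ref{lem_jump_tfae} gives $\jump_i(M)=0$, so Proposition~\ref{prop_jump_tfae}~\eqref{jumpwt} and Proposition~\ref{lem13.2}~\eqref{13.2.1} force $\epsilon_i(\barM)\le c-\wt_i(\barM)$, i.e.\ $\phi_i(\barM)\le c$, and then $\ft{i}(\barM\otimes b_i(-c))=\barM\otimes b_i(-c-1)$ which equals $\Psi_i(\ft{i}M)$ because $(\ets{i})^{c+1}\ft{i}M\cong\barM$ by Proposition~\ref{lem13.1}~\eqref{13.1p1}; in the second case the same inputs give $\epsilon_i(\barM)>c-\wt_i(\barM)$, so $\ft{i}(\barM\otimes b_i(-c))=\ft{i}\barM\otimes b_i(-c)$, matching $\Psi_i(\ft{i}M)$ via Proposition~\ref{lem13.1}~\eqref{13.1p2}. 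For $\et{i}$: when $\epsilon_i(M)=0$, Proposition~\ref{lem13.2}~\eqref{13.2.1} forces $\epsilon_i(\barM)=0$ and $\phi_i(\barM)\ge c$, so $\et{i}(\barM\otimes b_i(-c))=\et{i}\barM\otimes b_i(-c)=0=\Psi_i(\et{i}M)$; when $\epsilon_i(M)>0$ one splits on the dichotomy, reads $\eph(\et{i}M)$ and $(\ets{i})^{\eph(\et{i}M)}\et{i}M$ off Proposition~\ref{lem13.2}~\eqref{13.2.2}--\eqref{13.2.3}, and checks in each branch that the result coincides with the tensor-rule value of $\et{i}(\barM\otimes b_i(-c))$.

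The main obstacle is precisely keeping this last case analysis honest: one must line up the trichotomy of Proposition~\ref{lem13.2} with the two-way splits in the tensor rules, and it is exactly the translation $\wt_i(\barM)=\wt_i(M)+2c$ together with $\jump_i(M)=\epsilon_i(M)+\eph(M)+\wt_i(M)$ from Proposition~\ref{prop_jump_tfae}~\eqref{jumpwt} that makes the two descriptions coincide. Everything else (weights, the $j\neq i$ cases, injectivity) is bookkeeping, so once Propositions~\ref{lem13.1} and~\ref{lem13.2} and the Jump Lemma are in hand the proof is a finite, if slightly intricate, verification.
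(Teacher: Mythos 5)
Your proof is correct and follows essentially the same route as the paper: the same reduction to Propositions~\ref{lem13.1} and~\ref{lem13.2} and the tensor-product rules, with (M2) checked via~\eqref{13.2.1}, the $j\neq i$ strictness via Proposition~\ref{lem13.1}~\eqref{13.1p2}, and the $j=i$ strictness via the dichotomy in Proposition~\ref{lem13.2}. The only differences are cosmetic: you spell out the $\ft{i}$ case (which the paper leaves as ``similar''), make the injectivity argument explicit via $M\cong(\fts{i})^c\barM$, and surface the bookkeeping identity $\wt_i(\barM)=\wt_i(M)+2c$ that silently drives the case analysis — all welcome but not structurally different.
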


\begin{proof}
First we show that $\Psi_i$ is a morphism of crystals.  (M1) is obvious. For (M2) let $\barM=(\ets{i})^cM$.  We compute
\begin{equation}
  \wt(\psi_i(M)) = \wt(\barM \otimes b_i(-c)) = \wt(\barM)+\wt(b_i(-c)) = \wt(M)+c\alpha_i - c \alpha_i = \wt(M).
\end{equation}
Consider first the case $j \neq i$.  By Proposition~\ref{prop_K10.1.3}
\begin{eqnarray}
  \epsilon_j(\Psi_i(M)) &=& \epsilon_j(\barM\otimes b_i(-c)) \nn \\
  &=&  \max\{\epsilon_j(\barM),\epsilon_j(b_i(-c))-
  \langle h_j,\wt(\barM)\rangle \} \nn \\
  &=& \max\{\epsilon_j(\barM),-\infty\} =  \epsilon_j(\barM) \nn \\
  &=& \epsilon_j(M). \nn
\end{eqnarray}
In the case $j=i$,  Proposition~\ref{lem13.2}~\eqref{13.2.1} implies
\begin{eqnarray}
  \epsilon_i(\Psi_i(M)) &=& \ep{\barM\otimes b_i(-c)} \nn \\
  &=& \max\{\epsilon_i(\barM),\ep{b_i(-c)}-\langle h_i,\wt(\barM)\rangle \}
  = \max\{\epsilon_i(\barM),c-\wt_i(\barM)\} \nn \\
  &=& \epsilon_i(M).
\end{eqnarray}
Since for both crystals, $\phi_j(b)=\epsilon_j(b)+\langle h_j, \wt(b) \rangle$ it follows $\phi_j(M)=\phi_j(\Psi_i(M))$ for all $j \in I$.

It is clear that $\Psi_i$ is injective. We will prove a stronger statement than (M3) and (M4), namely $\Psi_i(\et{j}M)=\et{j}(\Psi_i(M))$ and $\Psi_i(\ft{j}M)=\ft{j}(\Psi_i(M))$ which will show $\Psi_i$ is not just a morphism of crystals, but since
it
is injective,
$\Psi_i$
is a strict embedding of crystals.

Observe
\begin{equation} \label{epsi}
 \et{j}(\Psi_i(M)) = \et{j}\left(\barM \otimes b_i(-c)\right) =
\left\{
 \begin{array}{lll}
   \et{j}\barM\otimes b_i(-c) & \quad & \text{if $\phi_j(\barM)\geq \ep{b_i(-c)}=c$} \\
  \barM \otimes b_i(-c+1) &  & \text{if $\phi_j(\barM)< c$.}
 \end{array}
\right.
\end{equation}
We first consider the case when $j=i$.
If $\ep{M}=0$, then clearly $\ep{\barM}=0$ and further $\et{i}M=\et{i}\barM=\0$.  By Proposition~\ref{lem13.2}~\eqref{13.2.1}
\begin{equation}
  \ep{\barM} = 0 = \ep{M} = \max \{ \ep{\barM}, c -\wt_i(\barM) \} \geq c-\wt_i(\barM),
\end{equation}
yielding
$\phi_i(\barM) = \ep{\barM} + \wt_i(\barM) \ge (c  - \wt_i(\barM)) + \wt_i(\barM) = c$,
so by \eqref{eq_ei_tensor}, \eqref{eq_Bi_eps} we get
\begin{equation}
  \et{i}\Psi_i(M) = \et{i} \barM \otimes b_i(-c) = 0 = \Psi_i(0)=\Psi_i(\et{i}M).
\end{equation}
Now suppose $\ep{M} > 0$.
Using that $\phi_i(\barM):= \epsilon_i(\barM) + \wt_i(\barM)$, \eqref{eq_ei_tensor}, and \eqref{eq_Bi_eps}, Proposition~\ref{lem13.2} implies we can rewrite
\begin{eqnarray}
\et{i}\Psi_i(M) &=& \left\{
 \begin{array}{lll}
    (\ets{i})^{c}\et{i}M \otimes b_i(-c) & \quad & \text{if $\epsilon_i(\barM) \geq c-\wt_i(\barM)$} \\
   (\ets{i})^{c-1}\et{i}M \otimes b_i(-c+1) &  & \text{if $\epsilon_i(\barM) < c-\wt_i(\barM)$}
 \end{array}
\right. \\
& =& (\ets{i})^{\eph(\et{i}M)}\et{i}M \otimes b_i(\eph(\et{i}M))\\ &=& \Psi_i(\et{i}M).
\end{eqnarray}

When $j\neq i$ note that $\eph(\et{j}M)=\eph(M)=c$ so long as
$\et{j}M \neq \0$, by Proposition~\ref{prop_K10.1.3} applied to
$\et{j}M$. Equation \eqref{13.1p2} of Proposition~\ref{lem13.1}
implies $\barM = (\ets{i})^cM = \ft{j}(\ets{i})^c \et{j}M$, so
$\et{j}\barM =(\ets{i})^{c}\et{j}M$. Therefore,  by \eqref{epsi} as
$ \epsilon_j(b_i(-c))=-\infty$,
\begin{equation}
\et{j}(\Psi_i(M))= \et{j}\barM\otimes b_i(-c) = (\ets{i})^{c}\et{j}M\otimes b_i(-c)=\Psi_i(\et{j}M).
\end{equation}
In the case $\et{j}M = \0$, Proposition~\ref{prop_K10.1.3} implies $\et{j}\barM  = \0$
as well, so we compute
$$
\et{j}(\Psi_i(M))= \et{j}\barM\otimes b_i(-c) =
0 = \Psi_i(0)
=\Psi_i(\et{j}M).
$$
The proof that $\Psi_i(\ft{j}M)=\ft{j}(\Psi_i(M))$ is similar.
\end{proof}

%
\subsection{Main Theorems}
%

In the following we use the characterization of $B(\infty)$ from Section~\ref{sec_Binf} to implicitly prove $\cal{B}$ is isomorphic to $B(\infty)$.

\begin{thm} \label{thm_Binf}
The crystal $\cal{B}$ is isomorphic to $B(\infty)$.
\end{thm}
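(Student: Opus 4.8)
The plan is to verify that the module-theoretic crystal $\cal{B}$ satisfies conditions (B1)--(B7) of the Kashiwara--Saito characterization (Proposition~\ref{prop_descB}), so that $\cal{B}\iso B(\infty)$. The element $b_0$ will be $\1$, the class of the trivial $\Rn{0}$-module.

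First I would check the easy axioms. (B1) holds since $\wt(M)=-\nu\in Q_-$ by definition. (B2): if $\wt(M)=0$ then $\nu=0$ so $M=\1$; uniqueness is clear as $R(0)=\Bbbk$. (B3): $\epsilon_i(\1)=0$ since $e_i\1=\0$ (there are no strands to restrict). (B4): $\epsilon_i(M)\in\Z$ always, because $M$ is finite dimensional and $\et{i}^nM=\0$ once $n>\dim M$; this was already noted in the proof that $\cal{B}$ is a crystal. For (B5) I would take $\Psi_i$ to be the strict embedding $\Psi_i\maps\cal{B}\to\cal{B}\otimes B_i$ constructed in the previous subsection, sending $M\mapsto (\ets{i})^cM\otimes b_i(-c)$ with $c=\eph(M)$; this is precisely a strict embedding by that proposition. (B6): since $c=\eph(M)\geq 0$ and $b_i(-c)=\ft{i}^c b_i$ (from the definition of $B_i$, $\ft{i}b_i(n)=b_i(n-1)$), the image of $\Psi_i$ lies in $\cal{B}\times\{\ft{i}^nb_i\mid n\geq 0\}$.

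The substantive point is (B7): for any $M\neq\1$ there must exist $i\in I$ with $\Psi_i(M)=M'\otimes\ft{i}^nb_i$ and $n>0$, i.e.\ with $c=\eph(M)>0$. Since $M\neq\1$ we have $\nu\neq 0$, so $\epsilon_j(M)>0$ for some $j\in I$, hence $\et{j}M\neq\0$. I need $\eph(M)>0$ for \emph{some} $i$ (not necessarily this $j$). Equivalently, I must rule out the possibility that $e_i^\vee M=\0$ for every $i\in I$ simultaneously. Here I would argue as follows: $\epsilon_i^\vee(M)=\epsilon_i(\sigma^*M)$, and $\sigma^*M$ is again a nonzero simple $\Rn{\nu}$-module with the same $\nu\neq 0$; so by the same reasoning there is some $i$ with $\epsilon_i(\sigma^*M)>0$, i.e.\ $\epsilon_i^\vee(M)=\eph(M)>0$. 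That is exactly the required $i$ and $n=c>0$. This is clean, so I do not expect (B7) to be the obstacle.

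The main obstacle is really bookkeeping: to even invoke Proposition~\ref{prop_descB} I need $\Psi_i$ to be a genuine strict \emph{embedding} of crystals with target $\cal{B}\otimes B_i$, and that relies on the chain Proposition~\ref{lem13.1}--Proposition~\ref{lem13.2} together with Proposition~\ref{prop_jump_tfae}\eqref{jumpwt} (equivalently Corollary~\ref{cor_jump}), all of which have been established. I would assemble the proof as: (i) recall $\1=b_0$ has weight zero; (ii) dispatch (B1)--(B4) as above; (iii) cite the strict-embedding proposition for (B5); (iv) observe $b_i(-c)=\ft{i}^cb_i$ for (B6); (v) give the $\sigma^*$-argument for (B7); (vi) conclude by Proposition~\ref{prop_descB} that $\cal{B}\iso B(\infty)$. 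The only place care is needed is checking that the isomorphism type $b_0\mapsto$ (class of trivial module) is consistent with all the maps $\wt,\epsilon_i,\et{i},\ft{i}$ already shown to define a crystal, but this was done when $\cal{B}$ was first introduced, so the theorem follows immediately once (B1)--(B7) are in hand.
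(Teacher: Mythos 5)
Your proof is correct and follows the same route as the paper: invoke the Kashiwara–Saito characterization (Proposition~\ref{prop_descB}), verify (B1)--(B7) with $\1$ as $b_0$, cite the strict embedding $\Psi_i$ for (B5), and for (B7) observe that $\nu\neq 0$ forces $\eph(M)>0$ for some $i$ (your $\sigma^*$-argument is a clean way to pin this down; the paper states it without elaboration).
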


\begin{proof}
Recall that by abuse of notation, for irreducible modules $M$, we
write $M \in \cal{B}$ as shorthand for $[M] \in \cal{B}$. We show
that the crystal $\cal{B}$ satisfies the characterizing properties
of $B(\infty)$ given in Proposition~\ref{prop_descB}.  Properties
(B1)-(B4) are  obvious with $\1$ the unique node with weight zero.
The embedding $\Psi_i \maps \cal{B} \to \cal{B} \otimes B_i$ for
(B5) was constructed in the previous section.  (B6) follows from the
definition of $\Psi_i$ as $\ephj(M)\geq 0$ for all $M \in \cal{B}$,
$j \in I$.  For (B7) we must show that for $M \in \cal{B}$ other
than $\1$, then there exists $i \in I$ such that $\Psi_i(M)=N
\otimes \ft{i}^nb_i$ for some $N \in \cal{B}$ and $n>0$.  But every
such $M$ has $\eph(M) >0$ for at least one $i \in I$, so that $N$
can be taken to be $\ets{i}{}^n(M)$ for
 $n = \eph(M) > 0$.
\end{proof}

Now we will show the data $(\cal{B}^{\Lambda},\epsilon_i^{\Lambda},\phiL,\et{i}^{\Lambda},\et{i}^{\Lambda},\wt^{\Lambda})$
of Section~\ref{sec-BLambda} defines a crystal graph and identify it as the highest weight crystal $B(\Lambda)$.

\begin{thm}
$\cal{B}^\Lambda$ is a crystal; furthermore the crystal $\cal{B}^\Lambda$ is isomorphic to $B(\Lambda)$.
\end{thm}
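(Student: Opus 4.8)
The plan is to push the whole structure of $\cal{B}^\Lambda$ into the tensor crystal $\cal{B}\otimes T_\Lambda$ through the injection $\Upsilon$, and then invoke Kashiwara's realization of the highest weight crystal inside $B(\infty)\otimes T_\Lambda$, now that Theorem~\ref{thm_Binf} identifies $\cal{B}$ with $B(\infty)$. First I would record, via Remark~\ref{rem_pr}, that $\Upsilon$ is a bijection of $\cal{B}^\Lambda$ onto the subset
\[
S_\Lambda \;:=\; \{\, \infL\cal{M}\otimes t_\Lambda \;:\; \eph(\infL\cal{M})\le\lambda_i \text{ for all }i\in I\,\}\;\subseteq\;\cal{B}\otimes T_\Lambda .
\]
Formulas \eqref{ups1}--\eqref{ups4} already show that $\Upsilon$ intertwines $\epsilon_i^\Lambda,\et{i}^\Lambda,\wt^\Lambda$ with the corresponding data of $\cal{B}\otimes T_\Lambda$, and intertwines $\ft{i}^\Lambda$ with the operator on $S_\Lambda$ that applies $\ft{i}$ and then zeroes the result whenever it leaves $S_\Lambda$. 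The only missing compatibility is $\phi_i$, and here Corollary~\ref{cor_jump} supplies exactly the right value: $\phiL(\infL\cal{M})=\lambda_i+\epsilon_i(\infL\cal{M})+\wt_i(\infL\cal{M})=\epsilon_i^\Lambda(\cal{M})+\langle h_i,\wt^\Lambda(\cal{M})\rangle$, which is precisely $\phi_i$ of $\Upsilon(\cal{M})$ in $\cal{B}\otimes T_\Lambda$. In particular this is axiom (C1) for $\cal{B}^\Lambda$.

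Next I would verify the remaining crystal axioms for $\cal{B}^\Lambda$ directly, using that $\cal{B}$ is a crystal together with the properties of $\et{i},\ft{i}$ on simple modules in Section~\ref{sec_properties}. For (C2) and (C3): if $\et{i}^\Lambda\cal{M}\ne\0$, writing $M=\infL\cal{M}$ and $N=\et{i}M$, the crystal axioms in $\cal{B}$ give $\epsilon_i(N)=\epsilon_i(M)-1$ and $\wt(N)=\wt(M)+\alpha_i$, hence the asserted relations for $\epsilon_i^\Lambda,\wt^\Lambda$ on $\et{i}^\Lambda\cal{M}=\prL N$, and then the $\phi_i^\Lambda$-relation follows from (C1); symmetrically for $\ft{i}^\Lambda$, using that $\cal{B}\cong B(\infty)$ satisfies (C3) so $\epsilon_i(\ft{i}M)=\epsilon_i(M)+1$, and that $\prL$ does nothing to simple modules that remain inside $\cal{B}^\Lambda$. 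Axiom (C4), that $\ft{i}^\Lambda\cal{M}_1=\cal{M}_2\Leftrightarrow\et{i}^\Lambda\cal{M}_2=\cal{M}_1$, reduces to the bijection $\ft{i}M_1\cong M_2\Leftrightarrow\et{i}M_2\cong M_1$ on $\cal{B}$ from Section~\ref{sec_properties}, again because $\prL$ is harmless here. Axiom (C5) is vacuous since $0\le\phi_i^\Lambda(\cal{M})<\infty$. Thus $\cal{B}^\Lambda$ is a crystal, and $\Upsilon$ is an isomorphism of crystals from $\cal{B}^\Lambda$ onto $S_\Lambda$ equipped with the operators obtained from $\cal{B}\otimes T_\Lambda$ by deleting everything that exits $S_\Lambda$.

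Finally I would identify $S_\Lambda$ with $B(\Lambda)$. Recall Kashiwara's realization: $B(\Lambda)$ is isomorphic to $\{\,b\otimes t_\Lambda: \varepsilon_i^*(b)\le\langle h_i,\Lambda\rangle \text{ for all }i\,\}\subseteq B(\infty)\otimes T_\Lambda$, with crystal operators again the tensor-product operators truncated outside the subset. Under the isomorphism $\cal{B}\cong B(\infty)$ of Theorem~\ref{thm_Binf} the function $\eph$ corresponds to Kashiwara's starred $\varepsilon_i^*$ --- because the involution $\sigma$ on the categories $R(\nu)\fmod$ induces Kashiwara's $*$-involution on $B(\infty)$, a compatibility intrinsic to the Grojnowski-style module-theoretic construction which one checks by matching the definition of $\ets{i}$ (the $\sigma$-twisted, i.e.\ co-induction, version of the socle-of-restriction construction) against the description of $\et{i}^*$ via the opposite comultiplication. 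Granting this, $S_\Lambda$ is carried onto exactly Kashiwara's subset, the truncated operators match on the nose, and therefore $\cal{B}^\Lambda\cong B(\Lambda)$.

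The substantive content --- Theorem~\ref{thm_epsilonifj} and its Corollary~\ref{cor_jump} --- has already been established, so the only real obstacle here is the $\sigma$-bookkeeping: one must be careful that the cutoff $\prL M\ne\0$, i.e.\ $\eph(M)\le\lambda_i$, is the $\sigma$-symmetric (``starred'') condition that appears in Kashiwara's description of $B(\Lambda)$, not the unstarred $\epsilon_i$-condition. Once the conventions are pinned down, everything else is a direct transport of structure through $\Upsilon$.
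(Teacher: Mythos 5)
Your overall strategy is the same as the paper's: transport $\cal{B}^{\Lambda}$ through $\Upsilon$ into $\cal{B}\otimes T_{\Lambda}\cong B(\infty)\otimes T_{\Lambda}$, use Corollary~\ref{cor_jump} to handle $\phi_i^{\Lambda}$, and invoke Kashiwara's realization of $B(\Lambda)$ as a subcrystal of $B(\infty)\otimes T_\Lambda$. Where you diverge is the step identifying $\eph$ with Kashiwara's $\epsilon_i^*$: you assert this via the claim that $\sigma$ (the flip involution on $R(\nu)\fmod$) induces Kashiwara's $*$-involution on $B(\infty)$ and leave the verification as an exercise. That claim is true but nontrivial; nothing in the paper establishes it, and making it precise would require a separate argument comparing $\sigma$-twists of socle-of-restriction against the opposite comultiplication. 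The paper's proof avoids this entirely: in Kashiwara's Proposition 8.2, $\epsilon_i^*(b)$ is exactly the exponent $c$ in $\Psi_i(b)=b'\otimes b_i(-c)$ for \emph{the} strict embedding $\Psi_i\colon B(\infty)\to B(\infty)\otimes B_i$. A strict embedding satisfying conditions (B5)--(B6) is unique (it is forced on $b_0$ by weight considerations and then propagates by strictness), so after the isomorphism of Theorem~\ref{thm_Binf} it must coincide with the $\Psi_i$ constructed in Section~\ref{sec_embedding}, namely $M\mapsto(\ets{i})^{\eph(M)}M\otimes b_i(-\eph(M))$. Reading off the second component gives $\epsilon_i^*=\eph$ with no additional input; indeed the $\sigma\leftrightarrow *$ compatibility you wanted is then a \emph{consequence} of the argument rather than something to be proved in advance. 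If you replace your $\sigma\leftrightarrow *$ hand-wave with this observation, the rest of your proposal goes through and matches the paper's proof.
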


\begin{proof}
Proposition 8.2 of Kashiwara~\cite{Kas4} gives us an embedding
\begin{equation} \label{upsilon}
  \Upsilon^{\infty} \maps B(\Lambda) \to B(\infty) \otimes T_{\Lambda}
\end{equation}
which identifies $B(\Lambda)$ as a subcrystal of $B(\infty) \otimes T_{\Lambda}$.
The nodes of $B(\Lambda)$ are associated with the nodes of the image
\begin{equation}
  {\rm Im} \Upsilon^{\infty} = \{ b \otimes t_{\Lambda}\mid \epsilon_i^*(b) \leq \langle h_i,\Lambda\rangle,\quad \text{for all $i \in I$} \}
\end{equation}
where $c=\epsilon_i^*(b)$ is defined via $\Psi_i b = b' \otimes b_i(-c)$ for the strict embedding $\Psi_i \maps B(\infty) \to B(\infty) \otimes B_i$.
The crystal data for $B(\Lambda)$ is thus inherited from that of $B(\infty) \otimes T_{\Lambda}$.
Via our isomorphism $B(\infty) \otimes T_{\Lambda} \cong \cal{B} \otimes T_{\Lambda}$ of Theorem~\ref{thm_Binf} and the description of
\begin{eqnarray}
  \Psi_i\maps \cal{B} &\to& \cal{B} \otimes B_i \nn \\
 M & \mapsto& (\ets{i})^{\eph(M)}M \otimes b_i(-\eph(M))
\end{eqnarray}
the set
\begin{equation}
  \{ M \otimes t_{\Lambda} \in \cal{B} \otimes t_{\Lambda} \mid \eph(M) \leq \lambda_i, \quad \text{for all $i \in I$} \}
\end{equation}
endowed with the crystal data of $\cal{B}\otimes T_{\Lambda}$ is thus isomorphic to $B(\Lambda)$.

Recall from Section~\ref{sec-BLambda} this is precisely ${\rm Im} \Upsilon$, as $\eph(M)\leq \lambda_i$ for all $i \in I$ if and only if $\prL M \neq \0$ which happens if and only if $M =\infL \cal{M}$
for some $\cal{M}\in \cal{B}^{\Lambda}$. By Kashiwara's Proposition, we know ${\rm Im} \Upsilon \cong B(\Lambda)$ as crystals.

What remains is to check that the crystal data ${\rm Im} \Upsilon$ inherits from $\cal{B}\otimes T_{\Lambda}$ agrees with the data defined in Section~\ref{sec-BLambda} for $\cal{B}^{\Lambda}$.
Once we verify this, we will have shown $\cal{B}^{\Lambda}$ is a crystal, $\cal{B}^{\Lambda}\cong B(\Lambda)$, and $\Upsilon$ is an embedding of crystals.

Let $\cal{M} \in \cal{B}^{\Lambda}$. Recall,
since $\prL\infL \cal{M} \neq \0$,
then
 $0 \leq \phiL(\infL \cal{M})= \phiL(\cal{M})$ which was defined as
$\max\{ k \mid \prL\ft{i}^k(\infL\cal{M})\neq \0\}$.
We verify
\begin{eqnarray}
  \phi_i(\Upsilon \cal{M}) &=& \phi_i(\infL\cal{M} \otimes t_{\Lambda}) \nn \\
&=& \phi_i(\infL\cal{M})+\lambda_i \nn \\
&=& \ep{\infL{\cal{M}}}+\wt_i(\infL\cal{M})+\lambda_i \nn \\
&\refequal{{\rm Cor}~\ref{cor_jump}}&
 \phiL(\infL\cal{M}) = \phiL(\cal{M}).
\end{eqnarray}

This computation, along with \eqref{ups1}--\eqref{ups4} completes the check that $(\cal{B}^{\Lambda},\epsilon_i^{\Lambda},\phiL,\et{i}^{\Lambda},\et{i}^{\Lambda},\wt^{\Lambda})$
is a crystal and isomorphic to $B(\Lambda)$.
\end{proof}

%
\subsection[$\mathbf{U}^+$-module structure]{$\Up$-module structures}
%

Set
$$\Gdual  = \bigoplus_\nu \Gnu \qquad \GL = \bigoplus_\nu \GnuL$$
where, by $V^\ast$ we mean the restricted linear dual $\Hom_{\mc{A}}(V, \mc{A})$.
Because $G_0(R)$ and $G_0(R^\Lambda)$ are $\UpA$-modules, we can endow
$\Gdual$, $\GL$ with a left $\UpA$-module structure in several ways,
via a choice of anti-automorphism.
Here we denote by $\ast$ the $\mc{A}$-linear anti-automorphism
defined by
$$e_i^\ast = e_i \, \text{ for all } i \in I.$$
Specifically, for $y \in \UpA$, $\gamma \in \Gdual$ or $\GL$, and $N$ simple,
set
$$(y \mydot \gamma)\left( [N] \right)  = \gamma \left( y^\ast [N] \right)$$
where we will identify $\eL{i}$ with $e_i$.

$\Gnu$
has basis given by $\{ \delM{M} \mid M \in \cal{B}, \wt(M) = -\nu\}$
defined by
$$\delM{M} ([N])  = \begin{cases} q^{-r} & M \iso N\{r\} \\
                        0 & \text{otherwise,}
\end{cases}
$$
where $N$ ranges over simple $\Rnu$-modules.  We set $\wt(\delta_M) = - \wt(M)$.
Likewise $\GnuL$ has basis
$\{ \deLM{\cal{M}} \mid \cal{M} \in \cal{B}^\Lambda, \wt(\cal{M}) = -\nu+\Lambda\}$
defined similarly.
Note that if $\delM{M}$ has degree $d$ then $\delM{{M\{1\}}} = q^{-1} \delM{M}$
has degree $d-1$.
Recall $\1 \in \cal{B}$ denotes the trivial $\Rn{0}$-module
and we will also write $\1 \in \cal{B}^\Lambda$  for the trivial $\RnL{0}$-module.

\begin{lem} \label{lem_eidelta} \hfill
\begin{enumerate}[i)]
\item \label{eidelta}
$e_i^{(m)} \mydot \deltriv = \delM{L(i^m)} \in G_0(\Rn{mi})^\ast$; 
$e_i^{(m)} \mydot \deLtriv =
0 \in G_0(R^\Lambda(mi))^\ast \subseteq \GL$ if $m \ge \lambda_i +1$.
\item \label{generates}
$\Gdual$ is generated by $\deltriv$ as a $\UpA$-module; 
$\GL$ is generated by $\deLtriv$ as a $\UpA$-module.
\end{enumerate}
\end{lem}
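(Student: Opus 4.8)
The plan is to prove (i) by an explicit character computation, and to bootstrap (ii) from the single identity $e_i^{(k)}\mydot\delM{N}=\delM{\ft{i}^kN}$ (up to a grading shift), valid whenever $N$ is simple with $\epsilon_i(N)=0$, organised by an induction over the weight spaces of $\Gdual$.

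For (i), I first unwind the $\UpA$-action. Since $\ast$ fixes each $e_i$ it fixes $e_i^{(m)}=e_i^m/[m]_i^!$, so $(e_i^{(m)}\mydot\deltriv)([N])=\deltriv(e_i^{(m)}[N])$ is the coefficient of $[\1]$ in $e_i^{(m)}[N]$, which can be nonzero only when $N$ is an $\Rn{mi}$-module. For $N=L(i^m)$, the module $e_i^mL(i^m)$ equals, as a graded vector space, $1_{i^m}L(i^m)=L(i^m)$; so with the normalisation $\chr(L(i^m))=[m]_i^!\,i^m$ we get $\chr(e_i^mL(i^m))=[m]_i^!\chr(\1)$, hence $[e_i^mL(i^m)]=[m]_i^![\1]$ by injectivity of the character map, i.e.\ $e_i^{(m)}[L(i^m)]=[\1]$. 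As $L(i^m)$ is, up to grading shift, the only simple $\Rn{mi}$-module, comparing values on all $[L(i^m)\{r\}]$ yields $e_i^{(m)}\mydot\deltriv=\delM{L(i^m)}$. For the cyclotomic part, $L(i^m)$ is $\sigma$-self-dual, so $\eph(L(i^m))=\epsilon_i(L(i^m))=m$ while $\epsilon^\vee_\ell(L(i^m))=0$ for $\ell\neq i$; hence if $m\ge\lambda_i+1$ then $\prL L(i^m)=\0$ by Remark~\ref{rem_pr}, so the finite-dimensional algebra $\RnL{mi}$ has no simple module and is therefore zero, whence $G_0(\RnL{mi})^\ast=0$ and $e_i^{(m)}\mydot\deLtriv=0$.

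For (ii), the key claim is that for simple $N$ with $\epsilon_i(N)=0$, up to a grading shift,
\begin{equation}
 e_i^{(k)}\mydot\delM{N}=\delM{\ft{i}^kN}+\sum_{\epsilon_i(L)>k}c_L\,\delM{L}, \qquad c_L\in\mc{A},
\end{equation}
the sum over simple $L$ of the same weight as $\ft{i}^kN$. Indeed $(e_i^{(k)}\mydot\delM{N})([L])$ is the graded multiplicity of $N$ in $e_i^{(k)}[L]$; comparing characters shows every composition factor of $e_iM$ has $\epsilon_i\le\epsilon_i(M)-1$, and $\epsilon_i(M)=\max\{n:\Delta_{i^n}M\neq\0\}$ (property~2 of Section~\ref{sec_properties} together with Frobenius reciprocity), so this multiplicity vanishes unless $\epsilon_i(L)\ge k$; and when $\epsilon_i(L)=k$, property~2 gives $\Delta_{i^k}L\cong\et{i}^kL\boxtimes L(i^k)$, so $e_i^{(k)}[L]=[\et{i}^kL]$ and the only contribution is $L\cong\ft{i}^kN$, with multiplicity one. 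Granting this, I induct on $|\wt M|$ to show $\delM{M}\in U:=\UpA\mydot\deltriv$ for every simple $M$; the case $M=\1$ is trivial. For $\nu\neq\0$ enumerate the finitely many colours occurring in $\nu$ as $i_1,\dots,i_r$ and process them in turn: at stage $t$ handle those simple $M$ with $\wt M=-\nu$, $\epsilon_{i_1}(M)=\dots=\epsilon_{i_{t-1}}(M)=0$ and $\epsilon_{i_t}(M)=:k>0$, by downward induction on $k$, applying the displayed identity with $i=i_t$ and $N=\et{i_t}^kM$ (which has strictly smaller weight, so $\delM{N}\in U$ by the outer induction, and $\ft{i_t}^kN=M$); every error term $\delM{L}$ has $\epsilon_{i_t}(L)>k$ and so was already produced either at an earlier stage (if some $\epsilon_{i_s}(L)>0$, $s<t$) or earlier in the present downward induction, so $\delM{M}\in U$. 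Every simple satisfies $\epsilon_{i_t}>0$ for some $t$, so this exhausts $\Gdual$. Finally the exact functor $\infL$ induces an injection $G_0(R^\Lambda)\hookrightarrow G_0(R)$ intertwining $e_i^\Lambda$ with $e_i$, which dualises to a $\UpA$-equivariant surjection $\Gdual\twoheadrightarrow\GL$ carrying $\deltriv$ to $\deLtriv$; hence $\GL=\UpA\mydot\deLtriv$.

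The main obstacle is the displayed identity, specifically that the term of top $\epsilon_i$ is exactly $\delM{\ft{i}^kN}$ with multiplicity one and that all remaining terms have strictly larger $\epsilon_i$. This is where one must use the precise description of the functors $\Delta_{i^n}$ on simple modules from Section~\ref{sec_properties} together with the divided-power relations ${\qbin{a+b}{a}}_i\,e_i^{(a+b)}=e_i^{(a)}e_i^{(b)}$; once it is established the staged induction is purely formal. (As an alternative to the induction, (ii) follows from the Khovanov--Lauda isomorphism $\Af\cong K_0(R)$ and the perfect pairing $K_0(R)\times G_0(R)\to\mc{A}$: these identify $\Gdual\cong\Af$ with $\deltriv\mapsto 1$ and turn the $\ast$-twisted $\UpA$-action into left multiplication in $\Af$, which is generated by $1$ under the divided powers $\theta_i^{(m)}$.)
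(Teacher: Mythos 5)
Your proof is correct and follows essentially the same route as the paper: for part (i) the paper simply observes that $e_i^{(m)}L(i^m)\iso\1$, that $L(i^m)$ is the unique simple $R(mi)$-module up to grading shift, and that $\prL L(i^m)=\0$ iff $m\ge\lambda_i+1$; for part (ii) it states that $\1$ co-generates $G_0(R)$ and then invokes, without elaboration, ``an inductive argument relying on triangularity with respect to $\epsilon_i$.'' Your displayed identity $e_i^{(k)}\mydot\delM{N}=\delM{\ft{i}^kN}+\sum_{\epsilon_i(L)>k}c_L\delM{L}$ together with the staged double induction is exactly that triangularity argument made explicit, and the parenthetical alternative via $\Af\iso K_0(R)$ is likewise acknowledged by the paper after Theorem~\ref{thm_iso} as another route to the stronger statement $\UpA\iso\Gdual$.
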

\begin{proof}
The first part follows since $e_i^{(m)} L(i^m) \iso \1$ and the only
irreducible module $N$ for which $e_i^{(m)} N$ is  a nonzero $\Rn{0}$-module
is $N \iso L(i^m) \{ r\}$ for some $r \in \Z$.
Recall $\prL L(i^m) = \0$ if and only if $m\ge \lambda_i +1$.

For the second part, recall $\1$ co-generates $G_0(R)$
(resp.
$G_0(R^\Lambda)$)
in the sense that for any irreducible $M$, there exist $i_t \in I$ such that
$$e_{i_k}^{(m_k)} \cdots  e_{i_2}^{(m_2)} e_{i_1}^{(m_1)} M \iso \mathfrak{a} \1,$$
where $m_t = \epany{i_t}{ \et{i_{t-1}}^{m_{t-1}}\cdots  \et{i_{1}}^{m_{1}} M }$
and $\mathfrak{a} \in \mc{A}$ (in fact $\mathfrak{a} = q^r$ for some $r \in \Z$).
So certainly $\deltriv$ generates $\Gdual$
(resp. $\deLtriv$  generates $G_0(R^\Lambda)$).

More specifically, an inductive argument relying on ``triangularity" with respect
to $\epsilon_i$ gives $\delM{M} \in \UpA \mydot \deltriv$ and
$\deLM{\cal{M}} \in \UpA \mydot \deLtriv$.
\end{proof}

\begin{lem} \label{lem_Umaps} \hfill
\begin{enumerate}[i)]
\item \label{Umaps}
The maps
\begin{align}
\UpA &\xrightarrow{F} \Gdual & \UpA &\xrightarrow{\cal{F}} \GL \\
y &\mapsto y \mydot \deltriv   &        y &\mapsto y \mydot \deLtriv
\end{align}
are $\UpA$-module homomorphisms.
\item \label{Usurj}
$F$ and $\cal{F}$ are surjective.
\item \label{Uker}
$\ker \cal{F} \ni e_i^{(\lambda_i +1)}$ for all $i \in I$.
\end{enumerate}
\end{lem}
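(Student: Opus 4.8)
The statement is Lemma \ref{lem_Umaps}, with three parts. Parts \eqref{Umaps} and \eqref{Usurj} should go quickly using Lemma \ref{lem_eidelta}; the content is really in part \eqref{Uker}, which I expect to be the main obstacle.

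For part \eqref{Umaps}, I would simply unwind the definitions. The action of $\UpA$ on $\Gdual$ (resp. $\GL$) is by $(y \mydot \gamma)([N]) = \gamma(y^\ast [N])$, which makes $\Gdual$ a left $\UpA$-module because $\ast$ is an anti-automorphism: for $y, z \in \UpA$, $(yz)\mydot \gamma)([N]) = \gamma((yz)^\ast[N]) = \gamma(z^\ast y^\ast [N]) = (z \mydot \gamma)(y^\ast[N]) = (y \mydot (z \mydot \gamma))([N])$. Then $F(yz) = (yz)\mydot \deltriv = y \mydot (z \mydot \deltriv) = y \mydot F(z)$, so $F$ is a $\UpA$-module map (and $F(1) = \deltriv$); identically for $\cal F$. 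For part \eqref{Usurj}, surjectivity of $F$ and $\cal F$ is exactly the second assertion of Lemma \ref{lem_eidelta}\eqref{generates}, that $\deltriv$ generates $\Gdual$ and $\deLtriv$ generates $\GL$ as $\UpA$-modules; since $F$ and $\cal F$ are $\UpA$-linear with $F(1) = \deltriv$, $\cal F(1) = \deLtriv$, their images are the $\UpA$-submodules generated by $\deltriv$, $\deLtriv$ respectively, hence everything.

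For part \eqref{Uker}, I need $e_i^{(\lambda_i+1)} \mydot \deLtriv = 0$ in $\GL$, i.e.\ $\cal F(e_i^{(\lambda_i+1)}) = 0$. By Lemma \ref{lem_eidelta}\eqref{eidelta} with $m = \lambda_i + 1 \ge \lambda_i + 1$, we have precisely $e_i^{(m)} \mydot \deLtriv = 0 \in G_0(R^\Lambda(mi))^\ast \subseteq \GL$. This is essentially immediate from the already-stated Lemma \ref{lem_eidelta}; the underlying reason is that for any simple $\RnuL$-module $\cal N$ of weight $-mi + \Lambda$, $\epsilon_i^\Lambda$-considerations (via $\prL L(i^m) = \0$ when $m \ge \lambda_i + 1$, recorded in Section \ref{sec_Lim} and Remark \ref{rem_pr}) force $e_i^{(m)} \cal N$ to vanish as an $R^\Lambda(0)$-module, so the dual functional $\deLtriv(e_i^{(m)}[\cal N]) = \deLtriv(0) = 0$ for all such $\cal N$; since these $\delta$'s span the relevant graded piece, $e_i^{(m)} \mydot \deLtriv = 0$.

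**Expected obstacle.** There is essentially no obstacle beyond bookkeeping, provided Lemma \ref{lem_eidelta} is granted in the form stated — part \eqref{Uker} is an immediate restatement of Lemma \ref{lem_eidelta}\eqref{eidelta}, and parts \eqref{Umaps}, \eqref{Usurj} are formal. The only place requiring a little care is making sure the $\UpA$-module structure on the duals is set up consistently with the $\ast$ anti-automorphism so that $F, \cal F$ genuinely are module homomorphisms (rather than anti-homomorphisms); this is the one-line check above. Thus the proof is short: cite Lemma \ref{lem_eidelta} for \eqref{Usurj} and \eqref{Uker}, and give the one-line unwinding for \eqref{Umaps}.
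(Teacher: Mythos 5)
Your proposal matches the paper's proof for parts \eqref{Usurj} and \eqref{Uker} exactly --- both just cite Lemma~\ref{lem_eidelta}. For part \eqref{Umaps}, the paper's proof instead makes explicit the check that the Serre-relation operators \eqref{Serrea} vanish on $\Gdual$ and $\GL$ (since those operators are $\ast$-invariant and vanish on $G_0(R)$, $G_0(R^\Lambda)$), which is what makes the $\UpA$-action on the duals --- and hence $F, \cal{F}$ as maps out of $\UpA$ rather than the free algebra --- well-defined; your argument gives only the formal identity $(yz)\mydot\gamma = y\mydot(z\mydot\gamma)$ and leans on the preamble's assertion that $\Gdual$, $\GL$ are already $\UpA$-modules. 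This is fine, but the ``care'' you flag is really the Serre-relation vanishing (well-definedness of the action on the quotient $\UpA$), not the left-versus-right/anti-homomorphism bookkeeping.
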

\begin{proof}
To show $F$, $\cal{F}$ are $\UpA$-maps, we need only check the Serre relations
\eqref{Serrea} vanish on $\Gdual$, $\GL$. But as the corresponding operators
are invariant under $\ast$ and vanish on any $[N]$, they certainly kill any
$\delM{M}$, $\,\deLM{\cal{M}}$.

Now $F$ (resp. $\cal{F}$) is clearly surjective as it contains
the generator $\deltriv$  (resp. $\deLtriv$) in its image.

The third statement follows from part~\eqref{eidelta} of Lemma~\ref{lem_eidelta}.
\end{proof}

  If $\VL$ is the irreducible highest weight $\U$-module with highest weight
$\Lambda$ and highest weight vector $v_{\Lambda}$ then its $\mc{A}$-form,
or integral form, $\AV$ is the $U_{\mc{A}}$-submodule of $\VL$
generated by $v_{\Lambda}$.
In particular, $\AV= \UmA \mydot v_{\Lambda}$.
We let $\Vdual$ denote the graded dual of $\VL$, whose elements are
sums of $\delta_v, v \in \VL$.
If $v \in V(\Lambda) $ has weight $\mu$ then
$\delta_v \in V(\Lambda)^\ast$ has weight $-\mu$
and
$e_i v$, if nonzero, has weight $\mu + i$ in the notation of this paper.
We set
$$\AVdual = \UpA \mydot \delta_{v_{\Lambda}}$$
  endowed  with the left $\UpA$-module structure
$$y \mydot \delta_v(w) = \delta_v( y^\ast w).$$
Note that the $-\mu$ weight space of the dual is the dual of the $\mu$
weight space, and that both weight spaces are free $\mc{A}$-modules
of finite rank.

As a left $\UpA$-module
\begin{gather}\label{UV}
\AVdual \iso \UpA / \sum_{i \in I} \UpA \mydot e_i^{(\lambda_i +1)}.
\end{gather}

We emphasize that parts \eqref{VGdual} and \eqref{VG} of the theorem below are new and settle part of the cyclotomic quotient conjecture in arbitrary type.  While part \eqref{UG} follows from \cite[Theorem 8]{KL2}, here we have given a new proof of it modeled after the work of Grojnowski~\cite{Groj} using crystals to verify the rank of $G_0(R(\nu))$.

\begin{thm} \label{thm_iso}
As $\UpA$ modules
\begin{enumerate}
\item \label{UG}
$ \UpA \iso   \Gdual$,
\item \label{VGdual}
$\AVdual \iso \GL$,
\item \label{VG}
$\AV \iso G_0(R^\Lambda)$.
\end{enumerate}
\end{thm}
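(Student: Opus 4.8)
The plan is to combine the surjectivity statements of Lemma~\ref{lem_Umaps} with rank counts coming from the crystal identifications $\cal{B}\iso B(\infty)$ (Theorem~\ref{thm_Binf}) and $\cal{B}^{\Lambda}\iso B(\Lambda)$ just established, and then to get \eqref{VG} out of \eqref{VGdual} by dualising. For \eqref{UG}, I would argue as follows. By Lemma~\ref{lem_Umaps} the map $F\maps\UpA\to\Gdual$, $y\mapsto y\mydot\deltriv$, is a surjective $\UpA$-module homomorphism, and it is weight-graded: it sends $\UpA(\nu)$ into $\Gnu$, since $\deltriv$ has weight $0$ and an element of $\UpA(\nu)$ shifts weight by $\nu$. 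Now $\Gnu$ is a free $\cal{A}$-module whose rank equals $|\{M\in\cal{B}:\wt(M)=-\nu\}|=|B(\infty)_{-\nu}|$ by Theorem~\ref{thm_Binf}, and, since the crystal basis of $\mathbf{f}$ is parametrised by $B(\infty)$, this number is $\dim_{\Q(q)}\mathbf{f}(\nu)$, which is the rank of $\UpA(\nu)$. (One may also read this rank equality directly off the Khovanov--Lauda isomorphism $\UpA\iso K_0(R)$ together with the perfect pairing between $K_0(R)$ and $G_0(R)$.) Thus, in each weight, $F$ is a surjection between free $\cal{A}$-modules of equal finite rank, hence an isomorphism, and therefore $F$ is an isomorphism.

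For \eqref{VGdual}, the key observation is that $\ker\cal{F}$ is a left $\UpA$-submodule of $\UpA$ and, by Lemma~\ref{lem_Umaps}\eqref{Uker}, contains each $e_i^{(\lambda_i+1)}$, hence contains $\sum_{i\in I}\UpA\mydot e_i^{(\lambda_i+1)}$. Therefore $\cal{F}$ factors as a surjection $\bar{\cal{F}}\maps\UpA\big/\sum_i\UpA\mydot e_i^{(\lambda_i+1)}\twoheadrightarrow\GL$, whose source is $\AVdual$ by \eqref{UV}. I would then compare ranks weight by weight: the weight-$(\nu-\Lambda)$ component of $\AVdual$ is the $\cal{A}$-dual of $\AV_{\Lambda-\nu}$, hence free of rank $\dim_{\Q(q)}\VL_{\Lambda-\nu}$, while $\GnuL$ is free of rank $|\{\cal{M}\in\cal{B}^{\Lambda}:\wt^{\Lambda}(\cal{M})=\Lambda-\nu\}|=|B(\Lambda)_{\Lambda-\nu}|=\dim_{\Q(q)}\VL_{\Lambda-\nu}$ by the theorem identifying $\cal{B}^{\Lambda}$ with $B(\Lambda)$. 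Equal ranks together with surjectivity force $\bar{\cal{F}}$ to be an isomorphism in every weight, so $\AVdual\iso\GL$ as $\UpA$-modules.

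For \eqref{VG}, I would apply to the isomorphism of \eqref{VGdual} the weight-space-wise $\cal{A}$-linear dual, twisted by the anti-involution $\ast$ with $e_i^{\ast}=e_i$. On $Q$-graded $\UpA$-modules with finite-rank free weight spaces this is an exact contravariant endofunctor, and because $\ast^{2}=\id$ it satisfies $(-)^{\ast\ast}\cong\id$. By construction $\GL=G_0(R^{\Lambda})^{\ast}$ carries exactly this $\ast$-twisted structure, so its dual is $G_0(R^{\Lambda})$ with its original ($e_i$-operator) $\UpA$-action; likewise $\AVdual$ is, by definition, the $\ast$-twisted graded dual of $\AV$, so its dual is $\AV$ with the $\UpA$-action inherited from $\VL$. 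Dualising $\AVdual\iso\GL$ therefore yields $\AV\iso G_0(R^{\Lambda})$ as $\UpA$-modules.

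The hard part is not in this argument at all: everything substantive was already done, and the crystal isomorphisms enter here only as the rank counts. What remains is bookkeeping --- aligning the weight and grading conventions so that the ranks genuinely match, and checking in \eqref{VG} that the $\ast$-twisted duality returns $G_0(R^{\Lambda})$ and $\AV$ with their intended module structures rather than some twist of them, which is precisely where one invokes that $\ast$ is an involution.
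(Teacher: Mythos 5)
Your proposal is correct and follows the same route as the paper: surjectivity from Lemma~\ref{lem_Umaps}, weight-by-weight rank comparison using the crystal identifications to conclude the surjections are isomorphisms, and then $\ast$-twisted graded duality to deduce \eqref{VG} from \eqref{VGdual}. You spell out the dualisation step in part \eqref{VG} in more detail than the paper does, but the substance is identical.
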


\begin{proof}
Note that both $F$ and $\cal{F}$
 are surjective and preserve weight in the
sense that $\wt(e_i) = i$ in the notation of this paper.
We know the dimension
 of the $\nu$-weight space of $\Up$ is
$$|\{ b \in B(\infty) \mid \wt(b) = -\nu\}|
=|\{ M \in \mc{B} \mid \wt(M) = -\nu\}|
= \rank_{\mc{A}} G_0(\Rnu) = \rank_{\mc{A}} \Gnu.$$
Because $\mc{A}$ is an integral domain, a surjection between two
free $\mc{A}$-modules of the same (finite) rank must be an injection.
Hence
$ F$ must also be injective and
hence an isomorphism.

Since the left ideal $\sum_{i \in I} \UpA \mydot e_i^{(\lambda_i +1)}$
is contained in the kernel of $\cal{F}$ by part~\eqref{Uker} of Lemma~\ref{lem_Umaps},
$\cal{F}$ induces a surjection
$${}_{\mc{A}} V^\ast(\Lambda) \twoheadrightarrow \GL.$$
The dimension  of the $-\Lambda + \nu$ weight space of
$ V(\Lambda)^\ast$
is the same as
\begin{gather}
\dim V(\Lambda)_{\Lambda - \nu} =
|\{ \mathfrak{b} \in B(\Lambda) \mid \wt(\mathfrak{b}) = \Lambda -\nu\}|
=|\{ \mathcal{M} \in \cal{B}^\Lambda \mid \wt(\mc{M}) = \Lambda -\nu\}|
\\
= \rank_{\mc{A}} G_0(\RnuL) = \rank_{\mc{A}} \GnuL,
\end{gather}
so 
as above, $\cal{F}$  must in fact be an isomorphism.

The third statement follows from dualizing with respect to the antiautomorphism $\ast$.
\end{proof}

We note that \cite{KL} proves a stronger statement than part~\eqref{UG} of
Theorem~\ref{thm_iso}, namely that $ \Af  \iso K_0(R)$ as $\mc{A}$-bialgebras. So in
particular,  as $\UpA$-modules, $\UpA \iso K_0(R)$.  Using their result yields
another proof that
$\UpA \iso G_0(R)$ as $\UpA$-modules.



\def\cprime{$'$}

%

\vspace{0.1in}

\noindent A.L:  { \sl \small Department of Mathematics, Columbia University, New
York, NY 10027} \newline \noindent
  {\tt \small email: lauda@math.columbia.edu}

\noindent M.V:  { \sl \small Department of Mathematics, University of California, Davis, Davis, California 95616-8633\newline \noindent
  {\tt \small email: vazirani@math.ucdavis.edu}

%
\end{document}
%